\documentclass[english]{article}
\usepackage[T1]{fontenc}
\usepackage[utf8]{inputenc}
\usepackage{babel}
\usepackage{float}
\usepackage{dsfont}
\usepackage{amsmath}
\usepackage{amsthm}
\usepackage{amssymb}
\usepackage[numbers]{natbib}
\usepackage[pdfusetitle,
 bookmarks=true,bookmarksnumbered=false,bookmarksopen=false,
 breaklinks=false,pdfborder={0 0 1},backref=false,colorlinks=false]
 {hyperref}

\makeatletter

\newcommand{\lyxmathsym}[1]{\ifmmode\begingroup\def\b@ld{bold}
  \text{\ifx\math@version\b@ld\bfseries\fi#1}\endgroup\else#1\fi}

\let\SF@@footnote\footnote
\def\footnote{\ifx\protect\@typeset@protect
    \expandafter\SF@@footnote
  \else
    \expandafter\SF@gobble@opt
  \fi
}
\expandafter\def\csname SF@gobble@opt \endcsname{\@ifnextchar[%]
  \SF@gobble@twobracket
  \@gobble
}
\edef\SF@gobble@opt{\noexpand\protect
  \expandafter\noexpand\csname SF@gobble@opt \endcsname}
\def\SF@gobble@twobracket[#1]#2{}
\floatstyle{ruled}
\newfloat{algorithm}{tbp}{loa}
\providecommand{\algorithmname}{Algorithm}
\floatname{algorithm}{\protect\algorithmname}

\theoremstyle{plain}
\newtheorem{assumption}{\protect\assumptionname}
\theoremstyle{plain}
\newtheorem{lem}{\protect\lemmaname}
\theoremstyle{remark}
\newtheorem{rem}{\protect\remarkname}
\theoremstyle{plain}
\newtheorem{thm}{\protect\theoremname}
\theoremstyle{plain}
\newtheorem{fact}{\protect\factname}

\@ifundefined{date}{}{\date{}}
\usepackage[main, final]{neurips_2026}
\usepackage[utf8]{inputenc} % allow utf-8 input
\usepackage[T1]{fontenc}    % use 8-bit T1 fonts
\usepackage{hyperref}       % hyperlinks
\usepackage{url}            % simple URL typesetting
\usepackage{booktabs}       % professional-quality tables
\usepackage{amsfonts}       % blackboard math symbols
\usepackage{nicefrac}       % compact symbols for 1/2, etc.
\usepackage{microtype}      % microtypography
\usepackage{xcolor}         % colors

\author{%
  Zijian Liu\\
  New York University\\
  \texttt{zl3067@stern.nyu.edu}
}

\usepackage{enumitem}
\usepackage{algorithmic}

\makeatother

\providecommand{\assumptionname}{Assumption}
\providecommand{\factname}{Fact}
\providecommand{\lemmaname}{Lemma}
\providecommand{\remarkname}{Remark}
\providecommand{\theoremname}{Theorem}

\begin{document}
\global\long\def\B{\mathbb{B}}%
\global\long\def\E{\mathbb{E}}%
\global\long\def\F{\mathcal{F}}%
\global\long\def\N{\mathbb{N}}%
\global\long\def\O{\mathcal{O}}%
\global\long\def\R{\mathbb{R}}%
\global\long\def\S{\mathbb{S}}%
\global\long\def\X{{\cal X}}%
\global\long\def\Z{{\cal Z}}%
\global\long\def\rb{\mathrm{b}}%
\global\long\def\rc{\mathrm{c}}%
\global\long\def\ru{\mathrm{u}}%
\global\long\def\bd{\boldsymbol{d}}%
\global\long\def\be{\boldsymbol{e}}%
\global\long\def\bg{\boldsymbol{g}}%
\global\long\def\bx{\boldsymbol{x}}%
\global\long\def\by{\boldsymbol{y}}%
\global\long\def\bz{\boldsymbol{z}}%
\global\long\def\p{\mathtt{p}}%
\global\long\def\q{\mathtt{q}}%
\global\long\def\bzero{\boldsymbol{0}}%
\global\long\def\1{\mathds{1}}%
\global\long\def\defeq{\triangleq}%
\global\long\def\argmin{\mathrm{argmin}}%
\global\long\def\dis{{\cal D}}%
\global\long\def\lar{\mathtt{l}}%
\global\long\def\sma{\mathtt{s}}%
\global\long\def\clip{\mathtt{clip}}%
\global\long\def\eff{\mathtt{eff}}%
\global\long\def\op{\mathtt{op}}%
\global\long\def\cvx{\mathrm{cvx}}%
\global\long\def\str{\mathrm{str}}%
\global\long\def\poly{\mathrm{poly}}%
\global\long\def\lip{\mathrm{lip}}%
\global\long\def\smo{\mathrm{smo}}%
\global\long\def\mydots{\dots}%
\global\long\def\cm{\tau}%
\global\long\def\hres{I}%
\global\long\def\hc{A}%
\global\long\def\eres{J}%
\global\long\def\ec{B}%
\global\long\def\lasthres{P}%
\global\long\def\lasthc{U}%
\global\long\def\constant{\mathtt{C}}%
\global\long\def\SGD{\mathtt{SGD}}%
\global\long\def\ClipSGD{\mathtt{Clipped\text{-}SGD}}%
\global\long\def\ProxClipSGD{\mathtt{Proximal\text{-}Clipped\text{-}SGD}}%

\title{On the Last-Iterate Convergence of\\Clipped Gradient Methods}
\maketitle
\begin{abstract}
In stochastic (strongly) convex optimization, establishing last-iterate
convergence guarantees for first-order methods has recently garnered
increasing attention, as this not only deepens the theoretical understanding
of these algorithms but also aligns more closely with practice. Notably,
people have demonstrated that one of the most famous, simple, and
popular methods, Stochastic Gradient Descent ($\mathtt{SGD}$), provably
achieves last-iterate convergence. However, the existing results may
have limited applicability, since they typically rely on strong conditions
on the gradient noise (e.g., an exponentially decaying tail) rather
than the more realistic heavy-tailed noise. Unfortunately, when faced
with heavy-tailed noise, $\mathtt{SGD}$ is known to exhibit undesirable
behavior or even fail to converge. To address the challenge of heavy-tailed
noise, people have proposed $\mathtt{Clipped}\text{-}\mathtt{SGD}$,
an algorithm that combines $\mathtt{SGD}$ with a simple mechanism,
gradient clipping. Although $\mathtt{Clipped}\text{-}\mathtt{SGD}$
performs well in practice, its last-iterate convergence property remains
highly underexplored. In this work, we prove the first optimal last-iterate
rates in high probability for $\mathtt{Clipped}\text{-}\mathtt{SGD}$
under heavy-tailed noise, thereby closing a gap in the literature.
\end{abstract}

\section{Introduction\protect\footnote{To save space, we only provide a simplified leading term in the rate
when discussing convergence results in this section.}\label{sec:introduction}}

In stochastic (strongly) convex optimization, establishing last-iterate
convergence guarantees\footnote{\label{fn:metric}Throughout the paper, given an objective function
$F$ and a point $\bx$ in the optimization domain, we measure the
convergence by the function value gap, i.e., $F(\bx)-F_{\star}$,
where $F_{\star}=F(\bx_{\star})$ denotes the function value at the
optimal solution $\bx_{\star}$. Note that for $\mu$-strongly convex
$F$ (not necessarily smooth), the function value gap always performs
as a stronger convergence criterion than the commonly used metric
$\mu\left\Vert \bx-\bx_{\star}\right\Vert ^{2}$, as discussed in
the literature (see, e.g., \citep{ICLR2024_b1239ceb}).} for first-order methods has attracted much attention in recent years.
This topic not only deepens our theoretical understanding of different
algorithms but also aligns more closely with reality, given that the
last iterate is often used as the final output in practice. Notably,
a particularly exciting line of work (see, e.g., \citep{NEURIPS2025_97335d45,garrigos2025last,pmlr-v99-harvey19a,doi:10.1137/19M128908X,ICLR2024_b1239ceb,NIPS2011_40008b9a,orabona2020blog,rakhlin2011making,pmlr-v28-shamir13})
has demonstrated that one of the most famous, simple, and popular
methods, Stochastic Gradient Descent ($\SGD$) \citep{robbins1951stochastic},
which serves as the prototype for many follow-up first-order algorithms,
provably achieves last-iterate convergence rates, even optimal ones,
for both Lipschitz and smooth objectives.

However, upon closer examination, one may realize that the existing
last-iterate convergence results for $\SGD$ may have limited applicability,
since they typically rely on strong conditions on the gradient noise
(which refers to the difference between the stochastic and the true
gradients). Especially, two commonly considered conditions, the finite-variance
noise assumption for convergence in expectation and the exponentially
decaying tail noise assumption (e.g., sub-Gaussian noise) for convergence
in high probability, do not accurately reflect practice, as they are
observed to be violated in many different scenarios \citep{ICLR2024_4661b552,pmlr-v238-battash24a,pmlr-v139-garg21b,pmlr-v97-simsekli19a,NEURIPS2020_b05b57f6}.
Given this, a more realistic assumption, known as heavy-tailed noise,
has been proposed to model the behavior of gradient noise. Specifically,
under the assumption of heavy-tailed noise, the gradient noise is
considered to have only a finite $\p$-th moment, where $\p\in\left(1,2\right]$
is called the tail index.

Unfortunately, when faced with heavy-tailed noise, $\SGD$ is known
to exhibit undesirable behavior, such as an inevitable $\poly(1/\delta)$
dependence on the failure probability $\delta\in\left(0,1\right]$
in high-probability convergence \citep{pmlr-v313-liu26a,pmlr-v202-sadiev23a},
or even to fail to converge in expectation \citep{NEURIPS2020_b05b57f6}.
To address this challenge, people have proposed $\ClipSGD$, an algorithm
that combines $\SGD$ with a simple mechanism known as gradient clipping.
Concretely, in contrast to $\SGD$, which directly applies the update
rule to the stochastic gradient $\bg$, $\ClipSGD$ first preprocesses
$\bg$ via the clipping function to obtain its clipped counterpart,
$\clip_{\cm}(\bg)\defeq\min\left\{ \cm,\left\Vert \bg\right\Vert \right\} \frac{\bg}{\left\Vert \bg\right\Vert }$,
where $\cm$ is called the clipping threshold, and then uses the clipped
stochastic gradient in the following procedure.

Although $\ClipSGD$ performs well in practice and has provable convergence
guarantees in various settings, its last-iterate convergence property
remains highly underexplored. As far as we are aware, only two studies
have established last-iterate rates under heavy-tailed noise. The
first is \citep{parletta2024improved}, which showed a last-iterate
rate of $\O(1/T^{1-\frac{1}{\p}})$ in expectation for Lipschitz convex
objectives; this rate is known to be optimal in $T$ \citep{ICLR2026_bd5b846a,nemirovskij1983problem,pmlr-v178-vural22a}.
The second is \citep{ICLR2026_03382840}, in which the authors developed
a last-iterate rate of $\O(\ln^{2}(T^{2}/\delta)/T^{\frac{2}{3}(1-\frac{1}{\p})})$
in high probability for smooth convex objectives. While, to the best
of our knowledge, no high-probability lower bounds in the corresponding
setting are known in the literature, the rate in \citep{ICLR2026_03382840}
appears to be suboptimal for the following two reasons. First, it
is worse than the best-known high-probability rate $\O(\ln^{1-\frac{1}{\p}}(1/\delta)/T^{1-\frac{1}{\p}})$
for the average iterate under the same setting \citep{NEURIPS2023_4c454d34}.
Second, when $\p=2$, the rate in \citep{ICLR2026_03382840} reduces
to only $\O(\ln^{2}(T^{2}/\delta)/T^{\frac{1}{3}})$, which is slower
than the best-known in-expectation rate $\O(1/\sqrt{T})$ for the
last iterate of $\SGD$ \citep{ICLR2024_b1239ceb}.

Given the above discussion, an important research question remains
to be investigated:
\begin{center}
\textit{Under heavy-tailed noise, can we establish high-probability
rates for the last iterate of $\ClipSGD$ that match its best-known
high-probability bounds for the average iterate?}
\par\end{center}

\subsection{Our Contributions}

This work answers the above question affirmatively by proving the
following results, thereby closing a gap in the literature.

\begin{itemize}[leftmargin=*]

\item For general convex optimization, we prove that the last iterate
of $\ClipSGD$ converges in high probability at a rate of $\O(\ln^{1-\frac{1}{\p}}(1/\delta)/T^{1-\frac{1}{\p}})$
for both Lipschitz and smooth objectives (cf. Theorems \ref{thm:main-cvx-lip-hp}
and \ref{thm:main-cvx-smo-hp}), matching the best-known average-iterate
results in \citep{ICLR2026_bd5b846a} for Lipschitz functions and
\citep{NEURIPS2023_4c454d34} for smooth functions.

\item For strongly convex optimization\footnote{In fact, our results extend beyond strongly convex functions. See
Assumption \ref{assu:obj} for details.}, we prove that the last iterate of $\ClipSGD$ converges in high
probability at a rate of $\O(\ln^{2-\frac{2}{\p}}(1/\delta)/T^{2-\frac{2}{\p}})$
for both Lipschitz and smooth objectives (cf. Theorems \ref{thm:main-str-lip-hp}
and \ref{thm:main-str-smo-hp}), matching the best-known average-iterate
result in \citep{ICLR2026_bd5b846a} for Lipschitz functions and improving
over the result in \citep{pmlr-v202-sadiev23a} for smooth functions
by removing extra logarithmic factors.

\item For Lipschitz objectives, our rates are optimal in all parameters
(including other problem-dependent parameters currently hidden in
the big-$\O$ notation) under the classical heavy-tailed noise assumption,
as implied by the high-probability minimax lower bounds in \citep{ICLR2026_bd5b846a}.
For smooth objectives, although no high-probability minimax lower
bounds are known, we believe that our rates are also optimal under
the classical heavy-tailed noise assumption.

\item Notably, our rates are derived under the fine-grained heavy-tailed
noise condition (cf. Assumption \ref{assu:heavy}) recently introduced
in \citep{ICLR2026_bd5b846a} and exhibit a refined dependence on
the noise parameter, as in \citep{ICLR2026_bd5b846a}.

\item We highlight that our analysis contains an interesting idea.
Specifically, we develop a new argument that maintains the optimal
$\poly(\ln(1/\delta))$ dependence on $\delta$. Our analysis is inspired
by induction-based proofs in the literature, which, in contrast, typically
lead to a suboptimal $\poly(\ln(T/\delta))$ dependence. For more
details, we refer the interested reader to Section \ref{sec:idea}.

\end{itemize}

\subsection{Related Work\label{subsec:related}}

First, we review the literature on high-probability convergence results
for $\ClipSGD$ in stochastic (strongly) convex optimization under
heavy-tailed noise. For high-probability convergence in stochastic
nonconvex optimization under heavy-tailed noise (not limited to the
$\ClipSGD$ algorithm), the interested reader could refer to, for
example, \citep{pmlr-v258-armacki25a,NEURIPS2021_26901deb,pmlr-v300-fradin26a,pmlr-v258-hubler25a,pmlr-v235-liu24bo,pmlr-v195-liu23c,NEURIPS2023_4c454d34,pmlr-v202-sadiev23a},
for recent progress. 

\paragraph{Upper bounds.}

For convex problems, the best-known high-probability rate for the
average iterate is $\O(\ln^{1-\frac{1}{\p}}(1/\delta)/T^{1-\frac{1}{\p}})$,
for both Lipschitz functions \citep{ICLR2026_bd5b846a} and smooth
functions \citep{NEURIPS2023_4c454d34}. For strongly convex problems,
the best-known high-probability bounds are $\O(\ln^{2-\frac{2}{\p}}(1/\delta)/T^{2-\frac{2}{\p}})$
for the weighted average iterate and the squared distance $\left\Vert \bx_{T+1}-\bx_{\star}\right\Vert ^{2}$
in the Lipschitz setting \citep{ICLR2026_bd5b846a} and $\widetilde{\O}(\ln^{2-\frac{2}{\p}}(T/\delta)/T^{2-\frac{2}{\p}})$
for $\left\Vert \bx_{T+1}-\bx_{\star}\right\Vert ^{2}$ in the smooth
setting \citep{pmlr-v202-sadiev23a}.

Regarding last-iterate convergence, the only work we are aware of
is \citep{ICLR2026_03382840}, which proved the first and only rate
in the order of $\O(\ln^{2}(T^{2}/\delta)/T^{\frac{2}{3}(1-\frac{1}{\p})})$
for smooth convex objectives. However, such a bound is worse than
the best-known average-iterate rate $\O(\ln^{1-\frac{1}{\p}}(1/\delta)/T^{1-\frac{1}{\p}})$
mentioned above for the same setting. Note that in the strongly convex
case, as discussed in Footnote \ref{fn:metric}, the existing results
for $\left\Vert \bx-\bx_{\star}\right\Vert ^{2}$ do not align with
the metric $F(\bx)-F_{\star}$ considered in this work.

Second, we discuss lower bounds for stochastic (strongly) convex optimization
under heavy-tailed noise (or similar conditions). Given the limited
number of results in the literature, we include both high-probability
and in-expectation lower bounds.

\paragraph{Lower bounds.}

To the best of our knowledge, only a few results of high-probability
lower bounds are known, and all of them are established for Lipschitz
functions. In detail, \citep{5394945} proved the first high-probability
minimax lower bound $\Omega(1/T^{1-\frac{1}{\p}})$ for convex optimization.
However, it considered the heavy-tailed stochastic gradient assumption
rather than the heavy-tailed noise assumption, and, more critically,
did not provide a tight dependence on parameters other than $T$,
such as the failure probability $\delta$. Under the same heavy-tailed
stochastic gradient assumption, but only for $\p=2$, \citep{pmlr-v247-carmon24a}
showed an improved minimax lower bound $\Omega(\sqrt{\ln(1/\delta)/T})$;
for general $\p\in\left(1,2\right]$, \citep{fatkhullin2025can} gave
an algorithm-dependent lower bound $\Omega((1/\delta)^{\frac{1}{\p}}/T^{1-\frac{1}{\p}})$.
Under the heavy-tailed noise assumption, \citep{ICLR2026_bd5b846a}
recently characterized the first high-probability minimax lower bounds,
namely, $\Omega(\ln^{1-\frac{1}{\p}}(1/\delta)/T^{1-\frac{1}{\p}})$
for convex optimization and $\Omega(\ln^{2-\frac{2}{\p}}(1/\delta)/T^{2-\frac{2}{\p}})$
for strongly convex optimization. 

Next, we turn our attention to in-expectation lower bounds. Under
the heavy-tailed stochastic gradient assumption, \citep{nemirovskij1983problem}
and \citep{pmlr-v178-vural22a} provided a minimax lower bound $\Omega(1/T^{1-\frac{1}{\p}})$
for Lipschitz convex functions, and \citep{NEURIPS2020_b05b57f6}
established the first $\Omega(1/T^{2-\frac{2}{\p}})$ result for smooth
strongly convex objectives. Under the heavy-tailed noise assumption,
\citep{ICLR2026_bd5b846a} recently showed similar lower bounds $\Omega(1/T^{1-\frac{1}{\p}})$
and $\Omega(1/T^{2-\frac{2}{\p}})$, for Lipschitz convex and strongly
convex optimization, respectively.

Finally, we remark that all lower bounds in \citep{ICLR2026_bd5b846a}
can apply to the fine-grained heavy-tailed noise assumption beyond
the classical one. We will elaborate further on this point later when
discussing our theorems in Section \ref{sec:algorithm}.

\section{Preliminary\label{sec:preliminary}}

\paragraph{Notation.}

$\N$ is the set of natural numbers (excluding $0$). $\left[T\right]\triangleq\left\{ 1,\mydots,T\right\} ,\forall T\in\N$.
$\left\lceil \cdot\right\rceil $ and $\left\lfloor \cdot\right\rfloor $
denote the ceiling and floor functions, respectively. $\left\langle \cdot,\cdot\right\rangle $
is the standard Euclidean inner product, and $\left\Vert \cdot\right\Vert \defeq\sqrt{\left\langle \cdot,\cdot\right\rangle }$
is the Euclidean norm. $\left\Vert \cdot\right\Vert _{\op}$ denotes
the operator norm. $\S^{d-1}$ is the unit sphere in $\R^{d}$. $\B^{d}(\bx,r)$
is the Euclidean ball centered at $\bx\in\R^{d}$ with radius $r>0$.
Given a convex $h:\R^{d}\to\R$, $\nabla h(\bx)$ denotes an arbitrary
element in $\partial h(\bx)$, where $\partial h(\bx)$ is the subgradient
set of $h$ at $\bx$.

\paragraph{Objective.}

We study the composite optimization problem in the form of
\[
\inf_{\bx\in\X}F(\bx)\defeq f(\bx)+r(\bx),
\]
where $\X\subseteq\R^{d}$ is a nonempty closed convex set.

\paragraph{Assumptions.}

Our analysis relies on the following assumptions.
\begin{assumption}
\label{assu:minimizer}There exists $\bx_{\star}\in\X$ such that
$F_{\star}\defeq F(\bx_{\star})=\inf_{\bx\in\X}F(\bx)$.
\end{assumption}
\begin{assumption}
\label{assu:obj}Both $f:\R^{d}\to\R$ and $r:\R^{d}\to\R$ are convex.
Moreover, $F$ is $\mu$-strongly star-convex on $\X$ for some $\mu\geq0$,
i.e., $F(\lambda\bx_{\star}+(1-\lambda)\bx)+\frac{\mu\lambda(1-\lambda)}{2}\left\Vert \bx_{\star}-\bx\right\Vert ^{2}\leq\lambda F_{\star}+(1-\lambda)F(\bx),\forall\bx\in\X,\lambda\in\left[0,1\right]$,
where $\bx_{\star}$ is the minimizer specified in Assumption \ref{assu:minimizer}.
\end{assumption}
\begin{assumption}
\label{assu:lip}$f$ is $G$-Lipschitz on $\X$, i.e., $\left\Vert \nabla f(\bx)\right\Vert \leq G,\forall\bx\in\X$.
\end{assumption}
\begin{assumption}
\label{assu:smo}$f$ is $L$-smooth on $\X$, i.e., $\left\Vert \nabla f(\bx)-\nabla f(\by)\right\Vert \leq L\left\Vert \bx-\by\right\Vert ,\forall\bx,\by\in\X$.
\end{assumption}
Assumptions \ref{assu:minimizer}, \ref{assu:lip}, and \ref{assu:smo}
are standard in the optimization literature \citep{doi:10.1137/16M1080173,lan2020first,nesterov2018lectures}.
The convexity requirement in Assumption \ref{assu:obj} is also fundamental.
We remark that the $\mu$-strong star-convexity condition \citep{necoara2019linear}
in Assumption \ref{assu:obj} is strictly weaker than the commonly
assumed $\mu$-strong convexity condition in the literature.

Before moving on, we recall a useful property of $\mu$-strongly star-convex
functions (see, e.g., \citep{pmlr-v125-hinder20a}) in the following
Lemma \ref{lem:str-star-cvx}. For completeness, we reproduce its
proof in Appendix \ref{sec:proof-of-lemma}.
\begin{lem}
\label{lem:str-star-cvx}Under Assumptions \ref{assu:minimizer} and
\ref{assu:obj}, we have $\frac{\mu}{2}\left\Vert \bx-\bx_{\star}\right\Vert ^{2}\leq F(\bx)-F_{\star}\leq\frac{\left\Vert \nabla F(\bx)\right\Vert ^{2}}{2\mu},\forall\bx\in\X$.
\end{lem}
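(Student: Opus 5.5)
Both inequalities come from Assumption \ref{assu:obj}. The lower bound follows from comparing the strong star-convexity inequality with optimality of $\bx_{\star}$. The upper bound follows from combining strong star-convexity with the ordinary subgradient inequality of the convex function $F$ along the segment from $\bx$ to $\bx_{\star}$, and then maximizing a one-dimensional quadratic. Throughout, fix $\bx\in\X$ and write $D\defeq\left\Vert \bx-\bx_{\star}\right\Vert $. For $\lambda\in\left[0,1\right]$ the point $\bx_{\lambda}\defeq\lambda\bx_{\star}+(1-\lambda)\bx$ lies in $\X$ because $\X$ is convex. When $\mu=0$ the lower bound is trivial and the upper bound is read as $+\infty$ (or as vacuous), so I assume $\mu>0$ for the upper bound.

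\emph{Lower bound.} By Assumption \ref{assu:minimizer}, $F(\bx_{\lambda})\geq F_{\star}$. Substituting this into the strong star-convexity inequality gives $F_{\star}+\frac{\mu\lambda(1-\lambda)}{2}D^{2}\leq\lambda F_{\star}+(1-\lambda)F(\bx)$, that is, $(1-\lambda)\bigl(F(\bx)-F_{\star}\bigr)\geq\frac{\mu\lambda(1-\lambda)}{2}D^{2}$. For $\lambda\in\left[0,1\right)$ I divide by $1-\lambda>0$ and obtain $F(\bx)-F_{\star}\geq\frac{\mu\lambda}{2}D^{2}$. Letting $\lambda\to1$ gives $F(\bx)-F_{\star}\geq\frac{\mu}{2}D^{2}$.

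\emph{Upper bound.} Since $f$ and $r$ are convex and finite on $\R^{d}$, the vector $\nabla F(\bx)\defeq\nabla f(\bx)+\nabla r(\bx)$ is a subgradient of the convex function $F$ at $\bx$. Write $\bg\defeq\nabla F(\bx)$. The subgradient inequality gives $F(\bx_{\lambda})\geq F(\bx)+\lambda\left\langle \bg,\bx_{\star}-\bx\right\rangle $. Combining this with the star-convexity upper bound on $F(\bx_{\lambda})$ yields
\[
F(\bx)+\lambda\left\langle \bg,\bx_{\star}-\bx\right\rangle \leq\lambda F_{\star}+(1-\lambda)F(\bx)-\frac{\mu\lambda(1-\lambda)}{2}D^{2}.
\]
Cancelling $F(\bx)$ and dividing by $\lambda>0$ gives $\left\langle \bg,\bx_{\star}-\bx\right\rangle \leq F_{\star}-F(\bx)-\frac{\mu(1-\lambda)}{2}D^{2}$. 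Letting $\lambda\to0$ then gives
\[
F(\bx)-F_{\star}\leq\left\langle \bg,\bx-\bx_{\star}\right\rangle -\frac{\mu}{2}D^{2}\leq\left\Vert \bg\right\Vert D-\frac{\mu}{2}D^{2}.
\]
The right-hand side is at most $\sup_{s\geq0}\bigl(\left\Vert \bg\right\Vert s-\frac{\mu}{2}s^{2}\bigr)=\frac{\left\Vert \bg\right\Vert ^{2}}{2\mu}$, which is the claim.

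The only delicate point is obtaining the sharp constant $\frac{1}{2\mu}$. The naive route, which plugs the lower bound into $F(\bx)-F_{\star}\leq\left\Vert \bg\right\Vert D$, loses a factor of $4$. Keeping the $-\frac{\mu}{2}D^{2}$ term produced by the limiting argument avoids this loss, so that is the step I would be careful with.
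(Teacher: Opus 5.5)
Your proof is correct and follows essentially the same route as the paper's. The lower bound comes from substituting $F(\lambda\bx_{\star}+(1-\lambda)\bx)\geq F_{\star}$ into strong star-convexity and letting $\lambda\to1$; the upper bound comes from combining the subgradient inequality with strong star-convexity, letting $\lambda\to0$ to get $F(\bx)-F_{\star}\leq\left\langle \nabla F(\bx),\bx-\bx_{\star}\right\rangle -\frac{\mu}{2}\left\Vert \bx-\bx_{\star}\right\Vert ^{2}$, and finishing with Young's inequality, which is the same step as your quadratic maximization.
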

Next, we consider a fine-grained heavy-tailed noise assumption recently
introduced in \citep{ICLR2026_bd5b846a}. For the plausibility of
Assumption \ref{assu:heavy}, we refer the reader to the discussions
in Section 2 of \citep{ICLR2026_bd5b846a}.
\begin{assumption}
\label{assu:heavy}There exists a function $\bg:\X\times\Z\to\R^{d}$
and a probability distribution $\dis$ on $\Z$ such that $\E_{Z\sim\dis}\left[\bg(\bx;Z)\right]=\nabla f(\bx),\forall\bx\in\X$.
In addition, for some $\p\in\left(1,2\right]$, we have
\begin{eqnarray*}
\E_{Z\sim\dis}\left[\left|\left\langle \be,\bg(\bx;Z)-\nabla f(\bx)\right\rangle \right|^{\p}\right]\leq\sigma_{\sma}^{\p}, & \E_{Z\sim\dis}\left[\left\Vert \bg(\bx;Z)-\nabla f(\bx)\right\Vert ^{\p}\right]\leq\sigma_{\lar}^{\p}, & \forall\be\in\S^{d-1},\bx\in\X,
\end{eqnarray*}
where $\sigma_{\sma}$ and $\sigma_{\lar}$ are two constants satisfying
$0\leq\sigma_{\sma}\leq\sigma_{\lar}\leq\sqrt{\pi d/2}\sigma_{\sma}.$
\end{assumption}
\begin{rem}
Hereafter, we omit $Z\sim\dis$ from $\E_{Z\sim\dis}$ when the context
is clear.
\end{rem}
\begin{rem}
Assumption \ref{assu:heavy} recovers the classical heavy-tailed noise
assumption by setting both $\sigma_{\sma}$ and $\sigma_{\lar}$ equal
to a common parameter $\sigma$ (or by simply ignoring the existence
of $\sigma_{\sma}$ and viewing $\sigma_{\lar}$ as the single parameter
of gradient noise).
\end{rem}
As in \citep{ICLR2026_bd5b846a}, we also define the generalized effective
dimension $d_{\eff}$ as follows:
\begin{equation}
d_{\eff}\defeq\sigma_{\lar}^{2}/\sigma_{\sma}^{2}\in\left\{ 0\right\} \cup\left[1,\pi d/2\right],\label{eq:main-d-eff}
\end{equation}
where $d_{\eff}=0$ if and only if $\sigma_{\sma}=\sigma_{\lar}=0$,
i.e., the deterministic case. Note that $d_{\eff}$ can attain order
$\Theta(d)$ \citep{ICLR2026_bd5b846a} and recovers the effective
dimension used in \citep{NEURIPS2024_10bf9689} when $\p=2$.

\section{Last-Iterate Convergence of Clipped Stochastic Gradient Descent\label{sec:algorithm}}

\begin{algorithm}[H]
\caption{\label{alg:Clipped-SGD}Clipped Stochastic Gradient Descent ($\protect\ClipSGD$)}

\begin{algorithmic}[1]

\STATE \textbf{Input:} initial point $\bx_{1}\in\X$, time horizon
$T\in\N$

\FOR{$t=1$ \textbf{to} $T$}

\STATE Decide a clipping threshold $\cm_{t}>0$

\STATE $\bg_{t}^{\rc}=\clip_{\cm_{t}}(\bg_{t})$ where $\bg_{t}=\bg(\bx_{t};Z_{t})$
and $Z_{t}\sim\dis$ is independent from the history

\STATE Decide a stepsize $\eta_{t}>0$

\STATE $\bx_{t+1}=\argmin_{\bx\in\X}r(\bx)+\left\langle \bg_{t}^{\rc},\bx\right\rangle +\frac{\left\Vert \bx-\bx_{t}\right\Vert ^{2}}{2\eta_{t}}$

\ENDFOR

\STATE \textbf{Output:} $\bx_{T+1}$

\end{algorithmic}
\end{algorithm}

\begin{rem}
In the same manner as \citep{ICLR2026_bd5b846a}, we refer to Algorithm
\ref{alg:Clipped-SGD} simply as $\ClipSGD$, rather than $\ProxClipSGD$.
\end{rem}
\begin{rem}
We treat the time horizon $T$ in Algorithm \ref{alg:Clipped-SGD}
as known to avoid any potential logarithmic dependence on $T$.
\end{rem}
This work studies the Clipped Stochastic Gradient Descent ($\ClipSGD$)
method, as given in Algorithm \ref{alg:Clipped-SGD}, and will establish
the first optimal last-iterate convergence rates in high probability. 

To ease the notation in the upcoming theorems, we denote by $D\defeq\left\Vert \bx_{\star}-\bx_{1}\right\Vert $
the distance between the optimal solution $\bx_{\star}$ and the initial
point $\bx_{1}$. Given a failure probability $\delta\in\left(0,1\right]$,
we adopt an important quantity $\cm_{\star}$ proposed in \citep{ICLR2026_bd5b846a},
defined as
\begin{equation}
\cm_{\star}\defeq\left(\min\left\{ \frac{\sigma_{\sma}\sigma_{\lar}^{\p-1}}{\ln\frac{3}{\delta}},\frac{\sigma_{\sma}^{2}}{\sigma_{\lar}^{2-\p}\1\left[\p<2\right]}\right\} \right)^{\frac{1}{\p}}.\label{eq:main-hp-tau-star}
\end{equation}
By the definition of the generalized effective dimension $d_{\eff}\overset{(\ref{eq:main-d-eff})}{=}\sigma_{\lar}^{2}/\sigma_{\sma}^{2}$,
$\cm_{\star}$ can also be written as
\begin{eqnarray}
\cm_{\star}=\sigma_{\lar}/\varphi_{\star}^{1/\p} & \text{where} & \varphi_{\star}\defeq\max\left\{ \sqrt{d_{\eff}}\ln\frac{3}{\delta},d_{\eff}\1\left[\p<2\right]\right\} .\label{eq:main-hp-varphi-star}
\end{eqnarray}

\subsection{General Convex Optimization\label{subsec:cvx-hp}}

This subsection focuses on general convex optimization, i.e., $\mu=0$
in Assumption \ref{assu:obj}.

\subsubsection{Lipschitz Objectives}

We first consider the case where $f$ is $G$-Lipschitz on $\X$.
\begin{thm}
\label{thm:main-cvx-lip-hp}Under Assumptions \ref{assu:minimizer},
\ref{assu:obj} (with $\mu=0$), \ref{assu:lip}, and \ref{assu:heavy},
for any $T\in\N$ and $\delta\in\left(0,1\right]$, setting $\eta_{t}=\eta_{\star}(T-t+1),\cm_{t}=\max\left\{ 2G,\cm_{\star}T^{\frac{1}{\p}}\right\} ,\forall t\in\left[T\right]$,
where $\eta_{\star}$ is a properly picked parameter defined in (\ref{eq:cvx-lip-hp-eta-star}),
then $\ClipSGD$ (Algorithm \ref{alg:Clipped-SGD}) guarantees that,
with probability at least $1-\delta$,
\[
F(\bx_{T+1})-F_{\star}=\O\left(\frac{(\varphi_{\lip}+\ln\frac{3}{\delta})GD}{T}+\frac{GD}{\sqrt{T}}+\frac{(\sigma_{\sma}^{\frac{2}{\p}-1}\sigma_{\lar}^{2-\frac{2}{\p}}+\sigma_{\sma}^{\frac{1}{\p}}\sigma_{\lar}^{1-\frac{1}{\p}}\ln^{1-\frac{1}{\p}}\frac{3}{\delta})D}{T^{1-\frac{1}{\p}}}\right),
\]
in which $\varphi_{\lip}\leq\varphi_{\star}$ is a constant defined
in (\ref{eq:lip-hp-varphi}) and equals $\varphi_{\star}$ when $T=\Omega\left(\frac{G^{\p}}{\sigma_{\lar}^{\p}}\varphi_{\star}\right)$.
\end{thm}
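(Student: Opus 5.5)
The plan combines a last-iterate potential argument for stepsizes $\eta_t=\eta_\star(T-t+1)$ with a high-probability control of the clipping noise that avoids a union bound over $t$. For the potential argument I would use the Zamani--Glineur / Liu--Zhou style comparison sequence. Define $\bz_{t}$ as a convex combination of $\bx_\star$ and past iterates, $\bz_t=w_t\bx_\star+(1-w_t)\bx_t$-type points with weights chosen so that telescoping the one-step proximal inequality
\[
\eta_t\bigl(F(\bx_{t+1})-F(\bz_t)\bigr)\le \tfrac12\|\bz_t-\bx_t\|^2-\tfrac12\|\bz_t-\bx_{t+1}\|^2+\eta_t\langle \nabla f(\bx_t)-\bg_t^{\rc},\bx_t-\bz_t\rangle+\eta_t^2\|\bg_t^{\rc}\|^2
\]
(or a variant with $\langle\cdot,\bx_{t+1}-\bz_t\rangle$ absorbed via Young) yields a bound on $F(\bx_{T+1})-F_\star$. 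Convexity (Assumption \ref{assu:obj} with $\mu=0$) is used to bound $F(\bz_t)$. With the linearly decaying stepsizes, the weights are explicit, and the deterministic part gives $\frac{D^2}{\eta_\star T^2}+\eta_\star G^2$-type terms, while the stochastic part is $\sum_t v_t\langle\xi_t,\bx_t-\bz_t\rangle+\sum_t v_t\eta_t\|\bg_t^{\rc}\|^2$ with explicit deterministic weights $v_t$.

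Next, split $\xi_t=\nabla f(\bx_t)-\bg_t^{\rc}$ into a martingale part $\xi^\ru_t=\E_t[\bg^{\rc}_t]-\bg_t^{\rc}$ and a bias part $\xi^\rb_t=\nabla f(\bx_t)-\E_t[\bg^{\rc}_t]$. Since $\tau_t\ge2G$, I would invoke the clipping lemmas of \citep{ICLR2026_bd5b846a}. These give $\|\xi^\rb_t\|\lesssim\sigma_\lar^\p/\tau^{\p-1}$, $\|\xi^\ru_t\|\le2\tau$, and the refined directional variance $\E_t\langle\be,\xi^\ru_t\rangle^2\lesssim\sigma_\sma^\p\tau^{2-\p}$ (together with $\E_t\|\xi^\ru_t\|^2\lesssim\sigma_\lar^\p\tau^{2-\p}$). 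The choice $\tau=\cm_\star T^{1/\p}$ is exactly what balances these terms into the $\sigma_\sma^{1/\p}\sigma_\lar^{1-1/\p}\ln^{1-1/\p}$ and $\sigma_\sma^{2/\p-1}\sigma_\lar^{2-2/\p}$ terms, once $\eta_\star$ in (\ref{eq:cvx-lip-hp-eta-star}) is tuned as a minimum of $D/(G\sqrt T)$, $D/(\tau\cdots)$ and similar candidates. The quadratic term $\eta_t^2\|\bg^\rc_t\|^2$ is handled by writing $\|\bg^\rc_t\|^2\le 2G^2+2\|\xi^\ru_t\|^2+\ldots$ and applying a Freedman/Bernstein-type bound to $\sum v_t\eta_t(\|\xi^\ru_t\|^2-\E_t\|\xi_t^\ru\|^2)$. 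The $\ln\frac3\delta\, GD/T$ and $\varphi_\lip GD/T$ terms come from the range term $2\tau$ in this concentration.

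The main obstacle is the inner-product martingale $\sum_t v_t\langle\xi^\ru_t,\bx_t-\bz_t\rangle$. Its increments depend on the random distance $\|\bx_t-\bz_t\|$, so a naive approach inducts on $t$ with a union bound, costing $\ln(T/\delta)$. I would instead apply a single exponential supermartingale (Freedman-type) bound with a random, data-dependent variance: for $\lambda>0$,
\[
\sum_t\Bigl(\lambda v_t\langle\xi^\ru_t,\bx_t-\bz_t\rangle-c\lambda^2v_t^2\sigma_\sma^\p\tau^{2-\p}\|\bx_t-\bz_t\|^2\Bigr)\le\ln\tfrac3\delta
\]
holds w.p. $1-\delta/3$, valid whenever $\lambda v_t\tau\|\bx_t-\bz_t\|\lesssim1$. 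Validity is ensured by capping $\|\bx_t-\bz_t\|$ through a stopping-time or indicator truncation at a deterministic radius of order $D$. The variance term $\sum v_t^2\|\bx_t-\bz_t\|^2$ is then absorbed into the negative distance terms $-\frac12\|\bz_t-\bx_{t+1}\|^2$ retained from telescoping, after choosing $\lambda\propto1/(\text{weights}\cdot D)$. On the good event, this simultaneously shows that all distances stay $\O(D)$, so the truncation is inactive. This closes the argument with only $\ln\frac3\delta$ and yields the stated rate. The last claim, that $\varphi_\lip=\varphi_\star$ for large $T$, follows because then $\cm_\star T^{1/\p}\ge2G$ and the max in $\tau_t$ is attained by the second term.
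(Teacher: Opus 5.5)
Your route is genuinely different from the paper's: a single exponential supermartingale with predictable, data-dependent variance, a truncation, and an induction on one good event. It can be made to work, but the crux step is mis-scaled as written, and at the scale you state the bound does not converge.

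With $\eta_t=\eta_\star(T-t+1)$ the last-iterate weights are $v_t^2=\eta_T/\sum_{s\geq t}\eta_s=\frac{2}{(T-t+1)(T-t+2)}$. So the potential starts at $v_0^2D^2\approx2D^2/T^2$, not at $D^2$. In the telescoped inequality whose left side is $\eta_T(F(\bx_{T+1})-F_\star)$, the coefficient of $\langle\bd^\ru_t,\bz_t-\bx_t\rangle$ is $v_t^2\eta_t=\frac{2\eta_\star}{T-t+2}$, which is $\Theta(\eta_\star)$ for $t$ near $T$.

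Suppose you cap $\|\bx_t-\bz_t\|$ at a uniform radius $\Theta(D)$. The range condition then forces $\lambda\lesssim1/(\eta_\star\cm D)$ with $\cm=\max\{2G,\cm_\star T^{1/\p}\}$, so $\lambda^{-1}\ln\frac3\delta\gtrsim\eta_\star\cm D\ln\frac3\delta$. After dividing by $\eta_T=\eta_\star$ you are left with $\cm D\ln\frac3\delta\geq2GD\ln\frac3\delta$, which does not decay in $T$ however $\eta_\star$ is tuned.

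What actually holds, and what you must truncate and carry through the induction, is the \emph{weighted} bound $v_{t-1}\|\bz_{t-1}-\bx_t\|=\O(v_0D)=\O(D/T)$. Equivalently, $\|\bz_{t-1}-\bx_t\|=\O(\frac{T-t+2}{T}D)$, because $\bz_t$ collapses onto the recent iterates. Truncating at this time-varying radius gives range $\O(\eta_\star\cm D/T)$ and the correct contribution $\O(\cm D\ln\frac3\delta/T)$.

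Two further repairs are needed at this step. First, use the maximal (Ville) form of the supermartingale bound, since the induction uses every partial sum and not just the full sum. Second, the terms $-\frac12v_t^2\|\bz_t-\bx_{t+1}\|^2$ mostly cancel in the telescoping and leave only a $\frac{2}{T-t+3}$ fraction as slack. So it is simpler to bound the variance by the truncation radius than to absorb it.

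Two smaller points concern the noise bounds.
\begin{enumerate}
\item The bias bound you quote, $\|\bd^\rb_t\|\lesssim\sigma_\lar^\p\cm^{1-\p}$, is the classical one and loses a power of $d_\eff$ in the $T^{-(1-1/\p)}$ term. With $\cm=\cm_\star T^{1/\p}$ the loss is a factor $\sqrt{d_\eff}$. You need item 4 of Lemma~\ref{lem:clip-ineq}, $\sigma_\sma\sigma_\lar^{\p-1}\cm^{1-\p}+2^{\p-1}\sigma_\lar^\p G\cm^{-\p}$.
\item The $\varphi_\lip GD/T$ term comes from the second piece of that bias bound, squared and optimized over $\beta$ as in (\ref{eq:lip-hp-varphi}). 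It does not come from the range $2\cm$.
\end{enumerate}

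For comparison, the paper needs no truncation or induction for this theorem. It normalizes the inner product by the running maximum $R_t=\max_{s\leq t}v_{s-1}\|\bz_{s-1}-\bx_s\|$ of the weighted potential itself. Then $M_t$ has deterministic range $2v_t\eta_t\cm_t$ and deterministic conditional variance, since $\cm_t$ is deterministic here. Freedman controls $\max_t|\sum_{s\leq t}M_s|$ at cost $\ln\frac3\delta$. Lemma~\ref{lem:Abel} gives $\sum_{s\leq t}R_sM_s\leq2R_t\max_{t'\leq t}|\sum_{s\leq t'}M_s|$, AM--GM leaves $R_t^2/4$, and Lemma~\ref{lem:algebra} absorbs it.

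Because $R_t$ is the weighted potential, the correct $v_0D$ scale comes for free. The one-event induction of Section~\ref{sec:idea} is used only in the smooth case, where $\cm_t$ is random.
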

To the best of our knowledge, for Lipschitz objectives, Theorem \ref{thm:main-cvx-lip-hp}
provides the first and only high-probability convergence rate for
the last iterate of $\ClipSGD$. Even in the special case $\p=2$,
i.e., finite-variance noise, no such high-probability last-iterate
convergence results exist in the literature.

We highlight that our Theorem \ref{thm:main-cvx-lip-hp} exactly matches
the best-known average-iterate convergence rate of $\ClipSGD$ in
high probability \citep{ICLR2026_bd5b846a} and hence has three implications.
First, under the classical heavy-tailed noise condition in the literature
(i.e., $\sigma_{\sma}=\sigma_{\lar}=\sigma$ for some $\sigma\geq0$),
the rate in Theorem \ref{thm:main-cvx-lip-hp} reduces to $\O\left(\frac{GD\ln\frac{1}{\delta}}{T}+\frac{GD}{\sqrt{T}}+\frac{\sigma D\ln^{1-\frac{1}{\p}}\frac{1}{\delta}}{T^{1-\frac{1}{\p}}}\right)$
by noting that $d_{\eff}\overset{(\ref{eq:main-d-eff})}{=}\sigma_{\lar}^{2}/\sigma_{\sma}^{2}=1\Rightarrow\varphi_{\lip}\leq\varphi_{\star}\overset{(\ref{eq:main-hp-varphi-star})}{=}\ln\frac{3}{\delta}$.
Remarkably, the leading term $\O\left(\frac{\sigma D\ln^{1-\frac{1}{\p}}\frac{1}{\delta}}{T^{1-\frac{1}{\p}}}\right)$
in the resulting bound is known to be fully optimal \citep{ICLR2026_bd5b846a}.
Second, under the fine-grained heavy-tailed noise assumption (i.e.,
$\sigma_{\sma}$ and $\sigma_{\lar}$ may differ), the rate established
in Theorem \ref{thm:main-cvx-lip-hp} is optimal in $T$, yet may
be suboptimal in terms of the problem-dependent parameters $\sigma_{\sma}$
and $\sigma_{\lar}$. This is because, under Assumption \ref{assu:heavy},
the only existing lower bound is $\Omega\left(\frac{(\sigma_{\sma}^{\frac{2}{\p}-1}\sigma_{\lar}^{2-\frac{2}{\p}}+\sigma_{\sma}\ln^{1-\frac{1}{\p}}\frac{1}{\delta})D}{T^{1-\frac{1}{\p}}}\right)$
\citep{ICLR2026_bd5b846a}, which suggests that there could still
be room to further improve the dependence on $\sigma_{\sma}$ and
$\sigma_{\lar}$. Third, when $\sigma_{\lar}$, equivalently $\sigma_{\sma}$,
tends to $0$, Theorem \ref{thm:main-cvx-lip-hp} recovers the standard
$\O\left(\frac{GD}{\sqrt{T}}\right)$ rate for deterministic Lipschitz
convex optimization.

Moreover, we remark that the linearly decaying stepsize employed in
Theorem \ref{thm:main-cvx-lip-hp} is critical to our proof. This
kind of stepsize was originally introduced in \citep{doi:10.1137/24M1717762}
to achieve an optimal convergence rate for Gradient Descent on Lipschitz
objectives in deterministic convex optimization. Subsequently, \citep{ICLR2024_b1239ceb}
demonstrated that it can also be applied to stochastic (strongly)
convex optimization and obtained optimal last-iterate convergence
rates in various settings. Our work further investigates the potential
of such a stepsize in $\ClipSGD$ and, consequently, establishes the
first optimal last-iterate convergence rates in high probability under
heavy-tailed noise.

\subsubsection{Smooth Objectives}

Next, we consider the setting where $f$ is $L$-smooth on $\X$.
In this case, we need an extra assumption.
\begin{assumption}
\label{assu:G1}We have access to a constant $G_{1}$ satisfying $G_{1}\geq\left\Vert \nabla f(\bx_{1})\right\Vert $.
\end{assumption}

\paragraph{Discussion on Assumption \ref{assu:G1}.}

We note that the same assumption was also imposed before in \citep{NEURIPS2023_4c454d34}
under the setting $r=0$ (see their Assumption (5)). As pointed out
by \citep{NEURIPS2023_4c454d34}, this is a mild condition, since
there are several ways to estimate $G_{1}$. For example, when $\X=\R^{d}$
and $r=0$ as in most prior work, we have $\nabla f(\bx_{\star})=\bzero$
and hence can set $G_{1}=LD$ by noticing that $\left\Vert \nabla f(\bx_{1})\right\Vert =\left\Vert \nabla f(\bx_{1})-\nabla f(\bx_{\star})\right\Vert \leq LD$.
For more details on the justification of Assumption \ref{assu:G1},
we kindly refer the interested reader to \citep{NEURIPS2023_4c454d34}.

Equipped with Assumption \ref{assu:G1}, we can prove the following
Theorem \ref{thm:main-cvx-smo-hp}.
\begin{thm}
\label{thm:main-cvx-smo-hp}Under Assumptions \ref{assu:minimizer},
\ref{assu:obj} (with $\mu=0$), \ref{assu:smo}, \ref{assu:heavy},
and \ref{assu:G1}, for any $T\in\N$ and $\delta\in\left(0,\frac{1}{2}\right]$,
setting $\eta_{t}=\eta_{\star}(T-t+1),\cm_{t}=\max\left\{ 2L\left\Vert \bx_{t}-\bx_{1}\right\Vert +2G_{1},\cm_{\star}T^{\frac{1}{\p}}\right\} ,\forall t\in\left[T\right]$,
where $\eta_{\star}$ is a properly picked parameter defined in (\ref{eq:cvx-smo-hp-eta-star}),
then $\ClipSGD$ (Algorithm \ref{alg:Clipped-SGD}) guarantees that,
with probability at least $1-2\delta$,
\[
F(\bx_{T+1})-F_{\star}=\O\left(\frac{(\varphi_{\smo}+\ln\frac{3}{\delta})(LD+G_{1})D}{T}+\frac{(\sigma_{\sma}^{\frac{2}{\p}-1}\sigma_{\lar}^{2-\frac{2}{\p}}+\sigma_{\sma}^{\frac{1}{\p}}\sigma_{\lar}^{1-\frac{1}{\p}}\ln^{1-\frac{1}{\p}}\frac{3}{\delta})D}{T^{1-\frac{1}{\p}}}\right),
\]
in which $\varphi_{\smo}\leq\varphi_{\star}$ is a constant defined
in (\ref{eq:smo-hp-varphi}) and equals $\varphi_{\star}$ when $T=\Omega\left(\frac{(LD+G_{1})^{\p}}{\sigma_{\lar}^{\p}}\varphi_{\star}\right)$.
\end{thm}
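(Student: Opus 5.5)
The proof follows the last-iterate template of \citep{ICLR2024_b1239ceb,doi:10.1137/24M1717762} for the linearly decaying stepsize $\eta_{t}=\eta_{\star}(T-t+1)$, with the Lipschitz bound on $\left\Vert \nabla f\right\Vert$ replaced by a smoothness-based one on a high-probability event. The first step is a one-step inequality for the proximal update. For any reference point $\bz_{t}$ (measurable w.r.t.\ the history), optimality of $\bx_{t+1}$ together with $L$-smoothness gives, after splitting $\bg_{t}^{\rc}-\nabla f(\bx_{t})$ into the unbiased part $\xi_{t}^{\ru}=\bg_{t}^{\rc}-\E_{t}[\bg_{t}^{\rc}]$ and the bias $\xi_{t}^{\rb}=\E_{t}[\bg_{t}^{\rc}]-\nabla f(\bx_{t})$,
\[
F(\bx_{t+1})-F(\bz_{t})\leq\frac{\left\Vert \bz_{t}-\bx_{t}\right\Vert ^{2}-\left\Vert \bz_{t}-\bx_{t+1}\right\Vert ^{2}}{2\eta_{t}}+\left\langle \xi_{t}^{\ru}+\xi_{t}^{\rb},\bz_{t}-\bx_{t+1}\right\rangle -\Big(\frac{1}{2\eta_{t}}-\frac{L}{2}\Big)\left\Vert \bx_{t+1}-\bx_{t}\right\Vert ^{2}.
\]
The choice $\eta_{\star}\leq1/(2LT)$ makes the last coefficient at least $\frac{1}{4\eta_{t}}$, which absorbs $\left\langle \xi_{t},\bx_{t}-\bx_{t+1}\right\rangle$ by Young's inequality. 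We then take the reference points $\bz_{t}$ as the convex combinations of $\bx_{\star}$ and past iterates used in \citep{ICLR2024_b1239ceb}. Summing with weights tailored to $\eta_{t}\propto T-t+1$, convexity telescopes the distance terms into $D^{2}/\eta_{\star}$ plus error sums. This yields a deterministic inequality
\[
T\big(F(\bx_{T+1})-F_{\star}\big)\lesssim\frac{D^{2}}{\eta_{\star}}+\sum_{t}w_{t}\left\langle \xi_{t}^{\ru},\bz_{t}-\bx_{t}\right\rangle +\sum_{t}w_{t}\eta_{t}\left\Vert \xi_{t}^{\ru}\right\Vert ^{2}+\sum_{t}w_{t}\left\Vert \xi_{t}^{\rb}\right\Vert \left\Vert \bz_{t}-\bx_{t}\right\Vert ,
\]
with explicit weights $w_{t}$.

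The second step controls the clipping noise. The threshold $\cm_{t}\geq2L\left\Vert \bx_{t}-\bx_{1}\right\Vert +2G_{1}\geq2\left\Vert \nabla f(\bx_{t})\right\Vert$ holds surely, so the standard clipping lemmas under Assumption \ref{assu:heavy}, as in \citep{ICLR2026_bd5b846a}, apply at every step. They give $\left\Vert \xi_{t}^{\rb}\right\Vert \lesssim\sigma_{\lar}^{\p}/\cm_{t}^{\p-1}$ (with the fine-grained directional refinement via $\sigma_{\sma}$), $\left\Vert \xi_{t}^{\ru}\right\Vert \leq2\cm_{t}$, and $\E_{t}\left\Vert \xi_{t}^{\ru}\right\Vert ^{2}\lesssim\sigma_{\lar}^{\p}\cm_{t}^{2-\p}$. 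For the martingale term we apply Freedman's inequality on the scalar directions $\bz_{t}-\bx_{t}$, which brings in $\sigma_{\sma}$. For $\sum\left\Vert \xi_{t}^{\ru}\right\Vert ^{2}$ we use a Bernstein-type bound. This is where the two failure events totalling $2\delta$ come from, and it is also where $\cm_{\star}T^{1/\p}$ balances bias against variance to produce $\sigma_{\sma}^{\frac{1}{\p}}\sigma_{\lar}^{1-\frac{1}{\p}}\ln^{1-\frac{1}{\p}}\frac{3}{\delta}$ and $\sigma_{\sma}^{\frac{2}{\p}-1}\sigma_{\lar}^{2-\frac{2}{\p}}$. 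When $2L\left\Vert \bx_{t}-\bx_{1}\right\Vert +2G_{1}$ dominates the threshold instead, we get the $(\varphi_{\smo}+\ln\frac{3}{\delta})(LD+G_{1})D/T$ term, and $\varphi_{\smo}$ is defined to track which branch of the max is active.

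The main obstacle is that these concentration bounds require the iterates to stay bounded, say $\left\Vert \bx_{t}-\bx_{\star}\right\Vert \leq2D$. Boundedness is needed both for the Freedman increments and for keeping $\cm_{t}\lesssim LD+G_{1}+\cm_{\star}T^{1/\p}$. A naive induction over $t$ with a union bound would cost $\ln(T/\delta)$. Instead we plan to run the argument on stopped or truncated quantities: replace $\xi_{t}^{\ru}$ by $\xi_{t}^{\ru}\1[\left\Vert \bx_{s}-\bx_{\star}\right\Vert \leq2D,\forall s\leq t]$, which is still a martingale difference sequence with bounded increments. We then apply a single Freedman inequality to the whole sum, uniformly over all prefixes via its maximal form, so only $\ln(1/\delta)$ appears. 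Finally, on that one event we show by induction, deterministically, that the distance recursion from the first step (with $\bz_{t}=\bx_{\star}$) keeps every iterate inside the ball. This needs $\eta_{\star}$ small enough, chosen as in (\ref{eq:cvx-smo-hp-eta-star}), so that $\eta_{\star}$ times the noise sums is at most a constant times $D^{2}$. Once boundedness holds on that event, the truncation is inactive, and plugging in $\eta_{\star}$ gives the stated rate.
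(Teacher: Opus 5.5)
Your boundedness argument (predictable truncation, one maximal Freedman event, then a deterministic induction on that event) is exactly the paper's Lemma~\ref{lem:cvx-smo-distance}. There, bounding the increments through the ball is harmless because only $\O(D^2)$ is needed. Your one-step inequality and clipping bounds also agree with Lemmas~\ref{lem:descent} and~\ref{lem:clip-ineq}.

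The gap is in the last-iterate step. After telescoping away the distance terms, you control $\sum_t w_t\langle\xi_t^{\ru},\bz_t-\bx_t\rangle$ by Freedman and use boundedness ``for the Freedman increments''. That is, you bound $|w_t\langle\xi_t^{\ru},\bz_t-\bx_t\rangle|\le 2w_t\rho\|\bz_t-\bx_t\|$ with $\|\bz_t-\bx_t\|=\O(D)$, which is too crude. Normalize so the left side is $\eta_T(F(\bx_{T+1})-F_\star)$ with $\eta_T=\eta_\star$. The weights are then $w_t=v_t^2\eta_t=\frac{2\eta_\star}{T-t+2}$ ($v_t$ as in (\ref{eq:cvx-v-def})), and $w_T=\eta_\star$, because the final noisy step enters $F(\bx_{T+1})$ with full weight. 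Freedman's range term is then $\O(\eta_\star\rho D\ln\frac{3}{\delta})$. Dividing by $\eta_T$ leaves $\O(\rho D\ln\frac{3}{\delta})$ with $\rho\geq 2(LD+G_1)$, which does not decay in $T$. The variance part is likewise too large by roughly $\sqrt{T}$, and the bias part $\sum_t w_t\|\xi_t^{\rb}\|\|\bz_t-\bx_t\|$ picks up an extra $\ln T$. In fact the weighted distance $v_t\|\bz_t-\bx_{t+1}\|$ is only $\O(D/T)$ plus noise terms. That is visible only through the recursion itself, not from the ball bound.

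The missing ingredient is the self-bounding argument of Lemma~\ref{lem:cvx-core}:
\begin{enumerate}
\item Keep $\frac{v_t^2}{2}\|\bz_t-\bx_{t+1}\|^2$ on the left side instead of telescoping it away.
\item Normalize the martingale by the predictable running maximum $R_t=\max_{s\le t}v_{s-1}\|\bz_{s-1}-\bx_s\|$. Then $M_t=\langle v_{t-1}\eta_t\bd_t^{\ru},v_{t-1}(\bz_{t-1}-\bx_t)/R_t\rangle$ satisfies $|M_t|\le 2v_t\eta_t\cm_t$ wherever the iterates are.
\item Lemma~\ref{lem:Abel} and AM-GM give $\sum_{s\le t}R_s(M_s+v_s\eta_s\|\bd_s^{\rb}\|)\le\frac{R_t^2}{4}+8\max_{t'\le t}\big(\sum_{s\le t'}M_s\big)^2+2\big(\sum_{s\le t}v_s\eta_s\|\bd_s^{\rb}\|\big)^2$.
\item Lemma~\ref{lem:algebra} absorbs $\frac{R_t^2}{4}$.
\end{enumerate}
The martingale now contributes $\O(\max_t v_t^2\eta_t^2\rho^2\ln^2\frac{3}{\delta})=\O(\eta_\star^2\rho^2\ln^2\frac{3}{\delta})$. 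After dividing by $\eta_T$ and using $\eta_\star\rho\ln\frac{3}{\delta}=\O(D/T)$, this is $\O(\rho D\ln\frac{3}{\delta}/T)$, a factor $T$ better. The truncation $\zeta_t$ is then needed only to replace $\cm_t$ and $\|\nabla f(\bx_t)\|$ by $\rho$ and $\bar G$, as in Lemma~\ref{lem:cvx-smo-res}.

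Separately, your displayed deterministic inequality is mis-normalized. Its left side should be $\frac{1}{\eta_\star}\big(\sum_t\eta_t\big)(F(\bx_{T+1})-F_\star)\approx\frac{T^2}{2}(F(\bx_{T+1})-F_\star)$. With $T$ in place of $T^2$, it only yields $D^2/(\eta_\star T)$, which does not decay under the stated $\eta_\star$.
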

As far as we know, for smooth objectives, a recent work \citep{ICLR2026_03382840}
obtained the first and currently only high-probability convergence
result for the last iterate of $\ClipSGD$ in the setting where $r=0$,
$\X=\R^{d}$, and $\sigma_{\sma}=\sigma_{\lar}=\sigma$ (i.e., the
classical heavy-tailed noise assumption) with a rate $\O\left(\frac{LD^{2}\ln^{2}\frac{T^{2}}{\delta}}{T}+\frac{\max\left\{ \sigma D,L^{\frac{\p-1}{3\p-1}}\sigma^{\frac{2\p}{3\p-1}}D^{\frac{4\p-2}{3\p-1}},L^{\frac{1}{3}}\sigma^{\frac{2}{3}}D^{\frac{4}{3}}\right\} \ln^{2}\frac{T^{2}}{\delta}}{T^{\frac{2}{3}\left(1-\frac{1}{\p}\right)}}\right)$.
However, this rate is much worse than the best-known average-iterate
rate $\O\left(\frac{LD^{2}\ln\frac{1}{\delta}}{T}+\frac{\sigma D\ln^{1-\frac{1}{\p}}\frac{1}{\delta}}{T^{1-\frac{1}{\p}}}\right)$
\citep{NEURIPS2023_4c454d34}. In contrast, by taking $G_{1}=LD$
and using $d_{\eff}\overset{(\ref{eq:main-d-eff})}{=}\sigma_{\lar}^{2}/\sigma_{\sma}^{2}=1\Rightarrow\varphi_{\smo}\leq\varphi_{\star}\overset{(\ref{eq:main-hp-varphi-star})}{=}\ln\frac{3}{\delta}$
in this special case, the last-iterate rate in Theorem \ref{thm:main-cvx-smo-hp}
can be simplified into $\O\left(\frac{LD^{2}\ln\frac{1}{\delta}}{T}+\frac{\sigma D\ln^{1-\frac{1}{\p}}\frac{1}{\delta}}{T^{1-\frac{1}{\p}}}\right)$,
which matches the bound in \citep{NEURIPS2023_4c454d34} and substantially
improves over \citep{ICLR2026_03382840}. Moreover, we believe that
the leading term in this simplified rate is optimal, although, as
previously noted, no high-probability lower bound exists in this setting.

Notably, our Theorem \ref{thm:main-cvx-smo-hp} holds in a more general
setting. In particular, it is the first provable result applicable
to smooth objectives under the fine-grained Assumption \ref{assu:heavy}.
Although it is currently unclear whether the bound is tight or not
(since the lower bounds in \citep{ICLR2026_bd5b846a} were proved
only for Lipschitz $f$), we believe that a conclusion similar to
the previous one for the Lipschitz case holds, that is, the leading
term in Theorem \ref{thm:main-cvx-smo-hp} is tight in $T$, whereas
the term $\sigma_{\sma}^{1/\p}\sigma_{\lar}^{1-1/\p}$ may not be
optimal (and we conjecture that the optimal dependence should be $\sigma_{\sma}$).

In addition, we would like to briefly discuss the first component,
$2L\left\Vert \bx_{t}-\bx_{1}\right\Vert +2G_{1}$, in our clipping
threshold $\cm_{t}$. The reader familiar with the literature may
immediately recognize a similar choice, $2L\max_{s\in\left[t\right]}\left\Vert \bx_{s}-\bx_{1}\right\Vert +2G_{1}$,
proposed in \citep{NEURIPS2023_4c454d34}, which directly inspired
our choice. However, as one can see, ours made a simplification by
replacing $\max_{s\in\left[t\right]}\left\Vert \bx_{s}-\bx_{1}\right\Vert $
with $\left\Vert \bx_{t}-\bx_{1}\right\Vert $.

\subsection{Strongly Star-Convex Optimization\label{subsec:str-hp}}

This subsection focuses on strongly star-convex optimization, i.e.,
$\mu>0$ in Assumption \ref{assu:obj}. To avoid algebraic issues
in the analysis, we consider only $T\geq2$.

\subsubsection{Lipschitz Objectives}

As before, we begin with the case where $f$ is $G$-Lipschitz on
$\X$.
\begin{thm}
\label{thm:main-str-lip-hp}Under Assumptions \ref{assu:minimizer},
\ref{assu:obj} (with $\mu>0$), \ref{assu:lip}, and \ref{assu:heavy},
for any $T\geq2$ and $\delta\in\left(0,\frac{1}{2}\right]$, setting
$\eta_{t}=\begin{cases}
\frac{6}{\mu t} & t\leq S\\
\frac{T-t+1}{\mu T^{2}} & t>S
\end{cases},\cm_{t}=\begin{cases}
\max\left\{ 2G,\cm_{\star}t^{\frac{1}{\p}}\right\}  & t\leq S\\
\max\left\{ 2G,\cm_{\star}\left\lfloor \frac{T}{2}\right\rfloor ^{\frac{1}{\p}}\right\}  & t>S
\end{cases},\forall t\in\left[T\right]$, where $S\defeq\left\lceil \frac{T}{2}\right\rceil $, then $\ClipSGD$
(Algorithm \ref{alg:Clipped-SGD}) guarantees that, with probability
at least $1-2\delta$,
\[
F(\bx_{T+1})-F_{\star}=\O\left(\frac{\mu D^{2}}{T^{3}}+\frac{(\varphi_{\lip}^{2}+\ln^{2}\frac{3}{\delta})G^{2}}{\mu T^{2}}+\frac{G^{2}}{\mu T}+\frac{\sigma_{\sma}^{\frac{4}{\p}-2}\sigma_{\lar}^{4-\frac{4}{\p}}+\sigma_{\sma}^{\frac{2}{\p}}\sigma_{\lar}^{2-\frac{2}{\p}}\ln^{2-\frac{2}{\p}}\frac{3}{\delta}}{\mu T^{2-\frac{2}{\p}}}\right),
\]
in which $\varphi_{\lip}\leq\varphi_{\star}$ is a constant defined
in (\ref{eq:lip-hp-varphi}) and equals $\varphi_{\star}$ when $T=\Omega\left(\frac{G^{\p}}{\sigma_{\lar}^{\p}}\varphi_{\star}\right)$.
\end{thm}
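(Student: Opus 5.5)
The proof has two phases that mirror the stepsize schedule. Phase one runs $t\le S=\lceil T/2\rceil$ with $\eta_t=\frac{6}{\mu t}$; we show that with probability at least $1-\delta$ the distance $\|\bx_{S+1}-\bx_\star\|^2$ is $\O\big(\frac{\mu D^2}{\mu T^{c}}+\frac{(\varphi_{\lip}^2+\ln^2\frac{3}{\delta})G^2}{\mu^2T^2}+\frac{G^2}{\mu^2 T}+\frac{\sigma_{\sma}^{\frac{4}{\p}-2}\sigma_{\lar}^{4-\frac{4}{\p}}+\cdots}{\mu^2T^{2-\frac{2}{\p}}}\big)$ with a large power $c$. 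Phase two runs $t>S$ with $\eta_t=\frac{T-t+1}{\mu T^2}$, which is linearly decaying as in Theorem \ref{thm:main-cvx-lip-hp}. It is a convex last-iterate argument restarted at $\bx_{S+1}$ over horizon $T-S\approx T/2$, with $\mu$ used only to convert the initial distance into $F$-gap units. A union bound over the two phases gives probability $1-2\delta$.

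\textbf{Common one-step inequality.} From the prox step and $\mu$-strong star-convexity (Assumption \ref{assu:obj}, via Lemma \ref{lem:str-star-cvx}-type manipulations), for any reference point $\bz_t$ we have
\[
\eta_t(F(\bx_{t+1})-F(\bz_t))\le \tfrac{1}{2}\|\bz_t-\bx_t\|^2-\tfrac12\|\bz_t-\bx_{t+1}\|^2-\tfrac{\mu\eta_t}{2}\|\bz_t-\bx_t\|^2\cdot\1[\bz_t=\bx_\star]+\eta_t\langle \xi_t,\bz_t-\bx_t\rangle+\eta_t^2\|\bg_t^\rc\|^2 .
\]
Here $\xi_t=\bg_t^\rc-\nabla f(\bx_t)$ is split as $\xi_t^\ru+\xi_t^\rb$, the martingale part and the bias part. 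Because $\cm_t\ge 2G$, the standard clipping lemmas from \citep{ICLR2026_bd5b846a} give
\[
\|\xi^\rb_t\|\lesssim \sigma_{\lar}^{\p}\cm_t^{1-\p},\qquad \E\|\xi^\ru_t\|^2\lesssim\sigma_{\lar}^\p\cm_t^{2-\p},\qquad \E\langle\be,\xi^\ru_t\rangle^2\lesssim\sigma_{\sma}^{\p}\cm_t^{2-\p}\ \text{(refined)}.
\]

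\textbf{Phase one.} Take $\bz_t=\bx_\star$, multiply by weights $w_t\propto t^{2}$ (or similar) so the $\mu$-terms telescope, and obtain a recursion for $\|\bx_{t}-\bx_\star\|^2$. The martingale term $\sum w_t\eta_t\langle\xi_t^\ru,\bx_\star-\bx_t\rangle$ is controlled with a Freedman-type inequality. To keep $\ln(1/\delta)$ instead of $\ln(T/\delta)$, we follow the paper's induction-free idea: bound the martingale with a self-normalizing exponential supermartingale, $\E\exp(\lambda\langle\cdot\rangle-\lambda^2 c\|\bx_t-\bx_\star\|^2\cdots)\le1$, so that the unknown distances are absorbed by the negative $-\frac{\mu\eta_t}{2}$ terms. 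The clipping level $\cm_\star t^{1/\p}$ balances bias against variance exactly as in the average-iterate analysis of \citep{ICLR2026_bd5b846a} and yields the stated distance rate at $t=S+1$.

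\textbf{Phase two and the main obstacle.} Use the shifted reference points of \citep{ICLR2024_b1239ceb}, namely $\bz_t$ equal to a convex combination of $\bx_\star$ and past iterates $\bx_{S+1},\dots,\bx_t$ with weights tied to $\eta_t$. This converts a sum of $\eta_t(F(\bx_{t+1})-F_\star)$-type terms into a bound on $F(\bx_{T+1})-F_\star$, namely $\lesssim \frac{\|\bx_{S+1}-\bx_\star\|^2}{\sum_{t>S}\eta_t}$ plus noise terms. Since $\sum_{t>S}\eta_t\asymp 1/\mu$, this becomes $\mu\|\bx_{S+1}-\bx_\star\|^2$ plus $\mu\sum\eta_t^2(G^2+\cm_t^{2-\p}\sigma_{\lar}^\p)$-like terms of order $\frac{G^2}{\mu T}+\frac{\cm_\star^{2-\p}\sigma_{\lar}^\p T^{2/\p-1}}{\mu T}$, which match the theorem. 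The hard step is the high-probability control of $\sum_t\eta_t\langle\xi^\ru_t,\bz_t-\bx_t\rangle$. Here $\bz_t-\bx_t$ is predictable but unbounded a priori, and a naive union bound over $t$ would cost $\ln T$. I would first try the same supermartingale trick: choose $\lambda$ of order $1/(\cm\,\eta\,\max\|\bz_t-\bx_t\|)$ and dominate $\|\bz_t-\bx_t\|^2$ by the telescoped distance potential. The potential itself is bounded on the same event by a non-inductive argument, writing the final inequality as $X\le a+b\sqrt{X}$ where $X=\max_t\|\bz_t-\bx_t\|^2$, and solving for $X$.
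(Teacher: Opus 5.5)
The two-phase structure you propose is the paper's. Phase one uses a $\gamma_t\propto t^3$-weighted distance recursion with $\eta_t=\frac{6}{\mu t}$ to bound $\|\bx_\star-\bx_{S+1}\|^2$. Phase two reruns the linearly-decaying-stepsize convex last-iterate argument from $\bx_{S+1}$ over $\lfloor T/2\rfloor$ steps, where $\sum_{t>S}\eta_t\asymp 1/\mu$ turns the initial distance into $\mu\|\bx_\star-\bx_{S+1}\|^2$, and a union bound gives $1-2\delta$.

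The gap is in the step you yourself flag as hard: high-probability control of the noise cross terms. The mechanisms you propose do not work.
\begin{itemize}
\item In phase two you take $\lambda\asymp 1/(\cm\eta\max_t\|\bz_t-\bx_t\|)$. In an exponential-supermartingale/Markov argument, $\lambda$ must be fixed in advance (or at least predictable). Here $\max_t\|\bz_t-\bx_t\|$ depends on the whole trajectory, so the Chernoff step is invalid. A grid or peeling over $\lambda$ would bring back extra logarithmic factors.
\item In phase one, absorbing the compensator into the $-\frac{\mu\eta_t}{2}$ terms fails under either reading of your display. A Bernstein/Freedman compensator built from the conditional variance needs $\lambda|Y_t|\lesssim 1$, i.e.\ an a priori bound on $\|\bx_t-\bx_\star\|$ that you do not have. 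A Hoeffding compensator is valid for every $\lambda$, but it uses the squared range, proportional to $\gamma_t^2\eta_t^2\cm_t^2\|\bx_t-\bx_\star\|^2$. Absorption then forces $\lambda\lesssim\mu^2/(S^2\cm_S^2)$. That leaves a residual of order $\cm_S^2\ln\frac{1}{\delta}/(\mu^2 S)\geq\cm_\star^2S^{\frac{2}{\p}-1}\ln\frac{1}{\delta}/\mu^2$ in $\|\bx_\star-\bx_{S+1}\|^2$, which does not decay.
\end{itemize}

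The missing idea is normalization by a \emph{predictable running maximum}.
\begin{itemize}
\item In the Lipschitz case $\eta_t$ and $\cm_t$ (hence $\gamma_t$ and $v_t$) are deterministic. So $D_t=\max_{s\le t}\sqrt{\gamma_s(1+\mu\eta_s/2)}\|\bx_\star-\bx_s\|$ and $R_t=\max_{s\le t}v_{s-1}\|\bz_{s-1}-\bx_s\|$ are $\F_{t-1}$-measurable.
\item The normalized increments $N_t$ and $M_t$ (the cross terms divided by $D_t$, resp.\ $R_t$) are then martingale differences with deterministic almost-sure bounds, e.g.\ $|M_t|\le 2v_t\eta_t\cm_t$. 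They also have deterministic conditional-variance bounds from the operator-norm estimate in Lemma~\ref{lem:clip-ineq}.
\item Plain Freedman (Lemma~\ref{lem:Freedman}) then controls $\max_t|\sum_{s\le t}M_s|$ with $\ln\frac{3}{\delta}$, uniformly in $t$.
\item Lemma~\ref{lem:Abel} gives $\sum_{s\le t}R_sM_s\le 2R_t\max_{t'\le t}|\sum_{s\le t'}M_s|$. AM--GM splits this into $R_t^2/8+8\max_{t'\le t}(\sum_{s\le t'}M_s)^2$. Lemma~\ref{lem:algebra} then absorbs the $R_t^2$ part pathwise; this is the rigorous version of your ``$X\le a+b\sqrt{X}$'' step.
\end{itemize}
No induction and no Section~\ref{sec:idea} device is needed here; that device is only for the smooth case, where $\cm_t$ is random. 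The strong-convexity terms are not used against the martingale at all. They only handle the bias cross term, which is why $\frac{1+2\eta}{\mu}\sum_t\gamma_t\eta_t\|\bd_t^{\rb}\|^2$ appears in $I_T^{\str}$.

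Three smaller fixes are also needed:
\begin{itemize}
\item Your bias bound $\sigma_{\lar}^{\p}\cm_t^{1-\p}$ is too coarse for the stated noise dependence; it inflates the squared-bias contribution by a factor $d_{\eff}$. You need the refined bound $\sigma_{\sma}\sigma_{\lar}^{\p-1}\cm_t^{1-\p}+2^{\p-1}\sigma_{\lar}^{\p}G\cm_t^{-\p}$.
\item The directional variance bound carries an extra $2^{\p+1}\sigma_{\lar}^{\p}G^2\cm_t^{-\p}$ term; together with the $G$-term in the refined bias bound, this is what produces $\varphi_{\lip}$.
\item Only $F$, not $f$, is strongly star-convex. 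The $\mu$-term must therefore come from $F(\bx_{t+1})-F_\star\ge\frac{\mu}{2}\|\bx_{t+1}-\bx_\star\|^2$, and it sits at $\bx_{t+1}$, not at $\bx_t$.
\end{itemize}
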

Theorem \ref{thm:main-str-lip-hp} presents the first last-iterate
convergence rate for $\ClipSGD$ in stochastic strongly star-convex
optimization under heavy-tailed noise, which fills out the missing
pieces in the literature, as no such result was known even in the
simple case $\p=2$ and $\sigma_{\sma}=\sigma_{\lar}=\sigma$, i.e.,
classical finite-variance noise with noise level $\sigma\geq0$.

As one can check, Theorem \ref{thm:main-str-lip-hp} matches the previously
fastest rate proved for $F(\bar{\bx}_{T+1})-F_{\star}$ (where $\bar{\bx}_{T+1}$
is a weighted average of $\left\{ \bx_{t}\right\} _{t=2}^{T+1}$)
and $\mu\left\Vert \bx_{T+1}-\bx_{\star}\right\Vert ^{2}$ \citep{ICLR2026_bd5b846a}.
We remind the reader that, as mentioned earlier in Footnote \ref{fn:metric},
$F(\bx_{T+1})-F_{\star}$ always serves as a stronger convergence
criterion than $\mu\left\Vert \bx_{T+1}-\bx_{\star}\right\Vert ^{2}$,
implying that our new result is more favorable than \citep{ICLR2026_bd5b846a}.
However, similar to the case of $\mu=0$ discussed previously, there
is also a gap between the leading term $\O\left(\frac{\sigma_{\sma}^{\frac{4}{\p}-2}\sigma_{\lar}^{4-\frac{4}{\p}}+\sigma_{\sma}^{\frac{2}{\p}}\sigma_{\lar}^{2-\frac{2}{\p}}\ln^{2-\frac{2}{\p}}\frac{3}{\delta}}{\mu T^{2-\frac{2}{\p}}}\right)$
in Theorem \ref{thm:main-str-lip-hp} and the lower bound $\Omega\left(\frac{\sigma_{\sma}^{\frac{4}{\p}-2}\sigma_{\lar}^{4-\frac{4}{\p}}+\sigma_{\sma}^{2}\ln^{2-\frac{2}{\p}}\frac{1}{\delta}}{\mu T^{2-\frac{2}{\p}}}\right)$
proved by \citep{ICLR2026_bd5b846a}, closing which could be an interesting
task that we leave in the future.

One may observe that our choices of $\left\{ \eta_{t}\right\} _{t=1}^{T}$
and $\left\{ \cm_{t}\right\} _{t=1}^{T}$ have a two-stage structure,
and especially, they coincide with those chosen in \citep{ICLR2026_bd5b846a}
during the first stage, i.e., when $t\leq\left\lceil \frac{T}{2}\right\rceil $.
We note that this is done intentionally as part of our proof. Intuitively,
we use the first stage to establish convergence in squared distance,
which is finally converted into convergence in function value. The
interested reader could refer to the proof of Theorem \ref{thm:str-lip-hp}
for details.

\subsubsection{Smooth Objectives}

Lastly, we turn to the setting where $f$ is $L$-smooth on $\X$.
In this case, we require another assumption different from the previous
Assumption \ref{assu:G1} for smooth convex optimization.
\begin{assumption}
\label{assu:compact}We have access to a constant $G_{1}$ satisfying
$G_{1}\geq\max\left\{ \left\Vert \nabla F(\bx_{1})\right\Vert ,\left\Vert \nabla f(\bx_{1})\right\Vert \right\} $.
In addition, $\X$ is compact with diameter at most $2G_{1}/\mu$.
\end{assumption}
\begin{rem}
Strictly speaking, $\left\Vert \nabla F(\bx_{1})\right\Vert $ should
be replaced by $\max_{\bg\in\partial r(\bx_{1})}\left\Vert \nabla f(\bx_{1})+\bg\right\Vert $,
given that $r$ may be non-differentiable, e.g., $r(\bx)=\left\Vert \bx\right\Vert _{1}$.
However, we simply write it as $\left\Vert \nabla F(\bx_{1})\right\Vert $
to simplify the expression.
\end{rem}

\paragraph{Discussion on Assumption \ref{assu:compact}.}

One might consider the compactness requirement on $\X$ too restrictive.
However, we would like to argue that this entails no loss of generality
due to the following Fact \ref{fact:compact}.
\begin{fact}
\label{fact:compact}Under Assumptions \ref{assu:minimizer} and \ref{assu:obj}
(with $\mu>0$), we have $\bx_{\star}\in\mathbb{B}^{d}\left(\bx_{1},\left\Vert \nabla F(\bx_{1})\right\Vert /\mu\right)$.
\end{fact}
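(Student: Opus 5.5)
The plan is to apply Lemma~\ref{lem:str-star-cvx} at the point $\bx=\bx_{1}$ and combine its two inequalities. Assumptions~\ref{assu:minimizer} and~\ref{assu:obj} with $\mu>0$ are exactly the hypotheses of that lemma, and $\bx_{1}\in\X$, so we obtain
\[
\frac{\mu}{2}\left\Vert \bx_{1}-\bx_{\star}\right\Vert ^{2}\leq F(\bx_{1})-F_{\star}\leq\frac{\left\Vert \nabla F(\bx_{1})\right\Vert ^{2}}{2\mu}.
\]
Rearranging gives $\left\Vert \bx_{1}-\bx_{\star}\right\Vert ^{2}\leq\left\Vert \nabla F(\bx_{1})\right\Vert ^{2}/\mu^{2}$. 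Taking square roots, with $\mu>0$ and both sides nonnegative, yields $\left\Vert \bx_{\star}-\bx_{1}\right\Vert \leq\left\Vert \nabla F(\bx_{1})\right\Vert /\mu$. This says precisely that $\bx_{\star}$ lies in the ball $\B^{d}\left(\bx_{1},\left\Vert \nabla F(\bx_{1})\right\Vert /\mu\right)$, understood as the closed ball.

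The only delicate point is the meaning of $\nabla F(\bx_{1})$ when $r$ is nonsmooth. Following the remark after Assumption~\ref{assu:compact}, we read the right-hand inequality of Lemma~\ref{lem:str-star-cvx} with $\nabla F(\bx_{1})=\nabla f(\bx_{1})+\bg$ for some subgradient $\bg\in\partial r(\bx_{1})$. That inequality is the one that relies on convexity of $f$ and $r$: $F(\bx)-F_{\star}\leq\langle\nabla F(\bx),\bx-\bx_{\star}\rangle\leq\left\Vert \nabla F(\bx)\right\Vert \left\Vert \bx-\bx_{\star}\right\Vert$. Combining this with the left inequality gives $\frac{\mu}{2}\left\Vert \bx-\bx_{\star}\right\Vert \leq\left\Vert \nabla F(\bx)\right\Vert$ for any chosen subgradient.

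Viewed this way, the chain gives the slightly stronger bound $\left\Vert \bx_{1}-\bx_{\star}\right\Vert \leq 2\left\Vert \nabla F(\bx_{1})\right\Vert /\mu$. The sharper constant $1/\mu$ comes from the squared form stated in Lemma~\ref{lem:str-star-cvx}, which we take as given. Beyond this, there is no real obstacle, and the fact follows directly from Lemma~\ref{lem:str-star-cvx}.
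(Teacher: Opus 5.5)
Your proof is correct and is exactly the paper's argument: evaluate Lemma~\ref{lem:str-star-cvx} at $\bx=\bx_{1}$, chain its two inequalities, and take square roots. One small slip is in your aside. The radius $2\left\Vert \nabla F(\bx_{1})\right\Vert /\mu$ obtained from the unsquared chain is weaker, not ``slightly stronger,'' than $\left\Vert \nabla F(\bx_{1})\right\Vert /\mu$. The sharp constant comes from the intermediate inequality $F(\bx)-F_{\star}\leq\left\langle \nabla F(\bx),\bx-\bx_{\star}\right\rangle -\frac{\mu}{2}\left\Vert \bx-\bx_{\star}\right\Vert ^{2}$ in that lemma's proof.
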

\begin{proof}
By Lemma \ref{lem:str-star-cvx}, $\frac{\mu}{2}\left\Vert \bx_{1}-\bx_{\star}\right\Vert ^{2}\leq F(\bx_{1})-F_{\star}\leq\frac{\left\Vert \nabla F(\bx_{1})\right\Vert ^{2}}{2\mu}\Rightarrow\bx_{\star}\in\B^{d}\left(\bx_{1},\left\Vert \nabla F(\bx_{1})\right\Vert /\mu\right)$.
\end{proof}

By Fact \ref{fact:compact} and the definition of $G_{1}$ in Assumption
\ref{assu:compact}, we know $\bx_{\star}\in\mathbb{B}^{d}\left(\bx_{1},G_{1}/\mu\right)$,
which further implies that 
\[
F_{\star}=F(\bx_{\star})=\inf_{\bx\in\X}F(\bx)=\inf_{\bx\in\X\cap\mathbb{B}^{d}\left(\bx_{1},G_{1}/\mu\right)}F(\bx).
\]
Hence, even if $\X$ is not compact, we can replace $\X$ with $\X\cap\mathbb{B}^{d}\left(\bx_{1},G_{1}/\mu\right)$,
which preserves the same optimal solution and restricts the problem
to a compact domain with diameter at most $2G_{1}/\mu$.

With Assumption \ref{assu:compact} in hand, we can prove the following
Theorem \ref{thm:main-str-smo-hp}.
\begin{thm}
\label{thm:main-str-smo-hp}Under Assumptions \ref{assu:minimizer},
\ref{assu:obj} (with $\mu>0$), \ref{assu:smo}, \ref{assu:heavy},
and \ref{assu:compact}, for any $T\geq2$ and $\delta\in\left(0,\frac{1}{2}\right]$,
setting $\eta_{t}=\begin{cases}
\frac{6}{\mu(t+12\kappa)} & t\leq S\\
\frac{T-t+1}{\mu T(T+\kappa)} & t>S
\end{cases},\cm_{t}=\begin{cases}
\max\left\{ 2L\left\Vert \bx_{t}-\bx_{1}\right\Vert +2G_{1},\cm_{\star}\left[T(T+\kappa)\right]^{\frac{1}{2\p}}\right\}  & t\leq S\\
\max\left\{ 2(1+2\kappa)G_{1},\cm_{\star}\left\lfloor \frac{T}{2}\right\rfloor ^{\frac{1}{\p}}\right\}  & t>S
\end{cases},\forall t\in\left[T\right]$, where $S\defeq\left\lceil \frac{T}{2}\right\rceil $ and $\kappa\defeq\frac{L}{\mu}$,
then $\ClipSGD$ (Algorithm \ref{alg:Clipped-SGD}) guarantees that,
with probability at least $1-2\delta$,
\[
F(\bx_{T+1})-F_{\star}=\O\left(\frac{(\varphi_{\smo}^{2}+\ln^{2}\frac{3}{\delta})(1+\kappa)^{2}G_{1}^{2}}{\mu T^{2}}+\frac{\sigma_{\sma}^{\frac{4}{\p}-2}\sigma_{\lar}^{4-\frac{4}{\p}}+\sigma_{\sma}^{\frac{2}{\p}}\sigma_{\lar}^{2-\frac{2}{\p}}\ln^{2-\frac{2}{\p}}\frac{3}{\delta}}{\mu\left[T(T+\kappa)\right]^{1-\frac{1}{\p}}}\right),
\]
in which $\varphi_{\smo}\leq\varphi_{\star}$ is a constant defined
in (\ref{eq:smo-hp-varphi}) and equals $\varphi_{\star}$ when $T=\Omega\left(\frac{(1+\kappa)^{\p}G_{1}^{\p}}{\sigma_{\lar}^{\p}}\varphi_{\star}\right)$.
\end{thm}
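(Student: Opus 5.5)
Our approach follows the two-stage design of the stepsizes. In the first stage ($t\le S$) we establish a high-probability bound on the squared distance $\left\Vert \bx_{S+1}-\bx_{\star}\right\Vert ^{2}$. In the second stage ($t>S$) we run a last-iterate argument with linearly decaying stepsizes, started from $\bx_{S+1}$, and convert distance into function value.

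\textbf{Stage one.} With $\eta_{t}=\frac{6}{\mu(t+12\kappa)}$ we have $\eta_{t}\le\frac{1}{2L}$, so the usual smooth proximal descent inequality holds. We write $\bg_{t}^{\rc}=\nabla f(\bx_{t})+\bd_{t}^{\ru}+\bd_{t}^{\rb}$, splitting the error into an unbiased part and a bias. Expanding $\left\Vert \bx_{t+1}-\bx_{\star}\right\Vert ^{2}$ and using strong star-convexity (Lemma \ref{lem:str-star-cvx}) gives a contraction of the form
\[
\left\Vert \bx_{t+1}-\bx_{\star}\right\Vert ^{2}\le(1-\mu\eta_{t}/2)\left\Vert \bx_{t}-\bx_{\star}\right\Vert ^{2}+2\eta_{t}\left\langle \bd_{t}^{\ru}+\bd_{t}^{\rb},\bx_{\star}-\bx_{t}\right\rangle +2\eta_{t}^{2}\left\Vert \bd_{t}\right\Vert ^{2}.
\]
To control the clipping bias we need $\left\Vert \nabla f(\bx_{t})\right\Vert \le\cm_{t}/2$. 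This holds deterministically because $\left\Vert \nabla f(\bx_{t})\right\Vert \le G_{1}+L\left\Vert \bx_{t}-\bx_{1}\right\Vert $ and $\cm_{t}\ge2L\left\Vert \bx_{t}-\bx_{1}\right\Vert +2G_{1}$. The standard clipping estimates then apply: $\left\Vert \bd^{\rb}\right\Vert \lesssim\sigma_{\lar}^{\p}/\cm^{\p-1}$, the directional version uses $\sigma_{\sma}$, and $\E\left\Vert \bd^{\ru}\right\Vert ^{2}\lesssim\sigma_{\lar}^{\p}\cm^{2-\p}$. Next we multiply by weights $w_{t}\propto(t+12\kappa)^{2}$, telescope, and bound the martingale terms with Freedman's inequality. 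This step is the one where the paper's new argument keeps the $\ln(1/\delta)$ dependence instead of $\ln(T/\delta)$. We use a bounded-domain trick: by Assumption \ref{assu:compact} we have $\left\Vert \bx_{t}-\bx_{\star}\right\Vert \le2G_{1}/\mu$ deterministically. This removes the need for an induction on distances, and the boundedness of the martingale increments then comes for free. The resulting target is $\mu\left\Vert \bx_{S+1}-\bx_{\star}\right\Vert ^{2}\lesssim\frac{(\varphi^{2}+\ln^{2})(1+\kappa)^{2}G_{1}^{2}}{\mu T^{2}}+\frac{\text{noise}}{\mu[T(T+\kappa)]^{1-1/\p}}$, with $\cm_{t}=\cm_{\star}[T(T+\kappa)]^{1/(2\p)}$ tuned to balance bias against variance.

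\textbf{Stage two.} On the compact set, $\left\Vert \nabla f(\bx)\right\Vert \le G_{1}+L\cdot2G_{1}/\mu=(1+2\kappa)G_{1}$. So $f$ is effectively Lipschitz with constant $(1+2\kappa)G_{1}$, and the threshold $2(1+2\kappa)G_{1}$ again keeps the true gradient inside half the clipping radius. We then apply the last-iterate technique for linearly decaying stepsizes (as in \citep{ICLR2024_b1239ceb}, \citep{doi:10.1137/24M1717762}). This means comparing $\bx_{T+1}$ to the reference points $\bz_{t}=$ convex combinations of $\bx_{\star}$ and earlier iterates, with weights chosen so that the sum telescopes. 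The outcome is a bound of the form $F(\bx_{T+1})-F_{\star}\lesssim\frac{\left\Vert \bx_{S+1}-\bx_{\star}\right\Vert ^{2}}{\sum_{t>S}\eta_{t}}+\frac{\sum\eta_{t}^{2}(\ldots)}{\sum\eta_{t}}+\text{martingale}+\text{bias}$. Because $\eta_{t}=\frac{T-t+1}{\mu T(T+\kappa)}$, the sum $\sum_{t>S}\eta_{t}\asymp\frac{T}{\mu(T+\kappa)}$. The first term therefore becomes $\mu\left\Vert \bx_{S+1}-\bx_{\star}\right\Vert ^{2}(1+\kappa/T)$, and inserting the stage-one bound gives the claimed order. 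The stochastic terms in stage two are handled again with Freedman's inequality at confidence $\delta$. The distances involved are bounded by the compact diameter, and the clipping level $\cm_{\star}\lfloor T/2\rfloor^{1/\p}$ is tuned for roughly $T/2$ steps, which yields the $\sigma$-dependent terms. A union bound over the two stages gives total probability $1-2\delta$.

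\textbf{Main obstacle.} The hardest part is the stage-two martingale. Its increments involve $\left\langle \bd_{t}^{\ru},\bz_{t}-\bx_{t}\right\rangle $, where $\bz_{t}$ depends on the history. A crude diameter bound $G_{1}/\mu$ would produce an extra $\kappa$ or give the wrong rate. To avoid this, we first bound the conditional variance by $\sum\eta_{t}^{2}\sigma_{\lar}^{\p}\cm^{2-\p}\left\Vert \bz_{t}-\bx_{t}\right\Vert ^{2}$. We then absorb $\left\Vert \bz_{t}-\bx_{t}\right\Vert ^{2}$ into the negative terms of the telescoped inequality, using the standard Freedman self-bounding trick $\sqrt{ab}\le\lambda a+b/\lambda$. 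This should give only $\ln(3/\delta)$, not $\ln(T/\delta)$.
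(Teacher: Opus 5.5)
There is a genuine gap in how you control the martingale terms, and this is where the difficulty of the theorem lies.

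In stage one you bound the increments $\gamma_t\eta_t\langle\bd_t^{\ru},\bx_\star-\bx_t\rangle$ (your weights $w_t$) using the diameter $\|\bx_t-\bx_\star\|\le 2G_1/\mu$, and then apply Freedman. Carry this out with $\gamma_t\eta_t\asymp(t+\kappa)^2/(\mu(1+\kappa)^3)$ and final weight $\gamma_S\asymp(T+\kappa)^3/(1+\kappa)^3$, taking $T\gtrsim\kappa$. Write $\rho=\max\{2\bar G,\cm\}$ with $\bar G\asymp(1+\kappa)G_1$. The contributions to $\|\bx_{S+1}-\bx_\star\|^2$ are then:
\begin{itemize}
\item from the range part, about $\frac{\rho G_1}{\mu^2(T+\kappa)}\ln\frac{3}{\delta}$;
\item from the variance part, about $\frac{G_1}{\mu^2}\sqrt{\sigma_{\sma}^{\p}\cm^{2-\p}\ln(3/\delta)/T}$.
\end{itemize}
With $\cm\asymp\cm_\star T^{1/\p}$, the conversion $F(\bx_{T+1})-F_\star\lesssim(1+\kappa/T)\mu\|\bx_{S+1}-\bx_\star\|^2+\cdots$ gives terms decaying like $T^{-1}$ and $T^{-(1-1/\p)}$. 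The claim needs $T^{-2}$ and $T^{-(2-2/\p)}$. The diameter is simply the wrong length scale.

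In stage two you do notice this, but the $\sqrt{ab}\le\lambda a+b/\lambda$ device only handles the variance. The range term of Freedman still needs a deterministic bound on $|v_{t-1}^2\eta_t\langle\bd_t^{\ru},\bz_{t-1}-\bx_t\rangle|$; equivalently, a fixed-$\lambda$ exponential-supermartingale argument needs $\lambda b\lesssim 1$. So the diameter comes back. The weights $v_{t-1}^2\eta_t$ are of order $\eta_T$ in the last few steps, so dividing by $\eta_T$ leaves a term of order $\cm G_1\ln(3/\delta)/\mu$, which does not decay at all.

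The missing idea is to make the martingale distance-free before invoking Freedman, and only afterwards absorb distances. The paper divides each increment by the predictable running maximum of the weighted distances:
\begin{itemize}
\item in stage one, $D_t=\max_{s\le t}\sqrt{\gamma_s(1+\mu\eta_s/2)}\|\bx_\star-\bx_s\|$;
\item in stage two, $R_t=\max_{s\le t}v_{s-1}\|\bz_{s-1}-\bx_s\|$, with $v_t^2=\eta_T/\sum_{s\ge t}\eta_s$.
\end{itemize}
The normalized differences $N_t$ and $M_t$ then have deterministic range $2\sqrt{\gamma_t}\eta_t\rho$ and $2v_t\eta_t\rho$ respectively. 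Their conditional variance is bounded through $\|\E[\bd_t^{\ru}(\bd_t^{\ru})^{\top}\mid\F_{t-1}]\|_{\op}$. This is what gives $\sigma_{\sma}$; your $\sigma_{\lar}^{\p}\cm^{2-\p}$ would lose the fine-grained dependence. The maximal form of Freedman (Lemma \ref{lem:Freedman}) then applies once, at level $\ln\frac{3}{\delta}$.

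The remaining steps are:
\begin{itemize}
\item Lemma \ref{lem:Abel} gives $\sum_{s\le t}R_sM_s\le 2R_t\max_{t'\le t}|\sum_{s\le t'}M_s|$.
\item AM-GM turns this into $R_t^2/4+8\max_{t'\le t}(\sum_{s\le t'}M_s)^2$.
\item Lemma \ref{lem:algebra} absorbs $R_t^2/4=\max_{s\le t}a_s/2$ into the left-hand side.
\end{itemize}
Your instinct to absorb into the left-hand side is right, but what gets absorbed is this running maximum. The payoff is a term $\max_t v_t^2\eta_t^2\rho^2\ln^2\frac{3}{\delta}$, the square of a small quantity, rather than a range-times-diameter product. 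This is what produces the $\ln^2\frac{3}{\delta}(1+\kappa)^2G_1^2/(\mu T^2)$ term.

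Compactness is needed only to make $\zeta_t\equiv 1$, that is, to bound $\|\nabla f(\bx_t)\|$ and hence $\cm_t\le\rho$.

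One further correction: in stage two, use the smooth descent inequality, not a Lipschitz one with constant $(1+2\kappa)G_1$. It is valid because $\eta_t\le\frac{1}{2\mu(T+\kappa)}\le\frac{1}{2L}$ for $t>S$. The Lipschitz version adds a term of order $\frac{(1+\kappa)^2G_1^2}{\mu(T+\kappa)}$, which is not in the claimed bound.
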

Theorem \ref{thm:main-str-smo-hp} gives not only the first (and,
in fact, the only) high-probability convergence result in terms of
$F(\bx_{T+1})-F_{\star}$ for $\ClipSGD$ in stochastic strongly star-convex
optimization, but also the first upper bound for smooth objectives
under the fine-grained Assumption \ref{assu:heavy}, exhibiting a
finer dependence on the noise parameter similar to Theorem \ref{thm:main-str-lip-hp}.

Under the classical heavy-tailed noise assumption with noise level
$\sigma$, Theorem \ref{thm:main-str-smo-hp} results in a leading
term $\O\left(\frac{\sigma^{2}\ln^{2-\frac{2}{\p}}\frac{1}{\delta}}{\mu\left[T(T+\kappa)\right]^{1-\frac{1}{\p}}}\right)$
for $F(\bx_{T+1})-F_{\star}$, which improves over the best-known
but suboptimal one $\widetilde{\O}\left(\frac{\sigma^{2}\ln^{2-\frac{2}{\p}}\frac{T}{\delta}}{\mu T^{2-\frac{2}{\p}}}\right)$
for $\mu\left\Vert \bx_{T+1}-\bx_{\star}\right\Vert ^{2}$ in the
same setting \citep{pmlr-v202-sadiev23a} (which additionally requires
$r=0$). However, we note that the higher-order term in \citep{pmlr-v202-sadiev23a}
decays exponentially and is therefore better than ours, which decays
only polynomially. It will be interesting to investigate whether a
more sophisticated design of $\left\{ \eta_{t}\right\} _{t=1}^{T}$
and $\left\{ \cm_{t}\right\} _{t=1}^{T}$, such as a three-stage schedule
\citep{ICLR2024_b1239ceb}, could refine Theorem \ref{thm:main-str-smo-hp}
and yield an exponentially decaying higher-order term.

Regarding the optimality of Theorem \ref{thm:main-str-smo-hp}, even
though no high-probability lower bounds currently exist under either
the classical or the fine-grained heavy-tailed noise assumption, we
believe that the leading term in Theorem \ref{thm:main-str-smo-hp}
is always optimal in $T$, $\delta$, and $\mu$, while the term $\sigma_{\sma}^{2/\p}\sigma_{\lar}^{2-2/\p}$
may be suboptimal under the fine-grained heavy-tailed noise assumption.

Finally, as in Theorem \ref{thm:main-str-lip-hp}, Theorem \ref{thm:main-str-smo-hp}
also adopts a two-stage structure for $\left\{ \eta_{t}\right\} _{t=1}^{T}$
and $\left\{ \cm_{t}\right\} _{t=1}^{T}$. The reason for this choice
will become clear in the proof of Theorem \ref{thm:str-smo-hp}.

\section{New Idea in Theoretical Analysis\label{sec:idea}}

In this section, we describe an interesting idea developed in our
theoretical analysis. Before beginning, we introduce some notation
that will be used later. In the following, let $\F_{t}\defeq\sigma\left(Z_{1},\mydots,Z_{t}\right)$
denote the natural filtration, and $\E_{t}\left[\cdot\right]\defeq\E\left[\cdot\mid\F_{t}\right]$
denote the conditional expectation.

Our contribution is a novel argument that goes beyond standard induction
while preserving the optimal $\poly(\ln(1/\delta))$ dependence on
$\delta$. To illustrate this point, we use smooth convex optimization
as an example. In the discussion below, we assume that both $\left\{ \eta_{t}\right\} _{t=1}^{T}$
and $\left\{ \cm_{t}\right\} _{t=1}^{T}$ are deterministic to simplify
the presentation.

To prove convergence of $\ClipSGD$ in this case (even for the average
iterate), one typically needs to show that $\left\Vert \bx_{\star}-\bx_{t+1}\right\Vert ^{2}=\O(D^{2}),\forall t\in\left[T\right]$
holds in high probability. A common strategy in prior work is induction
(e.g., \citep{pmlr-v235-gorbunov24a,pmlr-v202-sadiev23a}), which
consists of the following two parts.

\paragraph{Part I.}

One establishes a collection of inequalities of the form $\left\Vert \bx_{\star}-\bx_{t+1}\right\Vert ^{2}=\O(D^{2}+(\sum_{s=1}^{t}X_{s})^{2}+\sum_{s=1}^{t}Y_{s}^{2}),\forall t\in\left[T\right]$,
where $\left\{ X_{t}\right\} _{t=1}^{T}$ is a martingale difference
sequence (MDS) adapted to the filtration $\left\{ \F_{t}\right\} _{t=0}^{T}$,
and $\left\{ Y_{t}\right\} _{t=1}^{T}$ is a nonnegative predictable
sequence with respect to $\left\{ \F_{t}\right\} _{t=0}^{T}$. In
particular, they satisfy $\left|X_{t}\right|\leq h_{1}(\left\Vert \bx_{\star}-\bx_{t}\right\Vert ,\eta_{t},\cm_{t})$,
$\E_{t-1}\left[X_{t}^{2}\right]\leq h_{2}(\left\Vert \bx_{\star}-\bx_{t}\right\Vert ,\eta_{t},\cm_{t})$,
and $Y_{t}\leq h_{3}(\left\Vert \bx_{\star}-\bx_{t}\right\Vert ,\eta_{t},\cm_{t})$,
for three nonnegative functions $h_{1}$, $h_{2}$, and $h_{3}$,
each of which is nondecreasing in its first argument.

\paragraph{Part II.}

Let $\zeta_{t}\defeq\1\left[\left\Vert \bx_{\star}-\bx_{t}\right\Vert =\O(D)\right],\forall t\in\left[T+1\right]$
and $E_{t}\defeq\left\{ \zeta_{s}=1,\forall s\leq t\right\} ,\forall t\in\left[T+1\right]$.
Existing induction-based analysis assumes that, for a given $t\in\left[T\right]$,
$\Pr\left[E_{t}\right]\geq1-\frac{t-1}{T}\delta$. Then, on the event
$E_{t}$, we have $\left\Vert \bx_{\star}-\bx_{t+1}\right\Vert ^{2}=\O(D^{2}+(\sum_{s=1}^{t}X_{s}\zeta_{s})^{2}+\sum_{s=1}^{t}Y_{s}^{2}\zeta_{s}^{2})$.
Since $\zeta_{t}\in\F_{t-1}$, we know $\left|X_{t}\zeta_{t}\right|\leq h_{1,t}=\O(h_{1}(D,\eta_{t},\cm_{t}))$,
$\E_{t-1}\left[X_{t}^{2}\zeta_{t}^{2}\right]\leq h_{2,t}=\O(h_{2}(D,\eta_{t},\cm_{t}))$,
and $Y_{t}\zeta_{t}\leq h_{3,t}=\O(h_{3}(D,\eta_{t},\cm_{t}))$, which
further implies $\Pr\left[e_{t}\right]\geq1-\frac{\delta}{T}$ by
Freedman's inequality (cf. Lemma \ref{lem:Freedman}), where $e_{t}\defeq\left\{ \left|\sum_{s=1}^{t}X_{s}\zeta_{s}\right|\leq\O\left(\max_{s\in\left[t\right]}h_{1,s}\ln\frac{T}{\delta}+\sqrt{\sum_{s=1}^{t}h_{2,s}\ln\frac{T}{\delta}}\right)\right\} ,\forall t\in\left[T\right]$.
Note that, on the event $e_{t}\cap E_{t}$, $\left\Vert \bx_{\star}-\bx_{t+1}\right\Vert ^{2}=\O(D^{2}+\max_{s\in\left[t\right]}h_{1,s}^{2}\ln^{2}\frac{T}{\delta}+\sum_{s=1}^{t}h_{2,s}\ln\frac{T}{\delta}+\sum_{s=1}^{t}h_{3,s}^{2})\overset{\text{proper }\left\{ \eta_{t}\right\} _{t=1}^{T}\text{ and }\left\{ \cm_{t}\right\} _{t=1}^{T}}{=}\O(D^{2})\Rightarrow\zeta_{t+1}=1$.
Thus, $E_{t+1}=E_{t}\cap\left\{ \zeta_{t+1}=1\right\} \supseteq e_{t}\cap E_{t}\cap\left\{ \zeta_{t+1}=1\right\} =e_{t}\cap E_{t}$
and hence $\Pr\left[E_{t+1}\right]\geq1-\frac{t}{T}\delta$. This
completes the induction.

As can be seen, the induction argument in \textbf{Part II} inevitably
introduces a suboptimal $\poly(\ln(T/\delta))$ factor into the choices
of $\left\{ \eta_{t}\right\} _{t=1}^{T}$ and $\left\{ \cm_{t}\right\} _{t=1}^{T}$,
and consequently into the final convergence rate. However, we observe
that this issue can be solved by a new refined argument, introduced
below.

\paragraph{Refined Part II (Ours).}

We observe that $\Pr\left[E\right]\geq1-\delta$ by Freedman's inequality
(cf. Lemma \ref{lem:Freedman}), where $E\defeq\left\{ \max_{t\in\left[T\right]}\left|\sum_{s=1}^{t}X_{s}\zeta_{s}\right|\leq\O\left(\max_{t\in\left[T\right]}h_{1,t}\ln\frac{1}{\delta}+\sqrt{\sum_{t=1}^{T}h_{2,t}\ln\frac{1}{\delta}}\right)\right\} $.
Suppose we pick $\left\{ \eta_{t}\right\} _{t=1}^{T}$ and $\left\{ \cm_{t}\right\} _{t=1}^{T}$
to satisfy $\O(D^{2}+\max_{t\in\left[T\right]}h_{1,t}^{2}\ln^{2}\frac{1}{\delta}+\sum_{t=1}^{T}h_{2,t}\ln\frac{1}{\delta}+\sum_{t=1}^{T}h_{3,t}^{2})=\O(D^{2})$,
then we claim $\left\{ \zeta_{t+1}=1\right\} \supseteq E\cap E_{t},\forall t\in\left[T\right]$.
Given $t\in\left[T\right]$, under $E\cap E_{t}$, we observe
\begin{align*}
\left\Vert \bx_{\star}-\bx_{t+1}\right\Vert ^{2} & =\O(D^{2}+(\sum_{s=1}^{t}X_{s}\zeta_{s})^{2}+\sum_{s=1}^{t}Y_{s}^{2}\zeta_{s}^{2})\leq\O(D^{2}+\max_{t\in\left[T\right]}(\sum_{s=1}^{t}X_{s}\zeta_{s})^{2}+\sum_{t=1}^{T}Y_{t}^{2}\zeta_{t}^{2})\\
 & =\O(D^{2}+\max_{t\in\left[T\right]}h_{1,t}^{2}\ln^{2}\frac{1}{\delta}+\sum_{t=1}^{T}h_{2,t}\ln\frac{1}{\delta}+\sum_{t=1}^{T}h_{3,t}^{2})=\O(D^{2})\Rightarrow\zeta_{t+1}=1.
\end{align*}
Hence, our claim holds, which further implies that $E\cap E_{t+1}=E\cap E_{t}\cap\left\{ \zeta_{t+1}=1\right\} =E\cap E_{t},\forall t\in\left[T\right]\Rightarrow E_{T+1}\supseteq E\cap E_{T+1}=E\cap E_{1}=E\Rightarrow\Pr\left[E_{T+1}\right]\geq1-\delta$.
As one can see, our new argument maintains the optimal $\poly(\ln(1/\delta))$
dependence on $\delta$. For the formal application of the above idea,
we refer the reader to Lemma \ref{lem:cvx-smo-distance}.

\section{Conclusion and Limitations\label{sec:conclusion}}

This work proves the first last-iterate convergence rates optimal
in the time horizon $T$ for $\ClipSGD$ in high probability under
heavy-tailed noise across various settings, thereby closing an existing
gap in the literature. Specifically, we provide last-iterate convergence
guarantees for both convex and strongly star-convex objectives, covering
smooth and nonsmooth settings. Under the classical heavy-tailed noise
assumption (i.e., $\sigma_{\sma}=\sigma_{\lar}=\sigma$ in Assumption
\ref{assu:heavy}), these are the first minimax-optimal high-probability
last-iterate bounds for Lipschitz objectives; matching lower bounds
for smooth objectives remain open. Notably, our analysis also accommodates
composite objectives and constrained optimization, and captures a
refined dependence on noise parameters under the fine-grained Assumption
\ref{assu:heavy}.

Our work nevertheless has two major limitations. First, as in prior
work, the stepsize and clipping threshold depend on knowledge of the
tail index $\p$, which is generally unknown and difficult to estimate
reliably in practice. Removing this requirement would broaden the
applicability of our results. Second, our analysis assumes that the
time horizon $T$ is known in advance, and thus our current guarantees
do not directly extend to the anytime setting. Developing methods
that adapt to an unknown tail index while providing anytime last-iterate
guarantees is therefore an interesting direction for future research.

\section*{Acknowledgments}

The author thanks the anonymous reviewers for their valuable feedback.

\bibliographystyle{plainnat}
\bibliography{ref}

\clearpage

\appendix

\section{Proof of Lemma \ref{lem:str-star-cvx}\label{sec:proof-of-lemma}}

\begin{proof}
If $\mu=0$, the inequality holds automatically. Thus, we can focus
only on the case $\mu>0$. By Assumption \ref{assu:obj}, for any
$\bx\in\X$ and $\lambda\in\left[0,1\right]$,
\begin{equation}
F(\lambda\bx_{\star}+(1-\lambda)\bx)+\frac{\mu\lambda(1-\lambda)}{2}\left\Vert \bx_{\star}-\bx\right\Vert ^{2}\leq\lambda F_{\star}+(1-\lambda)F(\bx).\label{eq:str-star-cvx-1}
\end{equation}

Combining $F(\lambda\bx_{\star}+(1-\lambda)\bx)\geq F_{\star}$ and
(\ref{eq:str-star-cvx-1}), we obtain that
\[
F_{\star}+\frac{\mu\lambda(1-\lambda)}{2}\left\Vert \bx_{\star}-\bx\right\Vert ^{2}\leq\lambda F_{\star}+(1-\lambda)F(\bx),\forall\bx\in\X,\lambda\in\left[0,1\right].
\]
Therefore,
\begin{equation}
\frac{\mu\lambda}{2}\left\Vert \bx_{\star}-\bx\right\Vert ^{2}\leq F(\bx)-F_{\star},\forall\bx\in\X,\lambda\in\left[0,1\right)\Rightarrow\frac{\mu}{2}\left\Vert \bx-\bx_{\star}\right\Vert ^{2}\leq F(\bx)-F_{\star},\forall\bx\in\X.\label{eq:str-star-cvx-2}
\end{equation}

Combining $F(\lambda\bx_{\star}+(1-\lambda)\bx)\geq F(\bx)+\left\langle \nabla F(\bx),\lambda(\bx_{\star}-\bx)\right\rangle $,
due to the convexity of $F$, and (\ref{eq:str-star-cvx-1}), we obtain
that
\[
F(\bx)+\left\langle \nabla F(\bx),\lambda(\bx_{\star}-\bx)\right\rangle +\frac{\mu\lambda(1-\lambda)}{2}\left\Vert \bx_{\star}-\bx\right\Vert ^{2}\leq\lambda F_{\star}+(1-\lambda)F(\bx),\forall\bx\in\X,\lambda\in\left[0,1\right].
\]
Therefore,
\[
F(\bx)+\left\langle \nabla F(\bx),\bx_{\star}-\bx\right\rangle +\frac{\mu(1-\lambda)}{2}\left\Vert \bx_{\star}-\bx\right\Vert ^{2}\leq F_{\star},\forall\bx\in\X,\lambda\in\left(0,1\right],
\]
which implies that
\[
F(\bx)-F_{\star}\leq\left\langle \nabla F(\bx),\bx-\bx_{\star}\right\rangle -\frac{\mu}{2}\left\Vert \bx_{\star}-\bx\right\Vert ^{2},\forall\bx\in\X.
\]
Since
\[
\left\langle \nabla F(\bx),\bx-\bx_{\star}\right\rangle \leq\left\Vert \nabla F(\bx)\right\Vert \left\Vert \bx_{\star}-\bx\right\Vert \leq\frac{\left\Vert \nabla F(\bx)\right\Vert ^{2}}{2\mu}+\frac{\mu}{2}\left\Vert \bx_{\star}-\bx\right\Vert ^{2},
\]
we conclude that
\begin{equation}
F(\bx)-F_{\star}\leq\frac{\left\Vert \nabla F(\bx)\right\Vert ^{2}}{2\mu},\forall\bx\in\X.\label{eq:str-star-cvx-3}
\end{equation}

We combine (\ref{eq:str-star-cvx-2}) and (\ref{eq:str-star-cvx-3})
to complete the proof.
\end{proof}

\section{Missing Proofs for Theorems\label{sec:theorems}}

This section provides the missing proofs for theorems presented in
the main paper. All intermediate lemmas employed in the analysis will
be proved in Appendix \ref{sec:analysis}.

Before we begin, let us review the notation introduced previously
and also define some new ones. We first recall the term generalized
effective dimension $d_{\eff}$, defined in (\ref{eq:main-d-eff})
before, as follows:
\begin{equation}
d_{\eff}=\sigma_{\lar}^{2}/\sigma_{\sma}^{2}\in\left\{ 0\right\} \cup\left[1,\pi d/2\right],\label{eq:d-eff}
\end{equation}
in which $d_{\eff}=0$ if and only if $\sigma_{\sma}=\sigma_{\lar}=0$,
where $\sigma_{\sma}$ and $\sigma_{\lar}$ are two problem-dependent
parameters introduced in Assumption \ref{assu:heavy}. Next, a particularly
useful quantity given in (\ref{eq:main-hp-tau-star}) is
\begin{equation}
\cm_{\star}=\left(\min\left\{ \frac{\sigma_{\sma}\sigma_{\lar}^{\p-1}}{\ln\frac{3}{\delta}},\frac{\sigma_{\sma}^{2}}{\sigma_{\lar}^{2-\p}\1\left[\p<2\right]}\right\} \right)^{\frac{1}{\p}},\label{eq:hp-tau-star}
\end{equation}
where $\delta\in\left(0,1\right]$ is the failure probability and
$\p\in\left(1,2\right]$ is the tail index (see also Assumption \ref{assu:heavy}).
Another useful term mentioned before in (\ref{eq:main-hp-varphi-star})
is
\begin{equation}
\varphi_{\star}=\max\left\{ \sqrt{d_{\eff}}\ln\frac{3}{\delta},d_{\eff}\1\left[\p<2\right]\right\} .\label{eq:hp-varphi-star}
\end{equation}
In particular, by (\ref{eq:d-eff}), (\ref{eq:hp-tau-star}), and
(\ref{eq:hp-varphi-star}), the following equation holds
\begin{equation}
\varphi_{\star}=\frac{\sigma_{\lar}^{\p}}{\cm_{\star}^{\p}}.\label{eq:hp-varphi-tau-equation}
\end{equation}
Moreover, $D=\left\Vert \bx_{\star}-\bx_{1}\right\Vert $ always denotes
the distance between the optimal solution $\bx_{\star}$ and the initial
point $\bx_{1}$. Finally, we introduce two new quantities in the
following.

\begin{itemize}[leftmargin=*]

\item For Lipschitz objectives (i.e., Assumption \ref{assu:lip}),
let
\begin{equation}
\varphi_{\lip}\defeq\min_{\beta\in\left[0,\frac{1}{2}\right]}\frac{\sigma_{\lar}^{\p}T^{\beta}}{G^{\beta\p}\cm_{\star}^{(1-\beta)\p}}\overset{(\ref{eq:hp-varphi-tau-equation})}{=}\min\left\{ \varphi_{\star},\sqrt{\frac{\sigma_{\lar}^{\p}}{G^{\p}}\varphi_{\star}T}\right\} .\label{eq:lip-hp-varphi}
\end{equation}
Note that we have $\varphi_{\lip}=\varphi_{\star}$ when $T=\Omega\left(\frac{G^{\p}}{\sigma_{\lar}^{\p}}\varphi_{\star}\right)$.

\item For smooth objectives (i.e., Assumption \ref{assu:smo}), let
\begin{equation}
\varphi_{\smo}\defeq\min_{\beta\in\left[0,\frac{1}{2}\right]}\frac{\sigma_{\lar}^{\p}T^{\beta}}{\bar{G}^{\beta\p}\cm_{\star}^{(1-\beta)\p}}\overset{(\ref{eq:hp-varphi-tau-equation})}{=}\min\left\{ \varphi_{\star},\sqrt{\frac{\sigma_{\lar}^{\p}}{\bar{G}^{\p}}\varphi_{\star}T}\right\} ,\label{eq:smo-hp-varphi}
\end{equation}
in which $\bar{G}\defeq LD+G_{1}$ (resp. $\bar{G}\defeq(1+\kappa)G_{1}$)
and $G_{1}$ is defined in Assumption \ref{assu:G1} (resp. Assumption
\ref{assu:compact}) for general convex optimization (resp. strongly
star-convex optimization). Note that we have $\varphi_{\smo}=\varphi_{\star}$
when $T=\Omega\left(\frac{\bar{G}^{\p}}{\sigma_{\lar}^{\p}}\varphi_{\star}\right)$.

\end{itemize}

\subsection{General Convex Optimization}

We first consider general convex optimization, i.e., $\mu=0$ in Assumption
\ref{assu:obj}.

\subsubsection{Lipschitz Objectives}
\begin{thm}[Full statement of Theorem \ref{thm:main-cvx-lip-hp}]
\label{thm:cvx-lip-hp}Under Assumptions \ref{assu:minimizer}, \ref{assu:obj}
(with $\mu=0$), \ref{assu:lip}, and \ref{assu:heavy}, for any $T\in\N$
and $\delta\in\left(0,1\right]$, setting $\eta_{t}=\eta_{\star}(T-t+1),\cm_{t}=\max\left\{ 2G,\cm_{\star}T^{\frac{1}{\p}}\right\} ,\forall t\in\left[T\right]$,
where
\begin{equation}
\eta_{\star}=\min\left\{ \frac{D}{\left(\varphi_{\lip}+\ln\frac{3}{\delta}\right)GT},\frac{D}{GT^{\frac{3}{2}}},\frac{D}{\left(\sigma_{\sma}^{\frac{2}{\p}-1}\sigma_{\lar}^{2-\frac{2}{\p}}+\sigma_{\sma}^{\frac{1}{\p}}\sigma_{\lar}^{1-\frac{1}{\p}}\ln^{1-\frac{1}{\p}}\frac{3}{\delta}\right)T^{1+\frac{1}{\p}}}\right\} ,\label{eq:cvx-lip-hp-eta-star}
\end{equation}
and $\varphi_{\lip}\leq\varphi_{\star}$ is a constant defined in
(\ref{eq:lip-hp-varphi}), then $\ClipSGD$ (Algorithm \ref{alg:Clipped-SGD})
guarantees that, with probability at least $1-\delta$,
\[
F(\bx_{T+1})-F_{\star}=\O\left(\frac{\left(\varphi_{\lip}+\ln\frac{3}{\delta}\right)GD}{T}+\frac{GD}{\sqrt{T}}+\frac{\left(\sigma_{\sma}^{\frac{2}{\p}-1}\sigma_{\lar}^{2-\frac{2}{\p}}+\sigma_{\sma}^{\frac{1}{\p}}\sigma_{\lar}^{1-\frac{1}{\p}}\ln^{1-\frac{1}{\p}}\frac{3}{\delta}\right)D}{T^{1-\frac{1}{\p}}}\right).
\]
\end{thm}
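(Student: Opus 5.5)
Our plan is to combine the last-iterate technique for linearly decaying stepsizes (in the spirit of \citep{ICLR2024_b1239ceb,doi:10.1137/24M1717762}) with the refined high-probability argument of Section \ref{sec:idea}.

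\textbf{Step 1: a one-step inequality.} Fix an arbitrary comparator $\bz\in\X$. From the optimality condition of the proximal step and convexity of $f$ and $r$, we derive
\[
F(\bx_{t+1})-F(\bz)\le\frac{\|\bz-\bx_t\|^2-\|\bz-\bx_{t+1}\|^2}{2\eta_t}+\langle\nabla f(\bx_t)-\bg_t^{\rc},\bx_t-\bz\rangle+\O(\eta_t\|\bg_t^{\rc}\|^2+\eta_t G^2).
\]
Lipschitzness enters through the evaluation of $f$ at $\bx_{t+1}$ rather than $\bx_t$. We decompose the error $\nabla f(\bx_t)-\bg_t^{\rc}$ into two parts. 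The first is a martingale part $\xi_t^{\ru}=\E_{t-1}[\bg_t^{\rc}]-\bg_t^{\rc}$. The second is a bias part $\xi_t^{\rb}=\nabla f(\bx_t)-\E_{t-1}[\bg_t^{\rc}]$. Since $\cm_t\ge 2G$, the standard clipping lemmas of \citep{ICLR2026_bd5b846a} (derived from Assumption \ref{assu:heavy}) give three bounds:
\begin{itemize}[leftmargin=*]
\item $\|\xi_t^{\ru}\|\le2\cm_t$;
\item $\E_{t-1}\langle\be,\xi^{\ru}_t\rangle^2\lesssim\sigma_{\sma}^{\p}\cm_t^{2-\p}$ directionally, together with $\E_{t-1}\|\xi^{\ru}_t\|^2\lesssim\sigma_{\lar}^{\p}\cm_t^{2-\p}$;
\item $\|\xi_t^{\rb}\|\lesssim\sigma_{\lar}^{\p}/\cm_t^{\p-1}$.
\end{itemize}

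\textbf{Step 2: last-iterate conversion.} We choose the Zamani--Glineur/Liu--Zhou comparator sequence $\bz_t=\frac{w_{t-1}}{w_t}\bz_{t-1}+(1-\frac{w_{t-1}}{w_t})\bx_t$, where $\bz_1=\bx_\star$ and the weights $w_t$ are tied to $\eta_t=\eta_\star(T-t+1)$. We sum the one-step inequalities with $\bz=\bz_t$ and weights $w_t$, using convexity of $F$ along $\bz_t$. The telescoping then yields
\[
F(\bx_{T+1})-F_\star\lesssim\frac{D^2}{\eta_\star T^2}+\frac{1}{T}\sum_t\Big(\langle\xi_t^{\ru}+\xi_t^{\rb},\bx_t-\bz_t\rangle+\eta_t(\cm_t^2\wedge\cdots)\Big).
\]
The normalization $\eta_t\propto T-t+1$ is what makes the weights cancel to a clean $1/(\eta_\star T^2)$ factor. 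The quadratic term $\eta_t\|\bg_t^{\rc}\|^2$ should be split as $\|\xi_t^{\ru}\|^2$ plus bounded terms, and the sum $\sum\|\xi^{\ru}_t\|^2$ controlled by Freedman's inequality. In the same step, we establish a parallel distance inequality: $\|\bz_{t+1}-\bx_{t+1}\|^2$ (equivalently the distance from the iterates to $\bx_\star$-type comparators) is $\O(D^2)$ plus martingale and bias sums.

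\textbf{Step 3: high-probability control, and the main obstacle.} The inner products $\langle\xi_t^{\ru},\bx_t-\bz_t\rangle$ have a range and conditional variance that depend on the random quantity $\|\bx_t-\bz_t\|$. We therefore apply the Refined Part II of Section \ref{sec:idea}. Define $\zeta_t=\1[\|\bx_t-\bz_t\|\le cD]$, which is $\F_{t-1}$-measurable. A single application of Freedman's inequality (Lemma \ref{lem:Freedman}) to the truncated martingale $\sum_s\langle\xi_s^{\ru},\bx_s-\bz_s\rangle\zeta_s$ controls its running maximum with probability $1-\delta$, at cost $\ln\frac{3}{\delta}$ rather than $\ln\frac{T}{\delta}$. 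Using the directional variance $\sigma_{\sma}^{\p}\cm_t^{2-\p}$ here, instead of $\sigma_{\lar}$, is what produces the mixed $\sigma_{\sma}^{1/\p}\sigma_{\lar}^{1-1/\p}$ factor.

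The crux is then to check that the choice of $\eta_\star$ in (\ref{eq:cvx-lip-hp-eta-star}) and $\cm_t=\max\{2G,\cm_\star T^{1/\p}\}$ makes every error sum at most $\O(D^2)$. This is where I expect the real difficulty. The error sums are:
\begin{itemize}[leftmargin=*]
\item the range term, of order $\eta\cm D\ln\frac{3}{\delta}$;
\item the variance term, of order $\eta D\sqrt{T\sigma_{\sma}^{\p}\cm^{2-\p}\ln\frac{3}{\delta}}$;
\item the bias term, of order $\eta TD\sigma_{\lar}^{\p}\cm^{1-\p}$;
\item the quadratic term, of order $\eta^2T(\sigma_{\lar}^{\p}\cm^{2-\p}+G^2)$.
\end{itemize}
Here $\eta$ means the typical $\eta_t\sim\eta_\star T$. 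The case $\cm_t=2G>\cm_\star T^{1/\p}$ is the source of $\varphi_{\lip}$: we interpolate the range and bias bounds with exponent $\beta\in[0,\frac12]$, exactly as in (\ref{eq:lip-hp-varphi}). Once the induction closes with $\zeta_t\equiv1$ on the good event, plugging the same bounds into the Step 2 inequality gives $F(\bx_{T+1})-F_\star\lesssim D/(\eta_\star T^2)$. Substituting the three branches of $\eta_\star$ then yields the three stated terms.
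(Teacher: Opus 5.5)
\textbf{Gap: the truncation scale in Steps 2--3.} Your Steps 1--2 follow the paper's skeleton: Lemma~\ref{lem:descent} together with the comparators $\bz_t$ and weights $v_t^2=\eta_T/\sum_{s\ge t}\eta_s$. The way you handle the random factor in Step 3, however, has a scale mismatch that breaks the argument.

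Telescoping does not give uniform $\frac1T$ weights. With $\eta_t=\eta_\star(T-t+1)$ one has $v_t^2=\frac{2}{(T-t+2)(T-t+1)}$. After dividing by $\eta_T=\eta_\star$, the noise enters as $\sum_t\frac{2}{T-t+2}\langle\bd_t^{\rc},\bz_t-\bx_t\rangle$, and the initial term is $v_0^2D^2/\eta_\star$ with $v_0^2=\frac{2}{T(T+1)}$. So every error sum, before dividing by $\eta_\star$, must be $\O(D^2/T^2)$, while the last terms carry weight $v_T^2\eta_T=\eta_\star$.

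With your $\zeta_t=\1[\|\bx_t-\bz_t\|\le cD]$, the uniform range in Freedman's inequality is of order $\eta_\star\cm D$. The range term is therefore of order $\eta_\star\cm D\ln\frac3\delta$. But (\ref{eq:cvx-lip-hp-eta-star}) only guarantees $\eta_\star\cm\ln\frac3\delta\lesssim D/T$, so you are off by a factor $T$. Two consequences follow:
\begin{itemize}
\item The induction cannot be closed through the $\bz$-potential at early $t$, where $v_t^2\approx v_0^2$.
\item Suppose you close it instead via $\|\bx_\star-\bx_t\|=\O(D)$ and $\|\bz_t-\bx_t\|\le2\max_{s\le t}\|\bx_\star-\bx_s\|$. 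Then your bound on $F(\bx_{T+1})-F_\star$ contains a term of order $\cm D\ln\frac3\delta$, which does not decay.
\end{itemize}
The correct scale is the weighted one. On the good event $v_{t-1}\|\bz_{t-1}-\bx_t\|=\O(v_0D)$, i.e., $\|\bz_{t-1}-\bx_t\|=\O(\frac{T-t+2}{T}D)$, which shrinks to $\O(D/T)$ at the end. Your error checklist (with $\eta\sim\eta_\star T$ and budget $D^2$) is really the accounting for $\|\bx_\star-\bx_t\|$, i.e., the second inequality of Lemma~\ref{lem:cvx-core}. The Lipschitz theorem never uses that inequality.

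\textbf{How to repair it, and what the paper does.} Truncating with $\zeta_t=\1[v_{t-1}\|\bz_{t-1}-\bx_t\|\le c\,v_0D]$ would repair your route; it is still $\F_{t-1}$-measurable because $v_t$ is deterministic. The paper needs no truncation at all in the Lipschitz case. Since $\cm_t\ge2G$ makes every clipping bound deterministic, it self-normalizes by the predictable running maximum $R_t=\max_{s\le t}v_{s-1}\|\bz_{s-1}-\bx_s\|$. Then:
\begin{itemize}
\item $M_t=\langle v_{t-1}\eta_t\bd_t^{\ru},v_{t-1}(\bz_{t-1}-\bx_t)/R_t\rangle$ satisfies $|M_t|\le2v_t\eta_t\cm_t$ deterministically.
\item Lemma~\ref{lem:Abel} gives $\sum_{s\le t}R_sM_s\le2R_t\max_{t'\le t}|\sum_{s\le t'}M_s|$.
\item After AM--GM, the $R_t^2$ part is absorbed by Lemma~\ref{lem:algebra}.
\end{itemize}
One Freedman bound (Lemma~\ref{lem:cvx-lip-res}) then yields $\ln\frac3\delta$ directly. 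The Refined Part~II argument is needed only in the smooth case, where $\|\nabla f(\bx_t)\|$ is not a priori bounded.

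\textbf{Secondary issue: the bias bound.} Your bound $\|\xi_t^{\rb}\|\lesssim\sigma_\lar^\p\cm_t^{1-\p}$ is too coarse for the stated fine-grained rate. It exceeds the first part of Lemma~\ref{lem:clip-ineq}(4), $\|\bd_t^{\rb}\|\le\sigma_\sma\sigma_\lar^{\p-1}\cm_t^{1-\p}+2^{\p-1}\sigma_\lar^\p G\cm_t^{-\p}$, by a factor $\sqrt{d_\eff}$. With the given $\cm_\star$ and $\eta_\star$, your bias sum can therefore exceed $D^2$ by a factor growing with $d_\eff$.

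Relatedly, the mixed factor $\sigma_\sma^{1/\p}\sigma_\lar^{1-1/\p}$ comes from balancing $\cm^2\ln^2\frac3\delta$ against that refined bias term; the directional-variance term is then dominated by AM--GM. Finally, $\varphi_{\lip}$ is produced by the $G$-part of the bias bound, together with the $G$-part $2^{\p+1}\sigma_\lar^\p G^2\cm_t^{-\p}$ of the operator-norm variance bound. Both are missing from your list.
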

\begin{proof}
Given $T\in\N$ and $\delta\in\left(0,1\right]$, note that $\left\{ \eta_{t}\right\} _{t=1}^{T}$
and $\left\{ \cm_{t}\right\} _{t=1}^{T}$ satisfy Conditions \ref{enu:cvx-lip-res-1}
and \ref{enu:cvx-lip-res-2} in Lemma \ref{lem:cvx-lip-res}. Therefore,
Lemmas \ref{lem:cvx-core} ($\omega=1$ now) and \ref{lem:cvx-lip-res}
together yield that, with probability at least $1-\delta$,
\begin{equation}
F(\bx_{T+1})-F_{\star}\leq\frac{v_{0}^{2}D^{2}+2\lasthc_{T}^{\cvx}+8\sum_{t=1}^{T}v_{t}^{2}\eta_{t}^{2}G^{2}}{\eta_{T}},\label{eq:cvx-lip-hp-1}
\end{equation}
where $\lasthc_{T}^{\cvx}$ is a constant satisfying
\begin{equation}
\lasthc_{T}^{\cvx}=\O\left(\max_{t\in\left[T\right]}v_{t}^{2}\eta_{t}^{2}\cm_{t}^{2}\ln^{2}\frac{3}{\delta}+\sum_{t=1}^{T}\sigma_{\lar}^{\p}v_{t}^{2}\eta_{t}^{2}\cm_{t}^{2-\p}+\left(\sum_{t=1}^{T}\frac{\sigma_{\sma}\sigma_{\lar}^{\p-1}v_{t}\eta_{t}}{\cm_{t}^{\p-1}}+\frac{\sigma_{\lar}^{\p}Gv_{t}\eta_{t}}{\cm_{t}^{\p}}\right)^{2}\right),\label{eq:cvx-lip-hp-C}
\end{equation}
and $\left\{ v_{t}^{2}\right\} _{t=0}^{T}$ is a sequence defined
in (\ref{eq:cvx-v-def}) as
\begin{eqnarray*}
v_{t}^{2}\defeq\frac{\eta_{T}}{\sum_{s=t}^{T}\eta_{s}},\forall t\in\left[T\right] & \text{and} & v_{0}^{2}\defeq v_{1}^{2}.
\end{eqnarray*}

We observe that $v_{t}^{2}=\frac{2}{(T-t+2)(T-t+1)},\forall t\in\left[T\right]$
under the stepsize $\eta_{t}=\eta(T-t+1),\forall t\in\left[T\right]$
for any $\eta>0$. As such, we can simplify (\ref{eq:cvx-lip-hp-1})
and (\ref{eq:cvx-lip-hp-C}) into
\begin{align}
F(\bx_{T+1})-F_{\star} & \leq\frac{2D^{2}}{\eta T^{2}}+\frac{2\lasthc_{T}^{\cvx}}{\eta}+16\eta G^{2}T,\label{eq:cvx-lip-hp-2}\\
\lasthc_{T}^{\cvx} & =\O\left(\eta^{2}\left[\max_{t\in\left[T\right]}\cm_{t}^{2}\ln^{2}\frac{3}{\delta}+\sum_{t=1}^{T}\sigma_{\lar}^{\p}\cm_{t}^{2-\p}+\left(\sum_{t=1}^{T}\frac{\sigma_{\sma}\sigma_{\lar}^{\p-1}}{\cm_{t}^{\p-1}}+\frac{\sigma_{\lar}^{\p}G}{\cm_{t}^{\p}}\right)^{2}\right]\right).\nonumber 
\end{align}
Furthermore, when $\cm_{t}=\max\left\{ 2G,\cm\right\} ,\forall t\in\left[T\right]$
for $\cm\geq0$, $\lasthc_{T}^{\cvx}$ can be further reduced into
\begin{align*}
\lasthc_{T}^{\cvx} & =\O\left(\eta^{2}\left[\left(\frac{\sigma_{\lar}^{2\p}T^{2}}{\max\left\{ 2G,\cm\right\} ^{2\p}}+\ln^{2}\frac{3}{\delta}\right)G^{2}+\cm^{2}\ln^{2}\frac{3}{\delta}+\sigma_{\lar}^{\p}\cm^{2-\p}T+\frac{\sigma_{\sma}^{2}\sigma_{\lar}^{2\p-2}}{\cm^{2\p-2}}T^{2}\right]\right)\\
 & =\O\left(\eta^{2}\left[\left(\min_{\beta\in\left[0,\frac{1}{2}\right]}\frac{\sigma_{\lar}^{2\p}T^{2}}{G^{2\beta\p}\cm^{2(1-\beta)\p}}+\ln^{2}\frac{3}{\delta}\right)G^{2}+\cm^{2}\ln^{2}\frac{3}{\delta}+\sigma_{\lar}^{\p}\cm^{2-\p}T+\frac{\sigma_{\sma}^{2}\sigma_{\lar}^{2\p-2}}{\cm^{2\p-2}}T^{2}\right]\right).
\end{align*}
One more step, we replace $\cm$ with $\cm T^{\frac{1}{\p}}$ to have
\begin{equation}
\lasthc_{T}^{\cvx}=\O\left(\eta^{2}\left[\left(\min_{\beta\in\left[0,\frac{1}{2}\right]}\frac{\sigma_{\lar}^{2\p}T^{2\beta}}{G^{2\beta\p}\cm^{2(1-\beta)\p}}+\ln^{2}\frac{3}{\delta}\right)G^{2}+\left(\cm^{2}\ln^{2}\frac{3}{\delta}+\sigma_{\lar}^{\p}\cm^{2-\p}+\frac{\sigma_{\sma}^{2}\sigma_{\lar}^{2\p-2}}{\cm^{2\p-2}}\right)T^{\frac{2}{\p}}\right]\right).\label{eq:cvx-lip-hp-C-simple}
\end{equation}

Finally, we plug in $\cm=\cm_{\star}$ (see (\ref{eq:hp-tau-star}))
into (\ref{eq:cvx-lip-hp-C-simple}) and use the definition of $\varphi_{\lip}$
(see (\ref{eq:lip-hp-varphi})) to have
\[
\lasthc_{T}^{\cvx}=\O\left(\eta^{2}\left[\left(\varphi_{\lip}^{2}+\ln^{2}\frac{3}{\delta}\right)G^{2}+\left(\sigma_{\sma}^{\frac{4}{\p}-2}\sigma_{\lar}^{4-\frac{4}{\p}}+\sigma_{\sma}^{\frac{2}{\p}}\sigma_{\lar}^{2-\frac{2}{\p}}\ln^{2-\frac{2}{\p}}\frac{3}{\delta}\right)T^{\frac{2}{\p}}\right]\right).
\]
Combine the above result and (\ref{eq:cvx-lip-hp-2}) to obtain
\begin{align}
 & F(\bx_{T+1})-F_{\star}\nonumber \\
= & \O\left(\frac{D^{2}}{\eta T^{2}}+\eta\left(\varphi_{\lip}^{2}+\ln^{2}\frac{3}{\delta}\right)G^{2}+\eta G^{2}T+\eta\left(\sigma_{\sma}^{\frac{4}{\p}-2}\sigma_{\lar}^{4-\frac{4}{\p}}+\sigma_{\sma}^{\frac{2}{\p}}\sigma_{\lar}^{2-\frac{2}{\p}}\ln^{2-\frac{2}{\p}}\frac{3}{\delta}\right)T^{\frac{2}{\p}}\right).\label{eq:cvx-lip-hp-3}
\end{align}
Plug in $\eta=\eta_{\star}$ (see (\ref{eq:cvx-lip-hp-eta-star}))
to conclude that
\[
F(\bx_{T+1})-F_{\star}=\O\left(\frac{\left(\varphi_{\lip}+\ln\frac{3}{\delta}\right)GD}{T}+\frac{GD}{\sqrt{T}}+\frac{\left(\sigma_{\sma}^{\frac{2}{\p}-1}\sigma_{\lar}^{2-\frac{2}{\p}}+\sigma_{\sma}^{\frac{1}{\p}}\sigma_{\lar}^{1-\frac{1}{\p}}\ln^{1-\frac{1}{\p}}\frac{3}{\delta}\right)D}{T^{1-\frac{1}{\p}}}\right).
\]
\end{proof}

\subsubsection{Smooth Objectives}
\begin{thm}[Full statement of Theorem \ref{thm:main-cvx-smo-hp}]
\label{thm:cvx-smo-hp}Under Assumptions \ref{assu:minimizer}, \ref{assu:obj}
(with $\mu=0$), \ref{assu:smo}, \ref{assu:heavy}, and \ref{assu:G1},
for any $T\in\N$ and $\delta\in\left(0,\frac{1}{2}\right]$, setting
$\eta_{t}=\eta_{\star}(T-t+1),\cm_{t}=\max\left\{ 2L\left\Vert \bx_{t}-\bx_{1}\right\Vert +2G_{1},\cm_{\star}T^{\frac{1}{\p}}\right\} ,\forall t\in\left[T\right]$,
where
\begin{equation}
\eta_{\star}=\min\left\{ \frac{D}{8\sqrt{\constant}\left(\varphi_{\smo}+\ln\frac{3}{\delta}\right)\left(LD+G_{1}\right)T},\frac{D}{\left(\sigma_{\sma}^{\frac{2}{\p}-1}\sigma_{\lar}^{2-\frac{2}{\p}}+\sigma_{\sma}^{\frac{1}{\p}}\sigma_{\lar}^{1-\frac{1}{\p}}\ln^{1-\frac{1}{\p}}\frac{3}{\delta}\right)T^{1+\frac{1}{\p}}}\right\} ,\label{eq:cvx-smo-hp-eta-star}
\end{equation}
$\constant\geq1$ is a numerical constant, and $\varphi_{\smo}\leq\varphi_{\star}$
is a constant defined in (\ref{eq:smo-hp-varphi}), then $\ClipSGD$
(Algorithm \ref{alg:Clipped-SGD}) guarantees that, with probability
at least $1-2\delta$,
\[
F(\bx_{T+1})-F_{\star}=\O\left(\frac{\left(\varphi_{\smo}+\ln\frac{3}{\delta}\right)\left(LD+G_{1}\right)D}{T}+\frac{\left(\sigma_{\sma}^{\frac{2}{\p}-1}\sigma_{\lar}^{2-\frac{2}{\p}}+\sigma_{\sma}^{\frac{1}{\p}}\sigma_{\lar}^{1-\frac{1}{\p}}\ln^{1-\frac{1}{\p}}\frac{3}{\delta}\right)D}{T^{1-\frac{1}{\p}}}\right).
\]
\end{thm}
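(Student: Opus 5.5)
We will follow the structure of the proof of Theorem \ref{thm:cvx-lip-hp}, with one added ingredient: a high-probability bound on the distances $\left\Vert \bx_{\star}-\bx_{t}\right\Vert $. In the smooth case the clipping threshold $\cm_{t}=\max\{2L\left\Vert \bx_{t}-\bx_{1}\right\Vert +2G_{1},\cm_{\star}T^{\frac{1}{\p}}\}$ is random. We want it to dominate $2\left\Vert \nabla f(\bx_{t})\right\Vert $, which is what controls the clipping bias. By smoothness, $\left\Vert \nabla f(\bx_{t})\right\Vert \leq L\left\Vert \bx_{t}-\bx_{1}\right\Vert +G_{1}$, so the desired inequality always holds. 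The real issue is that $\cm_{t}$ is only useful if it is $\O(LD+G_{1})+\cm_{\star}T^{\frac{1}{\p}}$, which requires $\left\Vert \bx_{t}-\bx_{1}\right\Vert =\O(D)$ for all $t$.

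\textbf{First step: a distance lemma.} This is the smooth analogue of Lemma \ref{lem:cvx-smo-distance} referenced in Section \ref{sec:idea}. It states that with probability at least $1-\delta$, $\left\Vert \bx_{\star}-\bx_{t}\right\Vert \leq 2D$ for all $t\in\left[T+1\right]$, and hence $\cm_{t}\leq\max\{6LD+2G_{1},\cm_{\star}T^{\frac{1}{\p}}\}$.

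The proof would unroll the standard one-step inequality for proximal SGD, using convexity and $F(\bx_{t+1})\geq F_{\star}$. This gives $\left\Vert \bx_{\star}-\bx_{t+1}\right\Vert ^{2}\leq D^{2}+2\sum_{s\leq t}\eta_{s}\langle\xi_{s},\bx_{\star}-\bx_{s}\rangle+\O(\sum_{s\leq t}\eta_{s}^{2}\cm_{s}^{2})$ plus bias terms, where $\xi_{s}=\bg_{s}^{\rc}-\nabla f(\bx_{s})$. We then split $\xi_{s}$ into its martingale part and its bias part.

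The bias is bounded via Assumption \ref{assu:heavy} together with $\cm_{s}\geq2\left\Vert \nabla f(\bx_{s})\right\Vert $, in the fine-grained form $\sigma_{\sma}\sigma_{\lar}^{\p-1}/\cm_{s}^{\p-1}+\sigma_{\lar}^{\p}\left\Vert \nabla f(\bx_{s})\right\Vert /\cm_{s}^{\p}$. Since $\cm_{s}$ is random, we truncate with indicators $\zeta_{s}=\1[\left\Vert \bx_{\star}-\bx_{s}\right\Vert \leq2D]\in\F_{s-1}$. We then apply Freedman's inequality (Lemma \ref{lem:Freedman}) once to $\max_{t}\left|\sum_{s\leq t}X_{s}\zeta_{s}\right|$ with confidence $\delta$, and close the argument with the "Refined Part II" of Section \ref{sec:idea}. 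The closing inequality needs the bound $\O(D^{2}+\eta_{\star}^{2}T^{2}[(\varphi_{\smo}^{2}+\ln^{2}\frac{3}{\delta})(LD+G_{1})^{2}+(\cdots)T^{\frac{2}{\p}}])\leq4D^{2}$. This is exactly what the factor $8\sqrt{\constant}$ in the first term of (\ref{eq:cvx-smo-hp-eta-star}) and the second term are designed to guarantee, using $\sum_{t}\eta_{t}\leq\eta_{\star}T^{2}$. The $\varphi_{\smo}$ term arises from the same $\min_{\beta}$ interpolation as in the Lipschitz proof, with $G$ replaced by $\bar{G}=LD+G_{1}$.

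\textbf{Second step: last-iterate bound.} We invoke Lemma \ref{lem:cvx-core}, with its smooth counterpart of Lemma \ref{lem:cvx-lip-res}, on the event of the first step; a second Freedman application costs another $\delta$, giving $1-2\delta$. The smooth analogue of (\ref{eq:cvx-lip-hp-1}) replaces the $8\sum v_{t}^{2}\eta_{t}^{2}G^{2}$ term by a smoothness-based term. We expect $\eta_{t}\leq\eta_{\star}T\leq1/(8L)$, which follows from the first term of $\eta_{\star}$, to let the $L$-smooth descent absorb $\left\Vert \nabla f\right\Vert ^{2}$ contributions. This is why no $GD/\sqrt{T}$ term appears.

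With $v_{t}^{2}=\frac{2}{(T-t+2)(T-t+1)}$, exactly as before, and $\cm_{t}$ replaced by its high-probability upper bound $\max\{2\bar{G}',\cm_{\star}T^{\frac{1}{\p}}\}$ where $\bar{G}'=\O(LD+G_{1})$, the calculation leading to (\ref{eq:cvx-lip-hp-3}) carries over verbatim with $G\to LD+G_{1}$ and without the $\eta G^{2}T$ term. The bias terms use $\left\Vert \nabla f(\bx_{t})\right\Vert \leq3LD+G_{1}$ in place of $G$. Plugging in $\eta_{\star}$ then gives the stated rate.

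\textbf{Main obstacle.} We expect the main difficulty to be the first step: making the random, iterate-dependent threshold compatible with Freedman's inequality without losing a $\ln T$ factor. Doing so requires the indicator truncation and a careful check that every $h_{i}$ bound is monotone in $\left\Vert \bx_{\star}-\bx_{t}\right\Vert $. A secondary difficulty is verifying that the smooth one-step inequality used inside Lemma \ref{lem:cvx-core} tolerates the weighting by $v_{t}$ while retaining the negative $\left\Vert \nabla f\right\Vert ^{2}$ terms.
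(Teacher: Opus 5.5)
Your overall plan follows the paper's proof. It has the same four ingredients. First, a uniform distance bound obtained by truncating with $\zeta_t\in\F_{t-1}$ and closing with the single-event ``Refined Part II'' argument (Lemma~\ref{lem:cvx-smo-distance}). Second, Lemma~\ref{lem:cvx-core} with $\omega=0$, which is legitimate since $\eta_t\le\eta_\star T\le\frac{1}{8L}$. Third, a truncated residual bound (Lemma~\ref{lem:cvx-smo-res}) and a union bound giving $1-2\delta$. Fourth, the Lipschitz arithmetic with $G$ replaced by $\Theta(LD+G_1)$. You bound the unnormalized martingale $\sum_s\eta_s\langle\bd_s^{\ru},\bx_\star-\bx_s\rangle\zeta_s$ by $\bar D$ times a Freedman bound, where the paper uses the $D_t$-normalization with Lemmas~\ref{lem:Abel} and~\ref{lem:algebra}; that is a harmless variant.

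However, the distance step fails as you wrote it. The closing inequality you state is the right target, but it does not follow from the inequality you unrolled. There you control the squared-noise contribution by $\O(\sum_{s\le t}\eta_s^2\cm_s^2)$, i.e., by the almost-sure bound alone. Since $\cm_s\ge\cm_\star T^{1/\p}$ and $\eta_s=\eta_\star(T-s+1)$, this term is at least of order $\eta_\star^2\cm_\star^2T^{3+2/\p}$. That is a factor $T$ larger than the corresponding term in your closing inequality. For example, take $\p=2$, $\sigma_{\sma}=\sigma_{\lar}$, and $T$ large enough that the second branch of $\eta_\star$ is active. Then the term is $\Omega(D^2T/\ln^2\frac{3}{\delta})$, so the argument cannot certify $\|\bx_\star-\bx_t\|=\O(D)$.

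The fix is the following.
\begin{enumerate}
\item Split $\|\bd_s^{\rc}\|^2\le2\|\bd_s^{\ru}\|^2+2\|\bd_s^{\rb}\|^2$.
\item Bound $\sum_s\eta_s^2\|\bd_s^{\ru}\|^2\zeta_s^2$ with the sum-of-squares concentration inequality (Lemma~\ref{lem:Freedman-square}), using $\E[\|\bd_s^{\ru}\|^2\mid\F_{s-1}]\le4\sigma_{\lar}^{\p}\cm_s^{2-\p}$.
\item Make this variance proxy deterministic on the truncated event. Use $\cm_s\ge\cm$, $\cm_s\ge2G_s$ with $G_s=L\|\bx_s-\bx_1\|+G_1$, $G_s\zeta_s\le\bar G$, and the same $\beta$-interpolation that produces $\varphi_{\smo}$.
\end{enumerate}
With this, the almost-sure bound $\rho$ enters only through $\max_t\eta_t^2\rho^2\ln\frac{3}{\delta}$ and is not summed over $t$. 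This concentration event must be part of the event $E$ used in the Refined Part II argument. The paper allots $2\delta/3$ to Freedman and $\delta/3$ to this step, exactly as Lemma~\ref{lem:cvx-lip-res} already does for the last-iterate residual.

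Separately, $\bar D=2D$ is not what the stated $\eta_\star$ yields, for two reasons. The second branch of $\eta_\star$ has no $1/\sqrt{\constant}$ factor, so the noise terms contribute $\O(\constant D^2)$. Also, $\bar G=L(\bar D+D)+G_1$ grows with $\bar D$, which makes the closing condition self-referential. The factor $8\sqrt{\constant}$ in the first branch is what lets $\bar D^2$ absorb the $\bar G$-term, and the paper takes $\bar D=\sqrt{5+32\constant}\,D$. This affects only constants.
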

\begin{proof}
Given $T\in\N$ and $\delta\in\left(0,\frac{1}{2}\right]$, note that
$\left\{ \eta_{t}\right\} _{t=1}^{T}$ and $\left\{ \cm_{t}\right\} _{t=1}^{T}$
satisfy Conditions \ref{enu:cvx-smo-distance-1} and \ref{enu:cvx-smo-distance-2}
in Lemma \ref{lem:cvx-smo-distance}. Therefore, by Lemma \ref{lem:cvx-smo-distance},
for any $\bar{D}>0$ such that (where $\cm=\cm_{\star}T^{\frac{1}{\p}}$,
$\bar{G}=L(\bar{D}+D)+G_{1}$, $\rho=\max\left\{ 2\bar{G},\cm\right\} $,
and $\constant\geq1$ is a numerical constant)
\begin{equation}
2D^{2}+4\constant\left[\max_{t\in\left[T\right]}\eta_{t}^{2}\rho^{2}\ln^{2}\frac{3}{\delta}+\sum_{t=1}^{T}\sigma_{\lar}^{\p}\eta_{t}^{2}\cm^{2-\p}+\left(\sum_{t=1}^{T}\frac{\sigma_{\sma}\sigma_{\lar}^{\p-1}\eta_{t}}{\cm^{\p-1}}+\min_{\beta\in\left[0,\frac{1}{2}\right]}\frac{\sigma_{\lar}^{\p}\bar{G}^{1-\beta\p}\eta_{t}}{\cm^{(1-\beta)\p}}\right)^{2}\right]\leq\bar{D}^{2},\label{eq:cvx-smo-hp-Dbar-ineq}
\end{equation}
there is
\[
\Pr\left[\left\Vert \bx_{\star}-\bx_{t}\right\Vert \leq\bar{D},\forall t\in\left[T+1\right]\right]\geq1-\delta.
\]

Now, we claim that
\begin{equation}
\bar{D}\defeq\sqrt{5+32\constant}D=\Theta(D)\label{eq:cvx-smo-hp-Dbar-value}
\end{equation}
 satisfies (\ref{eq:cvx-smo-hp-Dbar-ineq}) under the current choices
of $\left\{ \eta_{t}\right\} _{t=1}^{T}$ and $\left\{ \cm_{t}\right\} _{t=1}^{T}$.
To prove the claim, we first upper bound
\begin{align}
 & \max_{t\in\left[T\right]}\eta_{t}^{2}\rho^{2}\ln^{2}\frac{3}{\delta}+\sum_{t=1}^{T}\sigma_{\lar}^{\p}\eta_{t}^{2}\cm^{2-\p}+\left(\sum_{t=1}^{T}\frac{\sigma_{\sma}\sigma_{\lar}^{\p-1}\eta_{t}}{\cm^{\p-1}}+\min_{\beta\in\left[0,\frac{1}{2}\right]}\frac{\sigma_{\lar}^{\p}\bar{G}^{1-\beta\p}\eta_{t}}{\cm^{(1-\beta)\p}}\right)^{2}\nonumber \\
\overset{(a)}{\leq} & \eta_{\star}^{2}T^{2}\left(\rho^{2}\ln^{2}\frac{3}{\delta}+\sigma_{\lar}^{\p}\cm^{2-\p}T+2\left(\frac{\sigma_{\sma}^{2}\sigma_{\lar}^{2\p-2}}{\cm^{2\p-2}}T^{2}+\min_{\beta\in\left[0,\frac{1}{2}\right]}\frac{\sigma_{\lar}^{2\p}T^{2}}{\bar{G}^{2\beta\p}\cm^{2(1-\beta)\p}}\bar{G}^{2}\right)\right)\nonumber \\
\overset{(b)}{\leq} & \eta_{\star}^{2}T^{2}\left(\left(\min_{\beta\in\left[0,\frac{1}{2}\right]}\frac{2\sigma_{\lar}^{2\p}T^{2}}{\bar{G}^{2\beta\p}\cm^{2(1-\beta)\p}}+4\ln^{2}\frac{3}{\delta}\right)\bar{G}^{2}+\cm^{2}\ln^{2}\frac{3}{\delta}+\sigma_{\lar}^{\p}\cm^{2-\p}T+2\frac{\sigma_{\sma}^{2}\sigma_{\lar}^{2\p-2}}{\cm^{2\p-2}}T^{2}\right)\nonumber \\
\overset{(c)}{=} & \eta_{\star}^{2}T^{2}\left(\left(\min_{\beta\in\left[0,\frac{1}{2}\right]}\frac{2\sigma_{\lar}^{2\p}T^{2\beta}}{\bar{G}^{2\beta\p}\cm_{\star}^{2(1-\beta)\p}}+4\ln^{2}\frac{3}{\delta}\right)\bar{G}^{2}+\left(\cm_{\star}^{2}\ln^{2}\frac{3}{\delta}+\sigma_{\lar}^{\p}\cm_{\star}^{2-\p}+2\frac{\sigma_{\sma}^{2}\sigma_{\lar}^{2\p-2}}{\cm_{\star}^{2\p-2}}\right)T^{\frac{2}{\p}}\right)\nonumber \\
\overset{(d)}{\leq} & \eta_{\star}^{2}T^{2}\left(\left(2\varphi_{\smo}^{2}+4\ln^{2}\frac{3}{\delta}\right)\left(L(\bar{D}+D)+G_{1}\right)^{2}+\left(\cm_{\star}^{2}\ln^{2}\frac{3}{\delta}+\sigma_{\lar}^{\p}\cm_{\star}^{2-\p}+2\frac{\sigma_{\sma}^{2}\sigma_{\lar}^{2\p-2}}{\cm_{\star}^{2\p-2}}\right)T^{\frac{2}{\p}}\right)\nonumber \\
\overset{(e)}{\leq} & \frac{\bar{D}^{2}+D^{2}}{8\constant}+4D^{2},\label{eq:cvx-smo-hp-Dbar-simple}
\end{align}
where $(a)$ is by $\eta_{t}=\eta_{\star}(T-t+1)\leq\eta_{\star}T,\forall t\in\left[T\right]$
and $(x+y)^{2}\leq2x^{2}+2y^{2}$, $(b)$ is due to $\rho=\max\left\{ 2\bar{G},\cm\right\} \Rightarrow\rho^{2}\leq4\bar{G}^{2}+\cm^{2}$,
$(c)$ holds after plugging in $\cm=\cm_{\star}T^{\frac{1}{\p}}$,
$(d)$ is due to $\bar{G}=L(\bar{D}+D)+G_{1}\overset{(\ref{eq:cvx-smo-hp-Dbar-value})}{=}\Theta(LD+G_{1})$
and the definition of $\varphi_{\smo}$ (see (\ref{eq:smo-hp-varphi})),
and $(e)$ is true by noticing that
\begin{align*}
\eta_{\star}^{2}T^{2}\cdot\left(2\varphi_{\smo}^{2}+4\ln^{2}\frac{3}{\delta}\right)\left(L(\bar{D}+D)+G_{1}\right)^{2} & \overset{(\ref{eq:cvx-smo-hp-eta-star})}{\leq}\frac{\bar{D}^{2}+D^{2}}{8\constant},\\
\eta_{\star}^{2}T^{2}\cdot\left(\cm_{\star}^{2}\ln^{2}\frac{3}{\delta}+\sigma_{\lar}^{\p}\cm_{\star}^{2-\p}+2\frac{\sigma_{\sma}^{2}\sigma_{\lar}^{2\p-2}}{\cm_{\star}^{2\p-2}}\right)T^{\frac{2}{\p}} & \overset{(\ref{eq:hp-tau-star}),(\ref{eq:cvx-smo-hp-eta-star})}{\leq}4D^{2}.
\end{align*}
Combining (\ref{eq:cvx-smo-hp-Dbar-ineq}) and (\ref{eq:cvx-smo-hp-Dbar-simple}),
we only need to show that $\bar{D}=\sqrt{5+32\constant}D$ satisfies
the inequality $2D^{2}+4\constant\left(\frac{\bar{D}^{2}+D^{2}}{8\constant}+4D^{2}\right)\leq\bar{D}^{2}$,
which holds after a direct verification. Hence, we obtain that
\begin{equation}
\Pr\left[\left\Vert \bx_{\star}-\bx_{t}\right\Vert \leq\bar{D},\forall t\in\left[T+1\right]\right]\geq1-\delta.\label{eq:cvx-smo-hp-distance}
\end{equation}

Next, Lemma \ref{lem:cvx-core} ($\omega=0$ now) yields that
\begin{equation}
F(\bx_{T+1})-F_{\star}\leq\frac{v_{0}^{2}D^{2}+2\lasthres_{T}^{\cvx}}{\eta_{T}},\label{eq:cvx-smo-hp-1}
\end{equation}
where
\[
\lasthres_{T}^{\cvx}=8\max_{t\in\left[T\right]}\left(\sum_{s=1}^{t}M_{s}\right)^{2}+2\sum_{t=1}^{T}v_{t}^{2}\eta_{t}^{2}\left\Vert \bd_{t}^{\ru}\right\Vert ^{2}+4\left(\sum_{t=1}^{T}v_{t}\eta_{t}\left\Vert \bd_{t}^{\rb}\right\Vert \right)^{2},
\]
and $\left\{ v_{t}^{2}\right\} _{t=0}^{T}$ is a sequence defined
in (\ref{eq:cvx-v-def}) as
\begin{eqnarray*}
v_{t}^{2}\defeq\frac{\eta_{T}}{\sum_{s=t}^{T}\eta_{s}},\forall t\in\left[T\right] & \text{and} & v_{0}^{2}\defeq v_{1}^{2}.
\end{eqnarray*}
Observe that
\[
\Pr\left[\lasthres_{T}^{\cvx}=\bar{\lasthres}_{T}^{\cvx}\right]\geq\Pr\left[\zeta_{t}=1,\forall t\in\left[T+1\right]\right]\overset{(\ref{eq:cvx-smo-hp-distance})}{\geq}1-\delta,
\]
in which
\begin{align*}
\bar{\lasthres}_{T}^{\cvx} & =8\max_{t\in\left[T\right]}\left(\sum_{s=1}^{t}M_{s}\zeta_{s}\right)^{2}+2\sum_{t=1}^{T}v_{t}^{2}\eta_{t}^{2}\left\Vert \bd_{t}^{\ru}\right\Vert ^{2}\zeta_{t}^{2}+4\left(\sum_{t=1}^{T}v_{t}\eta_{t}\left\Vert \bd_{t}^{\rb}\right\Vert \zeta_{t}\right)^{2},\\
\zeta_{t} & =\1\left[\left\Vert \bx_{\star}-\bx_{t}\right\Vert \leq\bar{D}\right],\forall t\in\left[T\right],
\end{align*}
are introduced in Lemma \ref{lem:cvx-smo-res}. Thus, with probability
at least $1-\delta$,
\begin{equation}
F(\bx_{T+1})-F_{\star}\overset{(\ref{eq:cvx-smo-hp-1})}{\leq}\frac{v_{0}^{2}D^{2}+2\lasthres_{T}^{\cvx}}{\eta_{T}}=\frac{v_{0}^{2}D^{2}+2\bar{\lasthres}_{T}^{\cvx}}{\eta_{T}}.\label{eq:cvx-smo-hp-2}
\end{equation}
Note that $\left\{ \eta_{t}\right\} _{t=1}^{T}$ and $\left\{ \cm_{t}\right\} _{t=1}^{T}$
also satisfy Conditions \ref{enu:cvx-smo-res-1} and \ref{enu:cvx-smo-res-2}
in Lemma \ref{lem:cvx-smo-res}, which implies that $\Pr\left[\bar{\lasthres}_{T}^{\cvx}\leq\bar{\lasthc}_{T}^{\cvx}\right]\geq1-\delta$,
where $\bar{\lasthc}_{T}^{\cvx}$ is a constant satisfying
\begin{equation}
\bar{\lasthc}_{T}^{\cvx}=\O\left(\max_{t\in\left[T\right]}v_{t}^{2}\eta_{t}^{2}\rho^{2}\ln^{2}\frac{3}{\delta}+\sum_{t=1}^{T}\sigma_{\lar}^{\p}v_{t}^{2}\eta_{t}^{2}\cm^{2-\p}+\left(\sum_{t=1}^{T}\frac{\sigma_{\sma}\sigma_{\lar}^{\p-1}v_{t}\eta_{t}}{\cm^{\p-1}}+\min_{\beta\in\left[0,\frac{1}{2}\right]}\frac{\sigma_{\lar}^{\p}\bar{G}^{1-\beta\p}v_{t}\eta_{t}}{\cm^{(1-\beta)\p}}\right)^{2}\right),\label{eq:cvx-smo-hp-C}
\end{equation}
in which $\cm=\cm_{\star}T^{\frac{1}{\p}}$, $\bar{G}=L(\bar{D}+D)+G_{1}\overset{(\ref{eq:cvx-smo-hp-Dbar-value})}{=}\Theta(LD+G_{1})$,
and $\rho=\max\left\{ 2\bar{G},\cm\right\} $. As such, we find that,
with probability at least $1-2\delta$,
\begin{equation}
F(\bx_{T+1})-F_{\star}\leq\frac{v_{0}^{2}D^{2}+2\bar{\lasthc}_{T}^{\cvx}}{\eta_{T}}.\label{eq:cvx-smo-hp-3}
\end{equation}

Finally, by following essentially the same steps in proving Theorem
\ref{thm:cvx-lip-hp}, we conclude that with probability at least
$1-2\delta$,
\[
F(\bx_{T+1})-F_{\star}=\O\left(\frac{\left(\varphi_{\smo}+\ln\frac{3}{\delta}\right)\left(LD+G_{1}\right)D}{T}+\frac{\left(\sigma_{\sma}^{\frac{2}{\p}-1}\sigma_{\lar}^{2-\frac{2}{\p}}+\sigma_{\sma}^{\frac{1}{\p}}\sigma_{\lar}^{1-\frac{1}{\p}}\ln^{1-\frac{1}{\p}}\frac{3}{\delta}\right)D}{T^{1-\frac{1}{\p}}}\right).
\]
\end{proof}

\subsection{Strongly Star-Convex Optimization}

We turn our attention to strongly star-convex optimization, i.e.,
$\mu>0$ in Assumption \ref{assu:obj}.

\subsubsection{Lipschitz Objectives}
\begin{thm}[Restatement of Theorem \ref{thm:main-str-lip-hp}]
\label{thm:str-lip-hp}Under Assumptions \ref{assu:minimizer}, \ref{assu:obj}
(with $\mu>0$), \ref{assu:lip}, and \ref{assu:heavy}, for any $T\geq2$
and $\delta\in\left(0,\frac{1}{2}\right]$, setting $\eta_{t}=\begin{cases}
\frac{6}{\mu t} & t\leq S\\
\frac{T-t+1}{\mu T^{2}} & t>S
\end{cases},\cm_{t}=\begin{cases}
\max\left\{ 2G,\cm_{\star}t^{\frac{1}{\p}}\right\}  & t\leq S\\
\max\left\{ 2G,\cm_{\star}\left\lfloor \frac{T}{2}\right\rfloor ^{\frac{1}{\p}}\right\}  & t>S
\end{cases},\forall t\in\left[T\right]$, where $S\defeq\left\lceil \frac{T}{2}\right\rceil $, then $\ClipSGD$
(Algorithm \ref{alg:Clipped-SGD}) guarantees that, with probability
at least $1-2\delta$,
\[
F(\bx_{T+1})-F_{\star}=\O\left(\frac{\mu D^{2}}{T^{3}}+\frac{\left(\varphi_{\lip}^{2}+\ln^{2}\frac{3}{\delta}\right)G^{2}}{\mu T^{2}}+\frac{G^{2}}{\mu T}+\frac{\sigma_{\sma}^{\frac{4}{\p}-2}\sigma_{\lar}^{4-\frac{4}{\p}}+\sigma_{\sma}^{\frac{2}{\p}}\sigma_{\lar}^{2-\frac{2}{\p}}\ln^{2-\frac{2}{\p}}\frac{3}{\delta}}{\mu T^{2-\frac{2}{\p}}}\right),
\]
in which $\varphi_{\lip}\leq\varphi_{\star}$ is a constant defined
in (\ref{eq:lip-hp-varphi}).
\end{thm}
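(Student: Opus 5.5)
The plan follows the intuition the paper itself gives after Theorem \ref{thm:main-str-lip-hp}: split the horizon at $S=\lceil T/2\rceil$. The first stage establishes a high-probability bound on $\left\Vert \bx_{\star}-\bx_{S+1}\right\Vert ^{2}$. The second stage treats $\bx_{S+1}$ as a new initial point and runs the linearly decaying last-iterate argument of Theorem \ref{thm:cvx-lip-hp}, with the strong star-convexity parameter $\mu$ now made explicit. Each stage costs failure probability $\delta$, which is why the statement holds with probability at least $1-2\delta$.

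\textbf{Stage 1} ($t\le S$, $\eta_{t}=\frac{6}{\mu t}$, $\cm_{t}=\max\{2G,\cm_{\star}t^{1/\p}\}$). These are exactly the choices of \citep{ICLR2026_bd5b846a}, so I would reproduce their distance analysis. Start from the one-step proximal inequality
\[
\left\Vert \bx_{\star}-\bx_{t+1}\right\Vert ^{2}\le(1-\mu\eta_{t})\left\Vert \bx_{\star}-\bx_{t}\right\Vert ^{2}-2\eta_{t}(F(\bx_{t+1})-F_{\star})+\text{noise/bias terms},
\]
which uses Lemma \ref{lem:str-star-cvx} or strong star-convexity. Multiply through by weights $w_{t}\propto t(t+1)\cdots$ chosen to telescope the factor $(1-6/t)$. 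Split the clipped-gradient error into an unbiased part $\bd_{t}^{\ru}$ and a bias $\bd_{t}^{\rb}$, and bound them as in Lemma \ref{lem:cvx-lip-res}. The bias obeys $\left\Vert \bd_{t}^{\rb}\right\Vert \lesssim\sigma_{\sma}\sigma_{\lar}^{\p-1}/\cm_{t}^{\p-1}+\sigma_{\lar}^{\p}G/\cm_{t}^{\p}$, since $\cm_{t}\ge2G$. The martingale term is handled by Freedman's inequality (Lemma \ref{lem:Freedman}), using the refined Part II of Section \ref{sec:idea} so that only a $\ln\frac{3}{\delta}$ factor appears rather than $\ln\frac{T}{\delta}$.

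The goal of Stage 1 is that, with probability at least $1-\delta$,
\[
\mu\left\Vert \bx_{\star}-\bx_{S+1}\right\Vert ^{2}=\O\left(\frac{\mu D^{2}}{T^{c}}+\frac{(\varphi_{\lip}^{2}+\ln^{2}\frac{3}{\delta})G^{2}}{\mu T}+\frac{\sigma_{\sma}^{\frac{4}{\p}-2}\sigma_{\lar}^{4-\frac{4}{\p}}+\sigma_{\sma}^{\frac{2}{\p}}\sigma_{\lar}^{2-\frac{2}{\p}}\ln^{2-\frac{2}{\p}}\frac{3}{\delta}}{\mu T^{2-\frac{2}{\p}}}\right),
\]
where $c$ is large because of the $6/(\mu t)$ stepsize; $c\ge 1$ is what I need. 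Here $\varphi_{\lip}$ appears in the same way as in the proof of Theorem \ref{thm:cvx-lip-hp}, by minimizing over $\beta\in[0,\frac12]$.

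\textbf{Stage 2} ($t>S$). Apply Lemma \ref{lem:cvx-core} on the window $t=S+1,\dots,T$ with initial point $\bx_{S+1}$, conditioning on $\F_{S}$. The stepsizes $\eta_{t}=\frac{T-t+1}{\mu T^{2}}$ are again linearly decaying with $\eta=\frac{1}{\mu T^{2}}$, and the window length is $T-S=\lfloor T/2\rfloor$. The clipping level $\cm_{t}=\max\{2G,\cm_{\star}\lfloor T/2\rfloor^{1/\p}\}$ therefore matches the convex-case choice for a horizon of $\lfloor T/2\rfloor$. Lemma \ref{lem:cvx-lip-res} then supplies the residual bound with probability at least $1-\delta$. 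Mirroring inequality (\ref{eq:cvx-lip-hp-3}) with $T\to T/2$, $D^{2}\to\left\Vert \bx_{\star}-\bx_{S+1}\right\Vert ^{2}$ and $\eta=1/(\mu T^{2})$ gives
\[
F(\bx_{T+1})-F_{\star}=\O\left(\mu\left\Vert \bx_{\star}-\bx_{S+1}\right\Vert ^{2}+\frac{(\varphi_{\lip}^{2}+\ln^{2}\frac{3}{\delta})G^{2}}{\mu T^{2}}+\frac{G^{2}}{\mu T}+\frac{(\cdots)T^{2/\p}}{\mu T^{2}}\right).
\]
Substituting the Stage 1 bound yields the claimed rate. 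The $G^{2}/(\mu T)$ term absorbs the $G^{2}/(\mu T)$-type contributions from Stage 1, and the $\mu D^{2}/T^{3}$ term comes from the first-stage contraction. A union bound over the two stages gives probability at least $1-2\delta$.

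\textbf{Main obstacle.} I expect the hardest step to be the Stage 1 bound. A careless bound there produces terms like $(\varphi_{\lip}^{2}+\ln^{2})G^{2}/(\mu T)$ in the distance, which would enter the final rate only at order $1/T$ instead of $1/T^{2}$.

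The higher-order terms in the final statement are $\frac{(\varphi_{\lip}^{2}+\ln^{2})G^{2}}{\mu T^{2}}+\frac{G^{2}}{\mu T}$, so the distance bound must put the $\ln^{2}$ term at order $1/T^{2}$ and only the term without logarithms at order $1/T$. To achieve this, I would track the weighted martingale carefully: the $\max_{t}$ term $\eta_{t}^{2}\cm_{t}^{2}\ln^{2}$, under weights $w_{t}\sim t^{6}$ normalized by $S^{6}$, must be shown to be dominated by its last entries, which are of order $G^{2}\ln^{2}/(\mu^{2}S^{2})$. The deterministic term $\sum_{t}w_{t}\eta_{t}^{2}G^{2}/w_{S}$ is what gives $G^{2}/(\mu^{2}S)$.

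I would also need to check that the weights $v_{t}$ in Lemma \ref{lem:cvx-core} stay compatible on the shifted window, i.e.\ that $v_{t}^{2}=\frac{2}{(T-t+2)(T-t+1)}$ still holds there. Finally, it must be verified that Stage 2 can treat $\left\Vert \bx_{\star}-\bx_{S+1}\right\Vert$ as $\F_{S}$-measurable, so that the conditional application of Lemma \ref{lem:cvx-lip-res} is valid.
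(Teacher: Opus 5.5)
Your two-stage plan is the paper's proof. Stage 1 is Lemma \ref{lem:str-core} (with $\eta=6$, $\omega=1$) combined with Lemma \ref{lem:str-lip-res} on horizon $S$, i.e.\ the distance analysis of \citep{ICLR2026_bd5b846a}, which gives (\ref{eq:str-lip-hp-3}). Stage 2 reruns (\ref{eq:cvx-lip-hp-3}) from $\bx_{S+1}$ over $\lfloor T/2\rfloor$ steps with $\eta_{\star}=\frac{1}{\mu T^{2}}$, and a union bound gives $1-2\delta$. The route is sound, but three local statements in your Stage 1 need fixing.

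\textbf{The one-step inequality you display does not hold here.} $r$ is handled by the prox step and only $F$ is strongly star-convex. Curvature therefore enters through $F(\bx_{t+1})-F_{\star}\geq\frac{\mu}{2}\|\bx_{t+1}-\bx_{\star}\|^{2}$ (Lemma \ref{lem:str-star-cvx}). This gives the implicit contraction $(1+\mu\eta_{t})\|\bx_{\star}-\bx_{t+1}\|^{2}\leq\|\bx_{\star}-\bx_{t}\|^{2}+\cdots$, on which Lemma \ref{lem:str-core} is built. An explicit factor $1-\mu\eta_{t}$ is not available. For a counterexample, take $f\equiv0$ (so $G$ may be arbitrarily small), $r=\frac{\mu}{2}\|\cdot-\bx_{\star}\|^{2}$, and no noise. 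Then $\|\bx_{\star}-\bx_{2}\|^{2}=D^{2}/49$, while your right-hand side is at most $(1-\mu\eta_{1})D^{2}+\O(G^{2}/\mu^{2})=-5D^{2}+\O(G^{2}/\mu^{2})<0$. Spending half of $\mu$ on the bias via AM-GM yields the paper's weights $\gamma_{t}=\frac{t(t+4)(t+5)}{30}=\Theta(t^{3})$ from (\ref{eq:str-gamma-def}). Your $t^{6}$ weights (full contraction, bias handled by a running-max argument) would also work.

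\textbf{Your displayed Stage 1 target is too weak.} With $c\geq1$ the final rate would retain $\mu D^{2}/T$. In fact $8D^{2}/\gamma_{S}$ gives $c=3$, which is exactly where $\frac{\mu D^{2}}{T^{3}}$ comes from. Also, as you note afterwards, the $\ln^{2}\frac{3}{\delta}$ and $\varphi_{\lip}^{2}$ terms must sit at $1/T^{2}$. This is not really an obstacle. Since $\hc_{S}^{\str}=\O\big(\frac{(\varphi_{\lip}^{2}+\ln^{2}\frac{3}{\delta})G^{2}}{\mu^{2}}T+\cdots\big)$, dividing by $\gamma_{S}=\Theta(T^{3})$ places these terms at $1/T^{2}$ automatically. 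The max term is $\O(S\cm_{S}^{2}\ln^{2}\frac{3}{\delta}/\mu^{2})$, because $\gamma_{t}\eta_{t}^{2}=\Theta(t/\mu^{2})$ and $\cm_{t}$ is nondecreasing. The only log-free $\frac{G^{2}}{\mu^{2}T}$ comes from the deterministic term $16\sum_{t\leq S}\gamma_{t}\eta_{t}^{2}G^{2}/\gamma_{S}$.

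\textbf{The refined Part II of Section \ref{sec:idea} is not needed in the Lipschitz case.} $\eta_{t}$ and $\cm_{t}$ are deterministic, and the normalization by $D_{t}$ in $N_{t}$ makes the Freedman bounds deterministic. A single maximal Freedman inequality therefore already gives $\ln\frac{3}{\delta}$. Similarly, Stage 2 needs no conditioning on $\F_{S}$: Lemma \ref{lem:cvx-core} holds pathwise in the initial distance, and the bound of Lemma \ref{lem:cvx-lip-res} does not involve it.
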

\begin{proof}
Given $T\in\N$ and $\delta\in\left(0,\frac{1}{2}\right]$, note that
$\left\{ \eta_{t}\right\} _{t=1}^{S}$ satisfies Lemma \ref{lem:str-core}
with $\eta=6$ under the time horizon $S$, and $\left\{ \eta_{t}\right\} _{t=1}^{S}$
and $\left\{ \cm_{t}\right\} _{t=1}^{S}$ satisfy Conditions \ref{enu:str-lip-res-1}
and \ref{enu:str-lip-res-2} in Lemma \ref{lem:str-lip-res}. Therefore,
Lemmas \ref{lem:str-core} ($\eta=6$ and $\omega=1$ now) and \ref{lem:str-lip-res}
($\eta=6$ now) together yield that, with probability at least $1-\delta$,
\begin{equation}
\left\Vert \bx_{\star}-\bx_{S+1}\right\Vert ^{2}\leq\frac{8D^{2}+4\hc_{S}^{\str}+16\sum_{t=1}^{S}\gamma_{t}\eta_{t}^{2}G^{2}}{\gamma_{S}},\label{eq:str-lip-hp-1}
\end{equation}
where $\hc_{S}^{\str}$ is a constant satisfying
\begin{align}
\hc_{S}^{\str}= & \O\left(\max_{t\in\left[S\right]}\gamma_{t}\eta_{t}^{2}\cm_{t}^{2}\ln^{2}\frac{3}{\delta}+\sum_{t=1}^{S}\sigma_{\lar}^{\p}\gamma_{t}\eta_{t}^{2}\cm_{t}^{2-\p}+\sum_{t=1}^{S}\left(\frac{\sigma_{\sma}^{\p}\gamma_{t}\eta_{t}^{2}}{\cm_{t}^{\p-2}}+\frac{\sigma_{\lar}^{\p}G^{2}\gamma_{t}\eta_{t}^{2}}{\cm_{t}^{\p}}\right)\ln\frac{3}{\delta}\right.\nonumber \\
 & \left.\quad+\sum_{t=1}^{S}\left(\frac{\sigma_{\sma}^{2}\sigma_{\lar}^{2\p-2}\gamma_{t}\eta_{t}}{\cm_{t}^{2\p-2}}+\frac{\sigma_{\lar}^{2\p}G^{2}\gamma_{t}\eta_{t}}{\cm_{t}^{2\p}}\right)\frac{1}{\mu}\right),\label{eq:str-lip-hp-C}
\end{align}
and $\left\{ \gamma_{t}\right\} _{t=1}^{S}$ is a sequence defined
in (\ref{eq:str-gamma-def}).

Next, we compute
\[
\gamma_{t}\overset{(\ref{eq:str-gamma-def})}{=}\prod_{s=2}^{t}\frac{1+\mu\eta_{s-1}}{1+\mu\eta_{s}/2}\overset{\eta_{s}=\frac{6}{\mu s}}{=}\prod_{s=2}^{t}\frac{s(s+5)}{(s-1)(s+3)}=\frac{t(t+4)(t+5)}{30},\forall t\in\left[S\right].
\]
By the above equation and (\ref{eq:str-lip-hp-1}), we have
\begin{equation}
\left\Vert \bx_{\star}-\bx_{S+1}\right\Vert ^{2}=\O\left(\frac{D^{2}+\hc_{S}^{\str}+\frac{G^{2}}{\mu^{2}}\sum_{t=1}^{S}t}{S^{3}}\right)\overset{S=\left\lceil \frac{T}{2}\right\rceil }{=}\O\left(\frac{D^{2}+\hc_{S}^{\str}}{T^{3}}+\frac{G^{2}}{\mu^{2}T}\right).\label{eq:str-lip-hp-2}
\end{equation}
By similar steps in proving Theorem 14 of \citep{ICLR2026_bd5b846a},
we can show
\begin{align}
\hc_{S}^{\str} & =\O\left(\frac{\left(\min_{\beta\in\left[0,\frac{1}{2}\right]}\frac{\sigma_{\lar}^{2\p}S^{2\beta}}{G^{2\beta\p}\cm_{\star}^{2(1-\beta)\p}}+\ln^{2}\frac{3}{\delta}\right)G^{2}}{\mu^{2}}S+\frac{\cm_{\star}^{2}\ln^{2}\frac{3}{\delta}+\sigma_{\lar}^{\p}\cm_{\star}^{2-\p}+\frac{\sigma_{\sma}^{2}\sigma_{\lar}^{2\p-2}}{\cm_{\star}^{2\p-2}}}{\mu^{2}}S^{1+\frac{2}{\p}}\right)\nonumber \\
 & \overset{S=\left\lceil \frac{T}{2}\right\rceil ,(\ref{eq:hp-tau-star}),(\ref{eq:lip-hp-varphi})}{=}\O\left(\frac{\left(\varphi_{\lip}^{2}+\ln^{2}\frac{3}{\delta}\right)G^{2}}{\mu^{2}}T+\frac{\sigma_{\sma}^{\frac{4}{\p}-2}\sigma_{\lar}^{4-\frac{4}{\p}}+\sigma_{\sma}^{\frac{2}{\p}}\sigma_{\lar}^{2-\frac{2}{\p}}\ln^{2-\frac{2}{\p}}\frac{3}{\delta}}{\mu^{2}}T^{1+\frac{2}{\p}}\right).\label{eq:str-lip-hp-C-simple}
\end{align}
Hence, we obtain that, with probability at least $1-\delta$,
\begin{equation}
\left\Vert \bx_{\star}-\bx_{S+1}\right\Vert ^{2}\overset{(\ref{eq:str-lip-hp-2}),(\ref{eq:str-lip-hp-C-simple})}{=}\O\left(\frac{D^{2}}{T^{3}}+\frac{\left(\varphi_{\lip}^{2}+\ln^{2}\frac{3}{\delta}\right)G^{2}}{\mu^{2}T^{2}}+\frac{G^{2}}{\mu^{2}T}+\frac{\sigma_{\sma}^{\frac{4}{\p}-2}\sigma_{\lar}^{4-\frac{4}{\p}}+\sigma_{\sma}^{\frac{2}{\p}}\sigma_{\lar}^{2-\frac{2}{\p}}\ln^{2-\frac{2}{\p}}\frac{3}{\delta}}{\mu^{2}T^{2-\frac{2}{\p}}}\right).\label{eq:str-lip-hp-3}
\end{equation}

Finally, when $T\geq2$, we observe that $\bx_{T+1}$ can be viewed
as the output of Algorithm \ref{alg:Clipped-SGD} after $T-S=\left\lfloor \frac{T}{2}\right\rfloor $
iterations with the initial point $\bx_{S+1}$. Therefore, let $\eta_{\star}\defeq\frac{1}{\mu T^{2}}$,
by our choices of $\left\{ \eta_{t}\right\} _{t=S+1}^{T}$ and $\left\{ \cm_{t}\right\} _{t=S+1}^{T}$
and the proof of Theorem \ref{thm:cvx-lip-hp}, we have, with probability
at least $1-\delta$,
\begin{align}
F(\bx_{T+1})-F_{\star}\overset{(\ref{eq:cvx-lip-hp-3})}{=} & \O\left(\frac{\left\Vert \bx_{\star}-\bx_{S+1}\right\Vert ^{2}}{\eta_{\star}\left\lfloor \frac{T}{2}\right\rfloor ^{2}}+\eta_{\star}\left(\varphi_{\lip}^{2}+\ln^{2}\frac{3}{\delta}\right)G^{2}+\eta_{\star}G^{2}\left\lfloor \frac{T}{2}\right\rfloor \right.\nonumber \\
 & \left.\quad+\eta_{\star}\left(\sigma_{\sma}^{\frac{4}{\p}-2}\sigma_{\lar}^{4-\frac{4}{\p}}+\sigma_{\sma}^{\frac{2}{\p}}\sigma_{\lar}^{2-\frac{2}{\p}}\ln^{2-\frac{2}{\p}}\frac{3}{\delta}\right)\left\lfloor \frac{T}{2}\right\rfloor ^{\frac{2}{\p}}\right)\nonumber \\
\overset{\eta_{\star}=\frac{1}{\mu T^{2}}}{=} & \O\left(\mu\left\Vert \bx_{\star}-\bx_{S+1}\right\Vert ^{2}+\frac{\left(\varphi_{\lip}^{2}+\ln^{2}\frac{3}{\delta}\right)G^{2}}{\mu T^{2}}+\frac{G^{2}}{\mu T}\right.\nonumber \\
 & \left.\quad+\frac{\sigma_{\sma}^{\frac{4}{\p}-2}\sigma_{\lar}^{4-\frac{4}{\p}}+\sigma_{\sma}^{\frac{2}{\p}}\sigma_{\lar}^{2-\frac{2}{\p}}\ln^{2-\frac{2}{\p}}\frac{3}{\delta}}{\mu T^{2-\frac{2}{\p}}}\right).\label{eq:str-lip-hp-4}
\end{align}
By combining (\ref{eq:str-lip-hp-3}) and (\ref{eq:str-lip-hp-4}),
we obtain the desired result.
\end{proof}

\subsubsection{Smooth Objectives}
\begin{thm}[Restatement of Theorem \ref{thm:main-str-smo-hp}]
\label{thm:str-smo-hp}Under Assumptions \ref{assu:minimizer}, \ref{assu:obj}
(with $\mu>0$), \ref{assu:smo}, \ref{assu:heavy}, and \ref{assu:compact},
for any $T\geq2$ and $\delta\in\left(0,\frac{1}{2}\right]$, setting
$\eta_{t}=\begin{cases}
\frac{6}{\mu(t+12\kappa)} & t\leq S\\
\frac{T-t+1}{\mu T(T+\kappa)} & t>S
\end{cases},\cm_{t}=\begin{cases}
\max\left\{ 2L\left\Vert \bx_{t}-\bx_{1}\right\Vert +2G_{1},\cm_{\star}\left[T(T+\kappa)\right]^{\frac{1}{2\p}}\right\}  & t\leq S\\
\max\left\{ 2(1+2\kappa)G_{1},\cm_{\star}\left\lfloor \frac{T}{2}\right\rfloor ^{\frac{1}{\p}}\right\}  & t>S
\end{cases},\forall t\in\left[T\right]$, where $S\defeq\left\lceil \frac{T}{2}\right\rceil $ and $\kappa\defeq\frac{L}{\mu}$,
then $\ClipSGD$ (Algorithm \ref{alg:Clipped-SGD}) guarantees that,
with probability at least $1-2\delta$,
\[
F(\bx_{T+1})-F_{\star}=\O\left(\frac{\left(\varphi_{\smo}^{2}+\ln^{2}\frac{3}{\delta}\right)\left(1+\kappa\right)^{2}G_{1}^{2}}{\mu T^{2}}+\frac{\sigma_{\sma}^{\frac{4}{\p}-2}\sigma_{\lar}^{4-\frac{4}{\p}}+\sigma_{\sma}^{\frac{2}{\p}}\sigma_{\lar}^{2-\frac{2}{\p}}\ln^{2-\frac{2}{\p}}\frac{3}{\delta}}{\mu\left[T(T+\kappa)\right]^{1-\frac{1}{\p}}}\right),
\]
in which $\varphi_{\smo}\leq\varphi_{\star}$ is a constant defined
in (\ref{eq:smo-hp-varphi}).
\end{thm}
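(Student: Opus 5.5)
We follow the two-stage template of the proof of Theorem \ref{thm:str-lip-hp}. Stage one ($t\leq S$) gives a high-probability bound on $\left\Vert \bx_{\star}-\bx_{S+1}\right\Vert ^{2}$. Stage two ($t>S$) restarts from $\bx_{S+1}$ and runs $\left\lfloor \frac{T}{2}\right\rfloor $ steps of $\ClipSGD$ with the linearly decaying stepsize. We then reuse the argument of Theorem \ref{thm:cvx-lip-hp}, with $\eta_{\star}=\frac{1}{\mu T(T+\kappa)}$, to convert distance into function value. Each stage fails with probability at most $\delta$, which gives the total $1-2\delta$.

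\textbf{Stage one.} Since $\X$ has diameter at most $2G_{1}/\mu$ (Assumption \ref{assu:compact}), every iterate satisfies $\left\Vert \bx_{t}-\bx_{1}\right\Vert \leq2G_{1}/\mu$. Hence $\left\Vert \nabla f(\bx_{t})\right\Vert \leq G_{1}+L\left\Vert \bx_{t}-\bx_{1}\right\Vert $, so the threshold $2L\left\Vert \bx_{t}-\bx_{1}\right\Vert +2G_{1}\geq2\left\Vert \nabla f(\bx_{t})\right\Vert $ behaves as in the Lipschitz case, and a deterministic bound $\left\Vert \nabla f(\bx_{t})\right\Vert \leq(1+2\kappa)G_{1}$ holds everywhere. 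This deterministic bound removes the need for the Freedman-plus-refined-induction localization of Lemma \ref{lem:cvx-smo-distance}. I would invoke the strongly convex core inequality (Lemma \ref{lem:str-core}) with $\omega=0$ (smooth case) on the horizon $S$. The stepsize $\eta_{t}=\frac{6}{\mu(t+12\kappa)}$ guarantees $\eta_{t}\leq\frac{1}{2L}$, so the smoothness term can be absorbed. I would then pair this with the smooth-residual lemma (the strongly convex analogue of Lemma \ref{lem:str-lip-res}, with $G$ replaced by $\bar{G}=(1+\kappa)G_{1}$). Computing $\gamma_{t}=\prod_{s=2}^{t}\frac{1+\mu\eta_{s-1}}{1+\mu\eta_{s}/2}$ gives a telescoping product of order $\frac{(t+12\kappa)^{3}}{\kappa^{3}}$, up to constants. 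Using the constant threshold level $\cm_{\star}[T(T+\kappa)]^{\frac{1}{2\p}}$ and the definition of $\varphi_{\smo}$ in (\ref{eq:smo-hp-varphi}), the target bound is
\[
\left\Vert \bx_{\star}-\bx_{S+1}\right\Vert ^{2}=\O\left(\frac{\kappa^{3}D^{2}}{(T+\kappa)^{3}}+\frac{(\varphi_{\smo}^{2}+\ln^{2}\frac{3}{\delta})\bar{G}^{2}}{\mu^{2}T(T+\kappa)}+\frac{\sigma_{\sma}^{\frac{4}{\p}-2}\sigma_{\lar}^{4-\frac{4}{\p}}+\sigma_{\sma}^{\frac{2}{\p}}\sigma_{\lar}^{2-\frac{2}{\p}}\ln^{2-\frac{2}{\p}}\frac{3}{\delta}}{\mu^{2}[T(T+\kappa)]^{1-\frac{1}{\p}}}\right).
\]
The first term is then controlled by $D\leq G_{1}/\mu$ (Fact \ref{fact:compact}) and $\frac{\kappa^{3}}{(T+\kappa)^{3}}\leq\frac{(1+\kappa)^{2}}{T(T+\kappa)}$, so it is absorbed into the second.

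\textbf{Stage two.} For $t>S$ the clipping threshold is at least $2(1+2\kappa)G_{1}\geq2\left\Vert \nabla f(\bx_{t})\right\Vert $, so $f$ is effectively $(1+2\kappa)G_{1}$-Lipschitz on $\X$. I would apply Lemma \ref{lem:cvx-core} with $\omega=1$ and Lemma \ref{lem:cvx-lip-res} exactly as in (\ref{eq:cvx-lip-hp-3}), with $G\to(1+2\kappa)G_{1}$, horizon $\left\lfloor \frac{T}{2}\right\rfloor $ and $\eta=\frac{1}{\mu T(T+\kappa)}$. The terms then scale as follows: $\frac{\left\Vert \bx_{\star}-\bx_{S+1}\right\Vert ^{2}}{\eta T^{2}}=\O(\mu\frac{T+\kappa}{T}\left\Vert \bx_{\star}-\bx_{S+1}\right\Vert ^{2})$; the $\eta G^{2}T$ term becomes $\frac{(1+\kappa)^{2}G_{1}^{2}}{\mu(T+\kappa)}$; and the noise term becomes $\frac{T^{2/\p}}{\mu T(T+\kappa)}$.

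\textbf{Main obstacle.} I expect the main difficulty to be that the Lipschitz-style stage-two term $\frac{\bar{G}^{2}}{\mu(T+\kappa)}$ is too large to fit the claimed $\frac{1}{\mu T^{2}}$ rate. To fix this, I would instead use the smooth core (Lemma \ref{lem:cvx-core} with $\omega=0$) in stage two; there the stepsize bound $\eta_{t}\leq\frac{1}{\mu(T+\kappa)}\leq\frac{1}{L}$ lets descent absorb the $\eta^{2}\left\Vert \nabla f\right\Vert ^{2}$ term, and only the residual-bias bound uses the deterministic gradient bound. The remaining bookkeeping is to check that $\frac{T^{2/\p}}{T(T+\kappa)}\lesssim[T(T+\kappa)]^{\frac{1}{\p}-1}$ and that the factor $\frac{T+\kappa}{T}$ multiplying the stage-one bound does not break the final rate. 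These checks reduce to the same $\cm_{\star}$ and $\varphi_{\smo}$ manipulations as in the proof of Theorem \ref{thm:cvx-smo-hp}.
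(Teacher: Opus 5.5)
Your two-stage route is the paper's. Stage one uses Lemma \ref{lem:str-core} with $\omega=0$ plus the smooth residual bound, where compactness gives $\zeta_t\equiv1$ for $\bar D=2G_1/\mu$. Stage two restarts at $\bx_{S+1}$ and runs the smooth convex last-iterate analysis with $\eta_\star=\frac{1}{\mu T(T+\kappa)}$. However, the proposal does not close, because the final combination you defer as bookkeeping fails for the stage-one bound you actually state.

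Write $\Sigma_{\p}\defeq\sigma_{\sma}^{\frac{4}{\p}-2}\sigma_{\lar}^{4-\frac{4}{\p}}+\sigma_{\sma}^{\frac{2}{\p}}\sigma_{\lar}^{2-\frac{2}{\p}}\ln^{2-\frac{2}{\p}}\frac{3}{\delta}$. Your stage-one target has noise part $\frac{\Sigma_{\p}}{\mu^{2}[T(T+\kappa)]^{1-\frac{1}{\p}}}$. The restart multiplies this by $\frac{1}{\eta_{\star}\lfloor T/2\rfloor^{2}}\asymp\frac{\mu(T+\kappa)}{T}$, leaving $\frac{T+\kappa}{T}\cdot\frac{\Sigma_{\p}}{\mu[T(T+\kappa)]^{1-\frac{1}{\p}}}$. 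That exceeds the claimed rate by a factor that is unbounded when $\kappa\gg T$. No manipulation of $\cm_{\star}$ or $\varphi_{\smo}$ repairs this. The $\bar G^{2}$ part of your target does survive the multiplication, giving $\frac{(\varphi_{\smo}^{2}+\ln^{2}\frac{3}{\delta})\bar G^{2}}{\mu T^{2}}$; only the noise part fails.

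The missing point is that stage one actually yields a noise term smaller by exactly $\frac{T}{T+\kappa}$, and you must keep that asymmetric form rather than symmetrizing it. The relevant orders are
\[
\gamma_{S}^{-1}\asymp\frac{(1+\kappa)^{3}}{(T+\kappa)^{3}},\qquad\gamma_t\eta_t\asymp\frac{(t+\kappa)^{2}}{\mu(1+\kappa)^{3}},\qquad\gamma_t\eta_t^{2}\asymp\frac{t+\kappa}{\mu^{2}(1+\kappa)^{3}},\qquad\cm=\cm_{\star}[T(T+\kappa)]^{\frac{1}{2\p}}.
\]
With these, the noise part of the residual is $\O\big(\frac{\Sigma_{\p}T^{\frac{1}{\p}}(T+\kappa)^{1+\frac{1}{\p}}}{\mu^{2}(1+\kappa)^{3}}\big)$. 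For example, the bias term contributes $\frac{\sigma_{\sma}^{2}\sigma_{\lar}^{2\p-2}}{\cm^{2\p-2}}\cdot\frac{T(T+\kappa)^{2}}{\mu^{2}(1+\kappa)^{3}}$. Hence the noise part of $\left\Vert \bx_{\star}-\bx_{S+1}\right\Vert ^{2}$ is $\O\big(\frac{\Sigma_{\p}T^{\frac{1}{\p}}}{\mu^{2}(T+\kappa)^{2-\frac{1}{\p}}}\big)$. Multiplying by $\frac{\mu(T+\kappa)}{T}$ then gives exactly $\frac{\Sigma_{\p}}{\mu[T(T+\kappa)]^{1-\frac{1}{\p}}}$.

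There are two smaller slips.
\begin{itemize}
\item $\gamma_t$ is of order $\frac{(t+\kappa)^{3}}{(1+\kappa)^{3}}$, not $\frac{(t+12\kappa)^{3}}{\kappa^{3}}$. The latter is wrong for $\kappa\lesssim1$: at $\kappa=0$ one has $\gamma_t=\frac{t(t+4)(t+5)}{30}$. Small $\kappa$ is genuinely possible, since strong star-convexity may come from $r$. Your absorption of the $D^{2}$ term still works with the correct normalization.
\item Lemma \ref{lem:descent} requires $\eta_t\leq\frac{1}{2L}$, not $\frac{1}{L}$. In stage two this follows from $T-t+1\leq\lfloor T/2\rfloor$, which gives $\eta_t\leq\frac{1}{2\mu(T+\kappa)}\leq\frac{1}{2L}$.
\end{itemize}
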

\begin{proof}
Given $T\in\N$ and $\delta\in\left(0,\frac{1}{2}\right]$, note that
$\left\{ \eta_{t}\right\} _{t=1}^{S}$ satisfies Lemma \ref{lem:str-core}
with $\eta=6$ under the time horizon $S$, $\left\{ \eta_{t}\right\} _{t=1}^{S}$
and $\left\{ \cm_{t}\right\} _{t=1}^{S}$ satisfy Conditions \ref{enu:str-smo-res-1}
and \ref{enu:str-smo-res-2} in Lemma \ref{lem:str-smo-res}, and
$\hres_{S}^{\str}$ in Lemma \ref{lem:str-core} equals $\bar{\hres}_{S}^{\str}$
in Lemma \ref{lem:str-smo-res} under $\bar{D}\defeq\frac{2G_{1}}{\mu}$
due to Assumption \ref{assu:compact}. Therefore, Lemmas \ref{lem:str-core}
($\eta=6$ and $\omega=0$ now) and \ref{lem:str-smo-res} ($\eta=6$
and $\bar{D}=\frac{2G_{1}}{\mu}$ now) together yield that, with probability
at least $1-\delta$,
\begin{equation}
\left\Vert \bx_{\star}-\bx_{S+1}\right\Vert ^{2}\leq\frac{8D^{2}+4\bar{\hc}_{S}^{\str}}{\gamma_{S}},\label{eq:str-smo-hp-1}
\end{equation}
where $\bar{\hc}_{S}^{\str}$ is a constant satisfying
\begin{align}
\bar{\hc}_{S}^{\str}= & \O\left(\max_{t\in\left[S\right]}\gamma_{t}\eta_{t}^{2}\rho^{2}\ln^{2}\frac{3}{\delta}+\sum_{t=1}^{S}\sigma_{\lar}^{\p}\gamma_{t}\eta_{t}^{2}\cm^{2-\p}+\sum_{t=1}^{S}\left(\frac{\sigma_{\sma}^{\p}\gamma_{t}\eta_{t}^{2}\rho}{\cm^{\p-1}}+\min_{\beta\in\left[0,\frac{1}{2}\right]}\frac{\sigma_{\lar}^{\p}\bar{G}^{2-\beta\p}\gamma_{t}\eta_{t}^{2}}{\cm^{(1-\beta)\p}}\right)\ln\frac{3}{\delta}\right.\nonumber \\
 & \left.\quad+\left(\sum_{t=1}^{S}\frac{\sigma_{\sma}^{2}\sigma_{\lar}^{2\p-2}\gamma_{t}\eta_{t}}{\cm^{2\p-2}}+\min_{\beta\in\left[0,\frac{1}{2}\right]}\frac{\sigma_{\lar}^{2\p}\bar{G}^{2-2\beta\p}\gamma_{t}\eta_{t}}{\cm^{2(1-\beta)\p}}\right)\frac{1}{\mu}\right),\label{eq:str-smo-hp-C-1}
\end{align}
 $\cm=\cm_{\star}\left[T(T+\kappa)\right]^{\frac{1}{2\p}}$, $\bar{G}=L(\bar{D}+D)+G_{1}\overset{0\leq D\leq\bar{D}=2G_{1}/\mu}{=}\Theta((1+\kappa)G_{1})$,
$\rho=\max\left\{ 2\bar{G},\cm\right\} $, and $\left\{ \gamma_{t}\right\} _{t=1}^{S}$
is a sequence defined in (\ref{eq:str-gamma-def}).

Next, we compute
\begin{align*}
\gamma_{t} & \overset{(\ref{eq:str-gamma-def})}{=}\prod_{s=2}^{t}\frac{1+\mu\eta_{s-1}}{1+\mu\eta_{s}/2}\overset{\eta_{s}=\frac{6}{\mu(s+12\kappa)}}{=}\prod_{s=2}^{t}\frac{(s+12\kappa)(s+5+12\kappa)}{(s-1+12\kappa)(s+3+12\kappa)}\\
 & =\frac{(t+12\kappa)(t+4+12\kappa)(t+5+12\kappa)}{(1+12\kappa)(5+12\kappa)(6+12\kappa)},\forall t\in\left[S\right].
\end{align*}
By the above equation and (\ref{eq:str-smo-hp-1}), we have
\begin{equation}
\left\Vert \bx_{\star}-\bx_{S+1}\right\Vert ^{2}=\O\left(\frac{\left(1+\kappa\right)^{3}\left(D^{2}+\bar{\hc}_{S}^{\str}\right)}{\left(S+\kappa\right)^{3}}\right)\overset{S=\left\lceil \frac{T}{2}\right\rceil }{=}\O\left(\frac{\left(1+\kappa\right)^{3}\left(D^{2}+\bar{\hc}_{S}^{\str}\right)}{\left(T+\kappa\right)^{3}}\right).\label{eq:str-smo-hp-2}
\end{equation}
Noticing that $\gamma_{t}\eta_{t}=\Theta\left(\frac{(t+\kappa)^{2}}{\mu(1+\kappa)^{3}}\right)$,
$\gamma_{t}\eta_{t}^{2}=\Theta\left(\frac{t+\kappa}{\mu^{2}(1+\kappa)^{3}}\right)$,
and $S=\Theta(T)$, we can write (\ref{eq:str-smo-hp-C-1}) into
\begin{align}
\bar{\hc}_{S}^{\str}= & \O\left(\frac{\rho^{2}(T+\kappa)\ln^{2}\frac{3}{\delta}}{\mu^{2}(1+\kappa)^{3}}+\frac{\sigma_{\lar}^{\p}\cm^{2-\p}(T+\kappa)T}{\mu^{2}(1+\kappa)^{3}}\right.\nonumber \\
 & \left.\quad+\left(\frac{\sigma_{\sma}^{\p}\rho(T+\kappa)T}{\cm^{\p-1}\mu^{2}(1+\kappa)^{3}}+\min_{\beta\in\left[0,\frac{1}{2}\right]}\frac{\sigma_{\lar}^{\p}\bar{G}^{2-\beta\p}(T+\kappa)T}{\cm^{(1-\beta)\p}\mu^{2}(1+\kappa)^{3}}\right)\ln\frac{3}{\delta}\right.\nonumber \\
 & \left.\quad+\frac{\sigma_{\sma}^{2}\sigma_{\lar}^{2\p-2}(T+\kappa)^{2}T}{\cm^{2\p-2}\mu^{2}(1+\kappa)^{3}}+\min_{\beta\in\left[0,\frac{1}{2}\right]}\frac{\sigma_{\lar}^{2\p}\bar{G}^{2-2\beta\p}(T+\kappa)^{2}T}{\cm^{2(1-\beta)\p}\mu^{2}(1+\kappa)^{3}}\right),\nonumber \\
= & \O\left(\frac{\rho^{2}(T+\kappa)\ln^{2}\frac{3}{\delta}}{\mu^{2}(1+\kappa)^{3}}+\frac{\sigma_{\lar}^{\p}\cm^{2-\p}(T+\kappa)T}{\mu^{2}(1+\kappa)^{3}}\right.\nonumber \\
 & \left.\quad+\frac{\sigma_{\sma}^{2}\sigma_{\lar}^{2\p-2}(T+\kappa)^{2}T}{\cm^{2\p-2}\mu^{2}(1+\kappa)^{3}}+\min_{\beta\in\left[0,\frac{1}{2}\right]}\frac{\sigma_{\lar}^{2\p}\bar{G}^{2-2\beta\p}(T+\kappa)^{2}T}{\cm^{2(1-\beta)\p}\mu^{2}(1+\kappa)^{3}}\right),\label{eq:str-smo-hp-C-2}
\end{align}
where the second equation is due to
\begin{align*}
 & \frac{\rho^{2}(T+\kappa)\ln^{2}\frac{3}{\delta}}{\mu^{2}(1+\kappa)^{3}}+\frac{\sigma_{\sma}^{2}\sigma_{\lar}^{2\p-2}(T+\kappa)^{2}T}{\cm^{2\p-2}\mu^{2}(1+\kappa)^{3}}+\min_{\beta\in\left[0,\frac{1}{2}\right]}\frac{\sigma_{\lar}^{2\p}\bar{G}^{2-2\beta\p}(T+\kappa)^{2}T}{\cm^{2(1-\beta)\p}\mu^{2}(1+\kappa)^{3}}\\
\overset{(a)}{=} & \Omega\left(\frac{\sigma_{\sma}\sigma_{\lar}^{\p-1}\rho(T+\kappa)^{\frac{3}{2}}T^{\frac{1}{2}}\ln\frac{3}{\delta}}{\cm^{\p-1}\mu^{2}(1+\kappa)^{3}}+\min_{\beta\in\left[0,\frac{1}{2}\right]}\frac{\sigma_{\lar}^{\p}\bar{G}^{1-\beta\p}\rho(T+\kappa)^{\frac{3}{2}}T^{\frac{1}{2}}\ln\frac{3}{\delta}}{\cm^{(1-\beta)\p}\mu^{2}(1+\kappa)^{3}}\right)\\
\overset{(b)}{=} & \Omega\left(\frac{\sigma_{\sma}^{\p}\rho(T+\kappa)T\ln\frac{3}{\delta}}{\cm^{\p-1}\mu^{2}(1+\kappa)^{3}}+\min_{\beta\in\left[0,\frac{1}{2}\right]}\frac{\sigma_{\lar}^{\p}\bar{G}^{2-\beta\p}(T+\kappa)T\ln\frac{3}{\delta}}{\cm^{(1-\beta)\p}\mu^{2}(1+\kappa)^{3}}\right),
\end{align*}
in which $(a)$ is by AM-GM inequality and $(b)$ is due to $\sigma_{\lar}\geq\sigma_{\sma}$,
$\p\geq1$, $\rho=\max\left\{ 2\bar{G},\cm\right\} \geq\bar{G}$,
and $T+\kappa\geq T$. Now, we use $\rho^{2}\leq4\bar{G}^{2}+\cm^{2}$
and $\cm=\cm_{\star}\left[T(T+\kappa)\right]^{\frac{1}{2\p}}$ to
simplify (\ref{eq:str-smo-hp-C-2}) into
\begin{align}
\bar{\hc}_{S}^{\str} & =\O\left(\frac{\left(\min_{\beta\in\left[0,\frac{1}{2}\right]}\frac{\sigma_{\lar}^{2\p}\left[T(T+\kappa)\right]^{\beta}}{\bar{G}^{2\beta\p}\cm_{\star}^{2(1-\beta)\p}}+\ln^{2}\frac{3}{\delta}\right)\bar{G}^{2}}{\mu^{2}(1+\kappa)^{3}}\left(T+\kappa\right)\right.\nonumber \\
 & \left.\quad+\frac{\cm_{\star}^{2}\ln^{2}\frac{3}{\delta}+\sigma_{\lar}^{\p}\cm_{\star}^{2-\p}+\frac{\sigma_{\sma}^{2}\sigma_{\lar}^{2\p-2}}{\cm_{\star}^{2\p-2}}}{\mu^{2}(1+\kappa)^{3}}T^{\frac{1}{\p}}\left(T+\kappa\right)^{1+\frac{1}{\p}}\right)\nonumber \\
 & \overset{(\ref{eq:hp-tau-star}),(\ref{eq:smo-hp-varphi})}{=}\O\left(\frac{\left(\varphi_{\smo}^{2}\sqrt{1+\frac{\kappa}{T}}+\ln^{2}\frac{3}{\delta}\right)\bar{G}^{2}}{\mu^{2}(1+\kappa)^{3}}\left(T+\kappa\right)\right.\nonumber \\
 & \left.\quad+\frac{\sigma_{\sma}^{\frac{4}{\p}-2}\sigma_{\lar}^{4-\frac{4}{\p}}+\sigma_{\sma}^{\frac{2}{\p}}\sigma_{\lar}^{2-\frac{2}{\p}}\ln^{2-\frac{2}{\p}}\frac{3}{\delta}}{\mu^{2}(1+\kappa)^{3}}T^{\frac{1}{\p}}\left(T+\kappa\right)^{1+\frac{1}{\p}}\right).\label{eq:str-smo-hp-C-simple}
\end{align}
Hence, we obtain that, with probability at least $1-\delta$,
\begin{align}
 & \left\Vert \bx_{\star}-\bx_{S+1}\right\Vert ^{2}\nonumber \\
\overset{(\ref{eq:str-smo-hp-2}),(\ref{eq:str-smo-hp-C-simple})}{=} & \O\left(\frac{\left(1+\kappa\right)^{3}D^{2}}{\left(T+\kappa\right)^{3}}+\frac{\left(\varphi_{\smo}^{2}\sqrt{1+\frac{\kappa}{T}}+\ln^{2}\frac{3}{\delta}\right)\bar{G}^{2}}{\mu^{2}\left(T+\kappa\right)^{2}}+\frac{\sigma_{\sma}^{\frac{4}{\p}-2}\sigma_{\lar}^{4-\frac{4}{\p}}+\sigma_{\sma}^{\frac{2}{\p}}\sigma_{\lar}^{2-\frac{2}{\p}}\ln^{2-\frac{2}{\p}}\frac{3}{\delta}}{\mu^{2}\left(T+\kappa\right)^{2-\frac{1}{\p}}T^{-\frac{1}{\p}}}\right)\nonumber \\
= & \O\left(\frac{\left(\varphi_{\smo}^{2}\sqrt{1+\frac{\kappa}{T}}+\ln^{2}\frac{3}{\delta}\right)\bar{G}^{2}}{\mu^{2}\left(T+\kappa\right)^{2}}+\frac{\sigma_{\sma}^{\frac{4}{\p}-2}\sigma_{\lar}^{4-\frac{4}{\p}}+\sigma_{\sma}^{\frac{2}{\p}}\sigma_{\lar}^{2-\frac{2}{\p}}\ln^{2-\frac{2}{\p}}\frac{3}{\delta}}{\mu^{2}\left(T+\kappa\right)^{2-\frac{1}{\p}}T^{-\frac{1}{\p}}}\right),\label{eq:str-smo-hp-3}
\end{align}
where the second step is due to $\frac{\left(1+\kappa\right)^{3}D^{2}}{\left(T+\kappa\right)^{3}}\leq\frac{\left(1+\kappa\right)^{2}D^{2}}{\left(T+\kappa\right)^{2}}\overset{\text{Assumption }\ref{assu:compact}}{=}\O\left(\frac{\left(1+\kappa\right)^{2}G_{1}^{2}}{\mu^{2}\left(T+\kappa\right)^{2}}\right)=\O\left(\frac{\bar{G}^{2}}{\mu^{2}\left(T+\kappa\right)^{2}}\right)$.

Finally, when $T\geq2$, we observe that $\bx_{T+1}$ can be viewed
as the output of Algorithm \ref{alg:Clipped-SGD} after $T-S=\left\lfloor \frac{T}{2}\right\rfloor $
iterations with the initial point $\bx_{S+1}$. Moreover, note that
for any $t>S$,
\begin{align*}
\left\Vert \nabla f(\bx_{t})\right\Vert  & \leq\left\Vert \nabla f(\bx_{t})-\nabla f(\bx_{1})\right\Vert +\left\Vert \nabla f(\bx_{1})\right\Vert \leq L\left\Vert \bx_{t}-\bx_{1}\right\Vert +\left\Vert \nabla f(\bx_{1})\right\Vert \\
 & \overset{(c)}{\leq}2\kappa G_{1}+G_{1}=(1+2\kappa)G_{1}\leq\frac{\cm_{t}}{2},
\end{align*}
where $(c)$ follows from Assumption \ref{assu:compact}. Therefore,
let $\eta_{\star}\defeq\frac{1}{\mu T(T+\kappa)}$, by our choices
of $\left\{ \eta_{t}\right\} _{t=S+1}^{T}$ and $\left\{ \cm_{t}\right\} _{t=S+1}^{T}$
and the proof of Theorem \ref{thm:cvx-smo-hp} (in which we can directly
set $\bar{D}=\frac{2G_{1}}{\mu}$ due to Assumption \ref{assu:compact},
resulting in $\zeta_{t}=1$ almost surely), we have, for $\bar{G}=(1+2\kappa)G_{1}$,
with probability at least $1-\delta$,
\begin{align}
F(\bx_{T+1})-F_{\star}\overset{(\ref{eq:cvx-smo-hp-C}),(\ref{eq:cvx-smo-hp-3})}{=} & \O\left(\frac{\left\Vert \bx_{\star}-\bx_{S+1}\right\Vert ^{2}}{\eta_{\star}\left\lfloor \frac{T}{2}\right\rfloor ^{2}}+\eta_{\star}\left(\varphi_{\smo}^{2}+\ln^{2}\frac{3}{\delta}\right)\bar{G}^{2}\right.\nonumber \\
 & \left.\quad+\eta_{\star}\left(\sigma_{\sma}^{\frac{4}{\p}-2}\sigma_{\lar}^{4-\frac{4}{\p}}+\sigma_{\sma}^{\frac{2}{\p}}\sigma_{\lar}^{2-\frac{2}{\p}}\ln^{2-\frac{2}{\p}}\frac{3}{\delta}\right)\left\lfloor \frac{T}{2}\right\rfloor ^{\frac{2}{\p}}\right)\nonumber \\
\overset{\eta_{\star}=\frac{1}{\mu T(T+\kappa)}}{=} & \O\left(\left(1+\frac{\kappa}{T}\right)\mu\left\Vert \bx_{\star}-\bx_{S+1}\right\Vert ^{2}+\frac{\left(\varphi_{\smo}^{2}+\ln^{2}\frac{3}{\delta}\right)\bar{G}^{2}}{\mu T(T+\kappa)}\right.\nonumber \\
 & \left.\quad+\frac{\sigma_{\sma}^{\frac{4}{\p}-2}\sigma_{\lar}^{4-\frac{4}{\p}}+\sigma_{\sma}^{\frac{2}{\p}}\sigma_{\lar}^{2-\frac{2}{\p}}\ln^{2-\frac{2}{\p}}\frac{3}{\delta}}{\mu T^{1-\frac{2}{\p}}(T+\kappa)}\right).\label{eq:str-smo-hp-4}
\end{align}
By combining (\ref{eq:str-smo-hp-3}) and (\ref{eq:str-smo-hp-4}),
we obtain the desired result.
\end{proof}

\section{Theoretical Analysis\label{sec:analysis}}

This section contains the missing analysis of each intermediate lemma
used in Section \ref{sec:theorems}. Before proceeding, we recall
that $\F_{t}=\sigma\left(Z_{1},\mydots,Z_{t}\right)$ denotes the
natural filtration. By the structure of Algorithm \ref{alg:Clipped-SGD},
we naturally have $\cm_{t}\in\F_{t-1}$ and $\eta_{t}\in\F_{t}$,
which implies that $\bx_{t}\in\F_{t-1}$ and $\nabla f(\bx_{t})\in\F_{t-1}$.
For each $t\in\left[T\right]$, we define the following standard notation
in the literature on $\ClipSGD$, 
\begin{eqnarray}
\bd_{t}^{\rc}\defeq\bg_{t}^{\rc}-\nabla f(\bx_{t}), & \bd_{t}^{\ru}\defeq\bg_{t}^{\rc}-\E\left[\bg_{t}^{\rc}\mid\F_{t-1}\right], & \bd_{t}^{\rb}\defeq\E\left[\bg_{t}^{\rc}\mid\F_{t-1}\right]-\nabla f(\bx_{t}),\label{eq:decomposition}
\end{eqnarray}
where $\bd_{t}^{\rc}\in\F_{t}$ denotes the difference between the
clipped stochastic gradient and the true gradient, $\bd_{t}^{\ru}$
is the unbiased part in $\bd_{t}^{\rc}$, and $\bd_{t}^{\rb}$ is
the biased part in $\bd_{t}^{\rc}$.

\subsection{General Lemmas}

This subsection provides two general lemmas used in the proof.

First, Lemma \ref{lem:clip-ineq} below gives the clipping error bounds
under Assumption \ref{assu:heavy} recently proved by \citep{ICLR2026_bd5b846a}.
\begin{lem}[Lemma 2 of \citep{ICLR2026_bd5b846a}]
\label{lem:clip-ineq}Under Assumption \ref{assu:heavy}, let $\chi_{t}\defeq\1\left[\cm_{t}\geq2\left\Vert \nabla f(\bx_{t})\right\Vert \right]$,
then there are:
\begin{enumerate}[leftmargin=*]
\item $\left\Vert \bd_{t}^{\ru}\right\Vert \leq2\cm_{t}$.
\item $\E\left[\left\Vert \bd_{t}^{\ru}\right\Vert ^{2}\mid\F_{t-1}\right]\leq4\sigma_{\lar}^{\p}\cm_{t}^{2-\p}$.
\item $\left\Vert \E\left[\bd_{t}^{\ru}\left(\bd_{t}^{\ru}\right)^{\top}\mid\F_{t-1}\right]\right\Vert _{\op}\chi_{t}\leq4\sigma_{\sma}^{\p}\cm_{t}^{2-\p}+2^{\p+1}\sigma_{\lar}^{\p}\left\Vert \nabla f(\bx_{t})\right\Vert ^{2}\cm_{t}^{-\p}$.
\item $\left\Vert \bd_{t}^{\rb}\right\Vert \chi_{t}\leq\sigma_{\sma}\sigma_{\lar}^{\p-1}\cm_{t}^{1-\p}+2^{\p-1}\sigma_{\lar}^{\p}\left\Vert \nabla f(\bx_{t})\right\Vert \cm_{t}^{-\p}$.
\end{enumerate}
\end{lem}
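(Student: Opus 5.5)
We work conditionally on $\F_{t-1}$, so that $\bx_{t}$, $\nabla f(\bx_{t})$, and $\cm_{t}$ are fixed. Write $\bg=\nabla f(\bx_{t})+\boldsymbol{\xi}$ with $\boldsymbol{\xi}\defeq\bg_{t}-\nabla f(\bx_{t})$, so that $\E[\boldsymbol{\xi}]=\bzero$. On the event $\{\left\Vert \bg\right\Vert >\cm_{t}\}$, write $\lambda\defeq\cm_{t}/\left\Vert \bg\right\Vert \in(0,1)$. Then
\[
\bg_{t}^{\rc}-\nabla f(\bx_{t})=\boldsymbol{\xi}\1[\left\Vert \bg\right\Vert \leq\cm_{t}]+\left(\lambda\boldsymbol{\xi}-(1-\lambda)\nabla f(\bx_{t})\right)\1[\left\Vert \bg\right\Vert >\cm_{t}].
\]
The two tools used throughout are the following.
\begin{itemize}[leftmargin=*]
\item Variance is at most the second moment about any fixed point: $\E\left\Vert \bd_{t}^{\ru}\right\Vert ^{2}\leq\E\left\Vert \bg_{t}^{\rc}-\bc\right\Vert ^{2}$, and likewise for each direction $\be\in\S^{d-1}$.
\item Under $\chi_{t}=1$, i.e., $\left\Vert \nabla f(\bx_{t})\right\Vert \leq\cm_{t}/2$, clipping can only occur when $\left\Vert \boldsymbol{\xi}\right\Vert \geq\cm_{t}/2$. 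Markov's inequality then gives $\Pr[\left\Vert \bg\right\Vert >\cm_{t}]\leq2^{\p}\sigma_{\lar}^{\p}\cm_{t}^{-\p}$.
\end{itemize}

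\emph{Items 1 and 2.} Item 1 is immediate: $\left\Vert \bg_{t}^{\rc}\right\Vert \leq\cm_{t}$ implies $\left\Vert \E[\bg_{t}^{\rc}\mid\F_{t-1}]\right\Vert \leq\cm_{t}$, and the triangle inequality gives the bound $2\cm_{t}$. For item 2, which must hold without $\chi_{t}$, I take $\bc=\clip_{\cm_{t}}(\nabla f(\bx_{t}))$. Clipping is the Euclidean projection onto $\B^{d}(\bzero,\cm_{t})$, hence nonexpansive, and both clipped points lie in that ball. Therefore
\[
\left\Vert \bg_{t}^{\rc}-\bc\right\Vert \leq\min\left\{ \left\Vert \boldsymbol{\xi}\right\Vert ,2\cm_{t}\right\} .
\]
Using $\min\{a,b\}^{2}\leq a^{\p}b^{2-\p}$ and taking expectations gives $2^{2-\p}\sigma_{\lar}^{\p}\cm_{t}^{2-\p}\leq4\sigma_{\lar}^{\p}\cm_{t}^{2-\p}$.

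\emph{Item 3.} Fix $\be\in\S^{d-1}$ and bound $\E\langle\be,\bd_{t}^{\ru}\rangle^{2}\leq\E\langle\be,\bg_{t}^{\rc}-\nabla f(\bx_{t})\rangle^{2}$ using the decomposition above.
\begin{itemize}[leftmargin=*]
\item On the unclipped event, $\left\Vert \boldsymbol{\xi}\right\Vert \leq\left\Vert \bg\right\Vert +\left\Vert \nabla f(\bx_{t})\right\Vert \leq\frac{3}{2}\cm_{t}$.
\item On the clipped event, $\lambda\left\Vert \boldsymbol{\xi}\right\Vert \leq\lambda(\left\Vert \bg\right\Vert +\cm_{t}/2)\leq\frac{3}{2}\cm_{t}$. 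Moreover, $(x+y)^{2}\leq2x^{2}+2y^{2}$ splits off the term $2(1-\lambda)^{2}\left\Vert \nabla f(\bx_{t})\right\Vert ^{2}$.
\end{itemize}
In both cases the $\boldsymbol{\xi}$-part is at most $2\left|\langle\be,\boldsymbol{\xi}\rangle\right|^{\p}(\frac{3}{2}\cm_{t})^{2-\p}$, whose expectation is $\O(\sigma_{\sma}^{\p}\cm_{t}^{2-\p})$. The gradient part is at most $2\left\Vert \nabla f(\bx_{t})\right\Vert ^{2}\Pr[\left\Vert \boldsymbol{\xi}\right\Vert \geq\cm_{t}/2]\leq2^{\p+1}\sigma_{\lar}^{\p}\left\Vert \nabla f(\bx_{t})\right\Vert ^{2}\cm_{t}^{-\p}$. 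Taking the supremum over $\be$ gives the operator-norm bound.

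\emph{Item 4.} Since $\E[\bg]=\nabla f(\bx_{t})$, the bias equals $\bd_{t}^{\rb}=\E[\bg_{t}^{\rc}-\bg]=-\E[(1-\lambda)\bg\1[\left\Vert \bg\right\Vert >\cm_{t}]]$. I bound its norm by $\sup_{\be}$ of $\E[(1-\lambda)(|\langle\be,\boldsymbol{\xi}\rangle|+\left\Vert \nabla f(\bx_{t})\right\Vert )\1[\left\Vert \boldsymbol{\xi}\right\Vert \geq\cm_{t}/2]]$.
\begin{itemize}[leftmargin=*]
\item The gradient part is controlled by the Markov bound above, giving $\sigma_{\lar}^{\p}\left\Vert \nabla f(\bx_{t})\right\Vert \cm_{t}^{-\p}$ up to a $2^{\p}$ factor.
\item For the noise part, Hölder's inequality with exponents $\p$ and $\frac{\p}{\p-1}$ gives $\sigma_{\sma}\Pr[\cdot]^{1-1/\p}\lesssim\sigma_{\sma}\sigma_{\lar}^{\p-1}\cm_{t}^{1-\p}$.
\end{itemize}

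The main obstacle is matching the exact constants in item 4, namely $1$ and $2^{\p-1}$, rather than the cruder $2^{\p-1}$ and $2^{\p}$ that the argument above yields. To close this gap I would exploit the factor $1-\lambda=(\left\Vert \bg\right\Vert -\cm_{t})_{+}/\left\Vert \bg\right\Vert $ more carefully, for instance via $(1-\lambda)\leq(1-\lambda)^{\p-1}$ combined with $(\left\Vert \bg\right\Vert -\cm_{t})_{+}\leq(\left\Vert \boldsymbol{\xi}\right\Vert -\cm_{t}/2)_{+}$, instead of using only the indicator. That said, the constants are immaterial for all downstream $\O(\cdot)$ uses of this lemma.
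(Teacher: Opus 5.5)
Your items 1--3 are correct with the stated constants. The nonexpansiveness argument for item 2 is clean and does not need $\chi_t$. In item 3 your noise part is at most $2(\tfrac{3}{2})^{2-\p}\sigma_{\sma}^{\p}\cm_t^{2-\p}\le 3\sigma_{\sma}^{\p}\cm_t^{2-\p}$, and you should state that constant explicitly rather than write $\O(\cdot)$. The paper itself gives no proof of this lemma; it imports it, with these explicit constants, from the cited work.

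The gap is item 4. Your argument only yields $\|\bd_t^{\rb}\|\chi_t\le 2^{\p-1}\sigma_{\sma}\sigma_{\lar}^{\p-1}\cm_t^{1-\p}+2^{\p}\sigma_{\lar}^{\p}\|\nabla f(\bx_t)\|\cm_t^{-\p}$, which is strictly weaker than the claim. Your remark that constants are immaterial downstream is true: they only enter the numerical constant $\constant$ and the $\O(\cdot)$ bounds. But that justifies a weaker lemma, not this one. The loss comes from bounding the weight $w\defeq(1-\lambda)\1[\|\bg\|>\cm_t]$ by the bare indicator $\1[\|\boldsymbol{\xi}\|\ge\cm_t/2]$ and then applying Markov.

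The missing step is a pointwise bound on $w$. On $\{\|\bg\|>\cm_t\}$ with $\chi_t=1$,
\[
1-\lambda=\frac{\|\bg\|-\cm_t}{\|\bg\|}\le\frac{\|\bg\|-\cm_t}{\cm_t}\le\frac{\|\boldsymbol{\xi}\|-\cm_t/2}{\cm_t},
\]
so $0\le w\le\min\{1,\|\boldsymbol{\xi}\|/\cm_t\}\,\1[\|\boldsymbol{\xi}\|\ge\cm_t/2]$.

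\begin{itemize}
\item \emph{Gradient term.} This gives $\E[w]\le\cm_t^{-1}\E[\|\boldsymbol{\xi}\|\1[\|\boldsymbol{\xi}\|\ge\cm_t/2]]\le\cm_t^{-1}(\cm_t/2)^{1-\p}\sigma_{\lar}^{\p}=2^{\p-1}\sigma_{\lar}^{\p}\cm_t^{-\p}$, exactly the stated constant.
\item \emph{Noise term.} Apply H\"older against $w$ itself rather than the indicator. Since $w\in[0,1]$ and $\frac{\p}{\p-1}\ge\p$ for $\p\le2$, you get $\E[w|\langle\be,\boldsymbol{\xi}\rangle|]\le\sigma_{\sma}(\E[w^{\p/(\p-1)}])^{1-1/\p}\le\sigma_{\sma}(\E[w^{\p}])^{1-1/\p}\le\sigma_{\sma}\sigma_{\lar}^{\p-1}\cm_t^{1-\p}$, i.e., constant $1$.
\end{itemize}

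Your suggested trick $(1-\lambda)\le(1-\lambda)^{\p-1}$ achieves the same thing for the noise term. For the gradient term it is the wrong move. It leaves only a $(\p-1)$-th power of $(\|\boldsymbol{\xi}\|-\cm_t/2)_+/\cm_t$. Bounding that pointwise by a multiple of $\|\boldsymbol{\xi}\|^{\p}\cm_t^{-\p}$ costs a constant $2(\p-1)^{\p-1}\p^{-\p}$, which exceeds $2^{\p-1}$ for $\p$ close to $1$. With the bound on $w$ above, your proof establishes all four items as stated.
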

\begin{rem}
The original Lemma 2 of \citep{ICLR2026_bd5b846a} is stated more
generally and includes two additional inequalities. Here, we present
a simplified version containing only the ones used in this paper.
\end{rem}
Next, we give a one-step descent inequality. Because it is standard
in the literature, we omit the proof to save space.
\begin{lem}
\label{lem:descent}Under Assumptions \ref{assu:obj} and \ref{assu:lip}
(resp. \ref{assu:smo}), for any $\by\in\X$, $t\in\N$, and $\eta_{t}>0$
(resp. $0<\eta_{t}\leq\frac{1}{2L}$), $\ClipSGD$ (Algorithm \ref{alg:Clipped-SGD})
guarantees:
\[
F(\bx_{t+1})-F(\by)\leq\frac{\left\Vert \by-\bx_{t}\right\Vert ^{2}-\left\Vert \by-\bx_{t+1}\right\Vert ^{2}}{2\eta_{t}}+\left\langle \bd_{t}^{\rc},\by-\bx_{t}\right\rangle +\eta_{t}\left\Vert \bd_{t}^{\rc}\right\Vert ^{2}+4\eta_{t}G^{2}\omega,
\]
where $\omega\defeq\begin{cases}
1 & \text{under Assumption \ref{assu:lip}}\\
0 & \text{under Assumption \ref{assu:smo}}
\end{cases}$.
\end{lem}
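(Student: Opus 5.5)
The statement is Lemma~\ref{lem:descent}, the one-step descent inequality. The plan is to combine the optimality condition of the proximal step with convexity of $r$, and then split into the Lipschitz case ($\omega=1$) and the smooth case ($\omega=0$).

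\textbf{Step 1: optimality of the proximal step.} Since $\bx_{t+1}$ minimizes the $\frac{1}{\eta_t}$-strongly convex function $r(\bx)+\langle\bg_t^{\rc},\bx\rangle+\frac{\|\bx-\bx_t\|^2}{2\eta_t}$ over $\X$, quadratic growth at the minimizer gives, for every $\by\in\X$,
\[
r(\bx_{t+1})+\langle\bg_t^{\rc},\bx_{t+1}-\by\rangle\le r(\by)+\frac{\|\by-\bx_t\|^2-\|\by-\bx_{t+1}\|^2-\|\bx_{t+1}-\bx_t\|^2}{2\eta_t}.
\]
Write $\bg_t^{\rc}=\nabla f(\bx_t)+\bd_t^{\rc}$ and split
\[
\langle\bd_t^{\rc},\by-\bx_{t+1}\rangle=\langle\bd_t^{\rc},\by-\bx_t\rangle+\langle\bd_t^{\rc},\bx_t-\bx_{t+1}\rangle .
\]
This yields
\[
r(\bx_{t+1})-r(\by)+\langle\nabla f(\bx_t),\bx_{t+1}-\by\rangle\le\frac{\|\by-\bx_t\|^2-\|\by-\bx_{t+1}\|^2}{2\eta_t}-\frac{\|\bx_{t+1}-\bx_t\|^2}{2\eta_t}+\langle\bd_t^{\rc},\by-\bx_t\rangle+\langle\bd_t^{\rc},\bx_t-\bx_{t+1}\rangle .
\]

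\textbf{Step 2: bounding $f(\bx_{t+1})-f(\by)$.} By convexity, $f(\bx_t)-f(\by)\le\langle\nabla f(\bx_t),\bx_t-\by\rangle$. It then remains to control $f(\bx_{t+1})-f(\bx_t)$.
\begin{itemize}
\item In the Lipschitz case, convexity at $\bx_{t+1}$ and the bound $\|\nabla f\|\le G$ give
\[
f(\bx_{t+1})-f(\bx_t)\le\langle\nabla f(\bx_t),\bx_{t+1}-\bx_t\rangle+2G\|\bx_{t+1}-\bx_t\|.
\]
\item In the smooth case, the descent lemma gives
\[
f(\bx_{t+1})-f(\bx_t)\le\langle\nabla f(\bx_t),\bx_{t+1}-\bx_t\rangle+\frac{L}{2}\|\bx_{t+1}-\bx_t\|^2 .
\]
\end{itemize}
Adding either bound to Step 1, the inner products with $\nabla f(\bx_t)$ cancel. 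What remains to absorb is
\[
-\frac{\|\bx_{t+1}-\bx_t\|^2}{2\eta_t}+\langle\bd_t^{\rc},\bx_t-\bx_{t+1}\rangle+\bigl(2G\|\bx_{t+1}-\bx_t\|\ \text{or}\ \tfrac{L}{2}\|\bx_{t+1}-\bx_t\|^2\bigr).
\]

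\textbf{Step 3: absorbing the leftover terms.} This is the only delicate step, because the constants must come out exactly as $\eta_t\|\bd_t^{\rc}\|^2+4\eta_tG^2\omega$. Set $a=\|\bx_{t+1}-\bx_t\|$.
\begin{itemize}
\item Lipschitz case: split the negative quadratic in half. Young's inequality gives $\|\bd_t^{\rc}\|a\le\eta_t\|\bd_t^{\rc}\|^2+\frac{a^2}{4\eta_t}$ and $2Ga\le4\eta_tG^2+\frac{a^2}{4\eta_t}$. Together these use up exactly $\frac{a^2}{2\eta_t}$.
\item Smooth case: since $\eta_t\le\frac{1}{2L}$, we have $\frac{L}{2}a^2\le\frac{a^2}{4\eta_t}$. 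The remaining $-\frac{a^2}{4\eta_t}$ absorbs $\|\bd_t^{\rc}\|a\le\eta_t\|\bd_t^{\rc}\|^2+\frac{a^2}{4\eta_t}$.
\end{itemize}
Summing the pieces gives the claimed inequality in both cases, with $\omega$ selecting whether the $4\eta_tG^2$ term is present.
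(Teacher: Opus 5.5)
Your proof is correct. The paper omits this proof as standard, and your argument is that standard route: the three-point inequality from the $\frac{1}{\eta_t}$-strong convexity of the proximal subproblem, then convexity plus either the $2G\|\bx_{t+1}-\bx_t\|$ bound or the descent lemma to handle $f(\bx_{t+1})-f(\bx_t)$, and finally Young's inequality to absorb the cross terms into $-\frac{\|\bx_{t+1}-\bx_t\|^2}{2\eta_t}$, with the constants coming out exactly as $\eta_t\|\bd_t^{\rc}\|^2+4\eta_tG^2\omega$ in both cases.
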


\subsection{Lemmas for General Convex Optimization}

In this subsection, we will prove the lemmas used in general convex
optimization, i.e., $\mu=0$ in Assumption \ref{assu:obj}.

Following \citep{ICLR2024_b1239ceb} and \citep{doi:10.1137/24M1717762},
we first introduce an auxiliary sequence $\left\{ \bz_{t}\right\} _{t=0}^{T}$
defined as follows
\begin{eqnarray}
\bz_{0}\defeq\bx_{\star} & \text{and} & \bz_{t}\defeq\frac{v_{t-1}^{2}}{v_{t}^{2}}\bz_{t-1}+\left(1-\frac{v_{t-1}^{2}}{v_{t}^{2}}\right)\bx_{t},\forall t\in\left[T\right],\label{eq:cvx-z-def}
\end{eqnarray}
where $\bx_{\star}$ is the optimal solution given in Assumption \ref{assu:minimizer},
and the sequence $\left\{ v_{t}^{2}\right\} _{t=0}^{T}$ is defined
as
\begin{eqnarray}
v_{t}^{2}\defeq\frac{\eta_{T}}{\sum_{s=t}^{T}\eta_{s}},\forall t\in\left[T\right] & \text{and} & v_{0}^{2}\defeq v_{1}^{2}.\label{eq:cvx-v-def}
\end{eqnarray}
Equivalently, there is
\begin{equation}
\bz_{t}=\frac{v_{0}^{2}}{v_{t}^{2}}\bx_{\star}+\sum_{s=1}^{t}\frac{v_{s}^{2}-v_{s-1}^{2}}{v_{t}^{2}}\bx_{s},\forall t\in\left[T\right],\label{eq:cvx-z-equiv}
\end{equation}
which implies that $\bz_{t}$ is a convex combination of $\bx_{\star}$
and $\left\{ \bx_{s}\right\} _{s=1}^{t}$ (since $\left\{ v_{t}^{2}\right\} _{t=0}^{T}$
is positive and nondecreasing), and hence falls into $\X$.

Next, inspired by \citep{ICLR2026_bd5b846a}, we introduce another
two sequences $\left\{ R_{t}\right\} _{t=1}^{T}$ and $\left\{ M_{t}\right\} _{t=1}^{T}$
as follows
\begin{eqnarray}
R_{t}\defeq\max_{s\in\left[t\right]}v_{s-1}\left\Vert \bz_{s-1}-\bx_{s}\right\Vert  & \text{and} & M_{t}\defeq\left\langle v_{t-1}\eta_{t}\bd_{t}^{\ru},\frac{v_{t-1}\left(\bz_{t-1}-\bx_{t}\right)}{R_{t}}\right\rangle .\label{eq:cvx-R-M-def}
\end{eqnarray}
One more step, like \citep{liu2023stochastic}, we define two more
sequences $\left\{ D_{t}\right\} _{t=1}^{T}$ and $\left\{ N_{t}\right\} _{t=1}^{T}$
in the following
\begin{eqnarray}
D_{t}\defeq\max_{s\in\left[t\right]}\left\Vert \bx_{\star}-\bx_{s}\right\Vert  & \text{and} & N_{t}\defeq\left\langle \eta_{t}\bd_{t}^{\ru},\frac{\bx_{\star}-\bx_{t}}{D_{t}}\right\rangle .\label{eq:cvx-D-N-def}
\end{eqnarray}
As one may observe, $\left\{ D_{t}\right\} _{t=1}^{T}$ (resp. $\left\{ N_{t}\right\} _{t=1}^{T}$)
corresponds to $\left\{ R_{t}\right\} _{t=1}^{T}$ (resp. $\left\{ M_{t}\right\} _{t=1}^{T}$)
when $v_{t}=1,\forall t\in\left\{ 0\right\} \cup\left[T\right]$.

Now, we are ready to prove the following Lemma \ref{lem:cvx-core},
which not only characterizes the last-iterate progress made by $\ClipSGD$
after $T$ iterations, but also provides a bound on $\left\Vert \bx_{\star}-\bx_{t+1}\right\Vert ^{2}$
for any $t\in\left[T\right]$.
\begin{lem}
\label{lem:cvx-core}Under Assumptions \ref{assu:minimizer}, \ref{assu:obj}
(with $\mu=0$), and \ref{assu:lip} (resp. \ref{assu:smo}), for
any $T\in\N$ and $\left\{ \eta_{t}\right\} _{t=1}^{T}$ satisfying
$\eta_{t}>0,\forall t\in\left[T\right]$ (resp. $0<\eta_{t}\leq\frac{1}{2L},\forall t\in\left[T\right]$),
$\ClipSGD$ (Algorithm \ref{alg:Clipped-SGD}) guarantees
\begin{align*}
F(\bx_{T+1})-F_{\star} & \leq\frac{v_{0}^{2}D^{2}+2\lasthres_{T}^{\cvx}+8\sum_{t=1}^{T}v_{t}^{2}\eta_{t}^{2}G^{2}\omega}{\eta_{T}},\\
\left\Vert \bx_{\star}-\bx_{t+1}\right\Vert ^{2} & \leq2D^{2}+4\hres_{t}^{\cvx}+16\sum_{s=1}^{t}\eta_{s}^{2}G^{2}\omega,\forall t\in\left[T\right],
\end{align*}
where 
\begin{align*}
\lasthres_{T}^{\cvx} & =8\max_{t\in\left[T\right]}\left(\sum_{s=1}^{t}M_{s}\right)^{2}+2\sum_{t=1}^{T}v_{t}^{2}\eta_{t}^{2}\left\Vert \bd_{t}^{\ru}\right\Vert ^{2}+4\left(\sum_{t=1}^{T}v_{t}\eta_{t}\left\Vert \bd_{t}^{\rb}\right\Vert \right)^{2},\\
\hres_{t}^{\cvx} & =8\max_{t^{\prime}\in\left[t\right]}\left(\sum_{s=1}^{t^{\prime}}N_{s}\right)^{2}+2\sum_{s=1}^{t}\eta_{s}^{2}\left\Vert \bd_{s}^{\ru}\right\Vert ^{2}+4\left(\sum_{s=1}^{t}\eta_{s}\left\Vert \bd_{s}^{\rb}\right\Vert \right)^{2},\forall t\in\left[T\right],
\end{align*}
and $\omega=\begin{cases}
1 & \text{under Assumption \ref{assu:lip}}\\
0 & \text{under Assumption \ref{assu:smo}}
\end{cases}$ is introduced in Lemma \ref{lem:descent}.
\end{lem}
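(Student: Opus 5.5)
We prove the two claims separately, both starting from Lemma \ref{lem:descent}. For the last-iterate bound we use the auxiliary-sequence technique of \citep{ICLR2024_b1239ceb,doi:10.1137/24M1717762}. For the distance bound we apply Lemma \ref{lem:descent} with $\by=\bx_{\star}$ and telescope.

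\emph{Distance bound.} Take $\by=\bx_{\star}$ in Lemma \ref{lem:descent} and use $F(\bx_{s+1})-F_{\star}\geq0$. Multiplying by $2\eta_s$ gives
\[
\left\Vert \bx_{\star}-\bx_{s+1}\right\Vert ^{2}\leq\left\Vert \bx_{\star}-\bx_{s}\right\Vert ^{2}+2\eta_s\left\langle \bd_{s}^{\rc},\bx_{\star}-\bx_{s}\right\rangle +2\eta_s^{2}\left\Vert \bd_{s}^{\rc}\right\Vert ^{2}+8\eta_s^{2}G^{2}\omega.
\]
Summing over $s\le t$, split $\bd_s^{\rc}=\bd_s^{\ru}+\bd_s^{\rb}$ and use $\left\Vert \bd_{s}^{\rc}\right\Vert ^{2}\leq2\left\Vert \bd_{s}^{\ru}\right\Vert ^{2}+2\left\Vert \bd_{s}^{\rb}\right\Vert ^{2}$. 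The unbiased cross terms equal $2D_s N_s$. Writing $2\sum_s D_sN_s$ by Abel summation against the nondecreasing sequence $D_s$ bounds it by $4D_t\max_{t'\le t}|\sum_{s\le t'}N_s|$. The biased cross term is at most $2D_t\sum_s\eta_s\|\bd_s^{\rb}\|$. The leftover $\sum\eta_s^2\|\bd_s^{\rb}\|^2$ is dominated by $(\sum\eta_s\|\bd_s^{\rb}\|)^2$. Apply Young's inequality $ab\le D_t^2/8+2b^2$ to each product involving $D_t$. The resulting inequality holds for every $t'\le t$ with the same right-hand side, since that side is monotone in $t$. Taking the max over $t'$ gives $D_{t+1}^2\le D^2+\tfrac12 D_{t+1}^2+2\hres_t^{\cvx}+8\sum\eta_s^2G^2\omega$. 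Rearranging yields the second claim with constants $2$, $4$, $16$ after matching the factors $8$, $2$, $4$ in $\hres_t^{\cvx}$.

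\emph{Last-iterate bound.} Apply Lemma \ref{lem:descent} at step $t$ with $\by=\bz_{t-1}\in\X$, and weight by $v_{t-1}^2$ (or $v_t^2$, whichever makes the telescoping exact). Convexity of $F$ and the representation (\ref{eq:cvx-z-equiv}) give $F(\bz_{t-1})\le\frac{v_0^2}{v_{t-1}^2}F_\star+\sum_{s<t}\frac{v_s^2-v_{s-1}^2}{v_{t-1}^2}F(\bx_s)$. The key identity is $\bz_t-\bx_{t+1}$ versus $\bz_{t-1}-\bx_{t+1}$. Since $\bz_t$ is a convex combination of $\bz_{t-1}$ and $\bx_t$, we have $v_t^2\|\bz_t-\bx_{t+1}\|^2\le v_{t-1}^2\|\bz_{t-1}-\bx_{t+1}\|^2+(v_t^2-v_{t-1}^2)\|\bx_t-\bx_{t+1}\|^2$. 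The extra term $\|\bx_t-\bx_{t+1}\|^2$ is absorbed using the $\|\by-\bx_{t+1}\|^2$ slack. Alternatively, as in \citep{ICLR2024_b1239ceb}, one can instead apply Lemma \ref{lem:descent} at $\by=\bz_t$, which is what the choice $v_t^2=\eta_T/\sum_{s\ge t}\eta_s$ is designed for.

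Summing the weighted inequalities, the function-value terms telescope to $v_T^2(F(\bx_{T+1})-F_\star)$, with $v_T=1$. The remaining $\sum_t (v_t^2-v_{t-1}^2)(F(\bx_t)-F_\star)$-type terms cancel against $\eta$-weighted ones by the defining relation $v_t^2\sum_{s\ge t}\eta_s=\eta_T$. The distance terms telescope to $v_0^2\|\bx_\star-\bx_1\|^2/(2\eta_T)$. After multiplying through by $\eta_T$, the noise terms become $\sum_t v_{t-1}^2\eta_t\langle\bd_t^{\rc},\bz_{t-1}-\bx_t\rangle+\sum_t v_t^2\eta_t^2\|\bd_t^{\rc}\|^2+4\sum_tv_t^2\eta_t^2G^2\omega$. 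We handle these exactly as in the distance bound, with $R_t,M_t$ replacing $D_t,N_t$. This requires also bounding $R_T^2$ by the same right-hand side: the same weighted telescoping inequality, run up to any intermediate $t$, controls $v_{t}^2\|\bz_{t}-\bx_{t+1}\|^2$. Young's inequality then gives $\eta_T(F(\bx_{T+1})-F_\star)\le v_0^2D^2+2\lasthres_T^{\cvx}+8\sum v_t^2\eta_t^2G^2\omega$, up to the stated halving.

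The main obstacle is the exact bookkeeping in the last-iterate telescoping: choosing between $\by=\bz_{t-1}$ and $\bz_t$ and the weight so that the function values cancel precisely under $v_t^2$. A second difficulty is simultaneously obtaining a self-bounding inequality for $R_t$ so that the $M_t$ normalization closes. I would first follow the proof of the corresponding lemma in \citep{ICLR2024_b1239ceb} for the deterministic skeleton, then insert the martingale normalization of \citep{ICLR2026_bd5b846a}.
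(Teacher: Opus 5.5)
Your distance bound is correct and is essentially the paper's argument: the paper reruns the last-iterate computation with $\by=\bx_{\star}$ (i.e.\ $v_t\equiv1$) and closes with Lemma \ref{lem:algebra}, and your max-and-rearrange step is equivalent. There is one constant slip. With the factor $2$ already on the cross terms, you need $D_t^{2}/4$, not $D_t^{2}/8$, in each Young split to get $\tfrac12 D_{t+1}^{2}+2\hres_{t}^{\cvx}$.

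The last-iterate part has a genuine gap, located exactly where you place the ``main obstacle.'' Your default choice $\by=\bz_{t-1}$ cannot be made to work with Lemma \ref{lem:descent}. That lemma retains no $-\|\bx_{t+1}-\bx_t\|^{2}/(2\eta_t)$ slack, because it was entirely spent producing $\eta_t\|\bd_t^{\rc}\|^2+4\eta_tG^2\omega$. So the term $(v_t^2-v_{t-1}^2)\|\bx_t-\bx_{t+1}\|^2$ from your convexity step has nothing to be absorbed into. Even the full prox-step slack would not suffice. After weighting by $v_{t-1}^2\eta_t$ you would need $v_t^2-v_{t-1}^2\le v_{t-1}^2$, which fails at $t=T$ for the linearly decaying stepsize, where $v_{T-1}^2=1/3$ and $v_T^2=1$.

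The paper uses your ``alternative'' $\by=\bz_t$ with weight $v_t^2\eta_t$, but the step that makes it work is missing from your sketch. Definition (\ref{eq:cvx-z-def}) gives $\bz_t-\bx_t=\frac{v_{t-1}^2}{v_t^2}(\bz_{t-1}-\bx_t)$, hence
\[
v_t^2\|\bz_t-\bx_t\|^2=\frac{v_{t-1}^4}{v_t^2}\|\bz_{t-1}-\bx_t\|^2\le v_{t-1}^2\|\bz_{t-1}-\bx_t\|^2,\qquad v_t^2\langle\bd_t^{\rc},\bz_t-\bx_t\rangle=v_{t-1}^2\langle\bd_t^{\rc},\bz_{t-1}-\bx_t\rangle.
\]
Consequently the quantities $v_{t-1}^2\|\bz_{t-1}-\bx_t\|^2$ telescope with no extra term. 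The cross term is exactly $R_tM_t$ plus a bias term bounded by $R_tv_t\eta_t\|\bd_t^{\rb}\|$.

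Also, the function values cancel exactly only at $t=T$. For intermediate $t$ one has
\[
\sum_{s=1}^{t}v_s^2\eta_s\bigl(F(\bx_{s+1})-F(\bz_s)\bigr)\ge v_t^2\eta_t\bigl(F(\bx_{t+1})-F_\star\bigr)+\sum_{s=2}^{t}(v_s^2-v_{s-1}^2)\Bigl(\sum_{\ell=t+1}^{T}\eta_\ell\Bigr)\bigl(F(\bx_s)-F_\star\bigr).
\]
The nonnegativity of this remainder is what justifies your ``run up to any intermediate $t$'' self-bound on $R_t$. The paper then closes with Lemma \ref{lem:algebra}, applied to $a_t=v_{t-1}^2\|\bz_{t-1}-\bx_t\|^2/2$ and $b_t=v_t^2\eta_t(F(\bx_{t+1})-F_\star)$. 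This gives $\eta_T(F(\bx_{T+1})-F_\star)\le v_0^2D^2+2\lasthres_T^{\cvx}+8\sum_{t}v_t^2\eta_t^2G^2\omega$ with the stated constants. Without these two computations, the first inequality of the lemma is asserted rather than derived.
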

\begin{proof}
We invoke Lemma \ref{lem:descent} with $\by=\bz_{t}$ (defined in
(\ref{eq:cvx-z-def})) and replace the subscript $t$ with $s$ to
have, for any $s\in\left[T\right]$,
\[
F(\bx_{s+1})-F(\bz_{s})\leq\frac{\left\Vert \bz_{s}-\bx_{s}\right\Vert ^{2}-\left\Vert \bz_{s}-\bx_{s+1}\right\Vert ^{2}}{2\eta_{s}}+\left\langle \bd_{s}^{\rc},\bz_{s}-\bx_{s}\right\rangle +\eta_{s}\left\Vert \bd_{s}^{\rc}\right\Vert ^{2}+4\eta_{s}G^{2}\omega,
\]
where $\omega=\begin{cases}
1 & \text{under Assumption }\ref{assu:lip}\\
0 & \text{under Assumption }\ref{assu:smo}
\end{cases}$. Multiply both sides of the above inequality by $v_{s}^{2}\eta_{s}$
and use $\bz_{s}-\bx_{s}\overset{(\ref{eq:cvx-z-def})}{=}\frac{v_{s-1}^{2}}{v_{s}^{2}}\left(\bz_{s-1}-\bx_{s}\right)$
and $v_{s-1}^{2}\leq v_{s}^{2}$ to obtain
\begin{align*}
v_{s}^{2}\eta_{s}\left(F(\bx_{s+1})-F(\bz_{s})\right)\leq & \frac{v_{s-1}^{2}\left\Vert \bz_{s-1}-\bx_{s}\right\Vert ^{2}-v_{s}^{2}\left\Vert \bz_{s}-\bx_{s+1}\right\Vert ^{2}}{2}\\
 & +\left\langle v_{s-1}\eta_{s}\bd_{s}^{\rc},v_{s-1}\left(\bz_{s-1}-\bx_{s}\right)\right\rangle +v_{s}^{2}\eta_{s}^{2}\left\Vert \bd_{s}^{\rc}\right\Vert ^{2}+4v_{s}^{2}\eta_{s}^{2}G^{2}\omega,
\end{align*}
sum up which from $s=1$ to $t$ where $t\in\left[T\right]$, use
$\bz_{0}\overset{(\ref{eq:cvx-z-def})}{=}\bx_{\star}$ and $D=\left\Vert \bx_{\star}-\bx_{1}\right\Vert $,
and rearrange terms to have
\begin{align}
 & \frac{v_{t}^{2}\left\Vert \bz_{t}-\bx_{t+1}\right\Vert ^{2}}{2}+\sum_{s=1}^{t}v_{s}^{2}\eta_{s}\left(F(\bx_{s+1})-F(\bz_{s})\right)\nonumber \\
\leq & \frac{v_{0}^{2}D^{2}}{2}+\sum_{s=1}^{t}\left\langle v_{s-1}\eta_{s}\bd_{s}^{\rc},v_{s-1}\left(\bz_{s-1}-\bx_{s}\right)\right\rangle +\sum_{s=1}^{t}v_{s}^{2}\eta_{s}^{2}\left\Vert \bd_{s}^{\rc}\right\Vert ^{2}+4\sum_{s=1}^{t}v_{s}^{2}\eta_{s}^{2}G^{2}\omega.\label{eq:cvx-core-1}
\end{align}

Next, we apply the convexity of $F$ to upper bound
\[
F(\bz_{s})-F_{\star}\overset{(\ref{eq:cvx-z-equiv})}{\leq}\sum_{\ell=1}^{s}\frac{v_{\ell}^{2}-v_{\ell-1}^{2}}{v_{s}^{2}}\left(F(\bx_{\ell})-F_{\star}\right)=\sum_{\ell=2}^{s}\frac{v_{\ell}^{2}-v_{\ell-1}^{2}}{v_{s}^{2}}\left(F(\bx_{\ell})-F_{\star}\right),
\]
where the last equation is due to $v_{1}\overset{(\ref{eq:cvx-v-def})}{=}v_{0}$.
Hence, we know
\begin{align}
 & \sum_{s=1}^{t}v_{s}^{2}\eta_{s}\left(F(\bx_{s+1})-F(\bz_{s})\right)\nonumber \\
\geq & \sum_{s=1}^{t}v_{s}^{2}\eta_{s}\left(F(\bx_{s+1})-F_{\star}-\sum_{\ell=2}^{s}\frac{v_{\ell}^{2}-v_{\ell-1}^{2}}{v_{s}^{2}}\left(F(\bx_{\ell})-F_{\star}\right)\right)\nonumber \\
= & \sum_{s=1}^{t}v_{s}^{2}\eta_{s}\left(F(\bx_{s+1})-F_{\star}\right)-\sum_{s=2}^{t}\sum_{\ell=2}^{s}\eta_{s}(v_{\ell}^{2}-v_{\ell-1}^{2})\left(F(\bx_{\ell})-F_{\star}\right)\nonumber \\
= & v_{t}^{2}\eta_{t}\left(F(\bx_{t+1})-F_{\star}\right)+\sum_{s=2}^{t}\left[v_{s-1}^{2}\eta_{s-1}-\left(\sum_{\ell=s}^{t}\eta_{\ell}\right)(v_{s}^{2}-v_{s-1}^{2})\right]\left(F(\bx_{s})-F_{\star}\right)\nonumber \\
\overset{(a)}{=} & v_{t}^{2}\eta_{t}\left(F(\bx_{t+1})-F_{\star}\right)+\sum_{s=2}^{t}\left(v_{s}^{2}-v_{s-1}^{2}\right)\left(\sum_{\ell=t+1}^{T}\eta_{\ell}\right)\left(F(\bx_{s})-F_{\star}\right)\nonumber \\
\overset{(b)}{\geq} & v_{t}^{2}\eta_{t}\left(F(\bx_{t+1})-F_{\star}\right),\label{eq:cvx-core-2}
\end{align}
where $(a)$ is by
\[
v_{s-1}^{2}\eta_{s-1}-\left(\sum_{\ell=s}^{t}\eta_{\ell}\right)(v_{s}^{2}-v_{s-1}^{2})=v_{s-1}^{2}\left(\sum_{\ell=s-1}^{t}\eta_{\ell}\right)-v_{s}^{2}\left(\sum_{\ell=s}^{t}\eta_{\ell}\right)\overset{(\ref{eq:cvx-v-def})}{=}\left(v_{s}^{2}-v_{s-1}^{2}\right)\left(\sum_{\ell=t+1}^{T}\eta_{\ell}\right),
\]
and $(b)$ is due to $v_{s}^{2}\geq v_{s-1}^{2}\geq0$ and $F(\bx_{s})-F_{\star}\geq0$.
We plug (\ref{eq:cvx-core-2}) back into (\ref{eq:cvx-core-1}) to
obtain
\begin{align}
 & \frac{v_{t}^{2}\left\Vert \bz_{t}-\bx_{t+1}\right\Vert ^{2}}{2}+v_{t}^{2}\eta_{t}\left(F(\bx_{t+1})-F_{\star}\right)\nonumber \\
\leq & \frac{v_{0}^{2}D^{2}}{2}+\sum_{s=1}^{t}\left\langle v_{s-1}\eta_{s}\bd_{s}^{\rc},v_{s-1}\left(\bz_{s-1}-\bx_{s}\right)\right\rangle +\sum_{s=1}^{t}v_{s}^{2}\eta_{s}^{2}\left\Vert \bd_{s}^{\rc}\right\Vert ^{2}+4\sum_{s=1}^{t}v_{s}^{2}\eta_{s}^{2}G^{2}\omega.\label{eq:cvx-core-3}
\end{align}

Moreover, by $\bd_{s}^{\rc}\overset{(\ref{eq:decomposition})}{=}\bd_{s}^{\ru}+\bd_{s}^{\rb}$,
we have the decomposition
\begin{align*}
 & \left\langle v_{s-1}\eta_{s}\bd_{s}^{\rc},v_{s-1}\left(\bz_{s-1}-\bx_{s}\right)\right\rangle \overset{(\ref{eq:cvx-R-M-def})}{=}R_{s}M_{s}+\left\langle v_{s-1}\eta_{s}\bd_{s}^{\rb},v_{s-1}\left(\bz_{s-1}-\bx_{s}\right)\right\rangle \\
\leq & R_{s}M_{s}+v_{s-1}\eta_{s}\left\Vert \bd_{s}^{\rb}\right\Vert \cdot v_{s-1}\left\Vert \bz_{s-1}-\bx_{s}\right\Vert \overset{(\ref{eq:cvx-R-M-def}),v_{s-1}\leq v_{s}}{\leq}R_{s}M_{s}+R_{s}v_{s}\eta_{s}\left\Vert \bd_{s}^{\rb}\right\Vert ,
\end{align*}
which implies that
\begin{align*}
 & \sum_{s=1}^{t}\left\langle v_{s-1}\eta_{s}\bd_{s}^{\rc},v_{s-1}\left(\bz_{s-1}-\bx_{s}\right)\right\rangle \leq\sum_{s=1}^{t}R_{s}M_{s}+\sum_{s=1}^{t}R_{s}v_{s}\eta_{s}\left\Vert \bd_{s}^{\rb}\right\Vert \\
\overset{(c)}{\leq} & 2R_{t}\max_{t^{\prime}\in\left[t\right]}\left|\sum_{s=1}^{t^{\prime}}M_{s}\right|+R_{t}\sum_{s=1}^{t}v_{s}\eta_{s}\left\Vert \bd_{s}^{\rb}\right\Vert \overset{(d)}{\leq}\frac{R_{t}^{2}}{4}+8\max_{t^{\prime}\in\left[t\right]}\left(\sum_{s=1}^{t^{\prime}}M_{s}\right)^{2}+2\left(\sum_{s=1}^{t}v_{s}\eta_{s}\left\Vert \bd_{s}^{\rb}\right\Vert \right)^{2},
\end{align*}
where $(c)$ is due to Lemma \ref{lem:Abel}, and $(d)$ is by AM-GM
inequality. Combine the above inequality and (\ref{eq:cvx-core-3})
to know
\begin{align}
 & \frac{v_{t}^{2}\left\Vert \bz_{t}-\bx_{t+1}\right\Vert ^{2}}{2}+v_{t}^{2}\eta_{t}\left(F(\bx_{t+1})-F_{\star}\right)\nonumber \\
\leq & \frac{v_{0}^{2}D^{2}}{2}+\frac{R_{t}^{2}}{4}+8\max_{t^{\prime}\in\left[t\right]}\left(\sum_{s=1}^{t^{\prime}}M_{s}\right)^{2}+2\left(\sum_{s=1}^{t}v_{s}\eta_{s}\left\Vert \bd_{s}^{\rb}\right\Vert \right)^{2}+\sum_{s=1}^{t}v_{s}^{2}\eta_{s}^{2}\left\Vert \bd_{s}^{\rc}\right\Vert ^{2}+4\sum_{s=1}^{t}v_{s}^{2}\eta_{s}^{2}G^{2}\omega\nonumber \\
\leq & \frac{R_{t}^{2}}{4}+\frac{v_{0}^{2}D^{2}}{2}+\underbrace{8\max_{t^{\prime}\in\left[t\right]}\left(\sum_{s=1}^{t^{\prime}}M_{s}\right)^{2}+2\sum_{s=1}^{t}v_{s}^{2}\eta_{s}^{2}\left\Vert \bd_{s}^{\ru}\right\Vert ^{2}+4\left(\sum_{s=1}^{t}v_{s}\eta_{s}\left\Vert \bd_{s}^{\rb}\right\Vert \right)^{2}}_{\defeq\lasthres_{t}^{\cvx}}+4\sum_{s=1}^{t}v_{s}^{2}\eta_{s}^{2}G^{2}\omega,\label{eq:cvx-core-4}
\end{align}
where the last step is by
\begin{align*}
\left\Vert \bd_{s}^{\rc}\right\Vert ^{2} & \overset{(\ref{eq:decomposition})}{=}\left\Vert \bd_{s}^{\ru}+\bd_{s}^{\rb}\right\Vert ^{2}\leq2\left\Vert \bd_{s}^{\ru}\right\Vert ^{2}+2\left\Vert \bd_{s}^{\rb}\right\Vert ^{2}\\
\Rightarrow\sum_{s=1}^{t}v_{s}^{2}\eta_{s}^{2}\left\Vert \bd_{s}^{\rc}\right\Vert ^{2} & \leq2\sum_{s=1}^{t}v_{s}^{2}\eta_{s}^{2}\left\Vert \bd_{s}^{\ru}\right\Vert ^{2}+2\sum_{s=1}^{t}v_{s}^{2}\eta_{s}^{2}\left\Vert \bd_{s}^{\rb}\right\Vert ^{2}\leq2\sum_{s=1}^{t}v_{s}^{2}\eta_{s}^{2}\left\Vert \bd_{s}^{\ru}\right\Vert ^{2}+2\left(\sum_{s=1}^{t}v_{s}\eta_{s}\left\Vert \bd_{s}^{\rb}\right\Vert \right)^{2}.
\end{align*}

Now, let $a_{t}\defeq\frac{v_{t-1}^{2}\left\Vert \bz_{t-1}-\bx_{t}\right\Vert ^{2}}{2}$,
$b_{t}\defeq v_{t}^{2}\eta_{t}\left(F(\bx_{t+1})-F_{\star}\right)$,
and $c_{t}\defeq\frac{v_{0}^{2}D^{2}}{2}+\lasthres_{t-1}^{\cvx}+4\sum_{s=1}^{t-1}v_{s}^{2}\eta_{s}^{2}G^{2}\omega$.
Then, (\ref{eq:cvx-core-4}) is saying that
\[
a_{t+1}+b_{t}\leq\frac{\max_{s\in\left[t\right]}a_{s}}{2}+c_{t+1},\forall t\in\left[T\right].
\]
Note that $b_{t}\geq0$, $c_{t}$ is nondecreasing, and $a_{1}=\frac{v_{0}^{2}D^{2}}{2}\leq v_{0}^{2}D^{2}=2c_{1}$.
Thus, by Lemma \ref{lem:algebra}, we have
\[
a_{t+1}+b_{t}\leq2c_{t+1},\forall t\in\left[T\right],
\]
implying that
\[
v_{T}^{2}\eta_{T}\left(F(\bx_{T+1})-F_{\star}\right)=b_{T}\leq a_{T+1}+b_{T}\leq2c_{T+1}=v_{0}^{2}D^{2}+2\lasthres_{T}^{\cvx}+8\sum_{t=1}^{T}v_{t}^{2}\eta_{t}^{2}G^{2}\omega.
\]
Finally, we divide both sides by $v_{T}^{2}\eta_{T}$ and use $v_{T}^{2}\overset{(\ref{eq:cvx-v-def})}{=}1$
to complete the proof for the first inequality.

To prove the inequality for $\left\Vert \bx_{\star}-\bx_{t+1}\right\Vert ^{2}$,
we can simply repeat the above analysis, up to replacing $\by=\bz_{s}$
with $\by=\bx_{\star}$. One new quantity defined in the proof is
\[
\hres_{t}^{\cvx}\defeq8\max_{t^{\prime}\in\left[t\right]}\left(\sum_{s=1}^{t^{\prime}}N_{s}\right)^{2}+2\sum_{s=1}^{t}\eta_{s}^{2}\left\Vert \bd_{s}^{\ru}\right\Vert ^{2}+4\left(\sum_{s=1}^{t}\eta_{s}\left\Vert \bd_{s}^{\rb}\right\Vert \right)^{2},\forall t\in\left[T\right].
\]
 For simplicity, we omit the details.
\end{proof}

\subsubsection{Lipschitz Objectives}

In this part, we consider Lipschitz objectives (i.e., Assumption \ref{assu:lip}).
To prove the last-iterate rate, it is sufficient to bound the residual
term $\lasthres_{T}^{\cvx}$ in Lemma \ref{lem:cvx-core}. To do so,
we rely on the following Lemma \ref{lem:cvx-lip-res}.
\begin{lem}
\label{lem:cvx-lip-res}Under Assumptions \ref{assu:lip} and \ref{assu:heavy},
and the following two conditions:
\begin{enumerate}[leftmargin=*]
\item \label{enu:cvx-lip-res-1}$\eta_{t}$ and $\cm_{t}$ are deterministic
for all $t\in\left[T\right]$.
\item \label{enu:cvx-lip-res-2}$\cm_{t}\geq2G$ for all $t\in\left[T\right]$. 
\end{enumerate}
We have for any $\delta\in\left(0,1\right]$, with probability at
least $1-\delta$, $\lasthres_{T}^{\cvx}\leq\lasthc_{T}^{\cvx}$,
where $\lasthres_{T}^{\cvx}$ is introduced in Lemma \ref{lem:cvx-core}
and $\lasthc_{T}^{\cvx}$ is a constant in the order of
\[
\O\left(\max_{t\in\left[T\right]}v_{t}^{2}\eta_{t}^{2}\cm_{t}^{2}\ln^{2}\frac{3}{\delta}+\sum_{t=1}^{T}\frac{\sigma_{\lar}^{\p}v_{t}^{2}\eta_{t}^{2}}{\cm_{t}^{\p-2}}+\left(\sum_{t=1}^{T}\frac{\sigma_{\sma}\sigma_{\lar}^{\p-1}v_{t}\eta_{t}}{\cm_{t}^{\p-1}}+\frac{\sigma_{\lar}^{\p}Gv_{t}\eta_{t}}{\cm_{t}^{\p}}\right)^{2}\right).
\]
\end{lem}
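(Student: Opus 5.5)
Since $\cm_t\geq 2G\geq 2\left\Vert \nabla f(\bx_t)\right\Vert$ by Condition \ref{enu:cvx-lip-res-2} and Assumption \ref{assu:lip}, the indicator $\chi_t$ of Lemma \ref{lem:clip-ineq} equals $1$ for every $t$. All four bounds in Lemma \ref{lem:clip-ineq} therefore hold unconditionally, with $\left\Vert \nabla f(\bx_t)\right\Vert$ replaced by $G$. I would bound the three pieces of $\lasthres_T^{\cvx}$ separately, each with probability at least $1-\delta/3$ or deterministically, and finish with a union bound. This explains the $\ln\frac{3}{\delta}$ in the statement.

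\emph{Bias term.} Lemma \ref{lem:clip-ineq}(4) gives $\left\Vert \bd_t^{\rb}\right\Vert\leq\sigma_{\sma}\sigma_{\lar}^{\p-1}\cm_t^{1-\p}+2^{\p-1}\sigma_{\lar}^{\p}G\cm_t^{-\p}$ deterministically. Hence $4(\sum_t v_t\eta_t\Vert\bd_t^{\rb}\Vert)^2$ is immediately bounded by the last term of the claimed order.

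\emph{Quadratic term.} We have $\sum_t v_t^2\eta_t^2\Vert\bd_t^{\ru}\Vert^2$. Each summand is bounded by $4v_t^2\eta_t^2\cm_t^2$, and its conditional mean is at most $4\sigma_{\lar}^{\p}v_t^2\eta_t^2\cm_t^{2-\p}$ by Lemma \ref{lem:clip-ineq}(1),(2). The weights are deterministic by Condition \ref{enu:cvx-lip-res-1}. I would center the summands and apply Freedman's inequality (Lemma \ref{lem:Freedman}). The conditional variance of the centered term is at most $\max_t 4v_t^2\eta_t^2\cm_t^2\cdot\sum_t 4\sigma_{\lar}^{\p}v_t^2\eta_t^2\cm_t^{2-\p}$. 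After AM-GM, with probability $1-\delta/3$ the sum is $\O(\max_t v_t^2\eta_t^2\cm_t^2\ln\frac{3}{\delta}+\sum_t\sigma_{\lar}^{\p}v_t^2\eta_t^2\cm_t^{2-\p})$, which fits the claimed order.

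\emph{Martingale term (main obstacle).} The difficulty is $\max_t(\sum_{s\leq t}M_s)^2$. By construction $\bz_{t-1}-\bx_t\in\F_{t-1}$ and $R_t\in\F_{t-1}$. Since $R_t\geq v_{t-1}\Vert\bz_{t-1}-\bx_t\Vert$, the direction $\bu_t\defeq v_{t-1}(\bz_{t-1}-\bx_t)/R_t$ is predictable with $\Vert\bu_t\Vert\leq1$. Thus $M_t=v_{t-1}\eta_t\langle\bd_t^{\ru},\bu_t\rangle$ is a martingale difference sequence with the following properties:
\begin{itemize}
\item it is bounded by $|M_t|\leq 2v_t\eta_t\cm_t$, using $v_{t-1}\leq v_t$;
\item its conditional variance satisfies $\E_{t-1}[M_t^2]\leq v_t^2\eta_t^2\Vert\E_{t-1}[\bd_t^{\ru}(\bd_t^{\ru})^\top]\Vert_{\op}\leq v_t^2\eta_t^2(4\sigma_{\sma}^{\p}\cm_t^{2-\p}+2^{\p+1}\sigma_{\lar}^{\p}G^2\cm_t^{-\p})$ by Lemma \ref{lem:clip-ineq}(3).
\end{itemize}
The normalization by $R_t$ is exactly what removes the dependence on the unknown iterate distances, so the bounds are deterministic.

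I would then use the maximal form of Freedman's inequality, which controls $\max_{t}|\sum_{s\leq t}M_s|$. With probability $1-\delta/3$ (up to a two-sided factor) this gives $\max_t(\sum_{s\le t}M_s)^2=\O(\max_t v_t^2\eta_t^2\cm_t^2\ln^2\frac{3}{\delta}+\sum_t v_t^2\eta_t^2(\sigma_{\sma}^{\p}\cm_t^{2-\p}+\sigma_{\lar}^{\p}G^2\cm_t^{-\p})\ln\frac{3}{\delta})$.

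The remaining work is to absorb these variance terms into the claimed expression:
\begin{itemize}
\item $\sigma_{\sma}^{\p}\cm_t^{2-\p}\leq\sigma_{\lar}^{\p}\cm_t^{2-\p}$, so that term is $\O(\sum_t\sigma_{\lar}^{\p}v_t^2\eta_t^2\cm_t^{2-\p})$.
\item For the $G$ part, $\ln\frac{3}{\delta}\sum_t v_t^2\eta_t^2\sigma_{\lar}^{\p}G^2\cm_t^{-\p}$ is bounded by AM-GM by $\max_t v_t^2\eta_t^2\cm_t^2\ln^2\frac{3}{\delta}+(\sum_t\sigma_{\lar}^{\p}Gv_t\eta_t\cm_t^{-\p})^2$, up to constants. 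This uses $\sum_t a_t^2 b_t\leq \max_t a_t\cdot\sum_t a_t b_t$ with $a_t=v_t\eta_t\cm_t$. Alternatively, since $G\leq\cm_t/2$, the $G$ part is directly at most $\sigma_{\lar}^{\p}\cm_t^{2-\p}$ times a constant, which is simpler.
\end{itemize}

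A union bound over the three pieces concludes the proof.
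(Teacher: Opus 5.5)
Up to the final step, your route is the paper's. You use two-sided maximal Freedman for $\sum_{s\le t}M_s$ (failure $2\delta/3$), bound the bias term deterministically, and take a union bound. Centering $v_t^2\eta_t^2\Vert\bd_t^{\ru}\Vert^2$ and applying Freedman directly is a harmless variant of the paper's Lemma~\ref{lem:Freedman-square}. The gap is in absorbing the Freedman variance term, which carries a factor $\ln\frac{3}{\delta}$.

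Your first bullet drops that factor, and so does the ``simpler'' alternative in your second bullet. From $\sigma_{\sma}^{\p}\leq\sigma_{\lar}^{\p}$, or from $G^2\leq\cm_t^2/4$, you only get $\ln\frac{3}{\delta}\sum_t\sigma_{\lar}^{\p}v_t^2\eta_t^2\cm_t^{2-\p}$. This is not $\O\bigl(\sum_t\sigma_{\lar}^{\p}v_t^2\eta_t^2\cm_t^{2-\p}\bigr)$ with a $\delta$-independent constant; take $\sigma_{\sma}=\sigma_{\lar}$ and let $\delta\to0$. This matters downstream. Keeping $\sum_t\sigma_{\lar}^{\p}v_t^2\eta_t^2\cm_t^{2-\p}$ free of logarithms is what gives the log-free $\sigma_{\sma}^{\frac{2}{\p}-1}\sigma_{\lar}^{2-\frac{2}{\p}}$ term in the theorems. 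An extra $\ln\frac{3}{\delta}$ would degrade the fine-grained dependence when $\sigma_{\lar}\gg\sigma_{\sma}$.

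The fix is the AM-GM argument you sketched for the $G$ part, applied to both variance pieces; this is what the paper does. Use $\sigma_{\sma}^{\p}\leq\sigma_{\sma}\sigma_{\lar}^{\p-1}$ and $G\leq\cm_t/2$, pull out $\max_t v_t\eta_t\cm_t$, and get
\[
\ln\frac{3}{\delta}\sum_{t=1}^{T}\left(\frac{\sigma_{\sma}^{\p}v_{t}^{2}\eta_{t}^{2}}{\cm_{t}^{\p-2}}+\frac{\sigma_{\lar}^{\p}G^{2}v_{t}^{2}\eta_{t}^{2}}{\cm_{t}^{\p}}\right)\leq\left(\max_{t\in\left[T\right]}v_{t}\eta_{t}\cm_{t}\ln\frac{3}{\delta}\right)\sum_{t=1}^{T}\left(\frac{\sigma_{\sma}\sigma_{\lar}^{\p-1}v_{t}\eta_{t}}{\cm_{t}^{\p-1}}+\frac{\sigma_{\lar}^{\p}Gv_{t}\eta_{t}}{\cm_{t}^{\p}}\right).
\]
Then $xy\leq\frac{x^{2}+y^{2}}{2}$ bounds the right-hand side by $\frac12\max_t v_t^2\eta_t^2\cm_t^2\ln^2\frac{3}{\delta}$ plus half the squared bias sum. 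These are the first and third terms of the claimed order. With this replacement, your proof goes through.
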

\begin{proof}
Note that $\cm_{t}\overset{\text{Condition }\ref{enu:cvx-lip-res-2}}{\geq}2G\overset{\text{Assumption }\ref{assu:lip}}{\geq}2\left\Vert \nabla f(\bx_{t})\right\Vert ,\forall t\in\left[T\right]$
holds, implying that $\chi_{t}$ in Lemma \ref{lem:clip-ineq} equals
$1$ for all $t\in\left[T\right]$. Then Lemma \ref{lem:clip-ineq}
and Assumption \ref{assu:lip} together yield that the following collection
of inequalities holds for any $t\in\left[T\right]$:
\begin{align}
\left\Vert \bd_{t}^{\ru}\right\Vert  & \leq2\cm_{t},\label{eq:cvx-lip-res-u-norm}\\
\E\left[\left\Vert \bd_{t}^{\ru}\right\Vert ^{2}\mid\F_{t-1}\right] & \leq\frac{4\sigma_{\lar}^{\p}}{\cm_{t}^{\p-2}},\label{eq:cvx-lip-res-u-var}\\
\left\Vert \E\left[\bd_{t}^{\ru}\left(\bd_{t}^{\ru}\right)^{\top}\mid\F_{t-1}\right]\right\Vert _{\op} & \leq\frac{4\sigma_{\sma}^{\p}}{\cm_{t}^{\p-2}}+\frac{2^{\p+1}\sigma_{\lar}^{\p}G^{2}}{\cm_{t}^{\p}},\label{eq:cvx-lip-res-u-op}\\
\left\Vert \bd_{t}^{\rb}\right\Vert  & \leq\frac{\sigma_{\sma}\sigma_{\lar}^{\p-1}}{\cm_{t}^{\p-1}}+\frac{2^{\p-1}\sigma_{\lar}^{\p}G}{\cm_{t}^{\p}}.\label{eq:cvx-lip-res-b-norm}
\end{align}

Now, we are ready to bound $\lasthres_{T}^{\cvx}$.

\begin{itemize}[leftmargin=*]

\item We first observe that the sequence $\left\{ v_{t}\right\} _{t=1}^{T}$
is now deterministic by Condition \ref{enu:cvx-lip-res-1} and its
definition in (\ref{eq:cvx-v-def}), which further implies that $\bz_{t}\in\F_{t-1}$
and $R_{t}\in\F_{t-1}$ (see (\ref{eq:cvx-z-equiv}) and (\ref{eq:cvx-R-M-def})
for their definitions). Therefore, we know
\[
\E\left[M_{t}\mid\F_{t-1}\right]\overset{(\ref{eq:cvx-R-M-def})}{=}\left\langle v_{t-1}\eta_{t}\E\left[\bd_{t}^{\ru}\mid\F_{t-1}\right],\frac{v_{t-1}\left(\bz_{t-1}-\bx_{t}\right)}{R_{t}}\right\rangle =0,\forall t\in\left[T\right],
\]
meaning that $\left\{ M_{t}\right\} _{t=1}^{T}$ is a real-valued
MDS. By Cauchy-Schwarz inequality, we can bound
\begin{align*}
\left|M_{t}\right| & \overset{(\ref{eq:cvx-R-M-def})}{=}\left|\left\langle v_{t-1}\eta_{t}\bd_{t}^{\ru},\frac{v_{t-1}\left(\bz_{t-1}-\bx_{t}\right)}{R_{t}}\right\rangle \right|\leq v_{t-1}\eta_{t}\left\Vert \bd_{t}^{\ru}\right\Vert \cdot\frac{v_{t-1}\left\Vert \bz_{t-1}-\bx_{t}\right\Vert }{R_{t}}\\
 & \overset{(\ref{eq:cvx-R-M-def}),v_{t-1}\leq v_{t}}{\leq}v_{t}\eta_{t}\left\Vert \bd_{t}^{\ru}\right\Vert \overset{(\ref{eq:cvx-lip-res-u-norm})}{\leq}2v_{t}\eta_{t}\cm_{t}\leq2\max_{t\in\left[T\right]}v_{t}\eta_{t}\cm_{t}.
\end{align*}
In addition, the conditional variance of $M_{t}$ can be controlled
by
\begin{align*}
\E\left[M_{t}^{2}\mid\F_{t-1}\right] & =\left(\frac{v_{t-1}\left(\bz_{t-1}-\bx_{t}\right)}{R_{t}}\right)^{\top}v_{t-1}^{2}\eta_{t}^{2}\E\left[\bd_{t}^{\ru}\left(\bd_{t}^{\ru}\right)^{\top}\mid\F_{t-1}\right]\frac{v_{t-1}\left(\bz_{t-1}-\bx_{t}\right)}{R_{t}}\\
 & \overset{(\ref{eq:cvx-R-M-def}),v_{t-1}\leq v_{t}}{\leq}v_{t-1}^{2}\eta_{t}^{2}\left\Vert \E\left[\bd_{t}^{\ru}\left(\bd_{t}^{\ru}\right)^{\top}\mid\F_{t-1}\right]\right\Vert _{\op}\overset{(\ref{eq:cvx-lip-res-u-op})}{\leq}\frac{4\sigma_{\sma}^{\p}v_{t}^{2}\eta_{t}^{2}}{\cm_{t}^{\p-2}}+\frac{2^{\p+1}\sigma_{\lar}^{\p}G^{2}v_{t}^{2}\eta_{t}^{2}}{\cm_{t}^{\p}}.
\end{align*}
Therefore, by Freedman's inequality (Lemma \ref{lem:Freedman}), with
probability at least $1-2\delta/3$,
\[
\left|\sum_{s=1}^{t}M_{s}\right|\leq\frac{2}{3}\max_{s\in\left[T\right]}v_{s}\eta_{s}\cm_{s}\ln\frac{3}{\delta}+\sqrt{8\sum_{s=1}^{T}\left(\frac{\sigma_{\sma}^{\p}v_{s}^{2}\eta_{s}^{2}}{\cm_{s}^{\p-2}}+\frac{2^{\p-1}\sigma_{\lar}^{\p}G^{2}v_{s}^{2}\eta_{s}^{2}}{\cm_{s}^{\p}}\right)\ln\frac{3}{\delta}},\forall t\in\left[T\right],
\]
which implies that
\begin{equation}
\max_{t\in\left[T\right]}\left(\sum_{s=1}^{t}M_{s}\right)^{2}\leq\frac{8}{9}\max_{t\in\left[T\right]}v_{t}^{2}\eta_{t}^{2}\cm_{t}^{2}\ln^{2}\frac{3}{\delta}+16\sum_{t=1}^{T}\left(\frac{\sigma_{\sma}^{\p}v_{t}^{2}\eta_{t}^{2}}{\cm_{t}^{\p-2}}+\frac{2^{\p-1}\sigma_{\lar}^{\p}G^{2}v_{t}^{2}\eta_{t}^{2}}{\cm_{t}^{\p}}\right)\ln\frac{3}{\delta}.\label{eq:cvx-lip-res-1}
\end{equation}

\item Next, note that $\left\{ v_{t}\eta_{t}\left\Vert \bd_{t}^{\ru}\right\Vert \right\} _{t=1}^{T}$
is a sequence of random variables satisfying
\begin{eqnarray*}
v_{t}\eta_{t}\left\Vert \bd_{t}^{\ru}\right\Vert \overset{(\ref{eq:cvx-lip-res-u-norm})}{\leq}2v_{t}\eta_{t}\cm_{t}\leq2\max_{t\in\left[T\right]}v_{t}\eta_{t}\cm_{t} & \text{and} & \E\left[v_{t}^{2}\eta_{t}^{2}\left\Vert \bd_{t}^{\ru}\right\Vert ^{2}\mid\F_{t-1}\right]\overset{(\ref{eq:cvx-lip-res-u-var})}{\leq}\frac{4\sigma_{\lar}^{\p}v_{t}^{2}\eta_{t}^{2}}{\cm_{t}^{\p-2}}.
\end{eqnarray*}
Therefore, by Lemma \ref{lem:Freedman-square}, with probability at
least $1-\delta/3$,
\begin{align}
\sum_{t=1}^{T}v_{t}^{2}\eta_{t}^{2}\left\Vert \bd_{t}^{\ru}\right\Vert ^{2} & \leq\frac{10}{3}\max_{t\in\left[T\right]}v_{t}^{2}\eta_{t}^{2}\cm_{t}^{2}\ln\frac{3}{\delta}+8\sum_{t=1}^{T}\frac{\sigma_{\lar}^{\p}v_{t}^{2}\eta_{t}^{2}}{\cm_{t}^{\p-2}}\nonumber \\
 & \overset{\ln\frac{3}{\delta}\geq1}{\leq}\frac{10}{3}\max_{t\in\left[T\right]}v_{t}^{2}\eta_{t}^{2}\cm_{t}^{2}\ln^{2}\frac{3}{\delta}+8\sum_{t=1}^{T}\frac{\sigma_{\lar}^{\p}v_{t}^{2}\eta_{t}^{2}}{\cm_{t}^{\p-2}}.\label{eq:cvx-lip-res-2}
\end{align}

\item Lastly, there is
\begin{equation}
\sum_{t=1}^{T}v_{t}\eta_{t}\left\Vert \bd_{t}^{\rb}\right\Vert \overset{(\ref{eq:cvx-lip-res-b-norm})}{\leq}\sum_{t=1}^{T}\frac{\sigma_{\sma}\sigma_{\lar}^{\p-1}v_{t}\eta_{t}}{\cm_{t}^{\p-1}}+\frac{2^{\p-1}\sigma_{\lar}^{\p}Gv_{t}\eta_{t}}{\cm_{t}^{\p}}.\label{eq:cvx-lip-res-3}
\end{equation}

\end{itemize}

Combine (\ref{eq:cvx-lip-res-1}), (\ref{eq:cvx-lip-res-2}), and
(\ref{eq:cvx-lip-res-3}) to have with probability at least $1-\delta$,
\[
\lasthres_{T}^{\cvx}=8\max_{t\in\left[T\right]}\left(\sum_{s=1}^{t}M_{s}\right)^{2}+2\sum_{t=1}^{T}v_{t}^{2}\eta_{t}^{2}\left\Vert \bd_{t}^{\ru}\right\Vert ^{2}+4\left(\sum_{t=1}^{T}v_{t}\eta_{t}\left\Vert \bd_{t}^{\rb}\right\Vert \right)^{2}\leq\lasthc_{T}^{\cvx},
\]
where
\begin{align*}
\lasthc_{T}^{\cvx}\defeq & \frac{124}{9}\max_{t\in\left[T\right]}v_{t}^{2}\eta_{t}^{2}\cm_{t}^{2}\ln^{2}\frac{3}{\delta}+128\sum_{t=1}^{T}\left(\frac{\sigma_{\sma}^{\p}v_{t}^{2}\eta_{t}^{2}}{\cm_{t}^{\p-2}}+\frac{2^{\p-1}\sigma_{\lar}^{\p}G^{2}v_{t}^{2}\eta_{t}^{2}}{\cm_{t}^{\p}}\right)\ln\frac{3}{\delta}\\
 & +16\sum_{t=1}^{T}\frac{\sigma_{\lar}^{\p}v_{t}^{2}\eta_{t}^{2}}{\cm_{t}^{\p-2}}+4\left(\sum_{t=1}^{T}\frac{\sigma_{\sma}\sigma_{\lar}^{\p-1}v_{t}\eta_{t}}{\cm_{t}^{\p-1}}+\frac{2^{\p-1}\sigma_{\lar}^{\p}Gv_{t}\eta_{t}}{\cm_{t}^{\p}}\right)^{2}\\
= & \O\left(\max_{t\in\left[T\right]}v_{t}^{2}\eta_{t}^{2}\cm_{t}^{2}\ln^{2}\frac{3}{\delta}+\sum_{t=1}^{T}\frac{\sigma_{\lar}^{\p}v_{t}^{2}\eta_{t}^{2}}{\cm_{t}^{\p-2}}+\left(\sum_{t=1}^{T}\frac{\sigma_{\sma}\sigma_{\lar}^{\p-1}v_{t}\eta_{t}}{\cm_{t}^{\p-1}}+\frac{\sigma_{\lar}^{\p}Gv_{t}\eta_{t}}{\cm_{t}^{\p}}\right)^{2}\right),
\end{align*}
in which the second equation is by noticing that
\begin{align*}
 & \max_{t\in\left[T\right]}v_{t}^{2}\eta_{t}^{2}\cm_{t}^{2}\ln^{2}\frac{3}{\delta}+\left(\sum_{t=1}^{T}\frac{\sigma_{\sma}\sigma_{\lar}^{\p-1}v_{t}\eta_{t}}{\cm_{t}^{\p-1}}+\frac{\sigma_{\lar}^{\p}Gv_{t}\eta_{t}}{\cm_{t}^{\p}}\right)^{2}\\
\overset{(a)}{\geq} & 2\left(\max_{t\in\left[T\right]}v_{t}\eta_{t}\cm_{t}\right)\sum_{t=1}^{T}\left(\frac{\sigma_{\sma}\sigma_{\lar}^{\p-1}v_{t}\eta_{t}}{\cm_{t}^{\p-1}}+\frac{\sigma_{\lar}^{\p}Gv_{t}\eta_{t}}{\cm_{t}^{\p}}\right)\ln\frac{3}{\delta}\\
\geq & 2\sum_{t=1}^{T}\left(\frac{\sigma_{\sma}\sigma_{\lar}^{\p-1}v_{t}^{2}\eta_{t}^{2}}{\cm_{t}^{\p-2}}+\frac{\sigma_{\lar}^{\p}Gv_{t}^{2}\eta_{t}^{2}}{\cm_{t}^{\p-1}}\right)\ln\frac{3}{\delta}\\
\overset{(b)}{=} & \Omega\left(\sum_{t=1}^{T}\left(\frac{\sigma_{\sma}^{\p}v_{t}^{2}\eta_{t}^{2}}{\cm_{t}^{\p-2}}+\frac{\sigma_{\lar}^{\p}G^{2}v_{t}^{2}\eta_{t}^{2}}{\cm_{t}^{\p}}\right)\ln\frac{3}{\delta}\right),
\end{align*}
where $(a)$ is due to AM-GM inequality and $(b)$ is by $\sigma_{\lar}\geq\sigma_{\sma}$,
$\p\geq1$, and $\cm_{t}\overset{\text{Condition }\ref{enu:cvx-lip-res-2}}{\geq}2G$.
\end{proof}

\subsubsection{Smooth Objectives}

In this part, we consider smooth objectives (i.e., Assumption \ref{assu:smo}).
One may still think that, like the Lipschitz case, we only need to
bound the residual term $\lasthres_{T}^{\cvx}$ in Lemma \ref{lem:cvx-core}.
This is not a wrong thought. However, an obstacle will arise when
considering smooth objectives, that is, how to bound $\left\Vert \nabla f(\bx_{t})\right\Vert $
on the R.H.S. of inequalities in Lemma \ref{lem:clip-ineq}. Why is
this important? Because, to apply Freedman's inequality (Lemma \ref{lem:Freedman}),
the conditional variance of the martingale should be upper bounded
by a constant.

To handle this challenge, our solution is to first bound $\bar{\lasthres}_{T}^{\cvx}$
in the following Lemma \ref{lem:cvx-smo-res}, a variant of $\lasthres_{T}^{\cvx}$
under the condition that $\left\Vert \bx_{\star}-\bx_{t}\right\Vert $
is uniformly bounded for all $t\in\left[T\right]$. Then, Lemma \ref{lem:cvx-smo-distance}
provides a sufficient condition to ensure that $\left\Vert \bx_{\star}-\bx_{t}\right\Vert $
is uniformly bounded.
\begin{lem}
\label{lem:cvx-smo-res}Under Assumptions \ref{assu:minimizer}, \ref{assu:smo},
\ref{assu:heavy}, \ref{assu:G1}, and the following two conditions:
\begin{enumerate}[leftmargin=*]
\item \label{enu:cvx-smo-res-1}$\eta_{t}$ is deterministic for all $t\in\left[T\right]$.
\item \label{enu:cvx-smo-res-2}$\cm_{t}=\max\left\{ 2G_{t},\cm\right\} $
for all $t\in\left[T\right]$, where $G_{t}=L\left\Vert \bx_{t}-\bx_{1}\right\Vert +G_{1}$
and $\cm\geq0$ is a constant.
\end{enumerate}
We have for any $\delta\in\left(0,1\right]$, with probability at
least $1-\delta$, $\bar{\lasthres}_{T}^{\cvx}\leq\bar{\lasthc}_{T}^{\cvx}$,
where
\begin{align*}
\bar{\lasthres}_{T}^{\cvx} & \defeq8\max_{t\in\left[T\right]}\left(\sum_{s=1}^{t}M_{s}\zeta_{s}\right)^{2}+2\sum_{t=1}^{T}v_{t}^{2}\eta_{t}^{2}\left\Vert \bd_{t}^{\ru}\right\Vert ^{2}\zeta_{t}^{2}+4\left(\sum_{t=1}^{T}v_{t}\eta_{t}\left\Vert \bd_{t}^{\rb}\right\Vert \zeta_{t}\right)^{2},\\
\zeta_{t} & \defeq\1\left[\left\Vert \bx_{\star}-\bx_{t}\right\Vert \leq\bar{D}\right],\forall t\in\left[T\right],
\end{align*}
$\bar{D}\geq0$ can be any constant, and $\bar{\lasthc}_{T}^{\cvx}$
is a constant in the order of
\[
\O\left(\max_{t\in\left[T\right]}v_{t}^{2}\eta_{t}^{2}\rho^{2}\ln^{2}\frac{3}{\delta}+\sum_{t=1}^{T}\frac{\sigma_{\lar}^{\p}v_{t}^{2}\eta_{t}^{2}}{\cm^{\p-2}}+\left(\sum_{t=1}^{T}\frac{\sigma_{\sma}\sigma_{\lar}^{\p-1}v_{t}\eta_{t}}{\cm^{\p-1}}+\min_{\beta\in\left[0,\frac{1}{2}\right]}\frac{\sigma_{\lar}^{\p}\bar{G}^{1-\beta\p}v_{t}\eta_{t}}{\cm^{(1-\beta)\p}}\right)^{2}\right),
\]
in which $\bar{G}=L(\bar{D}+D)+G_{1}$ and $\rho=\max\left\{ 2\bar{G},\cm\right\} $.
\end{lem}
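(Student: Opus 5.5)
We follow the proof of Lemma \ref{lem:cvx-lip-res}, using the indicator $\zeta_{t}$ in place of a global Lipschitz bound. The first step is to control $\left\Vert \nabla f(\bx_{t})\right\Vert$ and $\cm_{t}$ on the event $\zeta_{t}=1$. By Assumption \ref{assu:smo} and Assumption \ref{assu:G1},
\[
\left\Vert \nabla f(\bx_{t})\right\Vert \leq L\left\Vert \bx_{t}-\bx_{1}\right\Vert +G_{1}=G_{t}.
\]
Since $\cm_{t}=\max\left\{ 2G_{t},\cm\right\} \geq2G_{t}$, we get $\chi_{t}=1$ almost surely, so every item of Lemma \ref{lem:clip-ineq} applies. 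Moreover, $\zeta_{t}=1$ gives $\left\Vert \bx_{t}-\bx_{1}\right\Vert \leq\bar{D}+D$, hence $G_{t}\leq\bar{G}$ and $\cm\leq\cm_{t}\leq\rho$. Note that $\zeta_{t}$, $\cm_{t}$, $G_{t}$, $v_{t}$ are all $\F_{t-1}$-measurable, since $\eta_{t}$ is deterministic.

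Multiplying the bounds of Lemma \ref{lem:clip-ineq} by $\zeta_{t}$ then yields deterministic bounds:
\begin{itemize}
\item $\left\Vert \bd_{t}^{\ru}\right\Vert \zeta_{t}\leq2\rho$;
\item $\E_{t-1}\left[\left\Vert \bd_{t}^{\ru}\right\Vert ^{2}\right]\zeta_{t}\leq4\sigma_{\lar}^{\p}\cm_{t}^{2-\p}\zeta_{t}$;
\item $\left\Vert \E_{t-1}\left[\bd_{t}^{\ru}(\bd_{t}^{\ru})^{\top}\right]\right\Vert _{\op}\zeta_{t}\leq4\sigma_{\sma}^{\p}\cm_{t}^{2-\p}+2^{\p+1}\sigma_{\lar}^{\p}G_{t}^{2}\cm_{t}^{-\p}$;
\item $\left\Vert \bd_{t}^{\rb}\right\Vert \zeta_{t}\leq\sigma_{\sma}\sigma_{\lar}^{\p-1}\cm_{t}^{1-\p}+2^{\p-1}\sigma_{\lar}^{\p}G_{t}\cm_{t}^{-\p}$.
\end{itemize}
The exponent $2-\p\leq1$ could have either sign in general. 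Here, since $\p\leq2$, the factor $\cm_{t}^{2-\p}$ is nondecreasing in $\cm_{t}$ and must be bounded using $\cm_{t}\leq\rho$, not $\cm$. This is inconvenient, because the target has $\cm^{2-\p}$ in the variance sum. I would fix this with the trick of \citep{ICLR2026_bd5b846a}: write $\cm_{t}^{2-\p}=\cm_{t}^{2}\cm_{t}^{-\p}$ and absorb the excess. Concretely, when $\cm_{t}=2G_{t}>\cm$,
\[
\sigma_{\lar}^{\p}\cm_{t}^{2-\p}\leq\sigma_{\lar}^{\p}G_{t}^{2-\p}\cdot O(1).
\]
This is at most $O(\sigma_{\lar}^{\p}\cm^{2-\p}+\sigma_{\lar}^{\p}\bar{G}^{2}\cm^{-\p}\cdot\bar{G}^{\p}\cdots)$, and it should be dominated by the squared-bias term via the AM--GM step used at the end of Lemma \ref{lem:cvx-lip-res}. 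The $G_{t}$-dependent terms must be handled with the interpolation $\min_{\beta\in[0,1/2]}$:
\[
\frac{G_{t}}{\cm_{t}^{\p}}\leq\frac{G_{t}^{1-\beta\p}}{\cm^{(1-\beta)\p}}\cdot\frac{(G_{t})^{\beta\p}\cm^{(1-\beta)\p}}{\cm_{t}^{\p}}.
\]
This holds because $\cm_{t}\geq\max\{2G_{t},\cm\}$ gives $\cm_{t}^{\p}\geq(2G_{t})^{\beta\p}\cm^{(1-\beta)\p}$, and $G_{t}\leq\bar{G}$ together with $1-\beta\p\geq0$ then produces the $\bar{G}^{1-\beta\p}$ form.

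With these bounds, the three pieces of $\bar{\lasthres}_{T}^{\cvx}$ are handled as follows. First, $M_{t}\zeta_{t}$ is an MDS, because $\zeta_{t},R_{t},\bz_{t-1}\in\F_{t-1}$. It satisfies $|M_{t}\zeta_{t}|\leq2v_{t}\eta_{t}\rho$, and its conditional variance is at most $v_{t}^{2}\eta_{t}^{2}$ times the operator-norm bound. Applying Freedman's inequality (Lemma \ref{lem:Freedman}) uniformly over $t$ with probability $1-2\delta/3$ controls $\max_{t}(\sum_{s\leq t}M_{s}\zeta_{s})^{2}$. Second, Lemma \ref{lem:Freedman-square} with probability $1-\delta/3$ controls $\sum_{t}v_{t}^{2}\eta_{t}^{2}\left\Vert \bd_{t}^{\ru}\right\Vert ^{2}\zeta_{t}^{2}$. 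Third, the bias sum is bounded deterministically. A union bound and an AM--GM absorption, as at the end of the proof of Lemma \ref{lem:cvx-lip-res}, fold the $\ln\frac{3}{\delta}$-weighted variance terms into the max term and the squared bias term.

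The main obstacle is the monotonicity mismatch described above: the variance bound $\sigma_{\lar}^{\p}\cm_{t}^{2-\p}$ grows with the random threshold $\cm_{t}$, while the statement features only the constant $\cm$ there. I expect to split into the cases $\cm_{t}=\cm$ and $\cm_{t}=2G_{t}$. In the second case I would bound $\cm_{t}^{2-\p}\leq\rho^{2}\cm^{-\p}\cdot(\ldots)$ and show it is dominated by $\max_{t}v_{t}^{2}\eta_{t}^{2}\rho^{2}\ln^{2}\frac{3}{\delta}$ plus the squared $\bar{G}$-bias term, again via AM--GM. If that fails, I would accept replacing $\cm$ by $\rho$ in that single term, adjusting constants.
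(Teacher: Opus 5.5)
Your proposal is correct and follows essentially the same route as the paper. Localize with $\zeta_t$ so that $G_t\zeta_t\le\bar G$ and $\cm_t\zeta_t\le\rho$; interpolate the $G_t$-dependent terms via $\cm_t^{\p}\ge(2G_t)^{\beta\p}\cm^{(1-\beta)\p}$; apply Lemma \ref{lem:Freedman} to $M_t\zeta_t$ and Lemma \ref{lem:Freedman-square} to the squared-noise sum; bound the bias deterministically; and absorb the leftovers by AM--GM.

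The step you hedge on goes through without the fallback. The paper writes $\cm_t^{2-\p}\zeta_t=\cm_t^{2}\zeta_t/\cm_t^{\p}\le(\cm^{2}+4G_t^{2})\zeta_t/\cm_t^{\p}\le\cm^{2-\p}+(2\bar G)^{2-\beta\p}/\cm^{(1-\beta)\p}$, which is your case split in one line. The second piece is then absorbed by AM--GM into $\max_t v_t^2\eta_t^2\rho^2\ln^2\frac{3}{\delta}$ plus the squared $\bar G$-bias term, using $\rho\ge2\bar G$. For the $\sigma_{\sma}^{\p}\cm_t^{2-\p}$ term in the operator-norm bound, the cruder estimate $\cm_t^{2-\p}\zeta_t\le\rho/\cm^{\p-1}$ already suffices.
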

\begin{proof}
By $L$-smoothness (Assumption \ref{assu:smo}) and $\left\Vert \nabla f(\bx_{1})\right\Vert \leq G_{1}$
(Assumption \ref{assu:G1}), we have
\begin{equation}
\left\Vert \nabla f(\bx_{t})\right\Vert \leq\left\Vert \nabla f(\bx_{t})-\nabla f(\bx_{1})\right\Vert +\left\Vert \nabla f(\bx_{1})\right\Vert \leq L\left\Vert \bx_{t}-\bx_{1}\right\Vert +G_{1}=G_{t},\forall t\in\left[T\right].\label{eq:cvx-smo-res-Gt}
\end{equation}
Moreover, under Assumption \ref{assu:minimizer}, we note that $G_{t}\leq L\left\Vert \bx_{\star}-\bx_{t}\right\Vert +L\left\Vert \bx_{\star}-\bx_{1}\right\Vert +G_{1}$,
implying that
\begin{equation}
\left\Vert \nabla f(\bx_{t})\right\Vert \zeta_{t}\overset{(\ref{eq:cvx-smo-res-Gt})}{\leq}G_{t}\zeta_{t}\leq L(\bar{D}+D)+G_{1}\defeq\bar{G},\forall t\in\left[T\right],\label{eq:cvx-smo-res-Gbar}
\end{equation}
and
\begin{equation}
\cm_{t}\zeta_{t}=\max\left\{ 2G_{t},\cm\right\} \zeta_{t}\overset{\zeta_{t}\leq1}{\leq}\max\left\{ 2G_{t}\zeta_{t},\cm\right\} \overset{(\ref{eq:cvx-smo-res-Gbar})}{\leq}\max\left\{ 2\bar{G},\cm\right\} \defeq\rho,\forall t\in\left[T\right].\label{eq:cvx-smo-res-rho}
\end{equation}
We observe that
\begin{equation}
2\left\Vert \nabla f(\bx_{t})\right\Vert \overset{(\ref{eq:cvx-smo-res-Gt})}{\leq}2G_{t}\leq\max\left\{ 2G_{t},\cm\right\} =\cm_{t},\forall t\in\left[T\right],\label{eq:cvx-smo-res-tau}
\end{equation}
which implies that $\chi_{t}$ in Lemma \ref{lem:clip-ineq} equals
$1$ for all $t\in\left[T\right]$. Then Lemma \ref{lem:clip-ineq}
and (\ref{eq:cvx-smo-res-Gt}) together yield that the following collection
of inequalities holds for any $t\in\left[T\right]$:
\begin{align}
\left\Vert \bd_{t}^{\ru}\right\Vert  & \leq2\cm_{t},\label{eq:cvx-smo-res-u-norm}\\
\E\left[\left\Vert \bd_{t}^{\ru}\right\Vert ^{2}\mid\F_{t-1}\right] & \leq\frac{4\sigma_{\lar}^{\p}}{\cm_{t}^{\p-2}},\label{eq:cvx-smo-res-u-var}\\
\left\Vert \E\left[\bd_{t}^{\ru}\left(\bd_{t}^{\ru}\right)^{\top}\mid\F_{t-1}\right]\right\Vert _{\op} & \leq\frac{4\sigma_{\sma}^{\p}}{\cm_{t}^{\p-2}}+\frac{2^{\p+1}\sigma_{\lar}^{\p}G_{t}^{2}}{\cm_{t}^{\p}},\label{eq:cvx-smo-res-u-op}\\
\left\Vert \bd_{t}^{\rb}\right\Vert  & \leq\frac{\sigma_{\sma}\sigma_{\lar}^{\p-1}}{\cm_{t}^{\p-1}}+\frac{2^{\p-1}\sigma_{\lar}^{\p}G_{t}}{\cm_{t}^{\p}}.\label{eq:cvx-smo-res-b-norm}
\end{align}

Now, we are ready to bound $\bar{\lasthres}_{T}^{\cvx}$.

\begin{itemize}[leftmargin=*]

\item We first observe that the sequence $\left\{ v_{t}\right\} _{t=1}^{T}$
is now deterministic by Condition \ref{enu:cvx-smo-res-1} and its
definition in (\ref{eq:cvx-v-def}), which further implies that $\bz_{t}\in\F_{t-1}$
and $R_{t}\in\F_{t-1}$ (see (\ref{eq:cvx-z-equiv}) and (\ref{eq:cvx-R-M-def})
for their definitions). Therefore, using $\zeta_{t}\in\F_{t-1}$,
we know
\[
\E\left[M_{t}\zeta_{t}\mid\F_{t-1}\right]\overset{(\ref{eq:cvx-R-M-def})}{=}\left\langle v_{t-1}\eta_{t}\E\left[\bd_{t}^{\ru}\mid\F_{t-1}\right],\frac{v_{t-1}\left(\bz_{t-1}-\bx_{t}\right)}{R_{t}}\right\rangle \zeta_{t}=0,\forall t\in\left[T\right],
\]
meaning that $\left\{ M_{t}\zeta_{t}\right\} _{t=1}^{T}$ is a real-valued
MDS. By Cauchy-Schwarz inequality, we can bound
\begin{align*}
\left|M_{t}\zeta_{t}\right| & \overset{(\ref{eq:cvx-R-M-def})}{=}\left|\left\langle v_{t-1}\eta_{t}\bd_{t}^{\ru},\frac{v_{t-1}\left(\bz_{t-1}-\bx_{t}\right)}{R_{t}}\right\rangle \right|\zeta_{t}\leq v_{t-1}\eta_{t}\left\Vert \bd_{t}^{\ru}\right\Vert \cdot\frac{v_{t-1}\left\Vert \bz_{t-1}-\bx_{t}\right\Vert }{R_{t}}\cdot\zeta_{t}\\
 & \overset{(\ref{eq:cvx-R-M-def}),v_{t-1}\leq v_{t}}{\leq}v_{t}\eta_{t}\left\Vert \bd_{t}^{\ru}\right\Vert \zeta_{t}\overset{(\ref{eq:cvx-smo-res-u-norm})}{\leq}2v_{t}\eta_{t}\cm_{t}\zeta_{t}\overset{(\ref{eq:cvx-smo-res-rho})}{\leq}2v_{t}\eta_{t}\rho\leq2\max_{t\in\left[T\right]}v_{t}\eta_{t}\rho.
\end{align*}
In addition, the conditional variance of $M_{t}\zeta_{t}$ can be
controlled by
\begin{align*}
\E\left[\left(M_{t}\zeta_{t}\right)^{2}\mid\F_{t-1}\right] & =\left(\frac{v_{t-1}\left(\bz_{t-1}-\bx_{t}\right)}{R_{t}}\right)^{\top}v_{t-1}^{2}\eta_{t}^{2}\E\left[\bd_{t}^{\ru}\left(\bd_{t}^{\ru}\right)^{\top}\mid\F_{t-1}\right]\frac{v_{t-1}\left(\bz_{t-1}-\bx_{t}\right)}{R_{t}}\zeta_{t}^{2}\\
 & \overset{(\ref{eq:cvx-R-M-def}),v_{t-1}\leq v_{t}}{\leq}v_{t}^{2}\eta_{t}^{2}\left\Vert \E\left[\bd_{t}^{\ru}\left(\bd_{t}^{\ru}\right)^{\top}\mid\F_{t-1}\right]\right\Vert _{\op}\zeta_{t}^{2}\overset{(\ref{eq:cvx-smo-res-u-op})}{\leq}\frac{4\sigma_{\sma}^{\p}v_{t}^{2}\eta_{t}^{2}\zeta_{t}^{2}}{\cm_{t}^{\p-2}}+\frac{2^{\p+1}\sigma_{\lar}^{\p}G_{t}^{2}v_{t}^{2}\eta_{t}^{2}\zeta_{t}^{2}}{\cm_{t}^{\p}}\\
 & \leq\frac{4\sigma_{\sma}^{\p}v_{t}^{2}\eta_{t}^{2}\rho}{\cm^{\p-1}}+\min_{\beta\in\left[0,\frac{1}{2}\right]}\frac{2^{\p+1}\sigma_{\lar}^{\p}\bar{G}^{2-\beta\p}v_{t}^{2}\eta_{t}^{2}}{2^{\beta\p}\cm^{(1-\beta)\p}},
\end{align*}
where the last step is due to
\[
\frac{\zeta_{t}^{2}}{\cm_{t}^{\p-2}}=\frac{\cm_{t}\zeta_{t}^{2}}{\cm_{t}^{\p-1}}\overset{\cm_{t}\geq\cm,\p\geq1}{\leq}\frac{\cm_{t}\zeta_{t}^{2}}{\cm^{\p-1}}\overset{\zeta_{t}\leq1,(\ref{eq:cvx-smo-res-rho})}{\leq}\frac{\rho}{\cm^{\p-1}},
\]
and, for any $\beta\in\left[0,\frac{1}{2}\right]$,
\[
\frac{G_{t}^{2}\zeta_{t}^{2}}{\cm_{t}^{\p}}\overset{\cm_{t}\geq\cm,(\ref{eq:cvx-smo-res-tau})}{\leq}\frac{G_{t}^{2-\beta\p}\zeta_{t}^{2}}{2^{\beta\p}\cm^{(1-\beta)\p}}\overset{\zeta_{t}\leq1,\beta\p\leq2,(\ref{eq:cvx-smo-res-Gbar})}{\leq}\frac{\bar{G}^{2-\beta\p}}{2^{\beta\p}\cm^{(1-\beta)\p}}.
\]
Therefore, by Freedman's inequality (Lemma \ref{lem:Freedman}), with
probability at least $1-2\delta/3$,
\[
\left|\sum_{s=1}^{t}M_{s}\zeta_{s}\right|\leq\frac{2}{3}\max_{s\in\left[T\right]}v_{s}\eta_{s}\rho\ln\frac{3}{\delta}+\sqrt{8\sum_{s=1}^{T}\left(\frac{\sigma_{\sma}^{\p}v_{s}^{2}\eta_{s}^{2}\rho}{\cm^{\p-1}}+\min_{\beta\in\left[0,\frac{1}{2}\right]}\frac{2^{\p-1}\sigma_{\lar}^{\p}\bar{G}^{2-\beta\p}v_{s}^{2}\eta_{s}^{2}}{2^{\beta\p}\cm^{(1-\beta)\p}}\right)\ln\frac{3}{\delta}},
\]
which implies that
\begin{equation}
\max_{t\in\left[T\right]}\left(\sum_{s=1}^{t}M_{s}\zeta_{s}\right)^{2}\leq\frac{8}{9}\max_{t\in\left[T\right]}v_{t}^{2}\eta_{t}^{2}\rho^{2}\ln^{2}\frac{3}{\delta}+16\sum_{t=1}^{T}\left(\frac{\sigma_{\sma}^{\p}v_{t}^{2}\eta_{t}^{2}\rho}{\cm^{\p-1}}+\min_{\beta\in\left[0,\frac{1}{2}\right]}\frac{2^{\p-1}\sigma_{\lar}^{\p}\bar{G}^{2-\beta\p}v_{t}^{2}\eta_{t}^{2}}{2^{\beta\p}\cm^{(1-\beta)\p}}\right)\ln\frac{3}{\delta}.\label{eq:cvx-smo-res-1}
\end{equation}

\item Next, note that $\left\{ v_{t}\eta_{t}\left\Vert \bd_{t}^{\ru}\right\Vert \zeta_{t}\right\} _{t=1}^{T}$
is a sequence of random variables satisfying
\[
v_{t}\eta_{t}\left\Vert \bd_{t}^{\ru}\right\Vert \zeta_{t}\overset{(\ref{eq:cvx-smo-res-u-norm})}{\leq}2v_{t}\eta_{t}\cm_{t}\zeta_{t}\overset{(\ref{eq:cvx-smo-res-rho})}{\leq}2v_{t}\eta_{t}\rho\leq2\max_{t\in\left[T\right]}v_{t}\eta_{t}\rho,
\]
and
\[
\E\left[v_{t}^{2}\eta_{t}^{2}\left\Vert \bd_{t}^{\ru}\right\Vert ^{2}\zeta_{t}^{2}\mid\F_{t-1}\right]\overset{(\ref{eq:cvx-smo-res-u-var})}{\leq}\frac{4\sigma_{\lar}^{\p}v_{t}^{2}\eta_{t}^{2}\zeta_{t}^{2}}{\cm_{t}^{\p-2}}\leq\frac{4\sigma_{\lar}^{\p}v_{t}^{2}\eta_{t}^{2}}{\cm^{\p-2}}+\min_{\beta\in\left[0,\frac{1}{2}\right]}\frac{4\sigma_{\lar}^{\p}(2\bar{G})^{2-\beta\p}v_{t}^{2}\eta_{t}^{2}}{\cm^{(1-\beta)\p}},
\]
where the second step is due to, for any $\beta\in\left[0,\frac{1}{2}\right]$,
\begin{align*}
\frac{\zeta_{t}^{2}}{\cm_{t}^{\p-2}} & =\frac{\cm_{t}^{2}\zeta_{t}^{2}}{\cm_{t}^{\p}}\overset{\text{Condition }\ref{enu:cvx-smo-res-2}}{\leq}\frac{\left(\cm^{2}+4G_{t}^{2}\right)\zeta_{t}^{2}}{\cm_{t}^{\p}}\overset{\cm_{t}\geq\cm,\zeta_{t}\leq1}{\leq}\frac{1}{\cm^{\p-2}}+\frac{4G_{t}^{2}\zeta_{t}^{2}}{\cm_{t}^{\p}}\\
 & \overset{\cm_{t}\geq\cm,(\ref{eq:cvx-smo-res-tau})}{\leq}\frac{1}{\cm^{\p-2}}+\frac{(2G_{t})^{2-\beta\p}\zeta_{t}^{2}}{\cm^{(1-\beta)\p}}\overset{\zeta_{t}\leq1,\beta\p\leq2,(\ref{eq:cvx-smo-res-Gbar})}{\leq}\frac{1}{\cm^{\p-2}}+\frac{(2\bar{G})^{2-\beta\p}}{\cm^{(1-\beta)\p}}.
\end{align*}
Therefore, by Lemma \ref{lem:Freedman-square}, with probability at
least $1-\delta/3$,
\begin{align}
\sum_{t=1}^{T}v_{t}^{2}\eta_{t}^{2}\left\Vert \bd_{t}^{\ru}\right\Vert ^{2}\zeta_{t}^{2} & \leq\frac{10}{3}\max_{t\in\left[T\right]}v_{t}^{2}\eta_{t}^{2}\rho^{2}\ln\frac{3}{\delta}+8\sum_{t=1}^{T}\left(\frac{\sigma_{\lar}^{\p}v_{t}^{2}\eta_{t}^{2}}{\cm^{\p-2}}+\min_{\beta\in\left[0,\frac{1}{2}\right]}\frac{\sigma_{\lar}^{\p}(2\bar{G})^{2-\beta\p}v_{t}^{2}\eta_{t}^{2}}{\cm^{(1-\beta)\p}}\right)\nonumber \\
 & \overset{\ln\frac{3}{\delta}\geq1}{\leq}\frac{10}{3}\max_{t\in\left[T\right]}v_{t}^{2}\eta_{t}^{2}\rho^{2}\ln^{2}\frac{3}{\delta}+8\sum_{t=1}^{T}\left(\frac{\sigma_{\lar}^{\p}v_{t}^{2}\eta_{t}^{2}}{\cm^{\p-2}}+\min_{\beta\in\left[0,\frac{1}{2}\right]}\frac{\sigma_{\lar}^{\p}(2\bar{G})^{2-\beta\p}v_{t}^{2}\eta_{t}^{2}}{\cm^{(1-\beta)\p}}\right).\label{eq:cvx-smo-res-2}
\end{align}

\item Lastly, there is
\[
v_{t}\eta_{t}\left\Vert \bd_{t}^{\rb}\right\Vert \zeta_{t}\overset{(\ref{eq:cvx-smo-res-b-norm})}{\leq}\frac{\sigma_{\sma}\sigma_{\lar}^{\p-1}v_{t}\eta_{t}\zeta_{t}}{\cm_{t}^{\p-1}}+\frac{2^{\p-1}\sigma_{\lar}^{\p}G_{t}v_{t}\eta_{t}\zeta_{t}}{\cm_{t}^{\p}}\leq\frac{\sigma_{\sma}\sigma_{\lar}^{\p-1}v_{t}\eta_{t}}{\cm^{\p-1}}+\min_{\beta\in\left[0,\frac{1}{2}\right]}\frac{2^{\p-1}\sigma_{\lar}^{\p}\bar{G}^{1-\beta\p}v_{t}\eta_{t}}{2^{\beta\p}\cm^{(1-\beta)\p}},
\]
where the second step is due to
\[
\frac{\zeta_{t}}{\cm_{t}^{\p-1}}\overset{\cm_{t}\geq\cm,\p\geq1}{\leq}\frac{\zeta_{t}}{\cm^{\p-1}}\overset{\zeta_{t}\leq1}{\leq}\frac{1}{\cm^{\p-1}},
\]
and, for any $\beta\in\left[0,\frac{1}{2}\right]$,
\[
\frac{G_{t}\zeta_{t}}{\cm_{t}^{\p}}\overset{\cm_{t}\geq\cm,(\ref{eq:cvx-smo-res-tau})}{\leq}\frac{G_{t}^{1-\beta\p}\zeta_{t}}{2^{\beta\p}\cm^{(1-\beta)\p}}\overset{\zeta_{t}\leq1,\beta\p\leq1,(\ref{eq:cvx-smo-res-Gbar})}{\leq}\frac{\bar{G}^{1-\beta\p}}{2^{\beta\p}\cm^{(1-\beta)\p}}.
\]
Hence, we obtain
\begin{equation}
\sum_{t=1}^{T}v_{t}\eta_{t}\left\Vert \bd_{t}^{\rb}\right\Vert \zeta_{t}\leq\sum_{t=1}^{T}\frac{\sigma_{\sma}\sigma_{\lar}^{\p-1}v_{t}\eta_{t}}{\cm^{\p-1}}+\min_{\beta\in\left[0,\frac{1}{2}\right]}\frac{2^{\p-1}\sigma_{\lar}^{\p}\bar{G}^{1-\beta\p}v_{t}\eta_{t}}{2^{\beta\p}\cm^{(1-\beta)\p}}.\label{eq:cvx-smo-res-3}
\end{equation}

\end{itemize}

Combine (\ref{eq:cvx-smo-res-1}), (\ref{eq:cvx-smo-res-2}), and
(\ref{eq:cvx-smo-res-3}) to have with probability at least $1-\delta$,
\[
\bar{\lasthres}_{T}^{\cvx}=8\max_{t\in\left[T\right]}\left(\sum_{s=1}^{t}M_{s}\zeta_{s}\right)^{2}+2\sum_{t=1}^{T}v_{t}^{2}\eta_{t}^{2}\left\Vert \bd_{t}^{\ru}\right\Vert ^{2}\zeta_{t}^{2}+4\left(\sum_{t=1}^{T}v_{t}\eta_{t}\left\Vert \bd_{t}^{\rb}\right\Vert \zeta_{t}\right)^{2}\leq\bar{\lasthc}_{T}^{\cvx},
\]
where
\begin{align}
\bar{\lasthc}_{T}^{\cvx}\defeq & \frac{124}{9}\max_{t\in\left[T\right]}v_{t}^{2}\eta_{t}^{2}\rho^{2}\ln^{2}\frac{3}{\delta}+128\sum_{t=1}^{T}\left(\frac{\sigma_{\sma}^{\p}v_{t}^{2}\eta_{t}^{2}\rho}{\cm^{\p-1}}+\min_{\beta\in\left[0,\frac{1}{2}\right]}\frac{2^{\p-1}\sigma_{\lar}^{\p}\bar{G}^{2-\beta\p}v_{t}^{2}\eta_{t}^{2}}{2^{\beta\p}\cm^{(1-\beta)\p}}\right)\ln\frac{3}{\delta}\nonumber \\
 & +16\sum_{t=1}^{T}\left(\frac{\sigma_{\lar}^{\p}v_{t}^{2}\eta_{t}^{2}}{\cm^{\p-2}}+\min_{\beta\in\left[0,\frac{1}{2}\right]}\frac{\sigma_{\lar}^{\p}(2\bar{G})^{2-\beta\p}v_{t}^{2}\eta_{t}^{2}}{\cm^{(1-\beta)\p}}\right)\nonumber \\
 & +4\left(\sum_{t=1}^{T}\frac{\sigma_{\sma}\sigma_{\lar}^{\p-1}v_{t}\eta_{t}}{\cm^{\p-1}}+\min_{\beta\in\left[0,\frac{1}{2}\right]}\frac{2^{\p-1}\sigma_{\lar}^{\p}\bar{G}^{1-\beta\p}v_{t}\eta_{t}}{2^{\beta\p}\cm^{(1-\beta)\p}}\right)^{2}\nonumber \\
= & \O\left(\max_{t\in\left[T\right]}v_{t}^{2}\eta_{t}^{2}\rho^{2}\ln^{2}\frac{3}{\delta}+\sum_{t=1}^{T}\frac{\sigma_{\lar}^{\p}v_{t}^{2}\eta_{t}^{2}}{\cm^{\p-2}}+\left(\sum_{t=1}^{T}\frac{\sigma_{\sma}\sigma_{\lar}^{\p-1}v_{t}\eta_{t}}{\cm^{\p-1}}+\min_{\beta\in\left[0,\frac{1}{2}\right]}\frac{\sigma_{\lar}^{\p}\bar{G}^{1-\beta\p}v_{t}\eta_{t}}{\cm^{(1-\beta)\p}}\right)^{2}\right),\label{eq:cvx-smo-res-4}
\end{align}
in which the second equation is by noticing that
\begin{align*}
 & \max_{t\in\left[T\right]}v_{t}^{2}\eta_{t}^{2}\rho^{2}\ln^{2}\frac{3}{\delta}+\left(\sum_{t=1}^{T}\frac{\sigma_{\sma}\sigma_{\lar}^{\p-1}v_{t}\eta_{t}}{\cm^{\p-1}}+\min_{\beta\in\left[0,\frac{1}{2}\right]}\frac{2^{\p-1}\sigma_{\lar}^{\p}\bar{G}^{1-\beta\p}v_{t}\eta_{t}}{2^{\beta\p}\cm^{(1-\beta)\p}}\right)^{2}\\
\overset{(a)}{\geq} & 2\left(\max_{t\in\left[T\right]}v_{t}\eta_{t}\rho\right)\sum_{t=1}^{T}\left(\frac{\sigma_{\sma}\sigma_{\lar}^{\p-1}v_{t}\eta_{t}}{\cm^{\p-1}}+\min_{\beta\in\left[0,\frac{1}{2}\right]}\frac{2^{\p-1}\sigma_{\lar}^{\p}\bar{G}^{1-\beta\p}v_{t}\eta_{t}}{2^{\beta\p}\cm^{(1-\beta)\p}}\right)\ln\frac{3}{\delta}\\
\geq & 2\sum_{t=1}^{T}\left(\frac{\sigma_{\sma}\sigma_{\lar}^{\p-1}v_{t}^{2}\eta_{t}^{2}\rho}{\cm^{\p-1}}+\min_{\beta\in\left[0,\frac{1}{2}\right]}\frac{2^{\p-1}\sigma_{\lar}^{\p}\bar{G}^{1-\beta\p}v_{t}^{2}\eta_{t}^{2}\rho}{2^{\beta\p}\cm^{(1-\beta)\p}}\right)\ln\frac{3}{\delta}\\
\overset{(b)}{=} & \Omega\left(\sum_{t=1}^{T}\left(\frac{\sigma_{\sma}^{\p}v_{t}^{2}\eta_{t}^{2}\rho}{\cm^{\p-1}}+\min_{\beta\in\left[0,\frac{1}{2}\right]}\frac{2^{\p-1}\sigma_{\lar}^{\p}\bar{G}^{2-\beta\p}v_{t}^{2}\eta_{t}^{2}}{2^{\beta\p}\cm^{(1-\beta)\p}}\right)\ln\frac{3}{\delta}+\sum_{t=1}^{T}\min_{\beta\in\left[0,\frac{1}{2}\right]}\frac{\sigma_{\lar}^{\p}(2\bar{G})^{2-\beta\p}v_{t}^{2}\eta_{t}^{2}}{\cm^{(1-\beta)\p}}\right),
\end{align*}
where $(a)$ is due to AM-GM inequality and $(b)$ is by $\sigma_{\lar}\geq\sigma_{\sma}$,
$\p\geq1$, $\rho\overset{(\ref{eq:cvx-smo-res-rho})}{\geq}2\bar{G}$,
and $\ln\frac{3}{\delta}\geq1$.
\end{proof}

To find a sufficient condition for uniformly upper bounded $\left\Vert \bx_{\star}-\bx_{t}\right\Vert $,
we employ the idea introduced before in Section \ref{sec:idea}.
\begin{lem}
\label{lem:cvx-smo-distance}Under Assumptions \ref{assu:minimizer},
\ref{assu:obj} (with $\mu=0$), \ref{assu:smo}, \ref{assu:heavy},
\ref{assu:G1}, and the following two conditions:
\begin{enumerate}[leftmargin=*]
\item \label{enu:cvx-smo-distance-1}$\eta_{t}$ is deterministic and satisfies
$0<\eta_{t}\leq\frac{1}{2L}$ for all $t\in\left[T\right]$.
\item \label{enu:cvx-smo-distance-2}$\cm_{t}=\max\left\{ 2G_{t},\cm\right\} $
for all $t\in\left[T\right]$, where $G_{t}=L\left\Vert \bx_{t}-\bx_{1}\right\Vert +G_{1}$
and $\cm\geq0$ is a constant.
\end{enumerate}
$\ClipSGD$ (Algorithm \ref{alg:Clipped-SGD}) guarantees that, for
any $\delta\in\left(0,1\right]$, with probability at least $1-\delta$,
\[
\left\Vert \bx_{\star}-\bx_{t}\right\Vert \leq\bar{D},\forall t\in\left[T+1\right],
\]
where $\bar{D}>0$ is any constant satisfying (\ref{eq:cvx-smo-distance-ineq}).
\end{lem}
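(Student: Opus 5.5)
Condition (\ref{eq:cvx-smo-distance-ineq}) is not displayed above; I take it to be the inequality used in the proof of Theorem~\ref{thm:cvx-smo-hp}, i.e.\ (\ref{eq:cvx-smo-hp-Dbar-ineq}) with $v_t\equiv1$ and a numerical constant $\constant$. The proof follows the ``Refined Part II'' argument of Section~\ref{sec:idea}. There are three ingredients: the second (distance) inequality of Lemma~\ref{lem:cvx-core} with $\omega=0$, which applies since $\eta_t\le\frac{1}{2L}$; a single high-probability event controlling a \emph{truncated} residual; and a deterministic induction on that event. Throughout, set $\zeta_t\defeq\1[\Vert\bx_\star-\bx_t\Vert\le\bar D]\in\F_{t-1}$ and $E_t\defeq\{\zeta_s=1,\forall s\le t\}$.

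\textbf{Step 1 (truncated residual).} Define
\[
\bar\hres_T^{\cvx}\defeq8\max_{t\in[T]}\Bigl(\sum_{s=1}^{t}N_s\zeta_s\Bigr)^2+2\sum_{t=1}^T\eta_t^2\Vert\bd_t^\ru\Vert^2\zeta_t^2+4\Bigl(\sum_{t=1}^T\eta_t\Vert\bd_t^\rb\Vert\zeta_t\Bigr)^2.
\]
I will show $\Pr[\bar\hres_T^{\cvx}\le\bar\hc_T]\ge1-\delta$, where $\bar\hc_T$ is the bracket appearing in (\ref{eq:cvx-smo-hp-Dbar-ineq}) times a numerical constant. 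The argument is a verbatim copy of the proof of Lemma~\ref{lem:cvx-smo-res} with $v_t\equiv1$, $R_t\to D_t$ and $M_t\to N_t$; recall from (\ref{eq:cvx-D-N-def}) that these are exactly this specialization.
\begin{itemize}
\item The key checks are that $D_t\in\F_{t-1}$ and $\zeta_t\in\F_{t-1}$. Hence $N_t\zeta_t$ is an MDS with $|N_t\zeta_t|\le\eta_t\Vert\bd_t^\ru\Vert\zeta_t\le2\eta_t\rho$.
\item Its conditional variance is bounded via Lemma~\ref{lem:clip-ineq}, using $\Vert\nabla f(\bx_t)\Vert\zeta_t\le G_t\zeta_t\le\bar G$ and $\cm_t\zeta_t\le\rho$, exactly as in (\ref{eq:cvx-smo-res-Gbar})--(\ref{eq:cvx-smo-res-rho}).
\item Freedman's inequality (Lemma~\ref{lem:Freedman}), with its maximal-over-$t$ form, controls the martingale term at confidence $2\delta/3$. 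Lemma~\ref{lem:Freedman-square} controls the $\Vert\bd^\ru\Vert^2$ term at confidence $\delta/3$. The bias term is handled deterministically.
\end{itemize}
Let $E$ denote the event $\{\bar\hres_T^{\cvx}\le\bar\hc_T\}$. The hypothesis (\ref{eq:cvx-smo-distance-ineq}) is then exactly the statement $2D^2+4\bar\hc_T\le\bar D^2$.

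\textbf{Step 2 (deterministic induction on $E$).} I claim $E\cap E_t\subseteq\{\zeta_{t+1}=1\}$ for every $t\in[T]$.
\begin{enumerate}
\item On $E_t$ we have $\zeta_s=1$ for all $s\le t$. Hence $\hres_t^{\cvx}$ coincides with its truncated version summed up to $t$.
\item Each summand of that truncated version is dominated by the corresponding full-horizon quantity: the max over $t'\le t$ by the max over $t'\le T$, and the nonnegative sums by the sums up to $T$. So $\hres_t^{\cvx}\le\bar\hres_T^{\cvx}$ on $E_t$.
\item Lemma~\ref{lem:cvx-core} then gives, on $E\cap E_t$,
\[
\Vert\bx_\star-\bx_{t+1}\Vert^2\le2D^2+4\bar\hres_T^{\cvx}\le2D^2+4\bar\hc_T\le\bar D^2 .
\]
\end{enumerate}
Thus $E\cap E_{t+1}=E\cap E_t$ for all $t$. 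Since $\zeta_1=1$ (because $D\le\bar D$, which follows from (\ref{eq:cvx-smo-distance-ineq})), we get $E\cap E_1=E$, hence $E\subseteq E_{T+1}$ and $\Pr[E_{T+1}]\ge1-\delta$.

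\textbf{Main obstacle.} The delicate point is Step~1. The deviation bound must hold for the truncated martingale \emph{uniformly in $t$} with a single $\ln\frac3\delta$; a per-$t$ union bound would reintroduce $\ln\frac{T}{\delta}$. This works for two reasons. First, $\zeta_t$ is predictable, so truncation preserves the MDS property. Second, all the bounds ($\rho$, $\bar G$, and the $\min_\beta$ interpolation of $G_t^2/\cm_t^{\p}$) are deterministic once multiplied by $\zeta_t$, so Freedman's inequality applies with deterministic range and variance proxies. I would also verify that $D_t$, which is a running maximum involving $\bx_t$, stays $\F_{t-1}$-measurable and satisfies $\Vert\bx_\star-\bx_t\Vert/D_t\le1$, so that $|N_t|\le\eta_t\Vert\bd_t^\ru\Vert$.
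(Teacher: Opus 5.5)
Your proposal is correct and takes the paper's own route. The paper builds a single event $E=\{\bar{\hres}_T^{\cvx}\le\bar{\hc}_T^{\cvx}\}$ of probability at least $1-\delta$ by repeating the proof of Lemma~\ref{lem:cvx-smo-res} with $v_t\equiv1$ (Freedman's inequality plus Lemma~\ref{lem:Freedman-square}), then shows $E\cap E_t\subseteq\{\zeta_{t+1}=1\}$ deterministically via the distance bound of Lemma~\ref{lem:cvx-core}, which gives $E\subseteq E_{T+1}$ because $E_1$ is the sure event; your reading of (\ref{eq:cvx-smo-distance-ineq}) as $2D^2+4\bar{\hc}_T^{\cvx}\le\bar{D}^2$ also matches the paper.
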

\begin{proof}
Let $\bar{D}>0$ be any constant. We first upper bound the following
term:
\begin{equation}
\bar{\hres}_{T}^{\cvx}\defeq8\max_{t\in\left[T\right]}\left(\sum_{s=1}^{t}N_{s}\zeta_{s}\right)^{2}+2\sum_{t=1}^{T}\eta_{t}^{2}\left\Vert \bd_{t}^{\ru}\right\Vert ^{2}\zeta_{t}^{2}+4\left(\sum_{t=1}^{T}\eta_{t}\left\Vert \bd_{t}^{\rb}\right\Vert \zeta_{t}\right)^{2},\label{eq:cvx-smo-distance-1}
\end{equation}
where $\zeta_{t}\defeq\1\left[\left\Vert \bx_{\star}-\bx_{t}\right\Vert \leq\bar{D}\right],\forall t\in\left[T+1\right]$.
Following similar steps of proving (\ref{eq:cvx-smo-res-4}), we can
show that, for any $\delta\in\left(0,1\right]$,
\begin{align}
\Pr\left[E\right] & \geq1-\delta,\text{ where }E\defeq\left\{ \bar{\hres}_{T}^{\cvx}\leq\bar{\hc}_{T}^{\cvx}\right\} \text{ and }\label{eq:cvx-smo-distance-2}\\
\bar{\hc}_{T}^{\cvx} & \defeq\constant\left[\max_{t\in\left[T\right]}\eta_{t}^{2}\rho^{2}\ln^{2}\frac{3}{\delta}+\sum_{t=1}^{T}\sigma_{\lar}^{\p}\eta_{t}^{2}\cm^{2-\p}+\left(\sum_{t=1}^{T}\frac{\sigma_{\sma}\sigma_{\lar}^{\p-1}\eta_{t}}{\cm^{\p-1}}+\min_{\beta\in\left[0,\frac{1}{2}\right]}\frac{\sigma_{\lar}^{\p}\bar{G}^{1-\beta\p}\eta_{t}}{\cm^{(1-\beta)\p}}\right)^{2}\right],\nonumber 
\end{align}
in which $\bar{G}=L(\bar{D}+D)+G_{1}$, $\rho=\max\left\{ 2\bar{G},\cm\right\} $,
and $\constant\geq1$ is a numerical constant.

Next, suppose $\bar{D}$ satisfies the following inequality
\begin{equation}
2D^{2}+4\bar{\hc}_{T}^{\cvx}\leq\bar{D}^{2}.\label{eq:cvx-smo-distance-ineq}
\end{equation}
We will show
\[
\Pr\left[\left\Vert \bx_{\star}-\bx_{t}\right\Vert \leq\bar{D},\forall t\in\left[T+1\right]\right]\geq1-\delta.
\]
Let us define a group of events $E_{t}\defeq\left\{ \zeta_{s}=1,\forall s\in\left[t\right]\right\} ,\forall t\in\left[T+1\right]$.
We claim that there is always
\begin{equation}
E\cap E_{t}\subseteq\left\{ \zeta_{t+1}=1\right\} ,\forall t\in\left[T\right].\label{eq:cvx-smo-distance-3}
\end{equation}
Note that, by Condition \ref{enu:cvx-smo-distance-1} and Lemma \ref{lem:cvx-core}
($\omega=0$ now), every sample path satisfies
\[
\left\Vert \bx_{\star}-\bx_{t+1}\right\Vert ^{2}\leq2D^{2}+4\left[8\max_{t^{\prime}\in\left[t\right]}\left(\sum_{s=1}^{t^{\prime}}N_{s}\right)^{2}+2\sum_{s=1}^{t}\eta_{s}^{2}\left\Vert \bd_{s}^{\ru}\right\Vert ^{2}+4\left(\sum_{s=1}^{t}\eta_{s}\left\Vert \bd_{s}^{\rb}\right\Vert \right)^{2}\right],\forall t\in\left[T\right].
\]
Given a fixed $t\in\left[T\right]$ and a sample path in $E\cap E_{t}$,
we know $\zeta_{s}=1,\forall s\in\left[t\right]$ and $\bar{\hres}_{T}^{\cvx}\leq\bar{\hc}_{T}^{\cvx}$,
implying that
\begin{align*}
\left\Vert \bx_{\star}-\bx_{t+1}\right\Vert ^{2} & \leq2D^{2}+4\left[8\max_{t^{\prime}\in\left[t\right]}\left(\sum_{s=1}^{t^{\prime}}N_{s}\right)^{2}+2\sum_{s=1}^{t}\eta_{s}^{2}\left\Vert \bd_{s}^{\ru}\right\Vert ^{2}+4\left(\sum_{s=1}^{t}\eta_{s}\left\Vert \bd_{s}^{\rb}\right\Vert \right)^{2}\right]\\
 & =2D^{2}+4\left[8\max_{t^{\prime}\in\left[t\right]}\left(\sum_{s=1}^{t^{\prime}}N_{s}\zeta_{s}\right)^{2}+2\sum_{s=1}^{t}\eta_{s}^{2}\left\Vert \bd_{s}^{\ru}\right\Vert ^{2}\zeta_{s}^{2}+4\left(\sum_{s=1}^{t}\eta_{s}\left\Vert \bd_{s}^{\rb}\right\Vert \zeta_{s}\right)^{2}\right]\\
 & \leq2D^{2}+4\left[8\max_{t\in\left[T\right]}\left(\sum_{s=1}^{t}N_{s}\zeta_{s}\right)^{2}+2\sum_{t=1}^{T}\eta_{t}^{2}\left\Vert \bd_{t}^{\ru}\right\Vert ^{2}\zeta_{t}^{2}+4\left(\sum_{t=1}^{T}\eta_{t}\left\Vert \bd_{t}^{\rb}\right\Vert \zeta_{t}\right)^{2}\right]\\
 & \overset{(\ref{eq:cvx-smo-distance-1})}{=}2D^{2}+4\bar{\hres}_{T}^{\cvx}\leq2D^{2}+4\bar{\hc}_{T}^{\cvx}\overset{(\ref{eq:cvx-smo-distance-ineq})}{\leq}\bar{D}^{2}\Rightarrow\zeta_{t+1}=1.
\end{align*}
Therefore, (\ref{eq:cvx-smo-distance-3}) holds for all $t\in\left[T\right]$,
which implies that
\[
E\cap E_{t+1}=E\cap E_{t}\cap\left\{ \zeta_{t+1}=1\right\} =E\cap E_{t},\forall t\in\left[T\right].
\]
Subsequently, we obtain
\begin{equation}
E_{T+1}\supseteq E\cap E_{T+1}=E\cap E_{1}=E,\label{eq:cvx-smo-distance-4}
\end{equation}
where the last step holds because $E_{1}=\left\{ \zeta_{1}=1\right\} =\left\{ \left\Vert \bx_{\star}-\bx_{1}\right\Vert \leq\bar{D}\right\} $
is the set that contains all possible sample paths, since $\left\Vert \bx_{\star}-\bx_{1}\right\Vert =D\leq\sqrt{2}D\overset{(\ref{eq:cvx-smo-distance-ineq})}{\leq}\bar{D}.$
Finally, we have
\[
\Pr\left[\left\Vert \bx_{\star}-\bx_{t}\right\Vert \leq\bar{D},\forall t\in\left[T+1\right]\right]=\Pr\left[E_{T+1}\right]\overset{(\ref{eq:cvx-smo-distance-4})}{\geq}\Pr\left[E\right]\overset{(\ref{eq:cvx-smo-distance-2})}{\geq}1-\delta.
\]
\end{proof}

\subsection{Lemmas for Strongly Star-Convex Optimization}

In this subsection, we will prove the lemmas used in strongly star-convex
optimization, i.e., $\mu>0$ in Assumption \ref{assu:obj}.

Before proceeding with the analysis, we need some new notation. Inspired
by \citep{ICLR2026_bd5b846a}, we introduce an auxiliary sequence
$\left\{ \gamma_{t}\right\} _{t=1}^{T}$ defined as follows
\begin{equation}
\gamma_{t}\defeq\prod_{s=2}^{t}\frac{1+\mu\eta_{s-1}}{1+\mu\eta_{s}/2},\forall t\in\left[T\right].\label{eq:str-gamma-def}
\end{equation}
With $\left\{ \gamma_{t}\right\} _{t=1}^{T}$ in hand, we redefine
$\left\{ D_{t}\right\} _{t=1}^{T}$ and $\left\{ N_{t}\right\} _{t=1}^{T}$
in the following
\begin{align}
D_{t} & \defeq\max_{s\in\left[t\right]}\sqrt{\gamma_{s}(1+\mu\eta_{s}/2)}\left\Vert \bx_{\star}-\bx_{s}\right\Vert ,\label{eq:str-D-def}\\
N_{t} & \defeq\left\langle \sqrt{\frac{\gamma_{t}}{1+\mu\eta_{t}/2}}\eta_{t}\bd_{t}^{\ru},\frac{\sqrt{\gamma_{t}(1+\mu\eta_{t}/2)}\left(\bx_{\star}-\bx_{t}\right)}{D_{t}}\right\rangle .\label{eq:str-N-def}
\end{align}
As one can see, these two new definitions degenerate to the old ones
in (\ref{eq:cvx-D-N-def}) when $\mu=0$.

Although our goal is to prove last-iterate convergence in terms of
function value gap, we still need the following Lemma \ref{lem:str-core},
which provides a bound on $\left\Vert \bx_{\star}-\bx_{T+1}\right\Vert ^{2}$.
\begin{lem}
\label{lem:str-core}Under Assumptions \ref{assu:minimizer}, \ref{assu:obj}
(with $\mu>0$), and \ref{assu:lip} (resp. \ref{assu:smo}), for
any $T\in\N$ and $\left\{ \eta_{t}\right\} _{t=1}^{T}$ satisfying
$0<\eta_{t}\leq\frac{\eta}{\mu},\forall t\in\left[T\right]$ (resp.
$0<\eta_{t}\leq\min\left\{ \frac{1}{2L},\frac{\eta}{\mu}\right\} ,\forall t\in\left[T\right]$)
where $\eta>0$ is a constant, $\ClipSGD$ (Algorithm \ref{alg:Clipped-SGD})
guarantees
\[
\left\Vert \bx_{\star}-\bx_{T+1}\right\Vert ^{2}\leq\frac{(2+\eta)D^{2}+4\hres_{T}^{\str}+16\sum_{t=1}^{T}\gamma_{t}\eta_{t}^{2}G^{2}\omega}{\gamma_{T}},
\]
where 
\[
I_{T}^{\str}=4\max_{t\in\left[T\right]}\left(\sum_{s=1}^{t}N_{s}\right)^{2}+2\sum_{t=1}^{T}\gamma_{t}\eta_{t}^{2}\left\Vert \bd_{t}^{\ru}\right\Vert ^{2}+\frac{1+2\eta}{\mu}\sum_{t=1}^{T}\gamma_{t}\eta_{t}\left\Vert \bd_{t}^{\rb}\right\Vert ^{2},
\]
 and $\omega=\begin{cases}
1 & \text{under Assumption \ref{assu:lip}}\\
0 & \text{under Assumption \ref{assu:smo}}
\end{cases}$ is introduced in Lemma \ref{lem:descent}.
\end{lem}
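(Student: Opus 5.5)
We prove Lemma \ref{lem:str-core} by running the distance part of the proof of Lemma \ref{lem:cvx-core} with $\by=\bx_{\star}$, adding the strong star-convexity and the weights $\gamma_t$. Invoke Lemma \ref{lem:descent} with $\by=\bx_{\star}$ and lower bound $F(\bx_{t+1})-F_{\star}\geq\frac{\mu}{2}\left\Vert \bx_{\star}-\bx_{t+1}\right\Vert ^{2}$ via Lemma \ref{lem:str-star-cvx}. After multiplying by $2\eta_t$ this gives
\[
(1+\mu\eta_{t})\left\Vert \bx_{\star}-\bx_{t+1}\right\Vert ^{2}\leq\left\Vert \bx_{\star}-\bx_{t}\right\Vert ^{2}+2\eta_{t}\left\langle \bd_{t}^{\rc},\bx_{\star}-\bx_{t}\right\rangle +2\eta_{t}^{2}\left\Vert \bd_{t}^{\rc}\right\Vert ^{2}+8\eta_{t}^{2}G^{2}\omega .
\]
Split the bias inner product by Young's inequality, $2\eta_t\langle\bd_t^{\rb},\bx_\star-\bx_t\rangle\leq\frac{\mu\eta_t}{2}\|\bx_\star-\bx_t\|^2+\frac{2\eta_t}{\mu}\|\bd_t^{\rb}\|^2$. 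This turns the first term on the right into $(1+\mu\eta_t/2)\|\bx_\star-\bx_t\|^2$. Then bound $\|\bd_t^{\rc}\|^2\leq2\|\bd_t^{\ru}\|^2+2\|\bd_t^{\rb}\|^2$ and absorb $\eta_t^2\|\bd_t^{\rb}\|^2\leq\frac{\eta}{\mu}\eta_t\|\bd_t^{\rb}\|^2$ using $\eta_t\leq\eta/\mu$. This is the source of the $(1+2\eta)/\mu$ coefficient, up to the constant bookkeeping.

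Next, multiply the step-$t$ inequality by $\gamma_t$. By definition (\ref{eq:str-gamma-def}), $\gamma_{t+1}(1+\mu\eta_{t+1}/2)=\gamma_t(1+\mu\eta_t)$. Hence with $a_t\defeq\gamma_t(1+\mu\eta_t/2)\|\bx_\star-\bx_t\|^2$ (setting $\gamma_{T+1}(1+\mu\eta_{T+1}/2)\defeq\gamma_T(1+\mu\eta_T)$) the left side is $a_{t+1}$ and the leading right-side term is $a_t$, so the recursion telescopes. Summing from $1$ to $t$ gives
\[
a_{t+1}\leq a_1+2\sum_{s\leq t}\gamma_s\eta_s\langle\bd_s^{\ru},\bx_\star-\bx_s\rangle+\sum_{s\leq t}\left[4\gamma_s\eta_s^2\|\bd_s^{\ru}\|^2+\tfrac{c(1+2\eta)}{\mu}\gamma_s\eta_s\|\bd_s^{\rb}\|^2+8\gamma_s\eta_s^2G^2\omega\right].
\]
Here $a_1=(1+\mu\eta_1/2)D^2\leq(1+\eta/2)D^2$.

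The martingale term is exactly $2\sum_s D_sN_s$ by (\ref{eq:str-D-def})--(\ref{eq:str-N-def}), since $\gamma_s\eta_s=\sqrt{\gamma_s/(1+\mu\eta_s/2)}\,\eta_s\sqrt{\gamma_s(1+\mu\eta_s/2)}$. Apply Lemma \ref{lem:Abel} (as in step $(c)$ of Lemma \ref{lem:cvx-core}) to get $\sum_s D_sN_s\leq2D_t\max_{t'\leq t}|\sum_{s\leq t'}N_s|$. AM-GM then bounds $4D_t\max|\cdot|$ by $\frac{D_t^2}{2}+8\max(\cdot)^2$, and $D_t^2=\max_{s\leq t}a_s$.

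This yields $a_{t+1}\leq\frac12\max_{s\leq t}a_s+c_{t+1}$ with $c_t$ nondecreasing and $a_1\leq 2c_1$. Lemma \ref{lem:algebra} then gives $a_{T+1}\leq 2c_{T+1}$. Dividing by $\gamma_T(1+\mu\eta_T)\geq\gamma_T$ produces the stated form, with $(2+\eta)D^2$ coming from $2a_1$ and the remaining factors giving $4I_T^{\str}$ and $16\sum\gamma_t\eta_t^2G^2\omega$.

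The main obstacle is the constant bookkeeping. The Young split must be chosen so that the coefficient of $\|\bx_\star-\bx_t\|^2$ is exactly $1+\mu\eta_t/2$, matching $\gamma$'s definition. The normalizations in $N_t$ and $D_t$ must make Abel plus AM-GM produce the factor $\frac12\max a_s$, and the $\bd^{\rb}$ terms must collect into $\frac{1+2\eta}{\mu}$. If the constants do not match exactly, I would first adjust the Young weight or the AM-GM split. This only changes numerical constants in $I_T^{\str}$.
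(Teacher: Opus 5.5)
Your proposal is correct and follows the paper's route. The paper makes exactly your first reduction (Lemma \ref{lem:descent} with $\bx_{\star}$ in place of $\by$, plus Lemma \ref{lem:str-star-cvx}) and then defers the remainder to Lemma 7 of \citep{ICLR2026_bd5b846a}, which is the $\gamma_t$-weighted telescoping, Abel plus AM-GM, and Lemma \ref{lem:algebra} argument you describe.

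No constant adjustment is needed. With Young weight $\mu/2$ and $\eta_t^2\|\bd_t^{\rb}\|^2\le\frac{\eta}{\mu}\eta_t\|\bd_t^{\rb}\|^2$, the bias terms collect to exactly $\frac{2(1+2\eta)}{\mu}\sum_{s\le t}\gamma_s\eta_s\|\bd_s^{\rb}\|^2$. Hence $c_{t+1}=a_1+2I_t^{\str}+8\sum_{s\le t}\gamma_s\eta_s^2G^2\omega$, and together with $2a_1\le(2+\eta)D^2$ this reproduces the stated bound verbatim.
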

\begin{proof}
We invoke Lemma \ref{lem:descent} with $\by=\bx_{\star}$ and replace
the subscript $t$ with $s$ to have, for any $s\in\left[T\right]$,
\[
F(\bx_{s+1})-F_{\star}\leq\frac{\left\Vert \bx_{\star}-\bx_{s}\right\Vert ^{2}-\left\Vert \bx_{\star}-\bx_{s+1}\right\Vert ^{2}}{2\eta_{s}}+\left\langle \bd_{s}^{\rc},\bx_{\star}-\bx_{s}\right\rangle +\eta_{s}\left\Vert \bd_{s}^{\rc}\right\Vert ^{2}+4\eta_{s}G^{2}\omega,
\]
where $\omega=\begin{cases}
1 & \text{under Assumption }\ref{assu:lip}\\
0 & \text{under Assumption }\ref{assu:smo}
\end{cases}$. Note that $F_{\star}=F(\bx_{\star})=\inf_{\bx\in\X}F(\bx)$, which
implies that there exists $\nabla F(\bx_{\star})\in\partial F(\bx_{\star})$
such that $\left\langle \nabla F(\bx_{\star}),\bx-\bx_{\star}\right\rangle \geq0,\forall\bx\in\X$.
Therefore, we know
\[
F(\bx_{s+1})-F_{\star}\overset{\text{Lemma }\ref{lem:str-star-cvx}}{\geq}\frac{\mu}{2}\left\Vert \bx_{s+1}-\bx_{\star}\right\Vert ^{2}.
\]
As such, there is
\[
0\leq\frac{\left\Vert \bx_{\star}-\bx_{s}\right\Vert ^{2}-(1+\mu\eta_{s})\left\Vert \bx_{\star}-\bx_{s+1}\right\Vert ^{2}}{2\eta_{s}}+\left\langle \bd_{s}^{\rc},\bx_{\star}-\bx_{s}\right\rangle +\eta_{s}\left\Vert \bd_{s}^{\rc}\right\Vert ^{2}+4\eta_{s}G^{2}\omega.
\]
With the above inequality in hand, the following proof is identical
to the analysis for Lemma 7 of \citep{ICLR2026_bd5b846a}. Therefore,
we omit all details to save space, but only remind the reader of the
following newly defined quantity:
\[
\hres_{t}^{\str}\defeq4\max_{t^{\prime}\in\left[t\right]}\left(\sum_{s=1}^{t^{\prime}}N_{s}\right)^{2}+2\sum_{s=1}^{t}\gamma_{s}\eta_{s}^{2}\left\Vert \bd_{s}^{\ru}\right\Vert ^{2}+\frac{1+2\eta}{\mu}\sum_{s=1}^{t}\gamma_{s}\eta_{s}\left\Vert \bd_{s}^{\rb}\right\Vert ^{2},\forall t\in\left[T\right].
\]
\end{proof}

\subsubsection{Lipschitz Objectives}

In this part, we consider Lipschitz objectives (i.e., Assumption \ref{assu:lip}).
Our goal is to bound the residual term $I_{T}^{\str}$ in Lemma \ref{lem:str-core}.
Fortunately, this has been done previously by \citep{ICLR2026_bd5b846a}.
\begin{lem}[Lemma 9 of \citep{ICLR2026_bd5b846a}]
\label{lem:str-lip-res}Under Assumptions \ref{assu:lip} and \ref{assu:heavy},
and the following two conditions:
\begin{enumerate}[leftmargin=*]
\item \label{enu:str-lip-res-1}$\eta_{t}$ and $\cm_{t}$ are deterministic
for all $t\in\left[T\right]$.
\item \label{enu:str-lip-res-2}$\cm_{t}\geq2G$ for all $t\in\left[T\right]$.
\end{enumerate}
We have for any $\delta\in\left(0,1\right]$, with probability at
least $1-\delta$, $\hres_{T}^{\str}\leq\hc_{T}^{\str}$, where $\hres_{T}^{\str}$
is introduced in Lemma \ref{lem:str-core} and $\hc_{T}^{\str}$ is
a constant in the order of
\begin{align*}
\O & \left(\max_{t\in\left[T\right]}\gamma_{t}\eta_{t}^{2}\cm_{t}^{2}\ln^{2}\frac{3}{\delta}+\sum_{t=1}^{T}\frac{\sigma_{\lar}^{\p}\gamma_{t}\eta_{t}^{2}}{\cm_{t}^{\p-2}}+\sum_{t=1}^{T}\left(\frac{\sigma_{\sma}^{\p}\gamma_{t}\eta_{t}^{2}}{\cm_{t}^{\p-2}}+\frac{\sigma_{\lar}^{\p}G^{2}\gamma_{t}\eta_{t}^{2}}{\cm_{t}^{\p}}\right)\ln\frac{3}{\delta}\right.\\
 & \left.\quad+\sum_{t=1}^{T}\left(\frac{\sigma_{\sma}^{2}\sigma_{\lar}^{2\p-2}\gamma_{t}\eta_{t}}{\cm_{t}^{2\p-2}}+\frac{\sigma_{\lar}^{2\p}G^{2}\gamma_{t}\eta_{t}}{\cm_{t}^{2\p}}\right)\frac{1+\eta}{\mu}\right).
\end{align*}
\end{lem}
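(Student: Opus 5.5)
Since $\cm_{t}\geq2G\geq2\left\Vert \nabla f(\bx_{t})\right\Vert$ by Condition \ref{enu:str-lip-res-2} and Assumption \ref{assu:lip}, we have $\chi_{t}=1$ for every $t$. Lemma \ref{lem:clip-ineq} then gives deterministic versions of (\ref{eq:cvx-lip-res-u-norm})--(\ref{eq:cvx-lip-res-b-norm}). I would treat the three summands of $\hres_{T}^{\str}$ separately, give each a failure budget of order $\delta/3$, and then take a union bound, mirroring the proof of Lemma \ref{lem:cvx-lip-res}.

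\emph{Martingale term.} By Condition \ref{enu:str-lip-res-1}, $\gamma_{t}$ and $\eta_{t}$ are deterministic. Hence $D_{t}\in\F_{t-1}$ and $\bx_{t}\in\F_{t-1}$, and $\left\{ N_{t}\right\}$ is an MDS. By the definitions (\ref{eq:str-D-def}) and (\ref{eq:str-N-def}), the unit-norm direction is at most $1$ in norm, so
\[
\left|N_{t}\right|\leq\sqrt{\gamma_{t}/(1+\mu\eta_{t}/2)}\,\eta_{t}\left\Vert \bd_{t}^{\ru}\right\Vert \leq2\sqrt{\gamma_{t}}\eta_{t}\cm_{t}.
\]
Likewise, $\E_{t-1}[N_{t}^{2}]\leq\gamma_{t}\eta_{t}^{2}\left\Vert \E_{t-1}[\bd_{t}^{\ru}(\bd_{t}^{\ru})^{\top}]\right\Vert _{\op}\leq\gamma_{t}\eta_{t}^{2}(4\sigma_{\sma}^{\p}\cm_{t}^{2-\p}+2^{\p+1}\sigma_{\lar}^{\p}G^{2}\cm_{t}^{-\p})$. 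Freedman's inequality (Lemma \ref{lem:Freedman}) in its maximal form then bounds $\max_{t}(\sum_{s\leq t}N_{s})^{2}$ by $\max_{t}\gamma_{t}\eta_{t}^{2}\cm_{t}^{2}\ln^{2}\frac{3}{\delta}+\sum_{t}(\sigma_{\sma}^{\p}\gamma_{t}\eta_{t}^{2}\cm_{t}^{2-\p}+\sigma_{\lar}^{\p}G^{2}\gamma_{t}\eta_{t}^{2}\cm_{t}^{-\p})\ln\frac{3}{\delta}$, up to constants, with probability $1-2\delta/3$.

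\emph{Squared unbiased term.} The variables $\gamma_{t}\eta_{t}^{2}\left\Vert \bd_{t}^{\ru}\right\Vert ^{2}$ are nonnegative, bounded by $4\gamma_{t}\eta_{t}^{2}\cm_{t}^{2}$, and have conditional mean at most $4\sigma_{\lar}^{\p}\gamma_{t}\eta_{t}^{2}\cm_{t}^{2-\p}$. Lemma \ref{lem:Freedman-square}, applied to $\sqrt{\gamma_{t}}\eta_{t}\left\Vert \bd_{t}^{\ru}\right\Vert$, gives a bound of $\max_{t}\gamma_{t}\eta_{t}^{2}\cm_{t}^{2}\ln\frac{3}{\delta}+\sum_{t}\sigma_{\lar}^{\p}\gamma_{t}\eta_{t}^{2}\cm_{t}^{2-\p}$ with probability $1-\delta/3$. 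Since $\ln\frac{3}{\delta}\geq1$, the first part is absorbed into the $\ln^{2}$ term.

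\emph{Bias term.} This term is deterministic. Squaring the bias bound and using $(a+b)^{2}\leq2a^{2}+2b^{2}$ gives $\frac{1+2\eta}{\mu}\sum_{t}\gamma_{t}\eta_{t}\left\Vert \bd_{t}^{\rb}\right\Vert ^{2}\lesssim\frac{1+\eta}{\mu}\sum_{t}\gamma_{t}\eta_{t}(\sigma_{\sma}^{2}\sigma_{\lar}^{2\p-2}\cm_{t}^{2-2\p}+\sigma_{\lar}^{2\p}G^{2}\cm_{t}^{-2\p})$.

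The step needing the most care is the martingale term. Freedman must be applied to the weighted $N_{t}$ so that its range and variance proxies are genuinely predictable and bounded by deterministic constants; this is exactly what Condition \ref{enu:str-lip-res-1} provides, and it also requires the algebraic check that the weight $\sqrt{\gamma_{t}/(1+\mu\eta_{t}/2)}\le\sqrt{\gamma_{t}}$. Once that is in place, summing the three bounds gives $\hres_{T}^{\str}\leq\hc_{T}^{\str}$ with probability at least $1-\delta$.
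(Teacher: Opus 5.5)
Your proposal is correct and matches the approach the paper uses for this family of residual bounds. The paper imports this lemma from \citep{ICLR2026_bd5b846a} without reproving it, but its proofs of Lemma \ref{lem:cvx-lip-res} and Lemma \ref{lem:str-smo-res} use the same three-way split: two-sided Freedman (Lemma \ref{lem:Freedman}) on $N_t$ with budget $2\delta/3$, Lemma \ref{lem:Freedman-square} on $\sqrt{\gamma_t}\eta_t\Vert\boldsymbol{d}_t^{\mathrm{u}}\Vert$ with budget $\delta/3$, and a deterministic squared bias bound using $1+2\eta\le 2(1+\eta)$ (with the harmless convention $N_t=0$ when $D_t=0$).
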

\begin{rem}
Condition \ref{enu:str-lip-res-2} in the original Lemma 9 of \citep{ICLR2026_bd5b846a}
is stated in a more general form, i.e., $\cm_{t}\geq\frac{G}{1-\alpha},\forall t\in\left[T\right]$
for a constant $\alpha\in\left(0,1\right)$. Here, we set $\alpha=\frac{1}{2}$
to accommodate the present paper.
\end{rem}

\subsubsection{Smooth Objectives}

In this part, we consider smooth objectives (i.e., Assumption \ref{assu:smo}).
As discussed before, the difficulty in the smooth case is to bound
$\left\Vert \nabla f(\bx_{t})\right\Vert $ properly. Luckily, on
a compact domain (i.e., Assumption \ref{assu:compact}), this is relatively
easy to achieve. However, to maintain generality, we prove the following
lemma in a manner analogous to Lemma \ref{lem:cvx-smo-res} under
the weaker Assumption \ref{assu:G1}.
\begin{lem}
\label{lem:str-smo-res}Under Assumptions \ref{assu:minimizer}, \ref{assu:smo},
\ref{assu:heavy}, \ref{assu:G1}, and the following two conditions:
\begin{enumerate}[leftmargin=*]
\item \label{enu:str-smo-res-1}$\eta_{t}$ is deterministic for all $t\in\left[T\right]$.
\item \label{enu:str-smo-res-2}$\cm_{t}=\max\left\{ 2G_{t},\cm\right\} $
holds for all $t\in\left[T\right]$, where $G_{t}=L\left\Vert \bx_{t}-\bx_{1}\right\Vert +G_{1}$
and $\cm\geq0$ is deterministic.
\end{enumerate}
We have for any $\delta\in\left(0,1\right]$, with probability at
least $1-\delta$, $\bar{\hres}_{T}^{\str}\leq\bar{\hc}_{T}^{\str}$,
where
\begin{align*}
\bar{\hres}_{T}^{\str} & \defeq4\max_{t\in\left[T\right]}\left(\sum_{s=1}^{t}N_{s}\zeta_{s}\right)^{2}+2\sum_{t=1}^{T}\gamma_{t}\eta_{t}^{2}\left\Vert \bd_{t}^{\ru}\right\Vert ^{2}\zeta_{t}^{2}+\frac{1+2\eta}{\mu}\sum_{t=1}^{T}\gamma_{t}\eta_{t}\left\Vert \bd_{t}^{\rb}\right\Vert ^{2}\zeta_{t}^{2},\\
\zeta_{t} & \defeq\1\left[\left\Vert \bx_{\star}-\bx_{t}\right\Vert \leq\bar{D}\right],\forall t\in\left[T\right],
\end{align*}
$\bar{D}\geq0$ can be any constant, and $\bar{\hc}_{T}^{\str}$ is
a constant in the order of
\begin{align*}
 & \O\left(\max_{t\in\left[T\right]}\gamma_{t}\eta_{t}^{2}\rho^{2}\ln^{2}\frac{3}{\delta}+\sum_{t=1}^{T}\frac{\sigma_{\lar}^{\p}\gamma_{t}\eta_{t}^{2}}{\cm^{\p-2}}+\sum_{t=1}^{T}\left(\frac{\sigma_{\sma}^{\p}\gamma_{t}\eta_{t}^{2}\rho}{\cm^{\p-1}}+\min_{\beta\in\left[0,\frac{1}{2}\right]}\frac{\sigma_{\lar}^{\p}\bar{G}^{2-\beta\p}\gamma_{t}\eta_{t}^{2}}{\cm^{(1-\beta)\p}}\right)\ln\frac{3}{\delta}\right.\\
 & \left.\quad+\left(\sum_{t=1}^{T}\frac{\sigma_{\sma}^{2}\sigma_{\lar}^{2\p-2}\gamma_{t}\eta_{t}}{\cm^{2\p-2}}+\min_{\beta\in\left[0,\frac{1}{2}\right]}\frac{\sigma_{\lar}^{2\p}\bar{G}^{2-2\beta\p}\gamma_{t}\eta_{t}}{\cm^{2(1-\beta)\p}}\right)\frac{1+\eta}{\mu}\right),
\end{align*}
in which $\bar{G}=L(\bar{D}+D)+G_{1}$ and $\rho=\max\left\{ 2\bar{G},\cm\right\} $.
\end{lem}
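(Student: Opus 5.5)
The plan is to mirror the proof of Lemma \ref{lem:cvx-smo-res}, replacing the weights $v_t^2\eta_t^2$ by $\gamma_t\eta_t^2$ and the martingale $M_t$ by $N_t$ from (\ref{eq:str-N-def}). The bias term now enters as a sum of squares weighted by $\gamma_t\eta_t/\mu$, rather than as a squared sum.

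\textbf{Step 1: the same pointwise bounds as in the convex case.} By $L$-smoothness and Assumption \ref{assu:G1}, $\left\Vert \nabla f(\bx_{t})\right\Vert \leq G_{t}$. Hence $\cm_{t}=\max\{2G_{t},\cm\}\geq2\left\Vert \nabla f(\bx_{t})\right\Vert$, so $\chi_{t}=1$ and all four inequalities of Lemma \ref{lem:clip-ineq} hold with $G_t$ in place of $\left\Vert \nabla f(\bx_t)\right\Vert$. Since $\left\Vert \bx_t-\bx_1\right\Vert \leq \left\Vert \bx_\star-\bx_t\right\Vert +D$, we get $G_{t}\zeta_{t}\leq\bar{G}$ and $\cm_{t}\zeta_{t}\leq\rho$. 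Using $\cm\leq\cm_t$ and $2G_t\leq\cm_t$, I reuse verbatim the interpolation estimates from the proof of Lemma \ref{lem:cvx-smo-res}:
\[
\frac{\zeta_t}{\cm_t^{\p-2}}\leq \frac{\rho}{\cm^{\p-1}}, \qquad \frac{G_t^2\zeta_t}{\cm_t^{\p}}\leq \frac{\bar G^{2-\beta\p}}{2^{\beta\p}\cm^{(1-\beta)\p}}, \qquad \frac{G_t\zeta_t}{\cm_t^{\p}}\leq \frac{\bar G^{1-\beta\p}}{2^{\beta\p}\cm^{(1-\beta)\p}},
\]
together with the bound on $\zeta_t^2/\cm_t^{\p-2}$ via $\cm_t^2\leq\cm^2+4G_t^2$.

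\textbf{Step 2: the three terms.}
\begin{enumerate}
\item \emph{Martingale term.} Because $\eta_t$ is deterministic, $\gamma_t$ is deterministic. Then $D_t\in\F_{t-1}$, $\zeta_t\in\F_{t-1}$, and $\{N_t\zeta_t\}$ is an MDS. Cauchy--Schwarz with the definition of $D_t$ gives $|N_t\zeta_t|\leq\sqrt{\gamma_t/(1+\mu\eta_t/2)}\,\eta_t\left\Vert \bd_t^{\ru}\right\Vert \zeta_t\leq 2\sqrt{\gamma_t}\eta_t\rho$. The conditional variance is at most $\gamma_t\eta_t^2\left\Vert \E[\bd_t^{\ru}(\bd_t^{\ru})^\top\mid\F_{t-1}]\right\Vert_{\op}\zeta_t^2$, which Step 1 bounds by $\O\big(\sigma_{\sma}^{\p}\gamma_t\eta_t^2\rho/\cm^{\p-1}+\min_\beta \sigma_{\lar}^{\p}\bar G^{2-\beta\p}\gamma_t\eta_t^2/\cm^{(1-\beta)\p}\big)$. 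Freedman's inequality (Lemma \ref{lem:Freedman}) at level $2\delta/3$, uniformly over $t$, then yields the first and third groups of terms.
\item \emph{Squared-noise term.} For $\sum\gamma_t\eta_t^2\left\Vert \bd_t^{\ru}\right\Vert^2\zeta_t^2$, Lemma \ref{lem:Freedman-square} at level $\delta/3$ gives $\O\big(\max_t\gamma_t\eta_t^2\rho^2\ln\frac3\delta+\sum_t\sigma_{\lar}^{\p}\gamma_t\eta_t^2(\cm^{2-\p}+\min_\beta(2\bar G)^{2-\beta\p}/\cm^{(1-\beta)\p})\big)$.
\item \emph{Bias term.} It is deterministic given $\zeta_t$: square the bound $\left\Vert \bd_t^{\rb}\right\Vert\zeta_t\leq\sigma_{\sma}\sigma_{\lar}^{\p-1}/\cm^{\p-1}+\min_\beta 2^{\p-1}\sigma_{\lar}^{\p}\bar G^{1-\beta\p}/(2^{\beta\p}\cm^{(1-\beta)\p})$ and use $(a+b)^2\leq2a^2+2b^2$. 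This gives the last line, with factor $(1+2\eta)/\mu=\O((1+\eta)/\mu)$.
\end{enumerate}

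\textbf{Step 3: absorbing the stray term.} A union bound gives probability $1-\delta$. The one non-routine point is absorbing the term $\sum_t\sigma_{\lar}^{\p}\gamma_t\eta_t^2\min_\beta(2\bar G)^{2-\beta\p}/\cm^{(1-\beta)\p}$ from item 2, which has no matching term in the stated bound. Since $\ln\frac3\delta\geq1$, it is dominated by the already-present $\sum_t\min_\beta\sigma_{\lar}^{\p}\bar G^{2-\beta\p}\gamma_t\eta_t^2\ln\frac3\delta/\cm^{(1-\beta)\p}$ up to the constant $2^{2}$. So, unlike Lemma \ref{lem:cvx-smo-res}, no AM--GM cross-term argument is needed: every Freedman variance term appears explicitly in the target order.
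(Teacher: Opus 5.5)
Your proposal is correct and follows essentially the same route as the paper's proof. Both use the pointwise bounds $G_t\zeta_t\leq\bar{G}$ and $\cm_t\zeta_t\leq\rho$ together with the interpolation estimates from Lemma \ref{lem:cvx-smo-res}, Freedman's inequality at level $2\delta/3$ for $\sum_s N_s\zeta_s$, Lemma \ref{lem:Freedman-square} at level $\delta/3$ for the squared-noise term, and a direct squaring of the bias bound. Both then absorb the stray $(2\bar{G})^{2-\beta\p}$ term using only $\ln\frac{3}{\delta}\geq1$, and your remark that no AM--GM step is needed here (unlike in Lemma \ref{lem:cvx-smo-res}) matches how the paper concludes.
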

\begin{proof}
Similar to the proof of Lemma \ref{lem:cvx-smo-res}, let $\bar{G}\defeq L(\bar{D}+D)+G_{1}$
and $\rho\defeq\max\left\{ 2\bar{G},\cm\right\} $, then the following
inequalities hold for any $t\in\left[T\right]$:
\begin{eqnarray}
2\left\Vert \nabla f(\bx_{t})\right\Vert \leq2G_{t}\leq\cm_{t}, & \text{\ensuremath{\left\Vert \nabla f(\bx_{t})\right\Vert \zeta_{t}}}\leq\bar{G}, & \cm_{t}\zeta_{t}\leq\rho.\label{eq:str-smo-res-ineq}
\end{eqnarray}
In addition, $\chi_{t}$ in Lemma \ref{lem:clip-ineq} equals $1$
for all $t\in\left[T\right]$. Then Lemma \ref{lem:clip-ineq} and
(\ref{eq:str-smo-res-ineq}) together yield that the following collection
of inequalities holds for any $t\in\left[T\right]$:
\begin{align}
\left\Vert \bd_{t}^{\ru}\right\Vert  & \leq2\cm_{t},\label{eq:str-smo-res-u-norm}\\
\E\left[\left\Vert \bd_{t}^{\ru}\right\Vert ^{2}\mid\F_{t-1}\right] & \leq\frac{4\sigma_{\lar}^{\p}}{\cm_{t}^{\p-2}},\label{eq:str-smo-res-u-var}\\
\left\Vert \E\left[\bd_{t}^{\ru}\left(\bd_{t}^{\ru}\right)^{\top}\mid\F_{t-1}\right]\right\Vert _{\op} & \leq\frac{4\sigma_{\sma}^{\p}}{\cm_{t}^{\p-2}}+\frac{2^{\p+1}\sigma_{\lar}^{\p}G_{t}^{2}}{\cm_{t}^{\p}},\label{eq:str-smo-res-u-op}\\
\left\Vert \bd_{t}^{\rb}\right\Vert  & \leq\frac{\sigma_{\sma}\sigma_{\lar}^{\p-1}}{\cm_{t}^{\p-1}}+\frac{2^{\p-1}\sigma_{\lar}^{\p}G_{t}}{\cm_{t}^{\p}}.\label{eq:str-smo-res-b-norm}
\end{align}

Now, we are ready to bound $\bar{\hres}_{T}^{\str}$.

\begin{itemize}[leftmargin=*]

\item Similar to (\ref{eq:cvx-smo-res-1}), we can show with probability
at least $1-2\delta/3$,
\begin{equation}
\max_{t\in\left[T\right]}\left(\sum_{s=1}^{t}N_{s}\zeta_{s}\right)^{2}\leq\frac{8}{9}\max_{t\in\left[T\right]}\gamma_{t}\eta_{t}^{2}\rho^{2}\ln^{2}\frac{3}{\delta}+16\sum_{t=1}^{T}\left(\frac{\sigma_{\sma}^{\p}\gamma_{t}\eta_{t}^{2}\rho}{\cm^{\p-1}}+\min_{\beta\in\left[0,\frac{1}{2}\right]}\frac{2^{\p-1}\sigma_{\lar}^{\p}\bar{G}^{2-\beta\p}\gamma_{t}\eta_{t}^{2}}{2^{\beta\p}\cm^{(1-\beta)\p}}\right)\ln\frac{3}{\delta}.\label{eq:str-smo-res-1}
\end{equation}

\item Similar to (\ref{eq:cvx-smo-res-2}), we can show with probability
at least $1-\delta/3$,
\begin{align}
\sum_{t=1}^{T}\gamma_{t}\eta_{t}^{2}\left\Vert \bd_{t}^{\ru}\right\Vert ^{2}\zeta_{t}^{2} & \leq\frac{10}{3}\max_{t\in\left[T\right]}\gamma_{t}\eta_{t}^{2}\rho^{2}\ln\frac{3}{\delta}+8\sum_{t=1}^{T}\left(\frac{\sigma_{\lar}^{\p}\gamma_{t}\eta_{t}^{2}}{\cm^{\p-2}}+\min_{\beta\in\left[0,\frac{1}{2}\right]}\frac{\sigma_{\lar}^{\p}(2\bar{G})^{2-\beta\p}\gamma_{t}\eta_{t}^{2}}{\cm^{(1-\beta)\p}}\right)\nonumber \\
 & \overset{\ln\frac{3}{\delta}\geq1}{\leq}\frac{10}{3}\max_{t\in\left[T\right]}\gamma_{t}\eta_{t}^{2}\rho^{2}\ln^{2}\frac{3}{\delta}+8\sum_{t=1}^{T}\left(\frac{\sigma_{\lar}^{\p}\gamma_{t}\eta_{t}^{2}}{\cm^{\p-2}}+\min_{\beta\in\left[0,\frac{1}{2}\right]}\frac{\sigma_{\lar}^{\p}(2\bar{G})^{2-\beta\p}\gamma_{t}\eta_{t}^{2}}{\cm^{(1-\beta)\p}}\right).\label{eq:str-smo-res-2}
\end{align}

\item Similar to (\ref{eq:cvx-smo-res-3}), we can show
\begin{equation}
\sum_{t=1}^{T}\gamma_{t}\eta_{t}\left\Vert \bd_{t}^{\rb}\right\Vert ^{2}\zeta_{t}^{2}\leq2\sum_{t=1}^{T}\left(\frac{\sigma_{\sma}^{2}\sigma_{\lar}^{2\p-2}\gamma_{t}\eta_{t}}{\cm^{2\p-2}}+\min_{\beta\in\left[0,\frac{1}{2}\right]}\frac{2^{2\p-2}\sigma_{\lar}^{2\p}\bar{G}^{2-2\beta\p}\gamma_{t}\eta_{t}}{2^{2\beta\p}\cm^{2(1-\beta)\p}}\right).\label{eq:str-smo-res-3}
\end{equation}

\end{itemize}

Combine (\ref{eq:str-smo-res-1}), (\ref{eq:str-smo-res-2}), and
(\ref{eq:str-smo-res-3}) to have with probability at least $1-\delta$,
\[
\bar{\hres}_{T}^{\str}=4\max_{t\in\left[T\right]}\left(\sum_{s=1}^{t}N_{s}\zeta_{s}\right)^{2}+2\sum_{t=1}^{T}\gamma_{t}\eta_{t}^{2}\left\Vert \bd_{t}^{\ru}\right\Vert ^{2}\zeta_{t}^{2}+\frac{1+2\eta}{\mu}\sum_{t=1}^{T}\gamma_{t}\eta_{t}\left\Vert \bd_{t}^{\rb}\right\Vert ^{2}\zeta_{t}^{2}\leq\bar{\hc}_{T}^{\str},
\]
where
\begin{align*}
\bar{\hc}_{T}^{\str}\defeq & \frac{92}{9}\max_{t\in\left[T\right]}\gamma_{t}\eta_{t}^{2}\rho^{2}\ln^{2}\frac{3}{\delta}+64\sum_{t=1}^{T}\left(\frac{\sigma_{\sma}^{\p}\gamma_{t}\eta_{t}^{2}\rho}{\cm^{\p-1}}+\min_{\beta\in\left[0,\frac{1}{2}\right]}\frac{2^{\p-1}\sigma_{\lar}^{\p}\bar{G}^{2-\beta\p}\gamma_{t}\eta_{t}^{2}}{2^{\beta\p}\cm^{(1-\beta)\p}}\right)\ln\frac{3}{\delta}\\
 & +16\sum_{t=1}^{T}\left(\frac{\sigma_{\lar}^{\p}\gamma_{t}\eta_{t}^{2}}{\cm^{\p-2}}+\min_{\beta\in\left[0,\frac{1}{2}\right]}\frac{\sigma_{\lar}^{\p}(2\bar{G})^{2-\beta\p}\gamma_{t}\eta_{t}^{2}}{\cm^{(1-\beta)\p}}\right)\\
 & +2\left(\sum_{t=1}^{T}\frac{\sigma_{\sma}^{2}\sigma_{\lar}^{2\p-2}\gamma_{t}\eta_{t}}{\cm^{2\p-2}}+\min_{\beta\in\left[0,\frac{1}{2}\right]}\frac{2^{2\p-2}\sigma_{\lar}^{2\p}\bar{G}^{2-2\beta\p}\gamma_{t}\eta_{t}}{2^{2\beta\p}\cm^{2(1-\beta)\p}}\right)\frac{1+2\eta}{\mu}\\
= & \O\left(\max_{t\in\left[T\right]}\gamma_{t}\eta_{t}^{2}\rho^{2}\ln^{2}\frac{3}{\delta}+\sum_{t=1}^{T}\frac{\sigma_{\lar}^{\p}\gamma_{t}\eta_{t}^{2}}{\cm^{\p-2}}+\sum_{t=1}^{T}\left(\frac{\sigma_{\sma}^{\p}\gamma_{t}\eta_{t}^{2}\rho}{\cm^{\p-1}}+\min_{\beta\in\left[0,\frac{1}{2}\right]}\frac{\sigma_{\lar}^{\p}\bar{G}^{2-\beta\p}\gamma_{t}\eta_{t}^{2}}{\cm^{(1-\beta)\p}}\right)\ln\frac{3}{\delta}\right.\\
 & \left.\quad+\left(\sum_{t=1}^{T}\frac{\sigma_{\sma}^{2}\sigma_{\lar}^{2\p-2}\gamma_{t}\eta_{t}}{\cm^{2\p-2}}+\min_{\beta\in\left[0,\frac{1}{2}\right]}\frac{\sigma_{\lar}^{2\p}\bar{G}^{2-2\beta\p}\gamma_{t}\eta_{t}}{\cm^{2(1-\beta)\p}}\right)\frac{1+\eta}{\mu}\right),
\end{align*}
in which the second equation is by $\ln\frac{3}{\delta}\geq1$.
\end{proof}

\section{Technical Lemmas}

In this section, we list some technical lemmas used in the analysis.
For results already existing in the literature, we omit their proofs
to save space.

First, Lemma \ref{lem:Freedman} is known as Freedman's inequality,
a useful tool to obtain a high-probability concentration bound for
a real-valued MDS.
\begin{lem}[Freedman's inequality \citep{10.1214/aop/1176996452}]
\label{lem:Freedman}Suppose $\left\{ X_{t}\right\} _{t=1}^{T}$
is a real-valued MDS adapted to the filtration $\left\{ \F_{t}\right\} _{t=0}^{T}$
satisfying for any $t\in\left[T\right]$, $X_{t}\leq b$ and $\E\left[X_{t}^{2}\mid\F_{t-1}\right]\leq\sigma_{t}^{2}$
almost surely, where $b\geq0$ and $\sigma_{t}^{2}$ are both constant,
then for any $\delta\in\left(0,1\right]$, there is
\[
\Pr\left[\sum_{s=1}^{t}X_{s}\leq\frac{b}{3}\ln\frac{1}{\delta}+\sqrt{2\sum_{s=1}^{T}\sigma_{s}^{2}\ln\frac{1}{\delta}},\forall t\in\left[T\right]\right]\geq1-\delta.
\]
\end{lem}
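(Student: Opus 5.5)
The statement is Freedman's inequality in its maximal form: a uniform-in-$t$ bound on the partial sums that costs only $\ln\frac{1}{\delta}$. I will prove it with the standard exponential supermartingale plus Ville's maximal inequality, which is exactly what gives uniformity over $t$ at no extra union-bound price. If $b=0$, then $X_t\le0$ and every partial sum is nonpositive, so the claim is trivial; assume $b>0$ from now on.

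\textbf{Step 1 (one-step mgf bound).} Fix $\lambda\in(0,3/b)$ and set $\psi(\lambda)\defeq\frac{\lambda^{2}/2}{1-\lambda b/3}$. For a real $x\le b$, the function $u\mapsto(e^{u}-1-u)/u^{2}$ is nondecreasing. Hence $e^{\lambda x}\le1+\lambda x+\lambda^{2}x^{2}\frac{e^{\lambda b}-1-\lambda b}{(\lambda b)^{2}}$, and the Taylor bound $\sum_{k\ge2}u^{k}/k!\le\frac{u^{2}}{2}\sum_{j\ge0}(u/3)^{j}$ gives $\frac{e^{u}-1-u}{u^{2}}\le\frac{1}{2(1-u/3)}$ for $0\le u<3$. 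Taking conditional expectations and using $\E[X_t\mid\F_{t-1}]=0$ together with $\E[X_t^2\mid\F_{t-1}]\le\sigma_t^2$, we get
\[
\E\left[e^{\lambda X_{t}}\mid\F_{t-1}\right]\le1+\psi(\lambda)\sigma_{t}^{2}\le e^{\psi(\lambda)\sigma_{t}^{2}}.
\]

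\textbf{Step 2 (supermartingale and Ville).} Define $W_t\defeq\exp\bigl(\lambda\sum_{s=1}^{t}X_s-\psi(\lambda)\sum_{s=1}^{t}\sigma_s^2\bigr)$ with $W_0=1$. Since the $\sigma_s^2$ are deterministic constants, Step 1 shows $W_t$ is a nonnegative supermartingale. Let $V\defeq\sum_{s=1}^{T}\sigma_s^{2}$. For every $t\le T$ we have $\sum_{s\le t}\sigma_s^2\le V$, so the event $\{\exists t\le T:\sum_{s\le t}X_s>a\}$ is contained in $\{\max_{t\le T}W_t\ge e^{\lambda a-\psi(\lambda)V}\}$. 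Doob's (Ville's) maximal inequality for nonnegative supermartingales then gives
\[
\Pr\left[\exists t\le T:\sum_{s\le t}X_{s}>a\right]\le e^{-\lambda a+\psi(\lambda)V}.
\]

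\textbf{Step 3 (choosing $\lambda$).} The only delicate point is tuning $\lambda$ to land exactly on the constants $\frac{b}{3}\ln\frac1\delta+\sqrt{2V\ln\frac1\delta}$. Write $\ell=\ln\frac1\delta$; if $V=0$, let $\lambda\to3/b$ and the claim follows directly, so assume $V>0$. Set $a=\frac{b}{3}\ell+\sqrt{2V\ell}$ and choose $\lambda=\frac{\sqrt{2\ell/V}}{1+\frac{b}{3}\sqrt{2\ell/V}}\in(0,3/b)$, which gives $1-\lambda b/3=\frac{1}{1+\frac b3\sqrt{2\ell/V}}$. A direct computation then yields
\[
\lambda a-\psi(\lambda)V=\ell,
\]
because $\psi(\lambda)V=\frac{\ell}{1+\frac b3\sqrt{2\ell/V}}$ while $\lambda a=\frac{2\ell(1+\frac b3\sqrt{\ell/(2V)})}{1+\frac b3\sqrt{2\ell/V}}$, and their difference simplifies to $\ell$. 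The right-hand side of Step 2 therefore equals $\delta$, and the complement is the claimed event, which holds with probability at least $1-\delta$.
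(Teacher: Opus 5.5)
Your proof is correct. The paper does not prove this lemma itself; it quotes it from Freedman's 1975 paper and omits the proof. So your argument is a self-contained replacement for a citation rather than a competing route.

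Your one-step bound is the same device the paper uses in its proof of Lemma \ref{lem:Freedman-square}: the nondecreasing function $u\mapsto(e^{u}-1-u)/u^{2}$, evaluated at the upper bound on the increment. That proof only needs Markov's inequality at the single time $T$. You instead apply Ville's maximal inequality to the nonnegative supermartingale $W_t$, and this is what gives the uniform-in-$t$ statement at cost $\ln\frac{1}{\delta}$ rather than $\ln\frac{T}{\delta}$. That uniformity is exactly what the refined argument in Section \ref{sec:idea} relies on.

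Your explicit sub-gamma choice of $\lambda$ is also worth keeping. With $\ell=\ln\frac{1}{\delta}$, it makes $\lambda a-\psi(\lambda)V=\ell$ exactly, so you land precisely on the stated threshold $\frac{b}{3}\ell+\sqrt{2V\ell}$. By contrast, inverting the commonly quoted Bernstein-form tail $\exp\bigl(-\frac{a^{2}}{2(V+ab/3)}\bigr)$ gives only the slightly larger threshold $\frac{b}{3}\ell+\sqrt{\frac{b^{2}\ell^{2}}{9}+2V\ell}$. Quoting the literature would therefore still require an inversion step like yours to match the constants exactly.

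The only corner you leave implicit is $\delta=1$. There $\ell=0$ forces $\lambda=0\notin(0,3/b)$, but the claim is then trivially true.
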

Next, we introduce another concentration inequality in Lemma \ref{lem:Freedman-square}.
\begin{lem}
\label{lem:Freedman-square}Suppose $\left\{ X_{t}\right\} _{t=1}^{T}$
is a sequence of random variables adapted to the filtration $\left\{ \F_{t}\right\} _{t=0}^{T}$
satisfying for any $t\in\left[T\right]$, $\left|X_{t}\right|\leq b$
and $\E\left[X_{t}^{2}\mid\F_{t-1}\right]\leq\sigma_{t}^{2}$ almost
surely, where $b\geq0$ is a constant and $\sigma_{t}^{2}\in\F_{t-1}$
is predictable, then for any $\delta\in\left(0,1\right]$, there is
\[
\Pr\left[\sum_{t=1}^{T}X_{t}^{2}\leq\frac{5b^{2}}{6}\ln\frac{1}{\delta}+2\sum_{t=1}^{T}\sigma_{t}^{2}\right]\geq1-\delta.
\]
\end{lem}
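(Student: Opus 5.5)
The plan is an exponential supermartingale argument applied directly to the nonnegative variables $X_{t}^{2}$, rather than centering them and invoking Lemma \ref{lem:Freedman}. Centering would not work here, because the conditional variance bound $\sigma_{t}^{2}$ is only predictable, not constant. The degenerate case $b=0$ forces $X_{t}=0$ almost surely, so the bound is trivial. Assume $b>0$ from now on.

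\textbf{Step 1 (one-step bound).} Fix $\lambda>0$. For $x\in[0,b^{2}]$, convexity of $x\mapsto e^{\lambda x}$ gives the chord bound
\[
e^{\lambda x}\leq1+\frac{e^{\lambda b^{2}}-1}{b^{2}}\,x .
\]
Since $0\leq X_{t}^{2}\leq b^{2}$, we can substitute $x=X_{t}^{2}$ and take $\E\left[\cdot\mid\F_{t-1}\right]$. With $c\defeq(e^{\lambda b^{2}}-1)/b^{2}$, this yields
\[
\E\left[e^{\lambda X_{t}^{2}}\mid\F_{t-1}\right]\leq1+c\,\sigma_{t}^{2}\leq e^{c\sigma_{t}^{2}} .
\]
We use $1+y\leq e^{y}$ in the last step. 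We also use that $\sigma_{t}^{2}\in\F_{t-1}$, so $\sigma_{t}^{2}$ can be pulled out of the conditional expectation.

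\textbf{Step 2 (supermartingale and Markov).} Define
\[
W_{t}\defeq\exp\left(\lambda\sum_{s=1}^{t}X_{s}^{2}-c\sum_{s=1}^{t}\sigma_{s}^{2}\right),\qquad W_{0}=1 .
\]
Because $\sigma_{t}^{2}$ is predictable, Step 1 gives $\E\left[W_{t}\mid\F_{t-1}\right]\leq W_{t-1}$. Hence $\E\left[W_{T}\right]\leq1$. Markov's inequality then gives $\Pr\left[W_{T}\geq1/\delta\right]\leq\delta$. Therefore, with probability at least $1-\delta$,
\[
\sum_{t=1}^{T}X_{t}^{2}\leq\frac{c}{\lambda}\sum_{t=1}^{T}\sigma_{t}^{2}+\frac{1}{\lambda}\ln\frac{1}{\delta}.
\]

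\textbf{Step 3 (choice of $\lambda$).} Set $a\defeq\lambda b^{2}$. Then $c/\lambda=(e^{a}-1)/a$ and $1/\lambda=b^{2}/a$. Choosing $a=6/5$ gives $1/\lambda=5b^{2}/6$. It also gives $(e^{6/5}-1)/(6/5)\approx2.32/1.2<2$, using $e^{1.2}<3.4$. This yields exactly the claimed constants.

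The only delicate point is Step 1. The predictable (random) $\sigma_{t}^{2}$ has to be absorbed into the compensator of the supermartingale instead of being bounded by a deterministic constant. The chord inequality for the nonnegative bounded variable $X_{t}^{2}$ handles this cleanly. Once that is in place, the rest is routine, with only the numerical check of the constant $a=6/5$ remaining.
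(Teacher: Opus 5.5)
Your proof is correct. It reaches exactly the same intermediate inequality as the paper, but through a different one-step moment-generating-function bound.

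\textbf{What the paper does.} The paper does center. It writes $\sum_{t}X_{t}^{2}\leq\sum_{t}Y_{t}+\sum_{t}\sigma_{t}^{2}$ with $Y_{t}=X_{t}^{2}-\E[X_{t}^{2}\mid\F_{t-1}]$. It notes $Y_{t}\leq b^{2}$ and $\E[Y_{t}^{2}\mid\F_{t-1}]\leq\E[X_{t}^{4}\mid\F_{t-1}]\leq b^{2}\sigma_{t}^{2}$. It then bounds $\E[e^{\lambda Y_{t}}\mid\F_{t-1}]\leq\exp(\lambda^{2}b^{2}\sigma_{t}^{2}h(\lambda b^{2}))$ using that $h(x)=(e^{x}-1-x)/x^{2}$ is increasing. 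This is a Bernstein/Freedman-type step done by hand, so that the predictable $\sigma_{t}^{2}$ can sit in the compensator. So your remark that centering would not work is true only of invoking Lemma~\ref{lem:Freedman} as a black box; the paper centers and succeeds.

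\textbf{What you do differently.} You skip the centering. You apply the convexity chord bound directly to the nonnegative bounded variable $X_{t}^{2}$. This needs only the first conditional moment of $X_{t}^{2}$, not a second-moment estimate for $Y_{t}$.

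\textbf{The two routes give the same bound.} With $a=\lambda b^{2}$, one has $ah(a)+1=(e^{a}-1)/a$. Hence the paper's bound $\frac{1}{\lambda}\ln\frac{1}{\delta}+(\lambda b^{2}h(\lambda b^{2})+1)\sum_{t}\sigma_{t}^{2}$ equals your $\frac{1}{\lambda}\ln\frac{1}{\delta}+\frac{c}{\lambda}\sum_{t}\sigma_{t}^{2}$ for every $\lambda>0$. Both then take $\lambda b^{2}=6/5$.

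\textbf{What each approach buys.} Your version is shorter and more elementary. It exploits that $X_{t}^{2}\geq0$, so its conditional mean already serves as the variance proxy. It also handles $b=0$ explicitly, which the paper's choice $\lambda=6/(5b^{2})$ tacitly excludes. The paper's centered Bernstein-type step is the template that extends to general signed martingale differences that are only bounded above. There a chord bound anchored at $0$ is unavailable, or loses variance adaptivity.
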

\begin{rem}
At first glance, this inequality seems to exist in the prior literature
(e.g., \citep{NEURIPS2021_26901deb,ICLR2026_bd5b846a,liu2023stochastic,NEURIPS2022_349956de}).
However, we emphasize that our lemma is in fact stronger than existing
ones, as ours allows $\sigma_{t}^{2}$ to be a random variable rather
than a constant assumed in prior work. Although its proof is not difficult,
we believe it can provide some fresh insights to the community.
\end{rem}
\begin{proof}
We first have
\begin{equation}
\sum_{t=1}^{T}X_{t}^{2}=\sum_{t=1}^{T}\underbrace{X_{t}^{2}-\E\left[X_{t}^{2}\mid\F_{t-1}\right]}_{\defeq Y_{t}}+\sum_{t=1}^{T}\E\left[X_{t}^{2}\mid\F_{t-1}\right]\leq\sum_{t=1}^{T}Y_{t}+\sum_{t=1}^{T}\sigma_{t}^{2}.\label{eq:Freedman-square-1}
\end{equation}
Note that $\left\{ Y_{t}\right\} _{t=1}^{T}$ is a real-valued MDS
adapted to the filtration $\left\{ \F_{t}\right\} _{t=0}^{T}$ satisfying
\begin{eqnarray}
Y_{t}\leq X_{t}^{2}\leq b^{2} & \text{and} & \E\left[Y_{t}^{2}\mid\F_{t-1}\right]\leq\E\left[X_{t}^{4}\mid\F_{t-1}\right]\leq b^{2}\sigma_{t}^{2}.\label{eq:Freedman-square-2}
\end{eqnarray}

Next, let $h(x)\defeq\begin{cases}
\frac{\exp(x)-1-x}{x^{2}} & x\neq0\\
\frac{1}{2} & x=0
\end{cases}$. It is easy to verify that $h$ is continuous and increasing. Therefore,
given $\lambda>0$, we have for any $t\in\left[T\right]$,
\begin{align}
\exp(\lambda Y_{t}) & =1+\lambda Y_{t}+(\lambda Y_{t})^{2}h(\lambda Y_{t})\overset{(\ref{eq:Freedman-square-2})}{\leq}1+\lambda Y_{t}+(\lambda Y_{t})^{2}h(\lambda b^{2})\nonumber \\
\Rightarrow\E\left[\exp(\lambda Y_{t})\mid\F_{t-1}\right] & \leq1+\lambda\E\left[Y_{t}\mid\F_{t-1}\right]+\lambda^{2}\E\left[Y_{t}^{2}\mid\F_{t-1}\right]h(\lambda b^{2})\nonumber \\
 & \overset{(\ref{eq:Freedman-square-2})}{\leq}1+\lambda^{2}b^{2}\sigma_{t}^{2}h(\lambda b^{2})\leq\exp\left(\lambda^{2}b^{2}\sigma_{t}^{2}h(\lambda b^{2})\right).\label{eq:Freedman-square-3}
\end{align}
As such, we know
\begin{align}
\Pr\left[\sum_{t=1}^{T}Y_{t}-\lambda b^{2}h(\lambda b^{2})\sum_{t=1}^{T}\sigma_{t}^{2}\geq\frac{\ln\frac{1}{\delta}}{\lambda}\right] & =\Pr\left[\exp\left(\lambda\sum_{t=1}^{T}Y_{t}-\lambda^{2}b^{2}h(\lambda b^{2})\sum_{t=1}^{T}\sigma_{t}^{2}\right)\geq\frac{1}{\delta}\right]\nonumber \\
 & \overset{(a)}{\leq}\delta\E\left[\exp\left(\lambda\sum_{t=1}^{T}Y_{t}-\lambda^{2}b^{2}h(\lambda b^{2})\sum_{t=1}^{T}\sigma_{t}^{2}\right)\right]\overset{(b)}{\leq}\delta,\label{eq:Freedman-square-4}
\end{align}
where $(a)$ is due to Markov's inequality and $(b)$ is by applying
(\ref{eq:Freedman-square-3}) recursively.

Finally, by (\ref{eq:Freedman-square-1}) and (\ref{eq:Freedman-square-4}),
for any given $\lambda>0$,
\[
\Pr\left[\sum_{t=1}^{T}X_{t}^{2}\leq\frac{\ln\frac{1}{\delta}}{\lambda}+\left(\lambda b^{2}h(\lambda b^{2})+1\right)\sum_{t=1}^{T}\sigma_{t}^{2}\right]\geq1-\delta.
\]
We pick $\lambda=\frac{6}{5b^{2}}$ and use $\frac{6}{5}h(\frac{6}{5})\leq1$
to conclude.
\end{proof}

Moreover, our analysis relies on the following inequality proposed
in \citep{pmlr-v202-ivgi23a}.
\begin{lem}[Lemma C.2 of \citep{pmlr-v202-ivgi23a}]
\label{lem:Abel}Let $\left\{ a_{t}\right\} _{t=1}^{T}$ and $\left\{ b_{t}\right\} _{t=1}^{T}$
be two sequences in $\R$ such that $a_{t}$ is nonnegative and nondecreasing,
then there is
\[
\left|\sum_{s=1}^{t}a_{s}b_{s}\right|\leq2a_{t}\max_{t^{\prime}\in\left[t\right]}\left|\sum_{s=1}^{t^{\prime}}b_{s}\right|,\forall t\in\left[T\right].
\]
\end{lem}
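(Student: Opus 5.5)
We will prove the inequality by summation by parts (Abel summation). Fix $t\in\left[T\right]$ and introduce the partial sums $B_{s}\defeq\sum_{k=1}^{s}b_{k}$ for $s\in\left\{ 0\right\} \cup\left[t\right]$, with $B_{0}\defeq0$. Set $M_{t}^{\prime}\defeq\max_{t^{\prime}\in\left[t\right]}\left|B_{t^{\prime}}\right|$; this is the quantity on the right-hand side of the claimed bound.

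First, we rewrite the weighted sum by substituting $b_{s}=B_{s}-B_{s-1}$ and reindexing:
\[
\sum_{s=1}^{t}a_{s}b_{s}=\sum_{s=1}^{t}a_{s}\left(B_{s}-B_{s-1}\right)=a_{t}B_{t}-\sum_{s=1}^{t-1}\left(a_{s+1}-a_{s}\right)B_{s},
\]
where $B_{0}=0$ removes the boundary term at $s=0$. This identity holds for any real sequences, so it needs no assumption yet.

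Next, we bound the two pieces using the hypotheses on $\left\{ a_{t}\right\} $. The boundary term satisfies $\left|a_{t}B_{t}\right|\leq a_{t}M_{t}^{\prime}$ because $a_{t}\geq0$. For the sum, monotonicity gives $a_{s+1}-a_{s}\geq0$, so
\[
\left|\sum_{s=1}^{t-1}\left(a_{s+1}-a_{s}\right)B_{s}\right|\leq M_{t}^{\prime}\sum_{s=1}^{t-1}\left(a_{s+1}-a_{s}\right).
\]
The last sum telescopes to $a_{t}-a_{1}$, which is at most $a_{t}$ by nonnegativity of $a_{1}$. Adding the two bounds via the triangle inequality yields $\left|\sum_{s=1}^{t}a_{s}b_{s}\right|\leq2a_{t}M_{t}^{\prime}$. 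Since $t$ was arbitrary, the claim holds for all $t\in\left[T\right]$.

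There is no real obstacle in this argument. The only point requiring care is the sign bookkeeping in the summation by parts: nonnegativity of the increments $a_{s+1}-a_{s}$ is exactly what allows the bound by $M_{t}^{\prime}$ times a telescoping sum, and nonnegativity of $a_{1}$ gives $a_{t}-a_{1}\leq a_{t}$.
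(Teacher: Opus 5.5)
Your summation-by-parts argument is correct and complete, including the reindexing with $B_{0}=0$, the use of $a_{s+1}-a_{s}\geq0$, and the telescoping bound $a_{t}-a_{1}\leq a_{t}$. The paper gives no proof of its own and cites Lemma C.2 of Ivgi et al., whose proof is this same Abel-summation argument.
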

Finally, we need an algebraic inequality proved in \citep{ICLR2026_bd5b846a}.
As stated in \citep{ICLR2026_bd5b846a}, Lemma \ref{lem:algebra}
(or the same idea) has been used in previous works such as \citep{pmlr-v202-ivgi23a}
and \citep{liu2023stochastic}.
\begin{lem}[Lemma 14 of \citep{ICLR2026_bd5b846a}]
\label{lem:algebra}Let $\left\{ a_{t}\right\} _{t=1}^{T+1}$, $\left\{ b_{t}\right\} _{t=1}^{T}$,
and $\left\{ c_{t}\right\} _{t=1}^{T+1}$ be three sequences in $\R$
such that $b_{t}$ is nonnegative and $c_{t}$ is nondecreasing. If
$a_{1}\leq2c_{1}$ and
\[
a_{t+1}+b_{t}\leq\frac{\max_{s\in\left[t\right]}a_{s}}{2}+c_{t+1},\forall t\in\left[T\right],
\]
then there is
\[
a_{t+1}+b_{t}\leq2c_{t+1},\forall t\in\left[T\right].
\]
\end{lem}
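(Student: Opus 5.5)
The plan is a strong induction on $t$, carrying the auxiliary claim $a_{s}\leq2c_{s}$ for all $s\in\left[t\right]$. This claim is what controls the $\max_{s\in\left[t\right]}a_{s}$ term on the right-hand side of the hypothesis. The desired inequality $a_{t+1}+b_{t}\leq2c_{t+1}$ then falls out at each step, and because $b_{t}\geq0$ it also yields $a_{t+1}\leq2c_{t+1}$. That keeps the auxiliary claim alive for the next step.

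Concretely, the base case of the auxiliary claim is exactly the assumption $a_{1}\leq2c_{1}$. For the inductive step, fix $t\in\left[T\right]$ and assume $a_{s}\leq2c_{s}$ for every $s\in\left[t\right]$. Since $\left\{ c_{t}\right\} _{t=1}^{T+1}$ is nondecreasing, we have $2c_{s}\leq2c_{t}$ for all $s\leq t$, so $\max_{s\in\left[t\right]}a_{s}\leq2c_{t}$. Note that this bound holds even if some $a_{s}$ are negative, since we only need an upper bound on the maximum. Plugging it into the hypothesis gives
\[
a_{t+1}+b_{t}\leq\frac{\max_{s\in\left[t\right]}a_{s}}{2}+c_{t+1}\leq c_{t}+c_{t+1}\leq2c_{t+1},
\]
where the last step again uses monotonicity of $c$. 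This is the claimed inequality at index $t$. Finally, using $b_{t}\geq0$, we get $a_{t+1}\leq a_{t+1}+b_{t}\leq2c_{t+1}$, which extends the auxiliary claim to all $s\in\left[t+1\right]$ and closes the induction.

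There is no real obstacle here. The only point requiring care is to induct on the stronger statement $a_{s}\leq2c_{s}$ for all $s\leq t$, rather than on the conclusion alone. The reason is that the hypothesis involves the running maximum of $a_{s}$, which must be controlled by $c_{t}$ (and not by $c_{t+1}$) so that the constants add up to exactly $2c_{t+1}$.
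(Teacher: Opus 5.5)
Your proof is correct. You strengthen the induction to $a_s\leq 2c_s$ for all $s\in[t]$, bound $\max_{s\in[t]}a_s\leq 2c_t\leq 2c_{t+1}$ using that $c$ is nondecreasing, and use $b_t\geq 0$ to carry the bound forward; this is essentially the same induction argument the paper defers to via the cited reference's proof of its Lemma~14.
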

\begin{rem}
Although the above lemma differs slightly from the original one presented
in \citep{ICLR2026_bd5b846a}, it follows essentially the same proof
given in \citep{ICLR2026_bd5b846a}.
\end{rem}

\clearpage

\section*{NeurIPS Paper Checklist}

\begin{enumerate}

\item {\bf Claims}
    \item[] Question: Do the main claims made in the abstract and introduction accurately reflect the paper's contributions and scope?
    \item[] Answer: \answerYes{} % Replace by \answerYes{}, \answerNo{}, or \answerNA{}.
    \item[] Justification: The main claims made in the abstract and introduction accurately reflect the paper's contributions and scope.
    \item[] Guidelines:
    \begin{itemize}
        \item The answer \answerNA{} means that the abstract and introduction do not include the claims made in the paper.
        \item The abstract and/or introduction should clearly state the claims made, including the contributions made in the paper and important assumptions and limitations. A \answerNo{} or \answerNA{} answer to this question will not be perceived well by the reviewers. 
        \item The claims made should match theoretical and experimental results, and reflect how much the results can be expected to generalize to other settings. 
        \item It is fine to include aspirational goals as motivation as long as it is clear that these goals are not attained by the paper. 
    \end{itemize}

\item {\bf Limitations}
    \item[] Question: Does the paper discuss the limitations of the work performed by the authors?
    \item[] Answer: \answerYes{} % Replace by \answerYes{}, \answerNo{}, or \answerNA{}.
    \item[] Justification: In Section \ref{sec:conclusion}, we discuss the limitations of the work.
    \item[] Guidelines:
    \begin{itemize}
        \item The answer \answerNA{} means that the paper has no limitation while the answer \answerNo{} means that the paper has limitations, but those are not discussed in the paper. 
        \item The authors are encouraged to create a separate ``Limitations'' section in their paper.
        \item The paper should point out any strong assumptions and how robust the results are to violations of these assumptions (e.g., independence assumptions, noiseless settings, model well-specification, asymptotic approximations only holding locally). The authors should reflect on how these assumptions might be violated in practice and what the implications would be.
        \item The authors should reflect on the scope of the claims made, e.g., if the approach was only tested on a few datasets or with a few runs. In general, empirical results often depend on implicit assumptions, which should be articulated.
        \item The authors should reflect on the factors that influence the performance of the approach. For example, a facial recognition algorithm may perform poorly when image resolution is low or images are taken in low lighting. Or a speech-to-text system might not be used reliably to provide closed captions for online lectures because it fails to handle technical jargon.
        \item The authors should discuss the computational efficiency of the proposed algorithms and how they scale with dataset size.
        \item If applicable, the authors should discuss possible limitations of their approach to address problems of privacy and fairness.
        \item While the authors might fear that complete honesty about limitations might be used by reviewers as grounds for rejection, a worse outcome might be that reviewers discover limitations that aren't acknowledged in the paper. The authors should use their best judgment and recognize that individual actions in favor of transparency play an important role in developing norms that preserve the integrity of the community. Reviewers will be specifically instructed to not penalize honesty concerning limitations.
    \end{itemize}

\item {\bf Theory assumptions and proofs}
    \item[] Question: For each theoretical result, does the paper provide the full set of assumptions and a complete (and correct) proof?
    \item[] Answer: \answerYes{} % Replace by \answerYes{}, \answerNo{}, or \answerNA{}.
    \item[] Justification: For each theoretical result, the paper provides the full set of assumptions and a complete (and correct) proof.
    \item[] Guidelines:
    \begin{itemize}
        \item The answer \answerNA{} means that the paper does not include theoretical results. 
        \item All the theorems, formulas, and proofs in the paper should be numbered and cross-referenced.
        \item All assumptions should be clearly stated or referenced in the statement of any theorems.
        \item The proofs can either appear in the main paper or the supplemental material, but if they appear in the supplemental material, the authors are encouraged to provide a short proof sketch to provide intuition. 
        \item Inversely, any informal proof provided in the core of the paper should be complemented by formal proofs provided in appendix or supplemental material.
        \item Theorems and Lemmas that the proof relies upon should be properly referenced. 
    \end{itemize}

    \item {\bf Experimental result reproducibility}
    \item[] Question: Does the paper fully disclose all the information needed to reproduce the main experimental results of the paper to the extent that it affects the main claims and/or conclusions of the paper (regardless of whether the code and data are provided or not)?
    \item[] Answer: \answerNA{} % Replace by \answerYes{}, \answerNo{}, or \answerNA{}.
    \item[] Justification: The paper does not include experiments.
    \item[] Guidelines:
    \begin{itemize}
        \item The answer \answerNA{} means that the paper does not include experiments.
        \item If the paper includes experiments, a \answerNo{} answer to this question will not be perceived well by the reviewers: Making the paper reproducible is important, regardless of whether the code and data are provided or not.
        \item If the contribution is a dataset and\slash or model, the authors should describe the steps taken to make their results reproducible or verifiable. 
        \item Depending on the contribution, reproducibility can be accomplished in various ways. For example, if the contribution is a novel architecture, describing the architecture fully might suffice, or if the contribution is a specific model and empirical evaluation, it may be necessary to either make it possible for others to replicate the model with the same dataset, or provide access to the model. In general. releasing code and data is often one good way to accomplish this, but reproducibility can also be provided via detailed instructions for how to replicate the results, access to a hosted model (e.g., in the case of a large language model), releasing of a model checkpoint, or other means that are appropriate to the research performed.
        \item While NeurIPS does not require releasing code, the conference does require all submissions to provide some reasonable avenue for reproducibility, which may depend on the nature of the contribution. For example
        \begin{enumerate}
            \item If the contribution is primarily a new algorithm, the paper should make it clear how to reproduce that algorithm.
            \item If the contribution is primarily a new model architecture, the paper should describe the architecture clearly and fully.
            \item If the contribution is a new model (e.g., a large language model), then there should either be a way to access this model for reproducing the results or a way to reproduce the model (e.g., with an open-source dataset or instructions for how to construct the dataset).
            \item We recognize that reproducibility may be tricky in some cases, in which case authors are welcome to describe the particular way they provide for reproducibility. In the case of closed-source models, it may be that access to the model is limited in some way (e.g., to registered users), but it should be possible for other researchers to have some path to reproducing or verifying the results.
        \end{enumerate}
    \end{itemize}

\item {\bf Open access to data and code}
    \item[] Question: Does the paper provide open access to the data and code, with sufficient instructions to faithfully reproduce the main experimental results, as described in supplemental material?
    \item[] Answer: \answerNA{} % Replace by \answerYes{}, \answerNo{}, or \answerNA{}.
    \item[] Justification: The paper does not include experiments requiring code.
    \item[] Guidelines:
    \begin{itemize}
        \item The answer \answerNA{} means that paper does not include experiments requiring code.
        \item Please see the NeurIPS code and data submission guidelines (\url{https://neurips.cc/public/guides/CodeSubmissionPolicy}) for more details.
        \item While we encourage the release of code and data, we understand that this might not be possible, so \answerNo{} is an acceptable answer. Papers cannot be rejected simply for not including code, unless this is central to the contribution (e.g., for a new open-source benchmark).
        \item The instructions should contain the exact command and environment needed to run to reproduce the results. See the NeurIPS code and data submission guidelines (\url{https://neurips.cc/public/guides/CodeSubmissionPolicy}) for more details.
        \item The authors should provide instructions on data access and preparation, including how to access the raw data, preprocessed data, intermediate data, and generated data, etc.
        \item The authors should provide scripts to reproduce all experimental results for the new proposed method and baselines. If only a subset of experiments are reproducible, they should state which ones are omitted from the script and why.
        \item At submission time, to preserve anonymity, the authors should release anonymized versions (if applicable).
        \item Providing as much information as possible in supplemental material (appended to the paper) is recommended, but including URLs to data and code is permitted.
    \end{itemize}

\item {\bf Experimental setting/details}
    \item[] Question: Does the paper specify all the training and test details (e.g., data splits, hyperparameters, how they were chosen, type of optimizer) necessary to understand the results?
    \item[] Answer: \answerNA{} % Replace by \answerYes{}, \answerNo{}, or \answerNA{}.
    \item[] Justification: The paper does not include experiments.
    \item[] Guidelines:
    \begin{itemize}
        \item The answer \answerNA{} means that the paper does not include experiments.
        \item The experimental setting should be presented in the core of the paper to a level of detail that is necessary to appreciate the results and make sense of them.
        \item The full details can be provided either with the code, in appendix, or as supplemental material.
    \end{itemize}

\item {\bf Experiment statistical significance}
    \item[] Question: Does the paper report error bars suitably and correctly defined or other appropriate information about the statistical significance of the experiments?
    \item[] Answer: \answerNA{} % Replace by \answerYes{}, \answerNo{}, or \answerNA{}.
    \item[] Justification: The paper does not include experiments.
    \item[] Guidelines:
    \begin{itemize}
        \item The answer \answerNA{} means that the paper does not include experiments.
        \item The authors should answer \answerYes{} if the results are accompanied by error bars, confidence intervals, or statistical significance tests, at least for the experiments that support the main claims of the paper.
        \item The factors of variability that the error bars are capturing should be clearly stated (for example, train/test split, initialization, random drawing of some parameter, or overall run with given experimental conditions).
        \item The method for calculating the error bars should be explained (closed form formula, call to a library function, bootstrap, etc.)
        \item The assumptions made should be given (e.g., Normally distributed errors).
        \item It should be clear whether the error bar is the standard deviation or the standard error of the mean.
        \item It is OK to report 1-sigma error bars, but one should state it. The authors should preferably report a 2-sigma error bar than state that they have a 96\% CI, if the hypothesis of Normality of errors is not verified.
        \item For asymmetric distributions, the authors should be careful not to show in tables or figures symmetric error bars that would yield results that are out of range (e.g., negative error rates).
        \item If error bars are reported in tables or plots, the authors should explain in the text how they were calculated and reference the corresponding figures or tables in the text.
    \end{itemize}

\item {\bf Experiments compute resources}
    \item[] Question: For each experiment, does the paper provide sufficient information on the computer resources (type of compute workers, memory, time of execution) needed to reproduce the experiments?
    \item[] Answer: \answerNA{} % Replace by \answerYes{}, \answerNo{}, or \answerNA{}.
    \item[] Justification: The paper does not include experiments.
    \item[] Guidelines:
    \begin{itemize}
        \item The answer \answerNA{} means that the paper does not include experiments.
        \item The paper should indicate the type of compute workers CPU or GPU, internal cluster, or cloud provider, including relevant memory and storage.
        \item The paper should provide the amount of compute required for each of the individual experimental runs as well as estimate the total compute. 
        \item The paper should disclose whether the full research project required more compute than the experiments reported in the paper (e.g., preliminary or failed experiments that didn't make it into the paper). 
    \end{itemize}
    
\item {\bf Code of ethics}
    \item[] Question: Does the research conducted in the paper conform, in every respect, with the NeurIPS Code of Ethics \url{https://neurips.cc/public/EthicsGuidelines}?
    \item[] Answer: \answerYes{} % Replace by \answerYes{}, \answerNo{}, or \answerNA{}.
    \item[] Justification: The research conducted in the paper conforms, in every respect, with the NeurIPS Code of Ethics.
    \item[] Guidelines:
    \begin{itemize}
        \item The answer \answerNA{} means that the authors have not reviewed the NeurIPS Code of Ethics.
        \item If the authors answer \answerNo, they should explain the special circumstances that require a deviation from the Code of Ethics.
        \item The authors should make sure to preserve anonymity (e.g., if there is a special consideration due to laws or regulations in their jurisdiction).
    \end{itemize}

\item {\bf Broader impacts}
    \item[] Question: Does the paper discuss both potential positive societal impacts and negative societal impacts of the work performed?
    \item[] Answer: \answerNA{} % Replace by \answerYes{}, \answerNo{}, or \answerNA{}.
    \item[] Justification: There is no societal impact of the work performed.
    \item[] Guidelines:
    \begin{itemize}
        \item The answer \answerNA{} means that there is no societal impact of the work performed.
        \item If the authors answer \answerNA{} or \answerNo, they should explain why their work has no societal impact or why the paper does not address societal impact.
        \item Examples of negative societal impacts include potential malicious or unintended uses (e.g., disinformation, generating fake profiles, surveillance), fairness considerations (e.g., deployment of technologies that could make decisions that unfairly impact specific groups), privacy considerations, and security considerations.
        \item The conference expects that many papers will be foundational research and not tied to particular applications, let alone deployments. However, if there is a direct path to any negative applications, the authors should point it out. For example, it is legitimate to point out that an improvement in the quality of generative models could be used to generate Deepfakes for disinformation. On the other hand, it is not needed to point out that a generic algorithm for optimizing neural networks could enable people to train models that generate Deepfakes faster.
        \item The authors should consider possible harms that could arise when the technology is being used as intended and functioning correctly, harms that could arise when the technology is being used as intended but gives incorrect results, and harms following from (intentional or unintentional) misuse of the technology.
        \item If there are negative societal impacts, the authors could also discuss possible mitigation strategies (e.g., gated release of models, providing defenses in addition to attacks, mechanisms for monitoring misuse, mechanisms to monitor how a system learns from feedback over time, improving the efficiency and accessibility of ML).
    \end{itemize}
    
\item {\bf Safeguards}
    \item[] Question: Does the paper describe safeguards that have been put in place for responsible release of data or models that have a high risk for misuse (e.g., pre-trained language models, image generators, or scraped datasets)?
    \item[] Answer: \answerNA{} % Replace by \answerYes{}, \answerNo{}, or \answerNA{}.
    \item[] Justification: The paper poses no such risks.
    \item[] Guidelines:
    \begin{itemize}
        \item The answer \answerNA{} means that the paper poses no such risks.
        \item Released models that have a high risk for misuse or dual-use should be released with necessary safeguards to allow for controlled use of the model, for example by requiring that users adhere to usage guidelines or restrictions to access the model or implementing safety filters. 
        \item Datasets that have been scraped from the Internet could pose safety risks. The authors should describe how they avoided releasing unsafe images.
        \item We recognize that providing effective safeguards is challenging, and many papers do not require this, but we encourage authors to take this into account and make a best faith effort.
    \end{itemize}

\item {\bf Licenses for existing assets}
    \item[] Question: Are the creators or original owners of assets (e.g., code, data, models), used in the paper, properly credited and are the license and terms of use explicitly mentioned and properly respected?
    \item[] Answer: \answerNA{} % Replace by \answerYes{}, \answerNo{}, or \answerNA{}.
    \item[] Justification: The paper does not use existing assets.
    \item[] Guidelines:
    \begin{itemize}
        \item The answer \answerNA{} means that the paper does not use existing assets.
        \item The authors should cite the original paper that produced the code package or dataset.
        \item The authors should state which version of the asset is used and, if possible, include a URL.
        \item The name of the license (e.g., CC-BY 4.0) should be included for each asset.
        \item For scraped data from a particular source (e.g., website), the copyright and terms of service of that source should be provided.
        \item If assets are released, the license, copyright information, and terms of use in the package should be provided. For popular datasets, \url{paperswithcode.com/datasets} has curated licenses for some datasets. Their licensing guide can help determine the license of a dataset.
        \item For existing datasets that are re-packaged, both the original license and the license of the derived asset (if it has changed) should be provided.
        \item If this information is not available online, the authors are encouraged to reach out to the asset's creators.
    \end{itemize}

\item {\bf New assets}
    \item[] Question: Are new assets introduced in the paper well documented and is the documentation provided alongside the assets?
    \item[] Answer: \answerNA{} % Replace by \answerYes{}, \answerNo{}, or \answerNA{}.
    \item[] Justification: The paper does not release new assets.
    \item[] Guidelines:
    \begin{itemize}
        \item The answer \answerNA{} means that the paper does not release new assets.
        \item Researchers should communicate the details of the dataset\slash code\slash model as part of their submissions via structured templates. This includes details about training, license, limitations, etc. 
        \item The paper should discuss whether and how consent was obtained from people whose asset is used.
        \item At submission time, remember to anonymize your assets (if applicable). You can either create an anonymized URL or include an anonymized zip file.
    \end{itemize}

\item {\bf Crowdsourcing and research with human subjects}
    \item[] Question: For crowdsourcing experiments and research with human subjects, does the paper include the full text of instructions given to participants and screenshots, if applicable, as well as details about compensation (if any)? 
    \item[] Answer: \answerNA{} % Replace by \answerYes{}, \answerNo{}, or \answerNA{}.
    \item[] Justification: The paper does not involve crowdsourcing nor research with human subjects.
    \item[] Guidelines:
    \begin{itemize}
        \item The answer \answerNA{} means that the paper does not involve crowdsourcing nor research with human subjects.
        \item Including this information in the supplemental material is fine, but if the main contribution of the paper involves human subjects, then as much detail as possible should be included in the main paper. 
        \item According to the NeurIPS Code of Ethics, workers involved in data collection, curation, or other labor should be paid at least the minimum wage in the country of the data collector. 
    \end{itemize}

\item {\bf Institutional review board (IRB) approvals or equivalent for research with human subjects}
    \item[] Question: Does the paper describe potential risks incurred by study participants, whether such risks were disclosed to the subjects, and whether Institutional Review Board (IRB) approvals (or an equivalent approval/review based on the requirements of your country or institution) were obtained?
    \item[] Answer: \answerNA{} % Replace by \answerYes{}, \answerNo{}, or \answerNA{}.
    \item[] Justification: The paper does not involve crowdsourcing nor research with human subjects.
    \item[] Guidelines:
    \begin{itemize}
        \item The answer \answerNA{} means that the paper does not involve crowdsourcing nor research with human subjects.
        \item Depending on the country in which research is conducted, IRB approval (or equivalent) may be required for any human subjects research. If you obtained IRB approval, you should clearly state this in the paper. 
        \item We recognize that the procedures for this may vary significantly between institutions and locations, and we expect authors to adhere to the NeurIPS Code of Ethics and the guidelines for their institution. 
        \item For initial submissions, do not include any information that would break anonymity (if applicable), such as the institution conducting the review.
    \end{itemize}

\item {\bf Declaration of LLM usage}
    \item[] Question: Does the paper describe the usage of LLMs if it is an important, original, or non-standard component of the core methods in this research? Note that if the LLM is used only for writing, editing, or formatting purposes and does \emph{not} impact the core methodology, scientific rigor, or originality of the research, declaration is not required.
    %this research? 
    \item[] Answer: \answerNA{} % Replace by \answerYes{}, \answerNo{}, or \answerNA{}.
    \item[] Justification: The core method development in this research does not involve LLMs as any important, original, or non-standard components.
    \item[] Guidelines:
    \begin{itemize}
        \item The answer \answerNA{} means that the core method development in this research does not involve LLMs as any important, original, or non-standard components.
        \item Please refer to our LLM policy in the NeurIPS handbook for what should or should not be described.
    \end{itemize}

\end{enumerate}
\end{document}